\newcommand{\eps}{\varepsilon} 
\newcommand{\myeq}[1]{\ensuremath{\stackrel{\text{#1}}{=}}}
\newcommand{\wto}{\rightharpoonup}
\newcommand{\wsto}{\overset{\ast}{\rightharpoonup}}
\newcommand{\myleq}[1]{\ensuremath{\stackrel{\text{#1}}{\leq}}}
\newcommand{\defeq}{\vcentcolon=}
\newcommand\te{\tau_{\varepsilon}}
\newcommand\restr[2]{\ensuremath{\left.#1\right|_{#2}}}
\numberwithin{equation}{section}
\newtheorem{theorem}{Theorem}[section]
\newtheorem{prop}[theorem]{Proposition}
\newtheorem{lemma}[theorem]{Lemma}
\newtheorem{corollary}[theorem]{Corollary}
\newtheorem{remark}[theorem]{Remark}
\newtheorem{definition}[theorem]{Definition}
\setlist[enumerate,1]  {label={\rm (\roman*)}, leftmargin=1.5em} 
\renewcommand{\abs}[1]{\left| #1 \right|}
\newcommand*\Lap{\mathop{}\!\mathbin\bigtriangleup}
\newcommand*\adh[1]{\overline{#1}} 
\newcommand{\RN}{{\mathbb{R}^N}}
\newcommand{\R}{\mathbb{R}}
\newcommand{\rnorm}[1]{{\left\vert\kern-0.25ex\left\vert\kern-0.25ex\left\vert #1 
		\right\vert\kern-0.25ex\right\vert\kern-0.25ex\right\vert}}
\def\Xint#1{\mathchoice
	{\XXint\displaystyle\textstyle{#1}}%
	{\XXint\textstyle\scriptstyle{#1}}%
	{\XXint\scriptstyle\scriptscriptstyle{#1}}%
	{\XXint\scriptscriptstyle\scriptscriptstyle{#1}}%
	\!\int}
\def\XXint#1#2#3{{\setbox0=\hbox{$#1{#2#3}{\int}$ }
		\vcenter{\hbox{$#2#3$ }}\kern-.580\wd0}}
\def\dashint{\Xint-}
\renewcommand{\fint}{\dashint}   
\newcommand{\bnorm}[1]{\mathrel{\left\|#1\right\|}}
\newcommand{\hil}{H}
\renewcommand{\norm}[1]{\mathrel{\|#1\|}}
\DeclarePairedDelimiter{\normdos}{\lVert}{\rVert}
\begin{document}
	\title{A brush problem. Homogenization involving thin domains and PDEs in graphs.}

	\author{ José M. Arrieta${}^{(1)}$ \\
		Joaquín Domínguez-de-Tena${}^{(2)}$}

	\date{\today}
	\maketitle
	
	\setcounter{footnote}{2}
	\begin{center}
		Departamento de Análisis Matemático y  Matem\'atica Aplicada\\ Universidad
		Complutense de Madrid\\ 28040 Madrid, Spain \\ and \\
		Instituto de Ciencias Matem\'aticas \\
		CSIC-UAM-UC3M-UCM, Spain 
	\end{center}
	
	\makeatletter
	\begin{center}
		${}^{(1)}$ {E-mail: jarrieta@ucm.es
			}
		\\
		${}^{(2)}$ {E-mail:
			joadomin@ucm.es}
	\end{center}
	\makeatother

	\noindent {$\phantom{ai}$ {\bf Key words and phrases:}  Homogenization, thin domains, PDEs in graphs, brush-type domain.} 
	\newline{$\phantom{ai}$ {\bf Mathematical Subject Classification
			2020:} \
		35B27, 35J25, 35R02}
	

\begin{abstract}
	This work analyses the homogenization of a linear elliptic equation with Neumann boundary conditions in a comb/brush domain, composed of a fixed base and a family of thin teeth. The teeth are defined as rescalings of order less than or equal to $\varepsilon$ of a model tooth of arbitrary shape. Periodicity in their distribution is not assumed; instead, the existence of an asymptotic limit density $\theta$, which may vanish in certain regions, is assumed. The convergence analysis is performed using an adaptation of the unfolding operator method to a non-periodic framework. Finally, it is shown that, under certain conditions on the geometry of the teeth, the resulting limit problem can be interpreted as a differential equation on a graph.
\end{abstract}

\section{Introduction}

For several decades now, the field of homogenization in partial differential equations has gained significant interest and relevance. Its origins lie in the modelling of real world processes where the studied system exhibits repetitive structures at a comparatively smaller scale than the rest of the system. These microstructures may be represented by rapidly varying coefficients appearing in the equation, or by very rapidly variations of the domain, as in the case of perforated domain, or by highly oscillating boundaries,  among others.  Starting with the pioneering works \cite{SanP, BLP, ZKO}  where the basic methods to analyze these problems where set, the literature in the field is now very rich with new methods and applications to many different situations. Without being exhaustive, we refer to \cite{All, Ng, CioDon, librocioranescu,MKE,OSY,CasDiaz,CioMur} and references therein for a good view of different analysis and applications of homogenization theory.

The idea behind homogenization theory is to simplify the equations under study, obtaining a new ``averaged'' or ``homogenized'' equation that incorporates the effects of the microstructures in the equation. By doing so, the resultant equation does not have the rapid variations of the original one (in terms of variations of coefficients or domain or boundary oscillations) although these variations have been encoded into the equation through the new homogenized coefficients.  The homogenized equation may even contain new terms due to the microstructures, see for instance \cite{RauchTaylor} or the very well known article \cite{CioMur}.

On the other hand, the analysis of the behavior of solutions of PDEs in thin domains is nowadays a very active area of study with a lot of results from many different perspectives.  For instance, in the pioneer work \cite{HR} the authors analysed the behavior of the asymptotic states (attractor of the system) of a nonlinear reaction diffusion equation posed in a thin domain. They showed the convergence to an appropriate nonlinear 1D reaction diffusion equation, where the coefficients of the limiting problem incorporate the geometry of the thin domain. See also \cite{prizzi2001effect} for thin domains with ``holes'' whose asymptotic limit problem can be reinterpreted as an equation in a graph,  \cite{R} for a survey of PDEs on thin domains, \cite{AS} for a refinement of some results of \cite{HR}.  Another relevant type of domains studied in the literature are the so-called dumbbell domain, which roughly speaking consists of two fix domains joined by a thin channel whose width is given by a small parameter. This domain became very relevant when analyzing the phenomena of pattern formation in reaction-diffusion equations. Starting from the pioneer works \cite{Mat79, CasHol78},  it has been also a very active area also from different perspectives since the past century. See for instance \cite{J, Arr2, JK, dumbell, ACLc2, ACLc3}

Also, thin domains with oscillations at the boundary have been considered, obtaining the homogenized limit equation for which a combination of techniques from thin domains and homogenization is usually needed. See for instance \cite{brushthin3, VP1} for an analysis of the linear elliptic problem with locally periodic oscillatory boundaries.

\par\medskip

In this work, we deal with a brush (or a comb) domain which consists, roughly speaking, on a fix base domain at which a family of bristles (or teeth) are attached. The number of teeth goes to infinity and the width of each one goes to zero as the parameter $\varepsilon\to 0$. Let us remark that we usually refer to comb if the structure is 2D and brush if it is 3D or higher dimension, but we will use the term ``brush'' to refer to both situations.  Each bristle may be regarded as a thin domain. And attaching thin domains to a fix base domain may be considered as a dumbbell-type domain perturbation.  That is why, the understanding and analysis of thin and dumbbell domains will be helpful and inspiring when dealing with these brush domains.    

\begin{figure}[htbp]
	\centering
	\includegraphics[trim=0 70 0 70, clip, width=\textwidth]{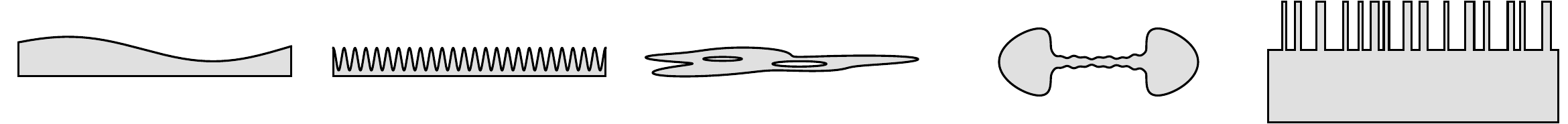}
	\caption{\small Examples of domains with thin structures. From left to right: three examples of thin domains, including one of the form $0 < y < \varepsilon g(x)$ \cite{HR}, an oscillating thin domain $0 < y < \varepsilon g(x/\varepsilon^\alpha)$ \cite{VP1}, and a thin domain of arbitrary shape with holes $\Omega_\varepsilon = \{(x,y) : (x, y/\varepsilon) \in \Omega\}$ \cite{prizzi2001effect}. The last two are domains with thin structures: a dumbbell domain, where two bodies are joined by a thin channel \cite{dumbell}, and a comb domain, where a body has several teeth attached to its top \cite{gaudiello}.}
	\label{fig:thin_domains}
\end{figure}
The starting point of this work is the outstanding and inspiring article \cite{gaudiello}, where the authors study a Neumann elliptic problem in brush-type domain $\Omega_\varepsilon\subset \R^N$.  
 Each tooth is a pure cylinder of fixed height and of section $\omega_\eps\subset \R^{N-1}$ with diameter less than $\eps$. Remarkably, instead of assuming a periodicity condition on the distribution of the teeth, as is customary in this kind of problems, the authors only assume a convergence of the density of teeth in certain sense to a limit density $\theta$ which is assumed to be strictly positive in the region where the teeth are located. In addition, they treat the case of source terms in $L^1(\Omega_\varepsilon)$,
 for which a more involved analysis and the consideration of renormalized solutions is required.

In the work that we start with this paper and which is continued in \cite{jtxtheta0}, a geometric generalization of the above problem is treated. We make two main generalizations. The first one consists in considering more general  shapes for the teeth instead of just purely cylindrical ones. We will consider teeth with shapes similar to the thin domains treated in the works \cite{HR, AS} or even \cite{prizzi2001effect}, in which thin domains with non connected cross sections are considered,   see Figure \ref{fig:supercombinado}.  Having teeth with these more general geometries  makes the limiting homogenized equation different, where the geometry of the thin domain is encoded into the equation. Moreover, in some situations where the thin domains do not have connected cross sections, the limiting homogenized problem needs to be reinterpreted as an equation in certain graph which is related to the geometry of the thin domain,  see Section \ref{sec:graph}.

The second generalization is the consideration of the case in which the asymptotic limit density of teeth, given by the function $\theta$,  may degenerate and become null in part of the region where the teeth concentrate. Notice that in  \cite{gaudiello} it is assumed that $\theta$ is bounded away from zero in the whole region where the teeth are located. 
The function $\theta$ appears in the limit equation as a ``parameter'' in the transmission conditions imposed at the boundary between the base of the brush and the asymptotic domain where the teeth are located. It actually measures the strength of the transmission conditions. Having this function $\theta$ degenerating at some region represents a degeneracy in the equation which needs to be understood properly and need some appropriate technical treatment.  

The weak formulation of the limit problem is expressed in \eqref{eqn:thmmaineq3} and the convergence of the solutions in the brush domain to the solutions of the limit problem is stated in our main result Theorem \ref{thm:main}. 

Moreover, in the second part of this work, see \cite{jtxtheta0},  we treat the very degenerate case where $\theta\equiv 0$. This includes the case of having just one single tooth (or a fix number of teeth $n$), as it is the case of a standard dumbbell domain \cite{dumbell}. But we may have an increasing number of teeth $n_\eps\to +\infty$ as $\eps\to0$, but in such a way that if the cross section of each teeth has $(N-1)$ dimensional measure $\sim m_\eps$ and  $n_\eps m_\eps\to 0$.  Our main result, Theorem \ref{thm:main},  also applies to the case where $\theta\equiv 0$ although this result does not give any information about the behavior of the solutions in the asymptotic region where the teeth are located.  As a matter of fact, to say something meaningful about the behavior of the solutions in the asymptotic teeth, we need to deal with other techniques drawn from dumbbell domain studies, see \cite{dumbell, ACLc2, ACLc3}. This case is treated in \cite{jtxtheta0}.

\par\medskip

As we mentioned above, our intention is to proceed with some geometrical generalizations. In this respect and in order to be clearer we have avoided dealing with more general elliptic operator and with source terms in $L^1$ (as it is treated in \cite{gaudiello}). We rather stick to the case of 
the canonical elliptic equation $-\Lap u + u = f$ with the source term $f$ in $L^2$.

The techniques used to address this new geometric setting are different from those in \cite{gaudiello}, although they are strongly inspired by the original paper. In the present paper, we make use of the so-called unfolding operator method, initially developed for the treatment of periodic problems \cite{CDG,CDG2,librocioranescu} that later has shown great versatility, being applied in other contexts, such as non-periodic homogenization \cite{tesismanuel,VP1}. Following these works, we adapt the method to our particular situation.
Our work is also inspired by the study of problems in thin domains \cite{R, HR, AS, J, Arr2, JK}, and in particular by  \cite{prizzi2001effect}.

As we have mentioned, the limit problem we obtain has an interpretation in terms of graphs. This is given in Section \ref{sec:graph} for a wide class of shapes of the teeth that, following \cite{prizzi2001effect},  we call ``nicely-decomposed domains'', for which a classical formulation of the limit problem formulated in Theorem \ref{thm:main} makes sense.

The problem presented here arises in various fields of biology, physics, mechanics, and engineering. We refer to \cite{gaudiello} and reference therein for studies related to the present one. In particular, we would refer to \cite{app1, app2, app3, app4, app+}. Other studies related to this type of domain include the foundational work of Brizzi and Chalot \cite{brizzichalot}, as well as subsequent developments such as \cite{brush1,brush2,brush3,brush4,brush5,brush6, brush+}. Additional works involving similar structures and treating control problems are \cite{brushcontrol1,brushcontrol2,brushcontrol3,brushcontrol4, brushcontrol+}.  In the works \cite{app3, brushotherbc1, brush2, brushotherbc+} different boundary conditions are treated and in  \cite{brushthin1,brushthin2,brushthin3,VP1,tesismanuel,app2,brushthin5,brushthin6,brushthin7} the authors study the case in which the height of the teeth degenerates, \cite{brushmorecomplex} for more complex geometries and other references in the paper \cite{gaudiello}.

The organization of the paper is the following. In Section \ref{sec:statmentjtx}, we will describe the problem we will treat and state the main result of the paper, which will be proved later. Section \ref{sec:jtxtools} will be dedicated to developing the tools and functional framework we will need to treat the problem. In Section \ref{sec:jtxresults} we will make the proofs in order to obtain the main result stated in Section \ref{sec:statmentjtx}. Finally, section \ref{sec:graph} will be dedicated to give a better interpretation of the limit problem in terms of a PDE problem in graphs when regularity assumptions are made on the geometry of the teeth.  We include at the end an Appendix with several auxiliary results that will help to understand the 
analysis in this paper. 
\par\medskip

\section{Statement of the problem}
\label{sec:statmentjtx}
In this section, we describe in a rigorous way the problem we are considering.  We include a thorough description of the brush domains and the statement of the main result we will prove. We also include an overview of the method we are using.
\subsection{Description of the problem}
\label{sec:desc}
The main goal of this paper is the study of the convergence of weak solutions of $-\Lap u + u = f$ subject to homogeneous Neumann boundary condition in a family of \textit{brush} domains $\Omega_\varepsilon$ which depend on a parameter $\varepsilon>0$ which we let tend to zero $0$.

Our family of domains will belong to the Euclidean space $\R^{N+1}$, with $N\geq 1$. We adopt this notation because the last dimension, which we will refer to as the \textit{vertical dimension}, will play a distinguished role. The remaining dimensions will be referred to as \textit{horizontal directions}. We will use the notation $x$ or $\xi$ to denote points in the horizontal space $\RN$, and $y$ to denote points in the vertical space $\R$. Thus, a point in $\R^{N+1}$ will be described by $(x, y)$ or $(\xi,y)$, where $x,  \xi, \in \mathbb{R}^N$ and $y \in \mathbb{R}$. To a lesser extent, we will also use $z \in \mathbb{R}^N$ or $s \in \mathbb{R}$.

Moreover, when considering a set $U \subset \mathbb{R}^{N+1}$, we will use the notation $U^a$ to refer to the \textit{upper part}, 
$U^a = \{ (x, y) \in U : y > 0 \}$
and  $U^b$ to refer to the lower part, $U^b = \{ (x, y) \in U : y < 0 \}$. Note that $a$ stands for \textit{above} and $b$ for \textit{below}. This notation is inherited from \cite{gaudiello}.

Let us start describing the domains $\Omega_\varepsilon$. Our domain $\Omega_\varepsilon$ will be divided into two parts: the upper part $\Omega_\varepsilon^a$ and the lower part $\Omega^b$. The lower part will be independent of $\varepsilon$ and will serve as the body of our brush. We will denote it by $\Omega^b$, it will be a bounded Lipschitz domain and it will satisfy
$$
\Omega^b \subset \{ (x, y) \in \mathbb{R}^{N+1} : x \in \mathbb{R}^N,\, y < 0 \}.
$$
We will denote by $R_0$ a positive number such that $\Omega^b\subset B_{\R^{N+1}}(0,R_0)$.

This $\Omega^b$ will contain an open cylinder $\Omega' \times (-1, 0)$, where $\Omega' \subset \mathbb{R}^N$ will be the domain where the teeth of the brush are attached to. 
The upper part $\Omega_\varepsilon^a$ will be given by the collection of  $N_\varepsilon$ disjoint teeth denoted by $Y_\varepsilon^n\subset \{(x,y)\in\R^{N+1}:y>0\}$, where $n$ is running from $1$ to $N_\varepsilon$,
\begin{equation}
	\Omega_\varepsilon^a = \bigsqcup_{n=1}^{N_\varepsilon} Y^n_\varepsilon
\end{equation}
Each tooth $Y^n_\varepsilon$ is obtained by shrinking a fixed base domain $Y\subset \{(\xi,y)\in\R^{N+1}:y>0\}$ in the horizontal directions and placing it at the base point $({\bar x^n_\varepsilon}, 0)$, with $x^n_\eps\in \Omega'$, that is
\begin{equation}
	\label{eqn:defYneps}
	Y^n_\varepsilon=({\bar x^n_\varepsilon},0)+\{ (l^n_\varepsilon \xi,y):  (\xi,y)\in Y\}.
\end{equation}
We assume that there exists $C>0$, independent of $\varepsilon$ and $n$ such that
\begin{equation}
	\label{eqn:ass1}
	0<l_\varepsilon^n \leq C\varepsilon \qquad \forall \varepsilon>0, \ \forall n=1,\ldots N_\eps
\end{equation}

Notice that all of these teeth will have the same shape, given by $Y$, but with different thickness, given by $l_\varepsilon^n>0$. We do not require any periodicity in the distribution of the base points $\bar x^n_\eps$ through $\Omega'$. 

Now, let us describe the basic assumptions on the model tooth $Y$.

\begin{enumerate}
	\item $Y\subset \{(\xi,y): |\xi|<R_1, 0<y<L\}=B(0,R_1)\times (0,L)$ for some $L,R_1>0$.
	\item $\partial Y\cap (\RN\times \{0\})=\bar \omega\times \{0\}$, where $\omega\subset \RN$ is a Lipschitz domain satisfying $0\in \omega$ and
	$
	\abs{\omega}=1
	$,
	which represents the base of the tooth $Y$.
	\item There exists $\delta_0>0$ such that 
	\begin{equation}
		\label{eqn:delta0}
		\omega\times(0,\delta_0)\subset Y
	\end{equation}
\end{enumerate}
For every $n=1,\ldots, N_\varepsilon$, we denote by $\omega_\varepsilon^n$ the base of teeth $Y_\varepsilon^n$, 
$$\omega_\varepsilon^n={\bar x^n_\varepsilon}+l_\varepsilon^n \omega=\left\{x\in \RN: \frac{x-\bar x^n_\varepsilon}{l_\varepsilon^n}\in \omega\right\}$$
and we assume that $\omega_\varepsilon^n\subset \Omega'$. We also define   $$\omega_\varepsilon=\bigcup_{n=1}^{N_\varepsilon} \omega_\varepsilon^n\subset \Omega'$$ and denote its characteristic function in $\RN$ by $\chi_{\omega_\varepsilon}$.

In addition, we also assume that, for different $n,m=1,\ldots,N_\varepsilon$, the sets $B({\bar x^n_\varepsilon},R_1l_\eps^n)$ and $B({\bar x^m_\varepsilon},R_1l_\eps^m)$ are disjoint, so that the teeth do not intersect each other.

With all these assumptions, our brush domain $\Omega_\varepsilon$ is defined as
\begin{equation}
	\label{eqn:omegaeps}
	\Omega_\varepsilon = \Omega^b \cup \Omega_\varepsilon^a \cup (\omega_\varepsilon\times\{0\}).
\end{equation}
The properties required above guarantee that $\Omega_\varepsilon$ is a bounded, connected, open set. 

For a better understanding, we attach an example of a 2-dimensional domain in Figure \ref{fig:supercombinado} with all the notations described above.

\begin{figure}[H]
	\centering 
	\includegraphics[width=1\textwidth]{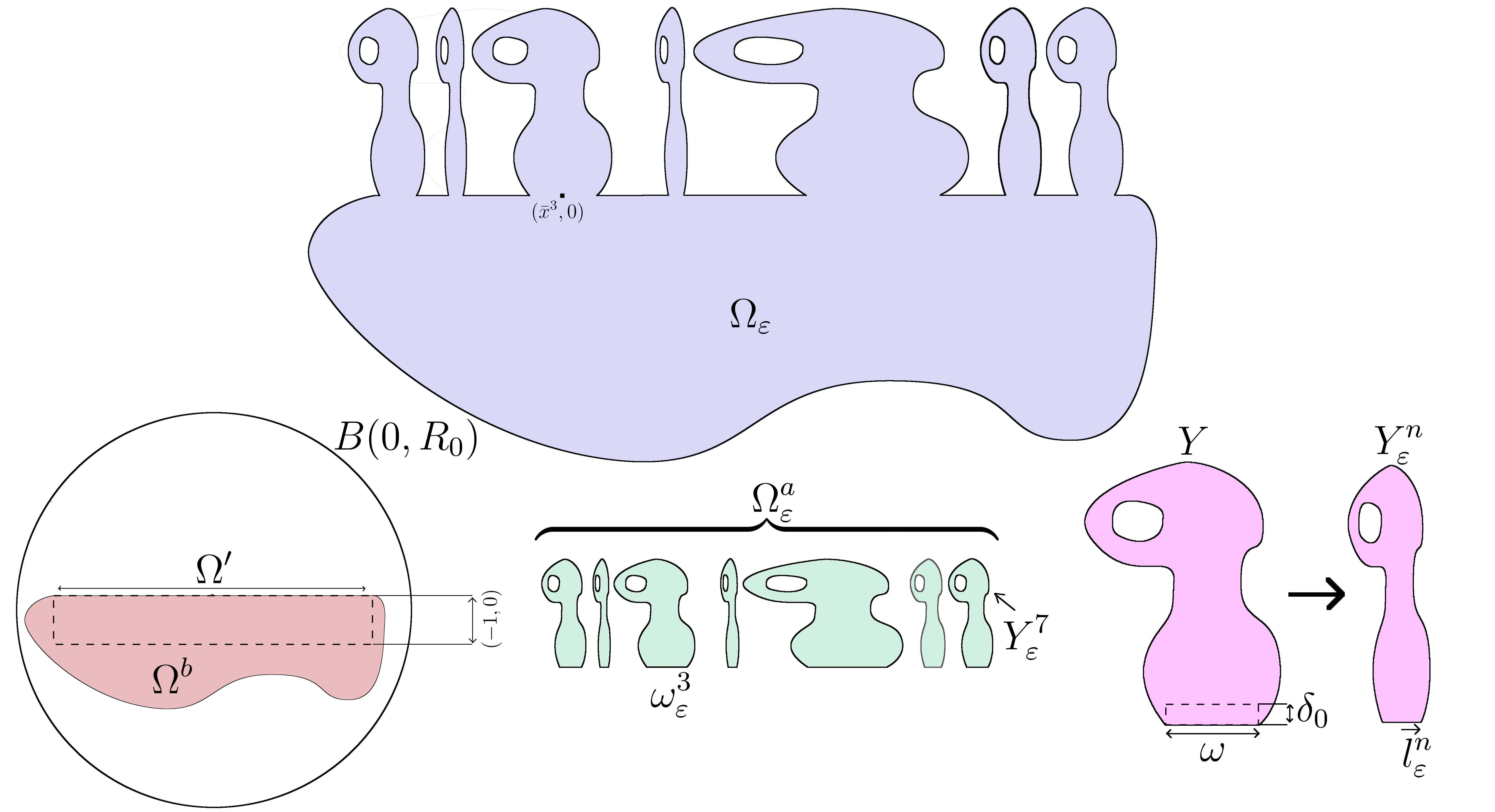}	
	\caption{\small \small An example of a domain $\Omega_\varepsilon$ in 2 dimensions is shown in the figure. In blue, at the top, the domain in question is displayed. Below, their components $\Omega^b$ and $\Omega_\varepsilon^a$ are depicted in red at the bottom left and green at the bottom center, respectively. In pink, at the bottom right, the unit cell $Y$ and an example of a rescaled cell $Y_\varepsilon^n$ are shown. Observe that the set $\Omega^b$ is contained in the ball $B(0, R_0)$ and contains the set $\Omega' \times (-1,0)$. The set $\Omega_\varepsilon^a$ is composed of the cells $Y_\varepsilon^n$ for $n = 1, \ldots, N_\varepsilon$ which have different horizontal size. Note that $Y$ contains the set $\omega \times \delta_0$ and observe that the unit cell $Y$ has an arbitrary shape which may not be simply connected.
}
	\label{fig:supercombinado}
\end{figure}
Now that we have described the domains, we define the problem to be considered. Given $f \in L^2(\R^{N+1})$, we consider the linear elliptic problem:
\begin{equation}
	\label{eqn:mainnonvariational}
	\left\{
	\begin{aligned}
		-\Delta u_\varepsilon+u_\varepsilon&=f,\qquad &&\text{in} \ \Omega_\varepsilon \\
		\frac{\partial u_\varepsilon}{\partial n}&=0, \qquad\qquad &&\text{on} \ \partial\Omega_\varepsilon.
	\end{aligned}
	\right.
\end{equation}
The weak formulation of the problem is given by: find $u_\varepsilon\in H^1(\Omega_\varepsilon)$ such that 
\begin{equation}
	\label{eqn:main}
	\int_{\Omega_\varepsilon}\nabla u_\varepsilon \nabla \varphi + \int_{\Omega_\varepsilon}u_\varepsilon\varphi = \int_{\Omega_\varepsilon} f\varphi,  \qquad \forall \varphi\in H^1(\Omega_\varepsilon).
\end{equation}

The existence and uniqueness of solutions of \eqref{eqn:main} is obtained via Lax-Milgram theorem. The main goal of this paper is to analyse the behaviour of the family of solutions $u_\eps$ as $\eps\to 0$. To do so, we will assume that
$\chi_{\omega_\varepsilon}$ converges $w^*-L^\infty(\Omega')$ to a function $\theta\in L^\infty(\Omega')$ with $0\leq \theta \leq 1$. That is, 
\begin{equation}
	\label{eqn:ass2}
	\chi_{\omega_\varepsilon} \wsto \theta \qquad w^*-L^\infty(\Omega'), \hbox{ when }\eps\to 0.
\end{equation}

\textbf{Remark: }note that, by Banach-Alaoglu's theorem, this assumption will be always satisfied at least for a subsequence of $\varepsilon_n\to 0$.\\

For some reasons we will see later, the set in which $\theta$ vanishes will be important, so we will denote it as \begin{equation}\label{def-theta0}
	\Theta_0=\{x\in \Omega': \theta(x)=0\}.
\end{equation}

\begin{figure}[H]
	\centering
	\includegraphics[width=1\textwidth]{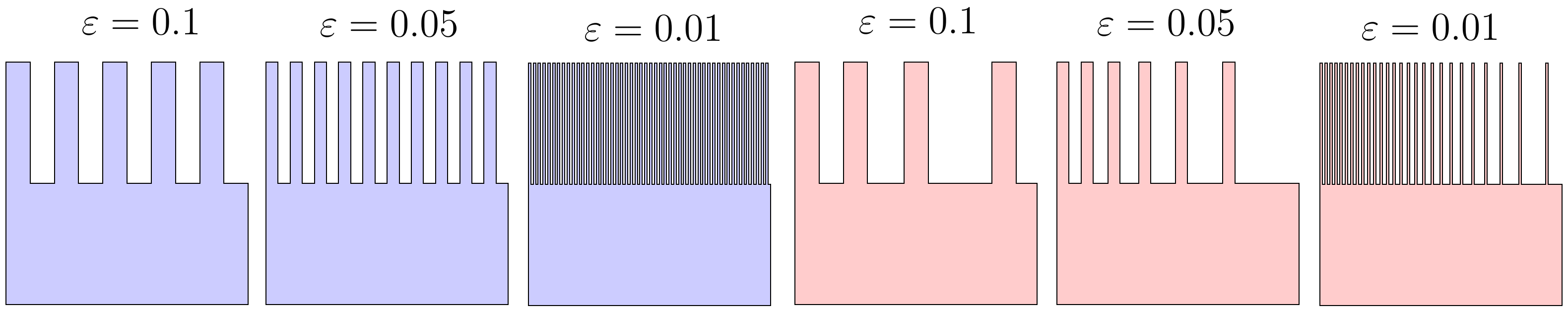}	
	\caption{\small In the figure, two examples of 2-dimensional domains $\Omega_\varepsilon$ are shown for different values of $\varepsilon$. The first three figures at the left, in blue, represent a domain with periodically distributed teeth. In this case, the limit density $\theta$ takes a constant value.
		The other three figures at the right, in red, a domain with non-periodically distributed teeth is shown. In this case, the space between teeth increases linearly (the separation between the n-th and (n+1)-th teeth is n times the separation of the first and second teeth). In this case, the limit density has the form $\theta(x) = (1 - x)/2$.
	}
\end{figure}

\subsection{Main result}
\label{sec:main}
Here we will state the main theorem of the paper, which characterizes the limit problem of \eqref{eqn:main}. As the domains $\Omega_\varepsilon$ of the problem vary with $\varepsilon$, it is not clear what the domain of the limit problem should be. In the lower part, $\Omega^b$, the domain is fixed, so, it is reasonable that the restriction of the limit solution to $\Omega^b$, that we denote by $u^b$ will belong to $H^1(\Omega^b)$. In the upper part, $\Omega^a_\eps= \cup_{n=1}^{N_\varepsilon} Y^n_\varepsilon$, if the number of teeth is very large, we can view $\Omega_\eps^a$ as a rapidly varying boundary. As it is customarily in many problems in homogenization, we will need to ``unfold'' this rapidly varying domain defining the set
$$W=\{(x,\xi,y) : x\in \Omega', (\xi,y)\in Y\}=\Omega'\times Y.$$
See Figure \ref{fig:unfoldeddomain} for a representation of the unfolded domain $W$ given a particular example of 2-dimensional domain $\Omega_\varepsilon$. Note that, although $\Omega_\varepsilon$ depends on $\varepsilon>0$, $W$ does not.

\begin{figure}[H]
	\centering
	\includegraphics[width=1\textwidth]{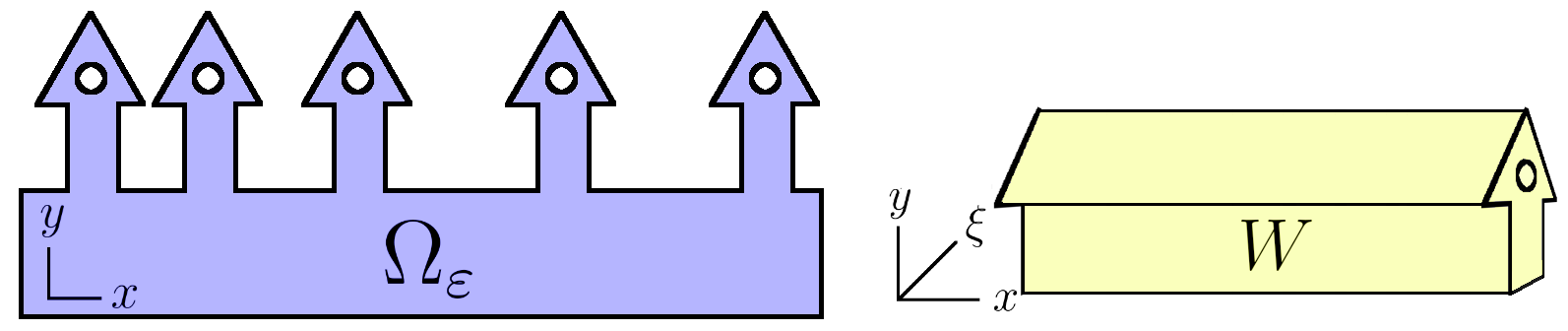}	
	\caption{\small On the left, in blue, a particular domain $\Omega_\varepsilon$ is shown and on the right, in yellow, its corresponding unfolded domain $W$.
	}
	\label{fig:unfoldeddomain}
\end{figure}

The solution of the limit problem in the upper part, that we denote by $u^a$ (the superscript $a$ stands for above), will be defined in the set $W$ and will satisfy $u^a\in L^2(\Omega',\theta, H^1(Y))$, that is $u^a(x)\in H^1(Y)$ a.e. $x\in \Omega'\setminus \Theta_0$ and 
$$\int_{\Omega'} \theta(x)\|u(x,\cdot)\|_{H^1(Y)}^2dx<\infty$$

For this space, the value of $u^a$ is undefined in $\Theta_0$ as the weight $\theta$ vanishes. We refer to Section~\ref{sec:functional} for a discussion of this type of ``weighted Bochner spaces".

\par\medskip

In order to state properly the limit unfolded problem, we need to consider the space of functions 
$$H(\theta)=\{(u^a, u^b): u^a\in L^2(\Omega',\theta; H^1(Y)),  u^b\in H^1(\Omega^b), \hbox{ satisfying (1) and (2) below} \} $$

\begin{enumerate}
	\item[(1)] $\nabla_{\xi}u^a(x)(\xi,y)=0$,  a.e. $x\in \Omega'\setminus \Theta_0$, $(\xi,y)\in Y$, that is $u^a$ does not depend on $\xi$ in the connected components of the horizontal sections of $Y$.
	
	\item[(2)] $u^a(x)(\xi,0)=u^b(x,0)$,   a.e. $x\in \Omega'\setminus  \Theta_0$ and a.e. $\xi\in \omega$ (interpreted in the sense of traces, see Section~\ref{sec:functional} below).
\end{enumerate}
A more detailed analysis of the space of solutions $H(\theta)$ will be given in Section~\ref{sec:spacesols}.

\bigskip

With the notation explained above, we can state the 

\par\medskip \noindent {\bf Limiting ``unfolded'' problem}: given $f\in L^2(\R^{N+1})$, find a pair of functions $(u^a, u^b)\in H(\theta)$, satisfying for every $(\varphi^a,\varphi^b)\in H(\theta)$,
\begin{equation}
	\label{eqn:thmmaineq3}
	\int_{\Omega^b}\left(\nabla u^b \nabla\varphi^b+u^b\varphi^b -f\varphi^b\right)+\int_{\Omega'\times Y}\theta\left( \frac{\partial u^a}{\partial y}\frac{\partial \varphi^a}{\partial y}+u^a \varphi^a-f^*\varphi^a\right)=0,
\end{equation}
where $f^*(x,\xi,y)=f(x,y)$ for $x\in \Omega'$ and $(\xi,y)\in Y$.

\par\bigskip  Existence and uniqueness of solutions of the unfolded problem can be proved via Lax-Milgram theorem.  We will see this later on. The main result is the following,

\begin{theorem}
	\label{thm:main}
	Let $u_\varepsilon$ be the solution of \eqref{eqn:main} and let $(u^a, u^b)\in H(\theta)$ be the solution of the unfolded problem described above. Redefine $u^a(x)=0$ when $x\in\Theta_0$. Then, $u^a$ belongs to $L^2(\Omega';H^1(Y))$ and, defining for $\varepsilon>0$
	\begin{equation}
		\label{eqn:thmmaineq1}
		\bar{u}^a_\varepsilon (x,y)\defeq 
			\fint_{\omega_\varepsilon^n} u^a\left(z,\frac{x-{\bar x^n_\varepsilon}}{l_\varepsilon^n},y\right)dz \qquad  (x,y)\in Y_\varepsilon^n,\quad n=1,\ldots N_\eps
	,
	\end{equation}
	we have $\bar{u}^a_\varepsilon \in H^1(\Omega_\varepsilon)$ and
	\begin{equation}
		\label{eqn:thmmaineq2}
		\lim_{\varepsilon\to 0}\norm{u_\varepsilon-\bar{u}^a_\varepsilon }_{H^1(\Omega^a_\varepsilon)}=0, \qquad	\lim_{\varepsilon\to 0}\norm{u_\varepsilon-u^b}_{H^1(\Omega^b)}=0.
	\end{equation}

\end{theorem}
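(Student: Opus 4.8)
The plan is to combine the non-periodic unfolding operator $T_\varepsilon$ and the weighted-Bochner framework developed in Section~\ref{sec:jtxtools} with the energy (Lax--Milgram) method. The first step is the uniform a priori bound: testing \eqref{eqn:main} with $\varphi=u_\varepsilon$ and using $\|f\|_{L^2(\Omega_\varepsilon)}\le\|f\|_{L^2(\R^{N+1})}$ gives $\|u_\varepsilon\|_{H^1(\Omega_\varepsilon)}\le C$ uniformly in $\varepsilon$. On the fixed base this yields $u_\varepsilon|_{\Omega^b}\rightharpoonup u^b$ in $H^1(\Omega^b)$ along a subsequence; on the teeth, applying $T_\varepsilon$ and the rescaling $x=\bar x^n_\varepsilon+l^n_\varepsilon\xi$ gives uniform control of $T_\varepsilon u_\varepsilon$, $\partial_y T_\varepsilon u_\varepsilon$ and $\tfrac{1}{l^n_\varepsilon}\nabla_\xi T_\varepsilon u_\varepsilon$ in $L^2(W,\chi_{\omega_\varepsilon})$. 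Using \eqref{eqn:ass2} and the convergence properties of $T_\varepsilon$ from Section~\ref{sec:jtxtools}, I would pass to the weighted weak limits $\chi_{\omega_\varepsilon}T_\varepsilon u_\varepsilon\rightharpoonup\theta u^a$ and $\chi_{\omega_\varepsilon}\partial_y T_\varepsilon u_\varepsilon\rightharpoonup\theta\,\partial_y u^a$ with $u^a\in L^2(\Omega',\theta;H^1(Y))$. Since $l^n_\varepsilon\to0$, boundedness of $\tfrac{1}{l^n_\varepsilon}\nabla_\xi T_\varepsilon u_\varepsilon$ forces $\nabla_\xi T_\varepsilon u_\varepsilon\to0$, hence $\nabla_\xi u^a=0$, i.e.\ constraint~(1); the trace identity at $y=0$ between each tooth and the base, preserved by $T_\varepsilon$, yields constraint~(2), so $(u^a,u^b)\in H(\theta)$.

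Second, I would identify the limit equation. Given a test pair $(\varphi^a,\varphi^b)\in H(\theta)$, taken first in a dense smooth subclass and then extended by density, I would construct $\Phi_\varepsilon\in H^1(\Omega_\varepsilon)$ by folding $\varphi^a$ onto the teeth through the averaging rule \eqref{eqn:thmmaineq1} and setting $\Phi_\varepsilon=\varphi^b$ on $\Omega^b$; constraint~(2) guarantees that $\Phi_\varepsilon$ has no jump across $\omega_\varepsilon\times\{0\}$ and is globally $H^1$. Inserting $\Phi_\varepsilon$ in \eqref{eqn:main}, using the integration identity $\int_{\Omega^a_\varepsilon}(\cdot)=\int_W\chi_{\omega_\varepsilon}T_\varepsilon(\cdot)$ and the derivative rules of $T_\varepsilon$, the horizontal-gradient contribution vanishes because the folded test function satisfies $\nabla_\xi T_\varepsilon\Phi_\varepsilon=0$ (a consequence of $\nabla_\xi\varphi^a=0$). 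Passing to the limit, the weighted weak convergences of $\chi_{\omega_\varepsilon}T_\varepsilon u_\varepsilon$ and $\chi_{\omega_\varepsilon}\partial_y T_\varepsilon u_\varepsilon$ pair against the strongly convergent unfolded test data and reproduce exactly \eqref{eqn:thmmaineq3}. Because that problem has a unique solution in $H(\theta)$ by Lax--Milgram, the limit is independent of the subsequence and the whole family converges.

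Third, for the regularity claim I would use the degeneracy-removing observation that on $\Omega'\setminus\Theta_0$ the factor $\theta(x)>0$ cancels in \eqref{eqn:thmmaineq3}: testing only in the upper part shows that, for a.e.\ such $x$, the profile $u^a(x,\cdot)$ solves on each connected component of the horizontal sections of $Y$ the same one-dimensional problem $-\partial_{yy}u^a+u^a=f^*$ with datum $u^a(x,\cdot,0)=u^b(x,0)$, whose coefficients do not involve $\theta$. This gives the uniform bound $\|u^a(x,\cdot)\|_{H^1(Y)}\le C\big(1+|u^b(x,0)|+\|f^*(x,\cdot)\|_{L^2(Y)}\big)$; integrating over $\Omega'$, using the trace bound $u^b(\cdot,0)\in L^2(\Omega')$ and $f\in L^2$, and setting $u^a\equiv0$ on $\Theta_0$, yields $u^a\in L^2(\Omega';H^1(Y))$. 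Consequently the corrector $\bar u^a_\varepsilon$ in \eqref{eqn:thmmaineq1} is a well-defined element of $H^1(\Omega^a_\varepsilon)$ with $\nabla_x\bar u^a_\varepsilon=0$ on each tooth, and constraint~(2) makes it glue with $u^b$ into an $H^1(\Omega_\varepsilon)$ function.

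Finally, I would obtain the strong convergence \eqref{eqn:thmmaineq2} by an energy argument. Testing \eqref{eqn:main} with $u_\varepsilon$ and letting $\varepsilon\to0$ on the right-hand side gives $\int_{\Omega_\varepsilon}f u_\varepsilon\to\int_{\Omega^b}f u^b+\int_W\theta f^* u^a$, which by \eqref{eqn:thmmaineq3} tested with $(u^a,u^b)$ equals the limit energy $\int_{\Omega^b}(|\nabla u^b|^2+|u^b|^2)+\int_W\theta(|\partial_y u^a|^2+|u^a|^2)$. Comparing this exact limit of the total energies with the two weighted weak lower semicontinuity inequalities on $\Omega^b$ and on $\Omega^a_\varepsilon$ forces both to be equalities; in particular $\int_{\Omega^a_\varepsilon}|\nabla_x u_\varepsilon|^2\to0$, matching $\nabla_x\bar u^a_\varepsilon=0$, while the $\partial_y$- and $L^2$-parts converge in the weighted norm. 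Upgrading weak-plus-norm convergence to strong convergence, and comparing with the folding $T_\varepsilon\bar u^a_\varepsilon\to u^a$, yields $\|u_\varepsilon-\bar u^a_\varepsilon\|_{H^1(\Omega^a_\varepsilon)}\to0$ and $\|u_\varepsilon-u^b\|_{H^1(\Omega^b)}\to0$. The main obstacle throughout is the degeneracy of $\theta$ on $\Theta_0$: it forces the weighted-Bochner setting, it makes every passage to the limit a product with a weak-star weight (so that one must rely on the weighted weak convergences of Section~\ref{sec:jtxtools} rather than on naive products of two weak limits), and it is precisely what the $\theta$-independent profile equation and the redefinition $u^a\equiv0$ on $\Theta_0$ are designed to handle.
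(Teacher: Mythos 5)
Your Steps 1 and 4 follow the paper (Propositions \ref{prop:convergences} and \ref{prop:convenergy}, Theorem \ref{thm:strongconv}), and your Step 3 is a genuinely different but workable route to Proposition \ref{prop:uaisnice}: instead of testing with $(\adh{u}/\theta_\delta,0)$ and invoking monotone convergence, you disintegrate \eqref{eqn:thmmaineq3} in $x$ and use the $\theta$-free one-dimensional elliptic estimate on $Y$ with boundary datum $u^b(x,0)$; integrating that bound over $\Omega'\setminus\Theta_0$ does give $u^a\in L^2(\Omega';H^1(Y))$ after the redefinition on $\Theta_0$.

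The gap is in Step 2, which is the heart of the theorem. The folded test function $\Phi_\eps$, equal on each tooth to
\begin{equation}
	\Phi_\eps(x,y)=\fint_{\omega_\eps^n}\varphi^a\left(z,\frac{x-\bar x_\eps^n}{l_\eps^n},y\right)dz,\qquad (x,y)\in Y_\eps^n,
\end{equation}
and to $\varphi^b$ on $\Omega^b$, is in general \emph{not} an element of $H^1(\Omega_\eps)$, so it cannot be inserted into \eqref{eqn:main}. Its trace on $\omega_\eps^n\times\{0\}$ from above is the constant $\fint_{\omega_\eps^n}{Tr}^b(\varphi^b)(z)\,dz$ (this is where condition (2) enters), whereas the trace from below is the non-constant function $x\mapsto {Tr}^b(\varphi^b)(x)$: the averaging over $\omega_\eps^n$ destroys the pointwise matching, so condition (2) does \emph{not} remove the jump. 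For instance, the smooth admissible pair $\varphi^b(x,y)=x_1$, $\varphi^a(x,\xi,y)=x_1$ produces on each $\omega_\eps^n$ the jump $x_1-\fint_{\omega_\eps^n}z_1\,dz$. Moreover, on teeth whose bases meet $\Theta_0$ the folding is not even well defined, since $\varphi^a$ is only an equivalence class in a $\theta$-weighted space and condition (2) gives no information there; after redefining $u^a\equiv 0$ on $\Theta_0$ the jump there is $O(1)$, not $O(\eps)$. This admissibility problem is exactly what the paper's adapted test functions are engineered to avoid: $U_\eps(\phi,\psi)=\phi(x)\psi(\xi,y)$ with $\psi\in H^1_{\braket{\xi}}(Y_e)$ on the extended cell is conforming (Lemma \ref{lemma:adaptedtest}) precisely because the same macroscopic factor $\phi(x)$ multiplies both sides of the junction and the microscopic profile is constant in $\xi$ there. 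The price of conformity is that $\nabla_x U_\eps(\phi,\psi)=\psi\,\nabla_x\phi\neq 0$ on the teeth, so the cross term $\int_W\te(\nabla_x u_\eps)\te(\nabla_x \varphi_\eps)$ does not vanish identically and must be shown to tend to zero — this is Lemma \ref{lemma:derivativex}, the Brizzi--Chalot-type argument — and one then needs density of spans of separated functions in $H(\theta)$ (Corollary \ref{cor:density}) to reach all test pairs. Your construction makes the cross term vanish by design, but only because it silently sacrifices admissibility of the test function; any repair (a boundary-layer correction near $y=0$, or reinstating an $x$-dependent factor on the teeth) reintroduces exactly the cross-term estimate your proposal skips. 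As written, the limit identification, and hence the proof, does not go through.
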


The proof of this theorem will be given in Section \ref{sec:energy} as a combination of all the results that we will obtain through the following sections.
\par\medskip

\begin{remark}
	The expression \eqref{eqn:thmmaineq1} makes sense for $u^a\in L^2(\Omega';H^1(Y))$ via the identification of $L^2(\Omega';H^1(Y))$
	with the space of functions belonging to $L^2(W)$ such that their derivatives with respect to $y$ and $\xi$ belong to $L^2(W)$ too, done in Lemma \ref{lemma:bochnerproduct}. Through that identification, \eqref{eqn:thmmaineq1} is equivalent to 
		$\bar{u}^a_\varepsilon (x,y)\defeq 
		\left(\fint_{\omega_\varepsilon^n} u^a(z)dz\right)\left(\frac{x-{\bar x^n_\varepsilon}}{l_\varepsilon^n},y\right)$.
\end{remark}

\begin{remark} Note that the limit functions $u^a$, $u^b$ are independent of $\varepsilon$. However, to prove the convergence in $\Omega_\varepsilon$, we need to define $\bar{u}^a_\varepsilon$ which depends irremediably on $\varepsilon$ but is constructed directly from $u^a$ and $u^b$.
\end{remark}

\begin{remark}
	The convergence \eqref{eqn:thmmaineq2} can be improved to $C^{1,\alpha}$ locally in $\adh{\Omega^b}\setminus \{\Omega'\times\{0\}\}$ if $\Omega^b$ and $f$ has better regularity properties. For instance, if $f\in L^\infty(\R^{N+1})$ and $U\subset \subset \Omega^b\cup T$ where $T$ is a $C^{1,1}$ boundary portion of $\Omega'\times\{0\}$, as $-\Lap u_\varepsilon + u_\varepsilon = f$ in $\Omega^b$ and $\frac{\partial u}{\partial n}=0$ on $T$, by standard bootstrap regularity arguments (see e.g. \cite[Corollary B.9]{mythesis}), $\norm{u_\varepsilon}_{C^{1,\alpha}(\adh{U})}\leq C(\norm{f}_\infty+\norm{u_\varepsilon}_{H^1(\Omega^b)})$ for any $0<\alpha<1$. Then, the uniform boundedness of $u_\varepsilon$ in $H^1(\Omega^b)$ implies the uniform boundedness in $C^{1,\alpha}(\adh{U})$, and using Ascoli-Arzelà as well as \eqref{eqn:thmmaineq2}, we obtain $u_\varepsilon \to u^b$ in $C^{1,\alpha}(\adh{U})$.
\end{remark}

\begin{remark}
	Note that the coupling condition (2) has to be checked only in $\Omega'\setminus \Theta_0$, that is, at the points where $\theta>0$. In particular, for the singular but very important case in which $\theta\equiv 0$, the limit problem 
 \eqref{eqn:thmmaineq3} becomes  
\begin{equation}
	\int_{\Omega^b}\left(\nabla u^b \nabla\varphi^b+u^b\varphi^b -f\varphi^b\right)=0, \, \forall \varphi^b\in H^1(\Omega^b)
\end{equation}
which is the weak formulation of the problem $-\Lap u^b+u^b=f$ in $\Omega^b$ with $\frac{\partial u}{\partial n}=0$ on $\partial \Omega^b$. 
Moreover, the only information that Theorem \ref{thm:main} provides on the behaviour of the solutions in $\Omega_\eps^a$ is that $\|u_\eps\|_{H^1(\Omega_\eps^a)}\to 0$, which is somehow expected since the Lebesgue measure of $\Omega_\eps^a$ satisfies $|\Omega_\eps^a|\to 0$. We analyze this case, obtaining a more refined convergence result of the solutions in $\Omega_\eps^a$,  in \cite{jtxtheta0}. See also \cite{mythesis}.  The techniques for this case are different from those of the present work and they are inspired in techniques of thin and dumbbell domains, see \cite{dumbell,AS}
\end{remark}

\subsection{Overview of the Method}
\label{sec:jtxoverview}

The goal of this subsection is to provide  the reader an idea of the underlying general approach, in order to facilitate the understanding of why certain results are obtained or the intuition behind certain definitions. We do not intend to develop here an extensive, rigorous, or detailed description of the method

We will employ the so-called ``unfolding operator method'' (see references~\cite{librocioranescu,tesismanuel}), which is nowadays a well known tool to treat homogenization problems. 

We begin with the problem described in~\eqref{eqn:mainnonvariational} in its variational form,
\begin{equation}
	\int_{\Omega_\varepsilon}\nabla u_\varepsilon \nabla \varphi + \int_{\Omega_\varepsilon}u_\varepsilon\varphi = \int_{\Omega_\varepsilon} f\varphi,  \qquad \forall \varphi\in H^1(\Omega_\varepsilon).
\end{equation}
As previously justified, it is reasonable to split our analysis into the upper and lower parts. When considering the variational formulation, this can be done in a simple way,
\begin{equation}
	\int_{\Omega^b}\left(\nabla u_\varepsilon \nabla \varphi + u_\varepsilon\varphi -f\varphi\right)+\int_{\Omega_\varepsilon^a}\left(\nabla u_\varepsilon \nabla \varphi + u_\varepsilon\varphi -f\varphi\right)= 0\qquad \forall \varphi\in H^1(\Omega_\varepsilon).
\end{equation}
Then, in Section~\ref{sec:unfolding}, we will define the so-called \textit{unfolding operator} $\te$. This operator transforms Lebesgue-measurable functions defined on $\Omega_\varepsilon^a$ into Lebesgue-measurable functions defined on the \textit{unfolded domain} $W$, which is independent of $\varepsilon$. Moreover, $\te$ enjoys very useful properties such as linearity, preservation of integrals, etc. We use the unfolding operator to tackle the upper part of our problem, while the lower part remains unchanged. Without going into much detail, this operator allows us to transform our original problem into the \textit{unfolded problem}:
\begin{equation}
	\begin{aligned}
		\int_{\Omega^b}\left(\nabla u_\varepsilon \nabla \varphi +u_\varepsilon\varphi-f\varphi\right)+\int_{W}\left(\te(\nabla u_\varepsilon)\te(\nabla \varphi)+\te(u_\varepsilon)\te(\varphi) - \te(\varphi) \te(f)\right)
		= 0, \ \ \ \ \ \forall \varphi\in H^1(\Omega_\varepsilon).
	\end{aligned}
\end{equation}
Notice that an important simplification we achieve is that now we have two fixed domains $W$ and $\Omega^b$ while in the original problem we have a fixed domain $\Omega^b$ and a variable domain $\Omega_\varepsilon^a$. The unfolding operator essentially moves the complexity of the problem from the domain to the solutions and the test functions.

The next step is then studying the convergence of the \textit{unfolded problem}. By using some a-priori estimates and compactness results on the solutions, and choosing adequate test functions, it is possible to take the limit $\varepsilon\to 0$ in the \textit{unfolded problem} to obtain the \textit{limit problem} previously described
\begin{equation}
	\int_{\Omega^b}\left(\nabla u^b \nabla\varphi^b+u^b\varphi^b -f\varphi^a\right)+\int_{W}\theta\left( \frac{\partial u^a}{\partial y}\frac{\partial \varphi^a}{\partial y}+u^a \varphi^a-f^*\varphi^a\right)=0, \qquad \forall (\varphi^a,\varphi^b)\in H(\theta),
\end{equation}
where $(u^a,u^b)\in H(\theta)$. The existence and uniqueness of the problem above then follows by Lax–Milgram theorem, and to obtain convergence of the original solutions $u_\varepsilon$ stated in Theorem \ref{thm:main}, we proceed with a reverse argument using the definition of the unfolding operator.

This is, without going into detail, the general scheme of the method we will follow, and which will be seen clearly in the proof of Proposition~\ref{prop:main}. Afterwards, we will obtain results concerning improved convergence, regularity, or interpretations of the limit problem in terms of graphs.

\subsection{Notation}
\label{sec:notations}

This paper involves a high density of notation. Moreover, many of the proofs require lengthy computations and technical details. Therefore, in order to simplify the reading without compromising mathematical rigour, we will systematically adopt the notational conventions that follow.

$\bullet$ The letters $C$ or $c$ may denote different constants, possibly changing from line to line. When the dependence of such constants is not clear from the context, it will be explicitly specified.

$\bullet$ In order to reduce notational burden, we omit the integration variables when no confusion arises. For instance, if the function $g \in L^1(A)$, we may write
$
\int_{A} g =  \int_A g(x)dx.
$
The fully expanded version with all variables will generally be used when the context does not make it clear which variables are being integrated, or when we wish to emphasize the role of specific variables due to their relevance in the proof.

$\bullet$ We sometimes use the same notation for functions and their restrictions. For example, if $g \in L^1(\mathbb{R}^N)$ and $\Omega \subset \mathbb{R}^N$, we may write $g \in L^1(\Omega)$ without explicitly mentioning that we are referring to $\restr{g}{\Omega}$, the restriction of $g$ to $\Omega$.

$\bullet$ When working with functions in product spaces $A\times B$, given a function $g: A \to \mathbb{R}$, we denote by the same symbol its constant extension to $A\times B$,
\begin{equation}
	\begin{aligned}
		g: \ & A\times B && \longrightarrow &&&   \R \  \ \\
		& \ (a,b) && \mapsto &&&  g(a) 
	\end{aligned}
\end{equation}
In this way, if for instance $g \in L^1(A)$ and $\abs{B}<\infty$, then it is meaningful to write
$
	\int_{A\times B} g = \abs{B} \int_A g.
$
However, for main results or whenever there is potential for confusion, we will make a point of clarifying or use alternative notation if necessary, as it is the case with the definition of $ f^*$ in \eqref{eqn:thmmaineq3}, since the extension of $f\in L^2(\R^{N+1})$ to $L^2(W)$ may not be immediately clear.

\section{Tools and functional framework}
\label{sec:jtxtools}
We have divided this section in two subsections. In the first one, we define rigorously the unfolding operator which we have mentioned briefly in the previous section. We will state their main properties and prove some preliminary results we will need later.  In the second subsection we will define the space of solutions for the limit unfolded problem $H(\theta)$ which have been mentioned above in Subection \ref{sec:main}. To accomplish this we need first to introduce some functional spaces, like Weighted Lebesgue Bochner spaces and analyse some of their properties, including traces operators in these spaces and a subclass of spaces that we have denoted ``partially constant Sobolev Spaces". To go over this functional framework is a needed technical first step to properly define the setting of our problem.

\subsection{Unfolding operator}
\label{sec:unfolding}

We define the main tool we are using to prove the convergence. As stated in the introduction, we will denote by $x,\xi$ points in $\RN$ while $y$ represents a point in $\R$ so $(x,y)\in \R^{N+1}$ and $(x,\xi ,y)\in \R^{2N+1}$. When necessary, each of the coordinates of $x$ and $\xi$ will be denoted $x_j$ and $\xi_j$. In the same way, $\nabla_x u=(u_{x_1},\dots, u_{x_N})$ and $\nabla_{\xi} u=(u_{\xi_1},\dots, u_{\xi_N})$.

Inspired by the unfolding operator method, extensively used in the theory of homogenization (see the classical reference book \cite{librocioranescu}, or  \cite{tesismanuel} for an introduction to the unfolding operator in the context of thin domains with oscillatory boundaries), we define a {\sl non-conventional} unfolding operator $\te$ adapted to our problem. This unfolding operator will act only in $\Omega_\varepsilon^a$, the upper part of the brush domain.  
Although it was already mentioned in the introduction, let us now proceed to define the unfolded domain and its base.
\begin{definition}
	We define the unfolded domain $W\subset \R^{2N+1}$ as
	$$
	W=\{(x,\xi,y): x\in \Omega', \ (\xi,y)\in Y \}
	$$
	We will also use the following notation for its base,
	$$
	W_0=\{(x,\xi,0): x\in \Omega', \ \xi\in\omega\}
	$$
\end{definition}
Let us define now the \textit{unfolding operator} $\tau_\varepsilon$.

\begin{definition}
	Let $\varphi$ be a Lebesgue-measurable function in $\Omega_\varepsilon^a$. The unfolding operator $\te$ acting on $\varphi$ is defined as the function $\te(\varphi)$ defined for $(x,\xi,y)\in W$ as follows,
	\begin{equation}
		\label{eqn:defunfolding}
		\te(\varphi)(x,\xi,y)=\left\{
		\begin{array}{cll}
			\varphi({\bar x^n_\varepsilon}+l^n_\varepsilon \xi,y) \qquad  & \text{if}   & x\in \omega^n_\varepsilon \hbox{ for some }n=1,\ldots,N_\eps,\\ \\ 
			0 \qquad & \text{if}   & x\notin \omega_\varepsilon=\cup_{n=1}^{N_\eps}\omega_\eps^n, 		\end{array}
		\right. 
	\end{equation}	
\end{definition}
\begin{remark}
	\label{rem:z}
	\ 
	\begin{enumerate}
		\item Notice that, given $(x,\xi,y)\in W$, with $x\in \omega_\varepsilon$, we have
		\begin{equation}
			\label{eqn:z}
			\te(\varphi)(x,\xi,y)=\varphi(x+z_\eps(x,\xi),y)
		\end{equation} where $z_\eps(x,\xi)=\bar x_\varepsilon^n+l_\varepsilon^n\xi-x$, for $x\in\omega_\eps^n$, which satisfies
		\begin{equation}
			\label{eqn:zeps}
			\abs{z_\eps(x,\xi)}\leq C\varepsilon
		\end{equation}
		where $C>0$ is independent of $\eps$, $x$ and $\xi$.
		\item Note that the operator $\te$ transforms Lebesgue-measurable functions defined on
		$\Omega_\varepsilon^a$ into Lebesgue-measurable functions defined on the set $W$ which are
		piecewise constant with respect to $x$.
		\item Observe also that the operator $\tau_\eps$ assigns the value $0$ to any point $(x,\xi,y)\in W$ with $x\not\in \omega_\eps$. Although this may not be the common procedure to define the unfolding operator, we will see it is very much adapted to our geometrical situation. 
		\item As in classical homogenization, the unfolding operator reflects two scales:
		the \textit{macroscopic} scale $x$ gives the position in $\Omega'$ and the \textit{microscopic}
		scale $(\xi,y)$ gives it in the cell $Y$.
		\item We say that this operator is a \textit{non-conventional} unfolding operator because the unfolding operators typically considered in the literature are applied to periodic systems and do not include the zero-extension property described in (iii). 
	\end{enumerate}
\end{remark}

The following proposition, analogous to \cite[Proposition 1.1.4]{tesismanuel}, presents several basic properties
of the unfolding operator.
\begin{prop}
	\label{prop:prop}
	The unfolding operator $\te$ has the following properties:
	\begin{enumerate}[label=(\roman*)]
		\item $\te$ is a linear operator, i.e., for every $\varphi_1, \varphi_2: \Omega_\varepsilon^a \to \R$ measurable and $\lambda_1,\lambda_2\in \R$,
		\begin{equation}
			\te(\lambda_1\varphi_1+\lambda_2\varphi_2)=\lambda_1\te(\varphi_1)+\lambda_2\te(\varphi_2).
		\end{equation}
		\item For every $\varphi_1, \varphi_2: \Omega_\varepsilon^a \to \R$ measurable functions,
		\begin{equation}
			\label{eqn:propprod}
			\te(\varphi_1 \varphi_2)=\te(\varphi_1)\te(\varphi_2).
		\end{equation}	
		\item For every $\varphi\in L^p(\Omega_\varepsilon^a)$, where $1\leq p \leq \infty$, we have $\te(\varphi)\in L^p(W)$. In addition,
		\begin{equation}
			\label{eqn:propL2}
			\norm{\te(\varphi)}_{L^p(W)}=\norm{\varphi}_{L^p(\Omega_\varepsilon^a)}.
		\end{equation}
		More precisely, for every $n=1,\ldots,N_\varepsilon$,
		\begin{equation}
			\label{eqn:propL22}
			\norm{\te(\varphi)}_{L^p(\omega_\varepsilon^n \times Y)}=\norm{\varphi}_{L^p(Y_\varepsilon^n)}.
		\end{equation}
		\item For every $\varphi\in H^1(\Omega_\varepsilon^a)$, we have that $\frac{\partial \te(\varphi)}{\partial y}\in L^2(W)$, $\frac{\partial \te(\varphi)}{\partial \xi_i}\in L^2(W)$ for every $i=1,\ldots, N$ and, for a.e. $(x,\xi,y)\in W$
		\begin{equation}
			\label{eqn:propdery}
			\frac{\partial \te(\varphi)}{\partial y}(x,\xi,y)=\te\left(\frac{\partial \varphi}{\partial y}\right)(x,\xi,y)
		\end{equation}
		\begin{equation}
			\label{eqn:propderxi}
			\frac{\partial \te(\varphi)}{\partial \xi_i}(x,\xi,y)=
			\left\{
			\begin{aligned}
				& l_\varepsilon^n\te\left(\frac{\partial \varphi}{\partial x_i}\right)(x,\xi,y) \qquad && x\in \omega_\varepsilon^n, \,n=1,\ldots,N_\eps,\\
				& 0 \qquad && x\not \in \omega_\varepsilon.
			\end{aligned}
			\right.	
		\end{equation}
	\end{enumerate}
\end{prop}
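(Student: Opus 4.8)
The plan is to prove the five properties in the stated order, noting that the isometry (iii) is needed inside the proof of (iv). Properties (i) and (ii) are immediate from the pointwise formula \eqref{eqn:defunfolding}: for $x\in\omega_\varepsilon^n$ one simply evaluates the linear, resp. multiplicative, operation at the single point $(\bar x^n_\varepsilon+l^n_\varepsilon\xi,y)$, so the identities hold there; for $x\notin\omega_\varepsilon$ both sides vanish because at least one factor is set to $0$. No measure-theoretic subtlety arises, so I would dispatch these in a line or two each.

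For the isometry (iii) the key structural fact is that $\te(\varphi)$ is constant in $x$ on each cell $\omega_\varepsilon^n\times Y$ and is obtained from $\varphi|_{Y_\varepsilon^n}$ by the affine rescaling $(\xi,y)\mapsto(\bar x^n_\varepsilon+l^n_\varepsilon\xi,y)$. First I would prove the local identity \eqref{eqn:propL22}. Fixing $n$ and using that the integrand is independent of $x$, Fubini produces the factor $\abs{\omega_\varepsilon^n}=(l^n_\varepsilon)^N\abs{\omega}=(l^n_\varepsilon)^N$, where the normalization $\abs{\omega}=1$ enters; then the change of variables $(\xi,y)\mapsto(\bar x^n_\varepsilon+l^n_\varepsilon\xi,y)$, which has Jacobian $(l^n_\varepsilon)^N$ and maps $Y$ onto $Y_\varepsilon^n$ by \eqref{eqn:defYneps}, contributes a compensating factor $(l^n_\varepsilon)^{-N}$, so the two powers cancel and $\norm{\te(\varphi)}_{L^p(\omega_\varepsilon^n\times Y)}=\norm{\varphi}_{L^p(Y_\varepsilon^n)}$. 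The global identity \eqref{eqn:propL2} then follows by summing over $n$, using that $\te(\varphi)=0$ for $x\notin\omega_\varepsilon$ and that $\Omega_\varepsilon^a=\bigsqcup_n Y_\varepsilon^n$ is a disjoint union. The case $p=\infty$ is analogous, the rescaling being an essential-supremum-preserving bijection on each cell.

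For the derivatives (iv) I would argue cellwise and transversally to the $x$-direction. For fixed $x\in\omega_\varepsilon^n$, the map $(\xi,y)\mapsto(\bar x^n_\varepsilon+l^n_\varepsilon\xi,y)$ is an affine diffeomorphism $Y\to Y_\varepsilon^n$, so from $\varphi\in H^1(\Omega_\varepsilon^a)$, hence $\varphi|_{Y_\varepsilon^n}\in H^1(Y_\varepsilon^n)$, the standard chain rule for Sobolev functions under affine changes of variables gives $\te(\varphi)(x,\cdot,\cdot)\in H^1(Y)$ with $\partial_y\te(\varphi)(x,\cdot,\cdot)=(\partial_y\varphi)(\bar x^n_\varepsilon+l^n_\varepsilon\cdot,\cdot)$ and $\partial_{\xi_i}\te(\varphi)(x,\cdot,\cdot)=l^n_\varepsilon(\partial_{x_i}\varphi)(\bar x^n_\varepsilon+l^n_\varepsilon\cdot,\cdot)$, which are exactly \eqref{eqn:propdery}--\eqref{eqn:propderxi} rewritten through $\te$. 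To upgrade these slicewise identities to genuine weak derivatives on all of $W$, I would test against $\psi\in C_c^\infty(W)$: since for fixed $x$ the slice $\psi(x,\cdot,\cdot)$ lies in $C_c^\infty(Y)$, integration by parts in $\xi_i$, resp. $y$, over $Y$ produces no boundary term on $\partial Y$; integrating the resulting identity in $x$ over $\omega_\varepsilon^n$ and summing over $n$ yields $\int_W\te(\varphi)\,\partial_{\xi_i}\psi=-\sum_n\int_{\omega_\varepsilon^n\times Y}l^n_\varepsilon\,\te(\partial_{x_i}\varphi)\,\psi$, which is \eqref{eqn:propderxi} in the weak sense, and analogously for $y$. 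Membership in $L^2(W)$ follows by applying (iii) to $\partial_{x_i}\varphi$ and $\partial_y\varphi$ together with the bound $l^n_\varepsilon\le C\varepsilon$ from \eqref{eqn:ass1}.

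The main obstacle is the rigorous justification in (iv) that no spurious distributional mass appears along the interfaces $\partial\omega_\varepsilon^n$ or along $\partial\omega_\varepsilon$, where $\te(\varphi)$ is only piecewise defined and genuinely jumps. The decisive observation is that one differentiates solely in the transverse variables $\xi$ and $y$, never in $x$: because $\te(\varphi)$ is constant in $x$ within each cell, its jumps occur across hypersurfaces of constant $x$ and are therefore invisible to $\partial_{\xi_i}$ and $\partial_y$. Making this precise is exactly what the slicewise integration by parts against $\psi(x,\cdot,\cdot)\in C_c^\infty(Y)$ accomplishes. Alternatively one could approximate $\varphi$ cell-by-cell by smooth functions, invoke the classical chain rule, and pass to the limit using the isometry (iii); I would present the direct slicewise argument, since it avoids approximation and keeps the dependence on $l^n_\varepsilon$ explicit.
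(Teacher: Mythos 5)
Your proposal is correct. Parts (i)--(iii) follow the paper's own line: the paper also dispatches (i)--(ii) directly from the definition and proves \eqref{eqn:propL22} by the same Fubini-plus-affine-change-of-variables computation (it does $p=1$ first and reduces general $p$ to it via $\te(\varphi)^p=\te(\varphi^p)$, whereas you run the identical Jacobian cancellation directly on $\abs{\varphi}^p$ --- an immaterial difference). Where you genuinely diverge is (iv): the paper proves the derivative formulas first for $\varphi\in C^\infty(\Omega_\varepsilon^a)$ by the classical chain rule and then invokes ``an approximation argument and \eqref{eqn:propL2}'' to pass to general $\varphi\in H^1(\Omega_\varepsilon^a)$, i.e.\ density of smooth functions plus the isometry (iii) to transfer convergence of the unfolded derivatives. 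You instead verify the weak derivatives on $W$ directly: the affine Sobolev chain rule gives the slicewise identities on each $\{x\}\times Y$, and testing against $\psi\in C_c^\infty(W)$, integrating by parts only in $(\xi,y)$ within each slice, and summing over cells yields \eqref{eqn:propdery}--\eqref{eqn:propderxi} without any approximation. Your route is longer but self-contained, and it makes explicit a point the paper leaves implicit: the jumps of $\te(\varphi)$ across the interfaces $\partial\omega_\varepsilon^n\times Y$ lie on hypersurfaces of constant $x$ and hence contribute nothing when one differentiates only in $\xi$ and $y$. The paper's route is shorter but quietly relies on Meyers--Serrin density in $H^1(\Omega_\varepsilon^a)$ and on (iii) to control the limit; yours trades that for the (standard) chain rule under bi-Lipschitz affine maps and a Fubini slicing argument. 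Both are complete proofs of the same statement.
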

\begin{proof}
	(i) and (ii) are a simple consequence of definition of the unfolding operator.
	
	\noindent (iii) We first do the case $p=1$ of \eqref{eqn:propL22}. Using the change of variables $x=\bar x_\varepsilon^n + l_\varepsilon^n \xi$ and the fact that $\abs{\omega_\varepsilon^n}=(l_\varepsilon^n)^{N}$,
	\begin{equation*}
		\int_{Y^n_\varepsilon}\varphi(x,y)dxdy=  (l^n_\varepsilon)^N\int_{Y}\varphi({ \bar x^n_\varepsilon}+l^n_\varepsilon \xi,y)d\xi dy =  \frac{(l^n_\varepsilon)^N}{\abs{\omega^n_\varepsilon}}\int_{\omega^n_\varepsilon\times Y}\te(\varphi)(x,\xi ,y)dxd\xi dy=\int_{\omega_\varepsilon^n\times Y} \te(\varphi)
	\end{equation*}
	Now, to obtain \eqref{eqn:propL2}, we express $\Omega_\varepsilon^a$ as the disjoint union of the sets $Y_\varepsilon^n$ and use that $\te(\varphi)(x,\xi, y)\equiv 0$ when $x\not \in \omega_\varepsilon$,
	\begin{equation}
		\int_{\Omega^a_\varepsilon}\varphi(x,y)dxdy = \sum_{n=1}^{N_\varepsilon} \int_{Y^n_\varepsilon}\varphi(x,y)dxdy= \sum_{n=1}^{N_\varepsilon} \int_{\omega^n_\varepsilon\times Y}\te(\varphi)(x,\xi ,y)dxd\xi dy=\int_{W} \te(\varphi)	
	\end{equation}
	The case $p=\infty$ is straightforward. If $p<\infty$, by using the result for $p=1$ and the fact that $\te(\varphi)^p=\te(\varphi^p)$,
 $$\norm{\te(\varphi)}^p_{L^p(W)}=\norm{(\te(\varphi))^p}_{L^1(W)}=\norm{\te(\varphi^p)}_{L^1(W)}= \norm{\varphi^p}_{L^1(\Omega_\varepsilon^a)}=\norm{\varphi}^p_{L^p(\Omega_\varepsilon^a)}.$$
	
	\noindent (iv)
	Assume first that $\varphi\in C^\infty(\Omega^a_\varepsilon)$. In that case, by using the chain rule, from \eqref{eqn:defunfolding} one obtains \eqref{eqn:propdery} and \eqref{eqn:propderxi} when $x\in \omega_\varepsilon$. When $x\not\in \omega_\varepsilon$, we just obtain that both derivatives are $0$, which is expressed explicitly in \eqref{eqn:propderxi} and implicitly in \eqref{eqn:propdery}, because $\te(\varphi_y)(x,\xi,y)=0$ when $x\not\in \omega_\varepsilon$. 
	If $\varphi$ is not smooth, the result follows by an approximation argument and \eqref{eqn:propL2}.

\end{proof}

Finally, the following lemma shows how the unfolding operator applied to an $L^2(\R^{N+1})$ function converges, when $\varepsilon\to 0$, to the same function. Recall that, by the conventions from Section \ref{sec:notations}, without risk of confusion we denote $f_{|_{\Omega^a_\eps}}$ the restriction of $f$ to $\Omega_\varepsilon^a$ by simply $f$.
\begin{lemma}
	\label{lemma:fconv}
	If $f\in L^2(\R^{N+1})$, we have
	\begin{equation}\label{convergence1}
		\lim_{\varepsilon\to 0}\int_W\abs{\te(f)(x,\xi,y)-f(x,y)\chi_{\omega_\varepsilon}(x)}^2dxd\xi dy=0.
	\end{equation}
\end{lemma}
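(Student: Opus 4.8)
The plan is to exploit the continuity of translations in $L^2$ together with the isometry property of $\te$ from Proposition~\ref{prop:prop}(iii), via a density argument. First I would reduce the integral in \eqref{convergence1}. Where $x\notin\omega_\varepsilon$ the integrand vanishes identically, since $\te(f)(x,\xi,y)=0$ by \eqref{eqn:defunfolding} and simultaneously $\chi_{\omega_\varepsilon}(x)=0$. Hence, using the representation \eqref{eqn:z}, the integral equals
\[
\int_{\omega_\varepsilon\times Y}\abs{f(x+z_\varepsilon(x,\xi),y)-f(x,y)}^2\,dx\,d\xi\,dy=\norm{\te(f)-f\chi_{\omega_\varepsilon}}_{L^2(W)}^2,
\]
where $f\chi_{\omega_\varepsilon}$ denotes the function $(x,\xi,y)\mapsto f(x,y)\chi_{\omega_\varepsilon}(x)$ on $W$ (constant in $\xi$, following the convention of Section~\ref{sec:notations}). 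Since $\abs{z_\varepsilon}\leq C\varepsilon$ by \eqref{eqn:zeps}, this is morally a vanishing translation, so for continuous $f$ the statement is immediate; the obstruction is that a general $f\in L^2$ is only measurable, which forces the approximation below.

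Fix $\eta>0$ and choose $g\in C_c(\R^{N+1})$ with $\norm{f-g}_{L^2(\R^{N+1})}<\eta$. By linearity of $\te$ and the triangle inequality I would split
\[
\norm{\te(f)-f\chi_{\omega_\varepsilon}}_{L^2(W)}\leq \norm{\te(f-g)}_{L^2(W)}+\norm{\te(g)-g\chi_{\omega_\varepsilon}}_{L^2(W)}+\norm{(f-g)\chi_{\omega_\varepsilon}}_{L^2(W)}.
\]
For the first term the isometry \eqref{eqn:propL2} gives $\norm{\te(f-g)}_{L^2(W)}=\norm{f-g}_{L^2(\Omega_\varepsilon^a)}\leq\norm{f-g}_{L^2(\R^{N+1})}<\eta$. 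For the third term, writing $C_1\defeq\abs{B(0,R_1)}$ and noting that each horizontal section $Y_y=\{\xi:(\xi,y)\in Y\}$ satisfies $\abs{Y_y}\leq C_1$ while $\omega_\varepsilon\subset\Omega'$, Fubini yields
\[
\norm{(f-g)\chi_{\omega_\varepsilon}}_{L^2(W)}^2=\int_{\omega_\varepsilon}\left(\int_Y\abs{(f-g)(x,y)}^2\,d\xi\,dy\right)dx\leq C_1\,\norm{f-g}_{L^2(\R^{N+1})}^2<C_1\eta^2.
\]

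The remaining term is the only one involving the smooth approximant, and here I would invoke uniform continuity: as $g\in C_c(\R^{N+1})$ it has a modulus of continuity $\rho_g$ with $\rho_g(t)\to 0$ as $t\to 0^+$, so for $x\in\omega_\varepsilon$ the bound \eqref{eqn:zeps} gives $\abs{g(x+z_\varepsilon(x,\xi),y)-g(x,y)}\leq\rho_g(C\varepsilon)$, whence
\[
\norm{\te(g)-g\chi_{\omega_\varepsilon}}_{L^2(W)}^2\leq \rho_g(C\varepsilon)^2\,\abs{\omega_\varepsilon\times Y}\leq\rho_g(C\varepsilon)^2\,\abs{\Omega'}\,\abs{Y}\longrightarrow 0\quad\text{as }\varepsilon\to 0.
\]
Combining the three estimates gives $\limsup_{\varepsilon\to 0}\norm{\te(f)-f\chi_{\omega_\varepsilon}}_{L^2(W)}\leq\bigl(1+C_1^{1/2}\bigr)\eta$, and since $\eta>0$ is arbitrary the limit is $0$, which is the claim.

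The main obstacle is the handling of the non-smooth remainder $f-g$: a direct translation estimate on it would be hopeless because $z_\varepsilon(x,\xi)$ is a complicated, merely measurable displacement field. What makes the density argument go through is precisely the \emph{exact} isometry \eqref{eqn:propL2}, which controls $\norm{\te(f-g)}_{L^2(W)}$ by $\norm{f-g}_{L^2}$ with no dependence on $z_\varepsilon$ whatsoever; the uniform-continuity estimate for $g$ is then entirely routine.
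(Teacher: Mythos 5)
Your proof is correct and follows essentially the same route as the paper: establish the estimate for a smooth (here, continuous) approximant via the uniform bound $\abs{z_\eps}\leq C\eps$, then transfer to general $f\in L^2$ by density using the isometry \eqref{eqn:propL2}. The only difference is one of detail — the paper states the smooth case explicitly and dismisses the general case with ``an approximation argument and \eqref{eqn:propL2}'', whereas you spell out that argument in full, including the bound $\norm{(f-g)\chi_{\omega_\varepsilon}}_{L^2(W)}^2\leq C_1\norm{f-g}_{L^2(\R^{N+1})}^2$ coming from the section bound $\abs{Y_y}\leq\abs{B(0,R_1)}$, which the paper leaves implicit.
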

\begin{proof}   Assume first that $f\in C^\infty_c(\R^{N+1})$. Recalling the notation of \eqref{eqn:z}, for any $(x,\xi,y)\in W$ such that $x\in \omega_\varepsilon$, we have $\te(f)(x,{\xi},y)=f(x+z_\eps(x,\xi),y)$. Then, due to \eqref{eqn:zeps} and using the regularity of $f$, we obtain
	\begin{equation}
		\label{eqn:fconveq1}
		\abs{\te(f)(x,{\xi},y)-f(x,y)}\leq C\varepsilon.
	\end{equation}
	In addition, when $(x,\xi,y)\in W$ with $x\not \in \omega_\varepsilon$, we have that $\te(f)(x,\xi,y)=0$, so $\te(f)(x,{\xi},y)-\chi_{\omega_\varepsilon}(x)f(x,y)=0$, so, combining this with \eqref{eqn:fconveq1},
	\begin{equation}
		\label{eqn:fconveq2}
		\abs{\te(f)(x,{\xi},y)-\chi_{\omega_\varepsilon}(x)f(x,y)}\leq C\varepsilon \qquad (x,\xi,y)\in W.
	\end{equation}
	Therefore, integrating in $W$, we obtain the result. If $f$ is not smooth, the result follows by an approximation argument and \eqref{eqn:propL2}.
	
\end{proof}
\subsection{The space of solutions and some functional framework}
\label{sec:functional}
At the end of this Subsection we will define rigorously the space of solutions of the limit unfolded problem. But to reach to this point we need to include some definitions and some results on the functional framework we are using.

\subsubsection{Weighted Lebesgue-Bochner Spaces}
\label{sec:weightedbochner}

This subsection is devoted to the description of the functional spaces we are going to use, which consists mainly in weighted Lebesgue-Bochner spaces.
A good introduction to these spaces and their main properties can be found in \cite{diestel1977vector}, \cite[Chapter 1]{banachspacesI}, \cite[Section V.5]{yosida2012functional} and \cite[Section 1.1.]{arendt2011vector}. We also refer to \cite[Appendix D]{mythesis} for an introduction adapted to this particular problem.

Let ${\hil}$ be a separable Hilbert space and $U\subset \RN$ a domain. Let $\mu:U \longrightarrow [0,\infty)$ bounded and measurable, and consider the associated measure $d\mu\defeq \mu dx$ where $dx$ represents the Lebesgue measure. We will denote by
$$
L^2(U,\mu; {\hil})
$$
the space of (equivalence classes of) Bochner $\mu$-measurable\footnote{
 The measure space $(U, \Sigma, \mu)$, where $\Sigma$ denotes the Lebesgue $\sigma$-algebra, is not necessarily a complete measure space (because of the non-measurable sets of $U$ where $\mu$ vanishes). However, it can easily completed adding the necessary sets to the $\sigma$-algebra, $(U, \Sigma_\mu, \mu)$. We will refer to $\mu$-measurability, to distinguish from standard (Lebesgue) measurability, to the measurability of functions with respect to that $\sigma$-algebra.
}
 functions $u: U\longrightarrow {\hil}$ such that
\begin{equation}
	\int_U \mu(x) \norm{u(x)}_{\hil}^2dx<\infty.
\end{equation}
In the particular case in which $\hil=\R$ we will simply use the notation $L^2(U,\mu)$. As in the case of the standard $L^2$ spaces, $L^2(U,\mu; {\hil})$ is a Hilbert space with the scalar product
\begin{equation}
	\label{eqn:bochnerescalarproduct2}
	\braket{u,v}_{L^2(U,\mu; {\hil})}= \int_U \mu(x)\braket{u(x),v(x)}_H dx.
\end{equation}
Again, as in the standard theory of Lebesgue spaces, $L^2(U,\mu; {\hil})$ is made of equivalence classes and without risk of confusion, whenever we refer to $u\in L^2(U,\mu; {\hil})$, we mean that $u$ is a representative of the equivalence class. It is important to emphasize that the value of $u$ on the set where $\mu$ vanishes is not meaningful, as it depends on the chosen representative. That is, two functions belonging to the same equivalence class may differ completely on the set where $\mu$ vanishes.

An important result regarding these spaces is the following.
\begin{lemma}
	\label{lemma:l2dense}
	Let ${\hil}$, $U$ and $\mu$ as above. Then, $L^2(U; {\hil})$ is canonically embedded in $L^2(U,\mu;{\hil})$ and
	\begin{equation}
		\qquad \norm{g}_{L^2(U,\mu;{\hil})}\leq C_{\mu}\norm{g}_{L^2(U;{\hil})}, \qquad \forall g\in L^2(U;{\hil}),
	\end{equation}
	where $C_{\mu}=\sqrt{\sup_{x\in U} \mu(x)}$. In addition, $L^2(U;{\hil})$ is dense in $L^2(U,\mu;{\hil})$.
\end{lemma}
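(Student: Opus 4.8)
The plan is to dispatch the three claims in increasing order of difficulty: first the norm inequality (which simultaneously shows that every Lebesgue-$L^2$ function genuinely belongs to the weighted space), then the well-definedness of the canonical map at the level of equivalence classes, and finally the density, which carries the real content.

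First I would observe that, since $\mu$ is bounded and nonnegative, for any $g$ with $\int_U \norm{g(x)}_{\hil}^2\,dx<\infty$ one has
$$\int_U \mu(x)\norm{g(x)}_{\hil}^2\,dx \le \Big(\sup_{x\in U}\mu(x)\Big)\int_U \norm{g(x)}_{\hil}^2\,dx = C_\mu^2\,\norm{g}_{L^2(U;{\hil})}^2,$$
which is the asserted estimate and in particular shows $g\in L^2(U,\mu;{\hil})$. For the canonical map to make sense between equivalence classes I would record two facts about the completed $\sigma$-algebra $\Sigma_\mu$ from the footnote: the Lebesgue $\sigma$-algebra $\Sigma$ is contained in $\Sigma_\mu$, so every $\Sigma$-measurable $g$ is automatically $\mu$-measurable; and every Lebesgue-null set is $\mu$-null (since $\int_E\mu\,dx=0$ whenever $|E|=0$), so two representatives agreeing Lebesgue-a.e.\ agree $\mu$-a.e. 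Hence the assignment sending the Lebesgue class of $g$ to its $\mu$-class is a well-defined linear continuous map, with operator bound $C_\mu$ given by the display above. I would not claim injectivity, since when $\mu$ vanishes on a set of positive Lebesgue measure the map has nontrivial kernel; ``embedding'' is understood here as this canonical continuous map.

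For density, given $u\in L^2(U,\mu;{\hil})$ I would approximate it by the truncations $u_k\defeq u\,\chi_{\{\mu>1/k\}}$. On $\{\mu>1/k\}$ we have $\norm{u}_{\hil}^2\le k\,\mu\,\norm{u}_{\hil}^2$, so $\int_U\norm{u_k}_{\hil}^2\,dx\le k\int_U\mu\norm{u}_{\hil}^2\,dx=k\norm{u}_{L^2(U,\mu;{\hil})}^2<\infty$; moreover $\{\mu>1/k\}\subset\{\mu>0\}$, and on $\{\mu>0\}$ the $\mu$-measurable function $u$ agrees, off a Lebesgue-null set, with a genuinely $\Sigma$-measurable representative, so $u_k$ is Lebesgue-measurable and therefore $u_k\in L^2(U;{\hil})$. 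Then
$$\norm{u-u_k}_{L^2(U,\mu;{\hil})}^2=\int_{\{\mu\le 1/k\}}\mu\,\norm{u}_{\hil}^2\,dx,$$
and since the sets $\{\mu\le 1/k\}$ decrease to $\{\mu=0\}$, on which the integrand vanishes, the dominated convergence theorem (with dominating function $\mu\norm{u}_{\hil}^2\in L^1(U)$) yields $\norm{u-u_k}_{L^2(U,\mu;{\hil})}\to 0$. This exhibits $u$ as a weighted-norm limit of elements of $L^2(U;{\hil})$, proving density.

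The steps demanding genuine care, rather than routine estimation, are the measure-theoretic ones: checking that the canonical map descends to equivalence classes, and, in the density argument, verifying that each truncation $u_k$ really lands in the Lebesgue space $L^2(U;{\hil})$ and not merely in the weighted space. Both points hinge on the precise relationship between $\Sigma$ and its $\mu$-completion $\Sigma_\mu$ recalled in the footnote, namely that the two $\sigma$-algebras differ only through subsets of $\{\mu=0\}$, so that restricting attention to $\{\mu>0\}$ collapses $\mu$-measurability to ordinary measurability. Everything else, i.e.\ the inequality, the truncation estimate, and the passage to the limit, is a direct application of the boundedness of $\mu$ and of dominated convergence.
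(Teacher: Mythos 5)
Your proof is correct and takes essentially the same route as the paper: both arguments truncate to the set where $\mu$ is bounded away from zero (the paper's approximant $h = g\,\chi_{\{\mu \geq 1/(n_0+1)\}}$, built from the slices $U_n=\{1/(n+1)\leq\mu<1/n\}$, is exactly your $u_k$), verify that the truncation lies in the unweighted space via the bound $\norm{u}_{\hil}^2\leq k\,\mu\norm{u}_{\hil}^2$ there, and show the remainder is small in the weighted norm. The only cosmetic differences are that you pass to the limit by dominated convergence on the decreasing sets $\{\mu\leq 1/k\}$ where the paper bounds the tail of a convergent series, and that you make explicit the measurability check (that $u_k$ is genuinely Lebesgue--Bochner measurable, using that $\Sigma_\mu$ and $\Sigma$ differ only through subsets of $\{\mu=0\}$), a point the paper handles only for the embedding direction and leaves implicit in the density step.
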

\begin{proof}
		Let $g\in L^2(U;{\hil})$. As $g$ is Bochner measurable, it can be approximated by simple functions a.e., so it can be approximated by simple functions $\mu-$a.e. and therefore it is Bochner $\mu-$measurable.
	The embedding result is straightforward:
	\begin{equation}
		\norm{g}_{L^2(U,\mu;{\hil})}=\left(\int_U \mu(x) \norm{g}_{\hil}^2dx\right)^{1/2} \leq C_\mu \left(\int_U \norm{g}_{\hil}^2dx\right)^{1/2}=C_\mu\norm{g}_{L^2(U;{\hil})}.
	\end{equation}
	Now, let us prove the density of $L^2(U;{\hil})$ in $L^2(U,\mu;{\hil})$. Take $g\in L^2(U,\mu;\hil)$ and let us approximate it by functions in $L^2(U;\hil)$. Define, for each $n\in \mathbb{N}$, $U_n=\left\{x\in U: \frac{1}{n+1}\leq\mu(x)<\frac{1}{n}\right\}$
	and $U_0=\{x\in U: 1\leq\mu(x)\}$. These sets are disjoint and its union is $\cup_{n=0}^\infty U_n = U\setminus M_0$, where $M_0=\{x\in U:\mu(x)=0\}$. But then,
	\begin{equation}
		\sum_{n=0}^\infty\int_{U_n} \mu(x)\norm{g(x)}^2_{\hil}dx = 
		\int_{U\setminus M_0} \mu(x)\norm{g(x)}^2_{\hil}dx = \int_{U} \mu(x)\norm{g(x)}^2_{\hil}dx <\infty.
	\end{equation}
	Therefore, given $\epsilon>0$, there exists $n_0\in \mathbb{N}$ such that
	\begin{equation}
		\label{eqn:l2denseeq1}
		\sum_{n=n_0+1}^\infty\int_{U_n} \mu(x)\norm{g(x)}^2_{\hil}dx < \epsilon
	\end{equation}
	Define then $h(x)\defeq g(x)$ when $x\in \cup_{n=0}^{n_0}U_n$ and $h(x)=0$ otherwise.
	Then, using that $n_0\mu(x)\geq 1$ for every $x\in \cup_{n=0}^{n_0}U_n$, we have
	\begin{equation}
		\norm{h}^2_{L^2(U;{\hil})}=\sum_{n=0}^{n_0}\int_{U_n}\norm{g(x)}_{\hil}^2dx\leq \sum_{n=0}^{n_0}\int_{U_n}(n_0\mu(x))\norm{g(x)}_{\hil}^2dx \leq n_0\norm{g}_{L^2(U,\mu;{\hil})}
	\end{equation}
	so $h\in L^2(U;{\hil})$. In addition,
	\begin{equation}
		\norm{g-h}^2_{L^2(U,\mu;{\hil})}= \sum_{n=n_0+1}^\infty\int_{U_n}\mu(x)\norm{g(x)}_{\hil}^2dx
		+\int_{M_0}\mu(x)\norm{g(x)}_{\hil}^2dx \myleq{\eqref{eqn:l2denseeq1}} \epsilon + 0
	\end{equation}
	because $\mu \equiv 0$ in $M_0$. As $\epsilon>0$ was arbitrary, this concludes the proof.
\end{proof}

\subsubsection{The unfolding operator and Bochner spaces}
As we have already mentioned, we will need to use Bochner spaces to describe the problem in the unfolded domain. This is because, when applying the unfolding operator $\te$ to a function $\varphi \in H^1(\Omega_\varepsilon)$, as we saw in Remark \ref{rem:z}, $\te(\varphi)$ is piecewise constant with respect to $x$ so we lose the weak differentiability in the $x$ variable. However, as shown in Proposition \ref{prop:prop}, the function $\te(\varphi)$ does have derivatives in the $\xi$ and $y$ directions, that is, it belongs to the following space:
$$
\left\{ \psi \in L^2(W): \frac{\partial \psi}{\partial y}\in L^2(W), \ \nabla_{\xi} \psi \in (L^2(W))^N\right\}
$$
It turns out that this space coincides with the Bochner space
$$
L^2(\Omega';H^1(Y)).
$$
This is a natural though not trivial fact, and to avoid overloading this section, we include the proof in Appendix \ref{app:bochner}, Lemma \ref{lemma:bochnerproductb}. In fact, due to equations \eqref{eqn:propL22}, \eqref{eqn:propdery} and \eqref{eqn:propderxi}, the following norm inequality holds:
\begin{equation}
	\label{eqn:propnorm}
	\norm{\te(\varphi)}_{L^2(\Omega';H^1(Y))}\leq C\norm{\varphi}_{H^1(\Omega_\varepsilon^a)}, \qquad  \varphi\in H^1(\Omega_\varepsilon), \ 0<\varepsilon<1.
\end{equation}
where $C$ is independent of $\varepsilon>0$. Note that \eqref{eqn:propnorm} together with the linearity of $\te$ implies the continuity of $\te$ as an operator from $H^1(\Omega_\varepsilon^a)$ to $L^2(\Omega';H^1(Y))$.

Thanks to \eqref{eqn:propnorm}, having a priori estimates in the $H^1(\Omega_\varepsilon^a)$ norm, in Section \ref{sec:conv} we will prove the weak convergence of the functions $\te(\varphi)$. When obtaining such convergence, the limit solution will be expressible as $\theta u^a$ where $u^a$ belongs to a weighted space of the form
\begin{equation}
	L^2(\Omega',\theta;H^1(Y))
\end{equation}
where $\theta$ is the limit density from \eqref{eqn:ass2}.

\subsubsection{Traces}
\label{subsubsec:traces}

As we already mentioned in the introduction, the solutions to the limit problem in the upper part will belong to a weighted Bochner space of the form
$L^2(\Omega',\theta; H^1(Y))$ and we will also need to work with the unweighted space $L^2(\Omega'; H^1(Y))$. 
In both cases, we will need to deal with traces of their functions in $W_0$, so in this section we will present some results concerning traces. In order to unify the proofs, we will consider $\mu$ as in Section \ref{sec:weightedbochner} so $L^2(\Omega',\mu; H^1(Y))$ will represent the spaces with and without the weight $\theta$.

First of all, we observe that,
since $\partial Y \cap (\RN\times\{0\}) = \adh{\omega}\times \{0\}$,
from \eqref{eqn:delta0}, $\omega\times(0,\delta_0)\subset Y$ and $\omega\times(0,\delta_0)$ is a Lipschitz domain, the space $H^1(Y)$ has a well defined linear and bounded trace operator in $L^2(\omega)$ that we denote by $tr:H^1(Y)\to L^2(\omega)$ (see for example \cite{evansmeasure} Section 4.3). Using this operator we can define now an extended trace operator in the space $L^2(\Omega',\theta;H^1(Y))$

\begin{definition}
	\label{def:trace}
	Given a function $\varphi\in L^2(\Omega',\mu; H^1(Y))$, we define the ``above trace'' as a function defined in $\Omega'\times \omega$ given by
	\begin{equation}
		\label{eqn:traceeq1}
		{Tr}^a(\varphi)(x,\xi)=tr(\varphi(x))(\xi) \qquad \mu-a.e. \ x\in \Omega', \ a.e. \ \xi\in \omega.
	\end{equation}
\end{definition}
Note that, without risk of confusion, we will denote ${Tr}^a$ the trace of an $L^2(\Omega',\theta;H^1(Y))$ or an $L^2(\Omega';H^1(Y))$ function.
The defined trace operators are in fact a bounded linear operators as the following lemma shows

\begin{lemma}
	\label{lemma:boundtra}
	Equation \eqref{eqn:traceeq1} defines a bounded and linear operator
	\begin{equation}
		\label{eqn:boundtraeq1}
		\begin{aligned}
			{Tr}^a: \  & L^2(\Omega',\mu;H^1(Y)) && \to &&& L^2(W_0,\mu) \\
			& \varphi && \mapsto &&& {Tr}^a(\varphi).
		\end{aligned}
	\end{equation}
	Moreover,
	\begin{equation}
		\norm{{Tr}^a}_{\mathscr{L}(L^2(\Omega',\mu;H^1(Y)),L^2(W_0,\mu))}\leq \norm{tr}_{\mathscr{L}(H^1(Y),L^2(\omega))}.
	\end{equation}
\end{lemma}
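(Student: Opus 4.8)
The plan is to deduce the statement directly from the fiberwise boundedness of the scalar trace operator $tr:H^1(Y)\to L^2(\omega)$, integrating against the weight $\mu$. The linearity of $Tr^a$ is immediate, since $tr$ is linear and the definition \eqref{eqn:traceeq1} applies $tr$ pointwise in $x$; so the real content of the lemma is the measurability of $Tr^a(\varphi)$ as a function on $W_0$, together with the norm estimate.

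First I would establish the norm bound, which is the computational heart but poses no genuine difficulty. Starting from a representative $\varphi\in L^2(\Omega',\mu;H^1(Y))$, and using Tonelli's theorem together with the identification of $L^2(W_0,\mu)$ with the weighted Bochner space $L^2(\Omega',\mu;L^2(\omega))$, I would write
\begin{equation}
\norm{Tr^a(\varphi)}^2_{L^2(W_0,\mu)}=\int_{\Omega'}\mu(x)\int_\omega \abs{tr(\varphi(x))(\xi)}^2\,d\xi\,dx=\int_{\Omega'}\mu(x)\norm{tr(\varphi(x))}^2_{L^2(\omega)}\,dx.
\end{equation}
Applying the boundedness of $tr$ fiberwise, $\norm{tr(\varphi(x))}_{L^2(\omega)}\leq \norm{tr}\,\norm{\varphi(x)}_{H^1(Y)}$ for $\mu$-a.e.\ $x\in\Omega'$, and integrating yields $\norm{Tr^a(\varphi)}_{L^2(W_0,\mu)}\leq\norm{tr}\,\norm{\varphi}_{L^2(\Omega',\mu;H^1(Y))}$, which is precisely the claimed operator-norm inequality.

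The main obstacle is the measurability: I must verify that $Tr^a(\varphi)$ genuinely defines an element of $L^2(W_0,\mu)$, i.e.\ that it is $\mu$-measurable on $W_0$ and independent of the chosen representative. Since $\varphi$ is Bochner $\mu$-measurable into $H^1(Y)$, it is the $\mu$-a.e.\ limit of a sequence of $H^1(Y)$-valued simple functions $\varphi_k$. Because $tr$ is continuous and linear, each $tr(\varphi_k(\cdot))$ is an $L^2(\omega)$-valued simple function and $tr(\varphi_k(x))\to tr(\varphi(x))$ in $L^2(\omega)$ for $\mu$-a.e.\ $x$; hence $x\mapsto tr(\varphi(x))$ is Bochner $\mu$-measurable into $L^2(\omega)$. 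Identifying this $L^2(\omega)$-valued map with a scalar function on the product $W_0$ (the same Fubini-type identification used for Lemma \ref{lemma:bochnerproduct}, here applied to the Lebesgue space $L^2(\omega)$ in place of $H^1(Y)$) then shows $Tr^a(\varphi)\in L^2(W_0,\mu)$, with a representative independent of the approximating sequence. Combined with the estimate of the previous paragraph and the evident linearity, this completes the proof.
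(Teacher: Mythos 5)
Your proposal is correct and follows essentially the same route as the paper: the identification $L^2(W_0,\mu)=L^2(\Omega',\mu;L^2(\omega))$ (the paper's Lemma \ref{lemma:bochnerproduct} together with Remark \ref{remark:bochnerproduct}) combined with the fiberwise bound $\norm{tr(\varphi(x))}_{L^2(\omega)}\leq \norm{tr}\norm{\varphi(x)}_{H^1(Y)}$ integrated against $\mu$. Your explicit verification of the Bochner $\mu$-measurability of $x\mapsto tr(\varphi(x))$ via simple-function approximation is a point the paper leaves implicit in that identification, so it is a welcome clarification rather than a different argument.
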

Note that, in equation \eqref{eqn:boundtraeq1}, $\mu$ is interpreted as a function defined in $W_0=\Omega'\times \omega$ so we are using the conventions from Section \ref{sec:notations}.

\begin{proof}
	The linearity is immediate. The boundedness follows from the identification $L^2(W_0,\mu)=L^2(\Omega'\times\omega,\mu)=L^2(\Omega',\mu;L^2(\omega))$ (see Lemma \ref{lemma:bochnerproduct} and Remark \ref{remark:bochnerproduct}) and from the following calculation
	\begin{equation}
		\begin{array}{l} 
			\displaystyle 
			\|{Tr}^a(\varphi)\|_{L^2(\Omega',\mu; L^2(\omega))}=\int_{\Omega'}\mu(x)\int_{\omega}\abs{tr(\varphi(x))({\xi})}^2d\xi dx \\ \\ \displaystyle \qquad \leq \int_{\Omega'}\mu(x)\norm{tr}_{\mathcal{L}(H^1(Y), L^2(\omega))}^2\norm{\varphi(x)}^2_{H^1(Y)} 
			\displaystyle =\norm{tr}_{\mathcal{L}(H^1(Y), L^2(\omega))}^2 \norm{\varphi}^2_{L^2(\Omega,\mu;H^1(Y))}.
		\end{array}
	\end{equation}
\end{proof}

In the lower part, as $\Omega^b$ is a Lipschitz domain, we also have a trace operator 
$${Tr}^b: H^1(\Omega^b)\to L^2(\Omega')$$ defined in the usual standard way. This is what we will call the ``below trace''.

The following result states the expected behaviour of the traces when applying the unfolding operator.

\begin{prop}
	\label{prop:tr}
	Let $\varepsilon>0$ and $\varphi\in H^1(\Omega_\varepsilon)$. Then, for any $n=1,\dots,N_\varepsilon$,
	\begin{equation}
		\label{eqn:proptreq10}
		{Tr}^a(\te(\varphi))(x,\xi)={Tr}^b(\varphi)(\bar x_\varepsilon^n+l_\varepsilon^n\xi) \qquad a.e. \ x\in \omega_\varepsilon^n, \ a.e. \ \xi\in \omega.
	\end{equation}
\end{prop}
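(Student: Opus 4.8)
The plan is to prove \eqref{eqn:proptreq10} first for functions that are smooth up to the boundary, where it collapses to a pointwise identity, and then to extend it to all of $H^1(\Omega_\varepsilon)$ by density together with the continuity of every operator appearing on the two sides. Fix $n\in\{1,\dots,N_\varepsilon\}$ and observe that assumption \eqref{eqn:delta0} together with $\Omega^b\supset\Omega'\times(-1,0)$ guarantees that the cylinder $Q_\varepsilon^n\defeq\omega_\varepsilon^n\times(-1,\delta_0)$ is contained in $\Omega_\varepsilon$ and crosses the interface $\omega_\varepsilon^n\times\{0\}$. This is the crucial geometric fact: since $\varphi\in H^1(Q_\varepsilon^n)$ genuinely straddles $\{y=0\}$, its trace on $\omega_\varepsilon^n\times\{0\}$ computed from below (inside $\Omega^b$) and from above (inside $Y_\varepsilon^n$) must coincide. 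I would run the whole argument on this fixed Lipschitz cylinder, so as not to rely on any global regularity of $\Omega_\varepsilon$.

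For the smooth case, take $\psi\in C^\infty(\overline{Q_\varepsilon^n})$. Directly from \eqref{eqn:defunfolding}, for $x\in\omega_\varepsilon^n$ the slice $\te(\psi)(x)$ is the function $(\xi,y)\mapsto\psi(\bar x_\varepsilon^n+l_\varepsilon^n\xi,y)$ on $Y$, which is independent of $x$ and continuous up to $\{y=0\}$; hence ${Tr}^a(\te(\psi))(x,\xi)=tr(\te(\psi)(x))(\xi)=\psi(\bar x_\varepsilon^n+l_\varepsilon^n\xi,0)$. On the other hand ${Tr}^b(\psi)(x)=\psi(x,0)$, so that ${Tr}^b(\psi)(\bar x_\varepsilon^n+l_\varepsilon^n\xi)=\psi(\bar x_\varepsilon^n+l_\varepsilon^n\xi,0)$ as well, and \eqref{eqn:proptreq10} holds pointwise. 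Thus the identity is valid on the dense subset $C^\infty(\overline{Q_\varepsilon^n})\subset H^1(Q_\varepsilon^n)$.

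To pass to the limit I would approximate $\varphi|_{Q_\varepsilon^n}$ in $H^1(Q_\varepsilon^n)$ by smooth $\psi_k$ and check that both sides depend continuously on $\varphi$ in this norm. For the left-hand side, $\te$ maps into $L^2(\Omega';H^1(Y))$ continuously by \eqref{eqn:propnorm} and ${Tr}^a$ is bounded by Lemma \ref{lemma:boundtra}, so ${Tr}^a(\te(\psi_k))\to{Tr}^a(\te(\varphi))$ in $L^2(\omega_\varepsilon^n\times\omega)$. For the right-hand side, ${Tr}^b(\psi_k)\to{Tr}^b(\varphi)$ in $L^2(\omega_\varepsilon^n)$ by boundedness of the below trace, and the affine change of variables $x=\bar x_\varepsilon^n+l_\varepsilon^n\xi$, whose Jacobian $(l_\varepsilon^n)^N$ is a positive constant, turns this into convergence of ${Tr}^b(\psi_k)(\bar x_\varepsilon^n+l_\varepsilon^n\,\cdot)$ to ${Tr}^b(\varphi)(\bar x_\varepsilon^n+l_\varepsilon^n\,\cdot)$ in $L^2(\omega)$. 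Passing to the limit in the smooth identity then yields \eqref{eqn:proptreq10} for a.e. $\xi\in\omega$ and, by the $x$-independence of both sides, for a.e. $x\in\omega_\varepsilon^n$.

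The main subtlety I expect lies entirely in trace theory and is concealed in the reduction to the cylinder. Legitimately working on $Q_\varepsilon^n$ requires that the trace of $\varphi$ on $\omega_\varepsilon^n\times\{0\}$ computed inside $Q_\varepsilon^n$ agree with the below trace ${Tr}^b(\varphi)$ and with the above trace transported from $Y_\varepsilon^n$ through $\te$; equivalently, that the one-sided traces from the two sides of the interface coincide and that the trace commutes with the horizontal rescaling $\Phi_\varepsilon^n(\xi,y)=(\bar x_\varepsilon^n+l_\varepsilon^n\xi,y)$. For the smooth approximants all of this is automatic, being mere pointwise restriction, and the locality of traces together with their commutation with the bi-Lipschitz, $y$-preserving map $\Phi_\varepsilon^n$ are standard facts on Lipschitz cylinders; the density argument then transfers them to the general $H^1$ case. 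Beyond this bookkeeping I do not anticipate any genuine obstruction once the cylinder $Q_\varepsilon^n$ is in place.
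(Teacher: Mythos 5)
Your proof is correct, and it shares the paper's skeleton --- verify the identity for smooth functions, where it reduces to pointwise restriction, then extend by density using continuity of the operators involved --- but the implementation differs enough to be worth comparing. You localize to the cylinder $Q_\varepsilon^n=\omega_\varepsilon^n\times(-1,\delta_0)$ (correctly exploiting that the teeth are shrunk only horizontally, so this cylinder really does cross the interface inside $\Omega_\varepsilon$) and approximate by $C^\infty(\overline{Q_\varepsilon^n})$, which requires $Q_\varepsilon^n$ to be Lipschitz and, as you yourself note, requires locality-of-trace facts plus localized versions of $\te$ and ${Tr}^a$: as written, $\te(\psi_k)$ is not literally defined for $\psi_k$ living only on the cylinder, since the slice over $x\in\omega_\varepsilon^n$ should be a function on all of $Y$, so you must replace $\te$ and ${Tr}^a$ by their restrictions to $\omega\times(0,\delta_0)$ and then match these localized operators with the global ones. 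The paper avoids all of this by staying global: it takes the dense class to be $C^\infty(\Omega_\varepsilon)\cap H^1(\Omega_\varepsilon)$ (Meyers--Serrin, valid for an arbitrary open set, with no boundary regularity required), for which the identity is immediate precisely because the interface $\omega_\varepsilon^n\times\{0\}$ lies in the \emph{interior} of $\Omega_\varepsilon$, so such functions are smooth across it; it then observes that $\varphi\mapsto {Tr}^a(\te(\varphi))-{Tr}^b(\varphi)(\bar x_\varepsilon^n+l_\varepsilon^n\,\cdot)$ is continuous from $H^1(\Omega_\varepsilon)$ to $L^2(\omega_\varepsilon^n\times\omega)$ by \eqref{eqn:propnorm} and Lemma \ref{lemma:boundtra}, so its vanishing on the dense class passes to the closure. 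In short, both are density arguments; yours trades the paper's one-line global continuity for cylinder bookkeeping and trace-locality lemmas (standard, but not free), while the paper's choice of an interior-smooth dense class is exactly the move that makes your localization unnecessary.
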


\begin{proof}
	If $\varphi$ is smooth, the result is immediate from the definition of the unfolding operator, \eqref{eqn:defunfolding}.
	
	Now, consider the following operator
	\begin{equation}
		\begin{aligned}
			T: \  & H^1(\Omega_\varepsilon) && \to &&& \R \qquad\qquad\qquad\qquad \qquad\quad \\
			& \varphi && \mapsto &&& \int_{\omega_\varepsilon^n \times \omega}\abs{{Tr}^a(\te(\varphi))(x,\xi)-{Tr}^b(\varphi)(\bar x_\varepsilon^n+l_\varepsilon^n\xi)}^2dxd\xi.
		\end{aligned}
	\end{equation}
	Due to the fact that $\te$ is bounded and linear from $H^1(\Omega_\varepsilon)$ to $L^2(\Omega';H^1(Y))$, ${Tr}^a$ is bounded and linear from $L^2(\Omega';H^1(Y))$ to $L^2(W_0)$ and ${Tr}^b$ is bounded and linear from $H^1(\Omega_\varepsilon)$ to $L^2(\Omega')$, we obtain that $T$ is bounded and linear. As $T(\varphi)=0$ for any $\varphi\in C^\infty(\Omega_\varepsilon)\cap H^1(\Omega_\varepsilon)$, and this set is dense in $H^1(\Omega_\varepsilon)$, $T(\varphi)=0$ for any $\varphi\in H^1(\Omega_\varepsilon)$ and the proof is finished.
\end{proof}

\subsubsection{Partially Constant Sobolev Spaces}

Some spaces needed for the analysis will require that some partial derivatives are identically zero.
Concretely, the derivatives in the directions of the components of ${\xi}$. We will use the following notation
\begin{definition}
	\label{def:functspaces1}  Let $S\subset\R^{N+1}$ be an open set. Use the notation $(\xi,y)$ to denote a point in $\R^{N+1}$ where $\xi\in \RN$ and $y\in \R$. We define the space
	\begin{equation}
		H^1_ {\left<\xi\right>}(S)\defeq \{u\in H^1(S): \nabla_{{\xi}} u(\xi,y)=0 \ a.e. \ (\xi,y)\in S\} \subset H^1(S). 
	\end{equation}
\end{definition}

	As this space is a closed subspace of the Hilbert space $H^1(S)$, it is a Hilbert space with the inherited scalar product.	

Note that functions belonging to these spaces are not necessarily independent of $\xi$. They may depend on $\xi$ but they will be locally constant on $\xi$, that is constant in connected horizontal components, as the following lemma shows.
\begin{lemma}
	\label{lemma:constxi1}
	Let $S\subset\R^{N+1}$ be an open set and let $\varphi\in H^1_{\braket{\xi}}(S)$. Then, up to a redefinition in a set of measure $0$ of $S$, for every $(\xi_0,y_0),(\widetilde{\xi_0},y_0)\in S$ such that
	$\{((1-t)\xi_0+t\widetilde{\xi_0},y):t\in [0,1]\}\subset S$, we have
	$\varphi(\xi_0,y)=\varphi(\widetilde{\xi_0},y)$.
\end{lemma}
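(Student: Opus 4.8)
The plan is to prove the statement locally, on small horizontal boxes, and then chain these local conclusions along the segment. The essential difficulty is that the hypothesis $\nabla_{\xi}\varphi=0$ holds only almost everywhere, whereas the conclusion is a pointwise statement on the segment $\{((1-t)\xi_0+t\widetilde{\xi_0},y_0):t\in[0,1]\}$, which is a Lebesgue-null subset of $\R^{N+1}$. Hence the whole argument hinges on selecting a good representative of $\varphi$ and propagating the equality through it.

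First I would localize. Fix a point $p=(\xi_*,y_*)\in S$ and choose a box $Q=D\times(c,d)$ with $D\subset\RN$ an open ball, $p\in Q$ and $\adh{Q}\subset S$. On $Q$ we have $\varphi\in H^1(Q)$ with $\nabla_{\xi}\varphi=0$ a.e. Using the slicing identification of $H^1(Q)$ recorded in Lemma \ref{lemma:bochnerproductb} (Fubini for Sobolev functions), for a.e. $y\in(c,d)$ the slice $\varphi(\cdot,y)$ lies in $H^1(D)$ with $\nabla_{\xi}\varphi(\cdot,y)=0$ a.e.; since $D$ is connected, this forces $\varphi(\cdot,y)$ to be a.e. equal to a constant, which I call $w(y)\defeq\fint_{D}\varphi(\xi,y)\,d\xi$. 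Thus $\varphi(\xi,y)=w(y)$ for a.e. $(\xi,y)\in Q$. Differentiating under the integral sign and using $\partial_y\varphi\in L^2(Q)$ gives $w\in H^1(c,d)$, so $w$ admits an absolutely continuous representative $\adh{w}$ on $(c,d)$. I then redefine $\varphi$ on $Q$ as $\adh{\varphi}(\xi,y)\defeq\adh{w}(y)$, which differs from $\varphi$ only on a null set, is continuous in $y$, and is genuinely constant in $\xi$ throughout $Q$.

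Next I would check consistency and glue: on the nonempty overlap of two such boxes the two continuous, $\xi$-independent representatives both coincide a.e. with $\varphi$, hence agree a.e., hence agree everywhere by continuity in $y$; this produces a single redefinition $\adh{\varphi}$ on all of $S$, equal to $\varphi$ off a null set. Finally, to reach an arbitrary admissible segment $\sigma=\{((1-t)\xi_0+t\widetilde{\xi_0},y_0):t\in[0,1]\}\subset S$, I use that $\sigma$ is compact and $S$ open to cover $\sigma$ by finitely many boxes as above; ordering them so that consecutive horizontal slices at height $y_0$ overlap, the value $\adh{\varphi}(\cdot,y_0)$ is constant on each such slice and matches across overlaps, so chaining yields $\adh{\varphi}(\xi_0,y_0)=\adh{\varphi}(\widetilde{\xi_0},y_0)$, as required. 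The main obstacle, as anticipated, is exactly the a.e.-to-everywhere passage: it is resolved by the local $H^1(c,d)$ regularity of $w$, which upgrades the slice-wise constancy to a continuous-in-$y$, constant-in-$\xi$ representative that can be glued globally and safely evaluated on the null segment.
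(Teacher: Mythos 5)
Your proof is correct, and its skeleton coincides with the paper's: establish constancy in $\xi$ locally on product neighbourhoods, cover $S$ by countably many of them, and propagate the equality along the segment. The difference lies in how the two ingredients are supplied. The paper imports the local (box) case wholesale from \cite[Lemma 2.3]{prizzi2001effect}, redefines $\varphi$ on each box of a countable cover, and propagates via a connectedness/infimum argument on $[0,1]$; it leaves implicit the verification that the countably many box-wise redefinitions are mutually consistent on overlaps. You instead prove the local case directly — slicing in $y$ to get a.e.\ constancy of $\varphi(\cdot,y)$ on the connected ball $D$, and then showing $w(y)=\fint_D\varphi(\xi,y)\,d\xi$ lies in $H^1(c,d)$ — which buys you a representative that is continuous in $y$ and genuinely constant in $\xi$ on each box. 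That continuity is exactly what makes the gluing airtight: two continuous-in-$y$, $\xi$-independent functions that agree a.e.\ on an open product overlap agree everywhere there, so your global redefinition is well defined. Your finite-chain argument along the compact segment then replaces the paper's infimum argument (whose write-up is in fact slightly garbled, with the infimum taken over the set where equality holds rather than fails); both routes are valid. Net effect: your version is self-contained and settles the consistency point the paper glosses over, at the cost of reproving the local lemma the paper merely cites.
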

\begin{proof}	
	The proof for a product of intervals, $\prod_{i=1}^{N+1} (a_i,b_i)$, can be found in \cite[Lemma 2.3]{prizzi2001effect}. Let us prove the general case. First, we can cover $S$ with a countable set of product of intervals $I_m=\prod_{i=1}^{N+1} (a^m_i,b^m_i)$.Then, we can redefine $\varphi$ in a set of measure zero such that $\varphi$ is constant in $\xi$ in each $I_m$ (note that we are using that the countable union of sets of measure $0$ is a set of measure $0$). Now, take any $\{((1-t)\xi_0+t\widetilde{\xi_0},y):t\in [0,1]\}\subset S$ as in the statement of the Lemma. Assume $\varphi$ is not constant in $\{((1-t)\xi_0+t\widetilde{\xi_0},y):t\in [0,1]\}$. Let $t_0=\inf\{t\in[0,1]:\varphi(\xi_0,y)=\varphi((1-t)\xi_0+t\widetilde{\xi_0},y)\}$. Then, for some $m_0\in \mathbb{N}$ we have $((1-t_0)\xi_0+t_0\widetilde{\xi_0},y)\in I_{m_0}$. But then, as $I_{m_0}$ is an open set, for some $\delta>0$
	\begin{equation}
		((1-t)\xi_0+t\widetilde{\xi_0},y)\in I_{m_0} \qquad t_0-\delta<t<t_0+\delta,
	\end{equation}
	and consequently
	\begin{equation}
		\varphi((1-t)\xi_0+t\widetilde{\xi_0},y)=\varphi((1-t_0)\xi_0+t_0\widetilde{\xi_0},y) \qquad t_0-\delta<t<t_0+\delta,
	\end{equation}
	contradicting the definition of $t_0$.
\end{proof}

As a consequence of the previous lemma, we have that, if $S\subset \R^{N+1}$ is a domain such that, for every $y\in \R$, $\{\xi\in\RN: (\xi,y)\in S\}$ is connected (this is what we will call to have connected horizontal sections, see Definition \ref{def:connhorsec}), then, any $\varphi\in H^1_{\braket{\xi}}(S)$ is independent of $\xi$. The following proposition states this fact.
\begin{prop}
	\label{prop:constxi}
	Let $S\subset \R^{N+1}$ be a domain and $\varphi\in H^1_{\braket{\xi}}(S)$. Assume that, for every $y\in \R$, the section of $S$ at height $y$,
	$$S_y=\{\xi\in\RN: (\xi,y)\in S\}$$
	is connected. Let $(a,b)=\{y\in \R: S_y\neq \emptyset\}$ and define
	\begin{equation}
		\label{eqn:constxieq1}
		p_S(y)=\abs{\{\xi\in\RN: (\xi,y)\in S\}}\qquad y\in (a,b).
	\end{equation}
	Then, there exists $\widetilde{\varphi}\in H^1((a,b),p_S)$ such that
	\begin{equation}
		\label{eqn:prizzieq1bis}
		\varphi(\xi,y)=\widetilde{\varphi}(y), \quad \frac{\partial \varphi}{\partial y}(\xi,y)=\frac{\partial \widetilde{\varphi}}{\partial y}(y), \qquad a.e. \ (\xi,y)\in S.
	\end{equation}
	In addition
	\begin{equation}
		\label{eqn:prizzieq2bis}
		\norm{\varphi}_{H^1(S)}=\norm{\widetilde{\varphi}}_{H^1((a,b),p_S)}.
	\end{equation}
	Conversely, if $\widetilde{\varphi}\in H^1((a, b),p_S)$, it induces the definition of the function $\varphi\in H^1_{\braket{\xi}}(S)$ satisfying \eqref{eqn:prizzieq1bis} and \eqref{eqn:prizzieq2bis}.
\end{prop}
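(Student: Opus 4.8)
The plan is to prove the two implications separately, reducing the $\xi$-dependence with Lemma \ref{lemma:constxi1} and passing between integrals over $S$ and over $(a,b)$ with the weight $p_S$ by Fubini's theorem. First I would record two structural facts. Since $S$ is a domain, its projection onto the $y$-axis is a nonempty open interval, which justifies the notation $(a,b)$; and for every $y\in(a,b)$ the section $S_y$ is a nonempty open set, so $p_S(y)=\abs{S_y}>0$. Moreover, because $S$ is open, $y\mapsto p_S(y)$ is lower semicontinuous (apply Fatou to $\chi_{S_y}$ as $y$ varies), hence bounded below by a positive constant on each compact subinterval of $(a,b)$. Consequently, on compact subintervals the weighted space $L^2((a,b),p_S)$ coincides as a set, with comparable norms, with the standard $L^2$; this makes the weighted and unweighted notions of weak derivative agree locally and legitimizes one-dimensional integration by parts.

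For the forward implication, Lemma \ref{lemma:constxi1} together with the hypothesis that each section $S_y$ is connected shows that, after redefining $\varphi$ on a null set, $\varphi(\cdot,y)$ is constant on $S_y$; I define $\widetilde{\varphi}(y)$ to be this common value, which is measurable by Fubini. To identify the derivative I localize: for any box $B=Q\times(c,d)\subset S$ with $Q\subset\RN$ a cube, $\restr{\varphi}{B}$ is an $H^1$ function independent of $\xi$, hence (by the one-dimensional characterization of Sobolev functions via absolute continuity on lines) of the form $h_B(y)$ with $h_B\in H^1((c,d))$ and $\restr{\partial_y\varphi}{B}=h_B'$. Covering $S$ by countably many such boxes and using again that the sections are connected, the local pieces $h_B$ glue into the single function $\widetilde{\varphi}$ on $(a,b)$, which therefore lies in $H^1_{\mathrm{loc}}((a,b))$ and satisfies $\partial_y\varphi(\xi,y)=\widetilde{\varphi}'(y)$ for a.e. $(\xi,y)\in S$, giving \eqref{eqn:prizzieq1bis}. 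Since $\nabla_\xi\varphi=0$, Fubini's theorem yields
\[
\int_S\abs{\varphi}^2=\int_a^b p_S(y)\abs{\widetilde{\varphi}(y)}^2\,dy,\qquad \int_S\abs{\partial_y\varphi}^2=\int_a^b p_S(y)\abs{\widetilde{\varphi}'(y)}^2\,dy,
\]
so $\widetilde{\varphi},\widetilde{\varphi}'\in L^2((a,b),p_S)$, i.e. $\widetilde{\varphi}\in H^1((a,b),p_S)$, and summing the two identities gives the norm equality \eqref{eqn:prizzieq2bis}.

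For the converse, given $\widetilde{\varphi}\in H^1((a,b),p_S)$ I set $\varphi(\xi,y)=\widetilde{\varphi}(y)$ on $S$. Fubini immediately gives $\varphi\in L^2(S)$ with $\norm{\varphi}_{L^2(S)}=\norm{\widetilde{\varphi}}_{L^2((a,b),p_S)}$. The weak $\xi_i$-derivatives vanish because $\varphi$ is independent of $\xi$ (test against $\psi\in C_c^\infty(S)$ and integrate in $\xi_i$ over each slice). For the $y$-derivative, testing against $\psi\in C_c^\infty(S)$ and integrating first in $y$ along each vertical slice, on which $\widetilde{\varphi}$ is a genuine one-dimensional $H^1$ function, produces $\partial_y\varphi=\widetilde{\varphi}'$; the remaining norm identity is once more Fubini. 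Hence $\varphi\in H^1_{\braket{\xi}}(S)$ and \eqref{eqn:prizzieq1bis}–\eqref{eqn:prizzieq2bis} hold.

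The main obstacle is transferring the weak $y$-derivative between $S$ and $(a,b)$ over a genuinely non-product domain: one cannot integrate by parts globally in $y$ because the vertical slices of $S$ need not be intervals, and the weight $p_S$ might a priori vanish. Both difficulties are resolved by the preliminary structural facts together with the localization-and-patching argument: connectedness of the horizontal sections $S_y$ is exactly what lets the local representatives $h_B$ glue to a single $\widetilde{\varphi}$, while the lower semicontinuity of $p_S$ keeps the weight bounded away from $0$ on compact subintervals, so that the weighted and unweighted weak derivatives coincide locally and the one-dimensional integration by parts is valid.
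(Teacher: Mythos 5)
Your proposal is correct and follows essentially the same approach as the paper: both proofs use Lemma \ref{lemma:constxi1} together with connectedness of the sections to reduce $\varphi$ to a function $\widetilde{\varphi}$ of $y$ alone, identify $\partial_y\varphi$ with $\widetilde{\varphi}'$ by localizing to product subdomains, and obtain the norm identities and the converse via Fubini-type arguments (your slice-wise integration by parts in the converse replaces the paper's trick of extending the test function by zero and integrating in $\xi$ over all of $\RN$, and your box-gluing replaces the paper's second application of the constancy argument to $\partial_y\varphi$). One small caveat: your preliminary claim that $L^2((a,b),p_S)$ and the unweighted $L^2$ \emph{coincide with comparable norms} on compact subintervals would require an upper bound on $p_S$, which lower semicontinuity does not provide; fortunately only the inclusion $L^2((a,b),p_S)\subset L^2_{\mathrm{loc}}((a,b))$ coming from the positive lower bound is ever used, so your argument is unaffected.
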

\begin{proof}
	Again, the proof of this result is based on the work \cite{prizzi2001effect}. As $S$ is an open set, so is $S_y\subset \RN$ for every $y\in \R$. As $S_y$ is open and connected set, it is polygonally connected (i.e. any two points of $S_y$ can be connected by a piecewise linear path in $S_y$), so, using Lemma \ref{lemma:constxi1}, we obtain that $\varphi$ is independent of $\xi$. Define then
	\begin{equation}
		\widetilde{\varphi}(y)=\varphi(\cdot,y) \qquad y\in (a,b)
	\end{equation}
	where $\varphi(\cdot,y)$ denotes $\varphi(\xi,y)$ for any $(\xi,y)\in S$. Then,
	\begin{equation}
		\norm{\widetilde{\varphi}}^2_{L^2((a,b),p_S)}=\int_a^b\abs{\widetilde{\varphi}(y)}^2p_{S}(y)dy=\int_a^b\int_{S_y}\abs{\varphi(\xi,y)}^2d\xi dy=\norm{\varphi}^2_{L^2(S)}
	\end{equation}
	so $\norm{\widetilde{\varphi}}_{L^2((a,b),p_S)}=\norm{\varphi}_{L^2(S)}$.
	
	Now, in the distributional sense, we have that $\nabla_{\xi}\frac{\partial \varphi}{\partial y}=\frac{\partial }{\partial y}\nabla_{\xi}\varphi=0$. Therefore, repeating the argument above one obtains that there exists $w\in L^2((a,b),p_S)$ such that $\frac{\partial \varphi}{\partial y}(\xi,y)=w(y)$.
	We want to prove that $w=\frac{\partial \widetilde{\varphi}}{\partial y}$. As the differentiability is a local property, we can restrict our analysis to a product of intervals, so let us assume $S_y=P=\prod_{i=1}^{N}(c_i,d_i)$ for every $y\in (a,b)$. Given any $\eta\in C^\infty_c(a,b)$ and $\psi\in C^\infty_c(P)$ such that $\int\psi=1$,
	\begin{equation}
		\begin{aligned}
			\int_a^b\frac{\partial \eta}{\partial y}(y)\tilde{\varphi}(y)dy= & \int_a^b\fint_P\psi(\xi)\frac{\partial \eta}{\partial y}(y)\varphi(\xi,y)d\xi dy = \int_a^b\fint_P\psi(\xi)\eta(y)\frac{\partial \varphi}{\partial y}(\xi,y)d\xi dy
			\\ = & \int_a^b\fint_P\psi(\xi)\eta(y)w(y)d\xi dy  = \int_a^b\eta(y)w(y)dy
		\end{aligned}
	\end{equation}
	so $w=\frac{\partial \widetilde{\varphi}}{\partial y}$.\\
	
	Let us prove the converse result. First, define
	\begin{equation}
		\varphi(\xi,y)=\widetilde{\varphi}(y), \quad \psi(\xi,y)=\frac{d \widetilde{\varphi}}{dy}(y) \qquad a.e. \ (\xi,y)\in S,
	\end{equation}
	Let us prove that $\frac{\partial \varphi}{\partial y}=\psi$ in the distributional sense. Take $\rho\in C^\infty_c(S)$ and extend it to $\rho\in C^\infty_c(\RN)$, then
	\begin{equation}
		\begin{aligned}
			\int_S \varphi(\xi,y)\frac{\partial \rho}{\partial y}(\xi,y)d\xi dy & = \int_{a}^{b}\widetilde{\varphi}(y)\left(\int_{\RN}\frac{\partial \rho}{\partial y}(\xi,y)d\xi\right)dy=\int_{a}^{b}\widetilde{\varphi}(y)\left(\frac{d}{dy}\int_{\RN}\rho(\xi,y)d\xi\right)dy
			\\
			& = \int_{a}^{b}\frac{d\widetilde{\varphi}}{dy}(y)\left(\int_{\RN}\rho(\xi,y)d\xi\right)dy = \int_{S}\psi(\xi,y)\rho(\xi,y)d\xi dy
		\end{aligned}
	\end{equation}
	so $\frac{\partial \widetilde{\varphi}}{\partial y}=\psi$. Then, \eqref{eqn:prizzieq2bis} is obtained in a straightforward way using Fubini's theorem.
\end{proof}

Another consequence of Lemma \ref{lemma:constxi1} is that the trace of functions of $L^2(\Omega',\theta;H^1_{\braket{\xi}}(Y))$ is constant in $\xi$. Note that this implies that they will belong to the space $L^2(\Omega',\theta)$. The following lemma summarizes and proves this fact.

\begin{lemma}
	\label{lemma:traces}
	Given $\varphi\in H^1_{\braket{\xi}}(Y)$, we have that $tr(\varphi)$ is a constant function in $\omega$. In other words, the trace operator restricted to $H^1_{\braket{\xi}}(Y)$, denoted by $tr_{\braket{\xi}}$ is a linear and bounded operator from $H^1_{\braket{\xi}}(Y)$ to $\R$. As a consequence of this fact, given $\varphi \in L^2(\Omega',\theta;H^1_{\braket{\xi}}(Y))$, we have that
	${Tr}^a(\varphi)$ is independent of $\xi$ and can be interpreted as a function in $L^2(\Omega',\theta)$. Therefore, the restriction of ${Tr}^a$ to $L^2(\Omega',\theta;H^1_{\braket{\xi}}(Y))$, denoted by ${Tr}^a_{\braket{\xi}}$ is a bounded operator from $L^2(\Omega',\theta;H^1_{\braket{\xi}}(Y))$ to $L^2(\Omega',\theta)$. The same applies to $L^2(\Omega';H^1_{\braket{\xi}}(Y))$.
\end{lemma}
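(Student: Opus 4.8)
The plan is to reduce everything to the scalar statement that $tr(\varphi)$ is constant on $\omega$ for a single $\varphi\in H^1_{\braket{\xi}}(Y)$, and then to promote this slicewise in $x$ to the Bochner setting. The crucial geometric input is assumption \eqref{eqn:delta0}, which guarantees that near the base $\adh{\omega}\times\{0\}$ the tooth $Y$ coincides with the straight cylinder $\omega\times(0,\delta_0)$, so that the trace only ever ``sees'' a product domain.

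First I would argue that the trace operator $tr:H^1(Y)\to L^2(\omega)$ is local near $\omega\times\{0\}$: since $\partial Y\cap(\RN\times\{0\})=\adh{\omega}\times\{0\}$ and $\omega\times(0,\delta_0)\subset Y$, the value of $tr(\varphi)$ is determined by $\restr{\varphi}{\omega\times(0,\delta_0)}$. Because the defining condition $\nabla_\xi\varphi=0$ is local, this restriction lies in $H^1_{\braket{\xi}}(\omega\times(0,\delta_0))$; as $\omega$ is a (connected) domain, the cylinder has connected horizontal sections with $p_S\equiv|\omega|=1$, so Proposition \ref{prop:constxi} produces $\widetilde{\varphi}\in H^1((0,\delta_0))$ with $\varphi(\xi,y)=\widetilde{\varphi}(y)$ a.e.\ on the cylinder. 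Since $\widetilde{\varphi}$ has a continuous representative on $[0,\delta_0]$ and the slices $\varphi(\cdot,h)=\widetilde{\varphi}(h)$ are constant in $\xi$ and converge in $L^2(\omega)$ to $\widetilde{\varphi}(0)$ as $h\to 0^+$ (the boundary portion being flat there), I identify $tr(\varphi)\equiv\widetilde{\varphi}(0)$, a constant on $\omega$. This is the heart of the lemma. The boundedness of $tr_{\braket{\xi}}:H^1_{\braket{\xi}}(Y)\to\R$ is then immediate from $|\omega|=1$, since $|tr_{\braket{\xi}}(\varphi)|=\|tr(\varphi)\|_{L^2(\omega)}\le\|tr\|_{\mathscr{L}(H^1(Y),L^2(\omega))}\,\|\varphi\|_{H^1(Y)}$.

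Next I would pass to the Bochner level. Given $\varphi\in L^2(\Omega',\theta;H^1_{\braket{\xi}}(Y))$, for $\theta$-a.e.\ $x$ the slice $\varphi(x)$ lies in $H^1_{\braket{\xi}}(Y)$, so by the first step $tr(\varphi(x))$ is constant in $\xi$; hence ${Tr}^a(\varphi)(x,\xi)=tr(\varphi(x))(\xi)$ is independent of $\xi$ and defines ${Tr}^a_{\braket{\xi}}(\varphi)(x)\defeq tr_{\braket{\xi}}(\varphi(x))$. Under the identification $L^2(W_0,\theta)=L^2(\Omega',\theta;L^2(\omega))$ and using $|\omega|=1$, one gets $\|{Tr}^a(\varphi)\|_{L^2(W_0,\theta)}=\|{Tr}^a_{\braket{\xi}}(\varphi)\|_{L^2(\Omega',\theta)}$, so membership in $L^2(\Omega',\theta)$ and the bound on ${Tr}^a_{\braket{\xi}}$ follow directly from Lemma \ref{lemma:boundtra} (equivalently, by integrating the scalar estimate of the first paragraph against $\theta\,dx$). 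The unweighted space $L^2(\Omega';H^1_{\braket{\xi}}(Y))$ is handled identically by taking $\mu\equiv 1$.

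The main obstacle is the localization and identification in the first paragraph: making precise that the abstractly defined trace on $H^1(Y)$ reduces, near the base, to the one-dimensional trace $\widetilde{\varphi}(0)$ supplied by Proposition \ref{prop:constxi}. Once this constancy is secured, everything else is a routine combination of the normalization $|\omega|=1$ with the already-established boundedness of $tr$ and of ${Tr}^a$.
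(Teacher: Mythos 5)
Your proposal is correct, and it shares the paper's opening reduction: localize to the straight cylinder $\omega\times(0,\delta_0)$ guaranteed by \eqref{eqn:delta0}, then invoke Proposition \ref{prop:constxi} to replace $\varphi$ there by a one-dimensional function $\widetilde{\varphi}\in H^1((0,\delta_0))$. Where you diverge is the concluding identification. The paper never uses the slice characterization of the trace: it approximates $\widetilde{\varphi}$ by smooth functions $\widetilde{\varphi}_n$ in $H^1((0,\delta_0))$, lifts them to $\varphi_n(\xi,y)=\widetilde{\varphi}_n(y)$, observes that each $tr(\varphi_n)$ is literally a constant $C_n$, shows $\{C_n\}$ is Cauchy via the $L^2(\omega)$ convergence of traces, and concludes that $tr(\varphi)$ is the constant limit --- an argument that needs only the boundedness of $tr$ and density of smooth functions. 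You instead use the $1$D Sobolev embedding (continuous representative of $\widetilde{\varphi}$ on $[0,\delta_0]$) together with the fact that, on a cylinder with flat base, the trace is the $L^2(\omega)$-limit of the horizontal slices $\varphi(\cdot,h)$ as $h\to 0^+$. That fact is standard and easy to prove (for smooth $u$, $\|u(\cdot,h)-u(\cdot,0)\|_{L^2(\omega)}\leq \sqrt{h}\,\|\partial_y u\|_{L^2}$, then pass to the limit by density), so your route is sound; it even buys slightly more than the paper's, since it pins down the constant explicitly as $\widetilde{\varphi}(0)$ rather than only asserting constancy. The Bochner-level promotion you spell out (slicewise application, the identification $L^2(W_0,\mu)=L^2(\Omega',\mu;L^2(\omega))$, and the normalization $\abs{\omega}=1$) is exactly what the paper leaves implicit via Definition \ref{def:trace} and Lemma \ref{lemma:boundtra}, so that part matches.
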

\begin{proof}
	Let us prove that $tr$ maps $H^1_{\braket{\xi}}(Y)$ into $\R$. First of all, as $\omega\times(0,\delta_0)\subset Y$, we can restrict our analysis to $\omega^{\delta_0}=\omega\times(0,\delta_0)$. In this case, as $\varphi\in H^1_{\braket{\xi}}(\omega^{\delta_0})$ and $\omega$ is connected, by Proposition \ref{prop:constxi}, there exists $\tilde{\varphi}\in H^1((0,\delta_0))$ such that
	\begin{equation}
		\varphi(\xi,y)=\tilde{\varphi}(y), \quad \frac{\partial \varphi}{\partial y}(\xi,y)=\frac{\partial \tilde{\varphi}}{\partial y}(y), \qquad (\xi,y)\in Y.
	\end{equation}
	Now, given $\delta>0$, we can find an approximating sequence of smooth functions $\{\tilde{\varphi}_n\}\subset C^\infty(\R)\cap H^1((0,\delta_0))$ such that $\tilde{\varphi}_n\to \tilde{\varphi}$ in $H^1((0,\delta_0))$ with $n\to\infty$. Then, defining $\varphi_n(\xi ,y)=\tilde{\varphi}_n(y)$, we obtain $\varphi_n\to \varphi$ in $H^1(\omega^{\delta_0})$. As the trace operator is continuous we also have $tr(\varphi_n)\to tr(\varphi)$ when $n\to\infty$. Now, we know that $tr(\varphi_n)$ are constant functions so let us denote them $tr(\varphi_n)(\xi)=C_n$. Then, we have
	\begin{equation}
		\abs{C_n-C_m}=\int_{\omega}\abs{tr(\varphi_n)(\xi)-tr(\varphi_m)(\xi)}d\xi \leq \norm{tr(\varphi_n)-tr(\varphi_m)}_{L^2(\omega)}\to 0
	\end{equation}
	when $n,m\to \infty$. Therefore $\{C_n\}$ is a Cauchy sequence and there exists $C_0\in\R$ such that $C_n\to C_0$ when $n\to 0$. Then
	\begin{equation}
		\int_{\omega}\abs{tr(\varphi_n)-C_0}^2d\xi=\abs{C_n-C_0}^2\to 0
	\end{equation}
	when $n\to \infty$. That is, $\{tr(\varphi_n)\}$ converges to a constant function. By uniqueness of the limit in $L^2(\omega)$, this proves that $tr(\varphi)$ is a constant function.
\end{proof}
\subsubsection{The space of solutions for the limit unfolded problem}
\label{sec:spacesols}
\par\medskip 
Finally we define the space of solutions of the limit unfolded problem \eqref{eqn:thmmaineq3}. We already introduced this space in Section \ref{sec:statmentjtx} without going into detail. Now that we have described the weighted Bochner spaces, their trace operators and the partially constant Sobolev spaces $H^1_{\left<\xi\right>}(Y)$, we can precisely define our Hilbert space of solutions of the limit problem.
\begin{definition}
	We define the space of solutions as
	\begin{equation}
		\label{def:spaceofsolY}
		\begin{aligned}
			H(\theta)\defeq \bigg\{(u^a,u^b) : \ & u^a\in L^2(\Omega',\theta; H^1_{\left<\xi\right>}(Y)), \ u^b\in H^1(\Omega^b), \\
			& {Tr}^a_{\braket{\xi}}(u^a)(x)=Tr^b(u^b)(x) \quad  \theta-a.e. \ x\in\Omega'\bigg\}
		\end{aligned}	
	\end{equation}
	Recall that ${Tr}^a_{\braket{\xi}}$ is defined in Lemma \ref{lemma:traces} as the restriction of ${Tr}^a$, from Definition \ref{def:trace}, to the space $L^2(\Omega',\theta; H^1_{\left<\xi\right>}(Y))$, whose range belongs to $L^2(\Omega',\theta)$.
\end{definition}

Now, we include a lemma which proves that $H(\theta)$ is a Hilbert space and define its correspondent scalar product

\begin{lemma}
	The set $H(\theta)$ is a Hilbert space endowed with the scalar product
	\begin{equation}
		\label{eqn:vpht}
		\braket{(u^a, u^b)|(v^a,v^b)}=\braket{u^a|v^a}_{L^2(\Omega',\theta; H^1_{\left<\xi\right>}(Y))}+\braket{u^b|v^b}_{H^1(\Omega^b)}
	\end{equation}
\end{lemma}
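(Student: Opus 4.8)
The plan is to exhibit $H(\theta)$ as a closed linear subspace of the product space
$$
X\defeq L^2(\Omega',\theta; H^1_{\left<\xi\right>}(Y))\times H^1(\Omega^b),
$$
endowed with the inner product \eqref{eqn:vpht}. Since a closed subspace of a Hilbert space is again a Hilbert space under the inherited scalar product, this reduces the claim to two facts: that $X$ is a Hilbert space, and that $H(\theta)$ is closed in it. For the first fact, $H^1(\Omega^b)$ is a standard Hilbert space, while $H^1_{\left<\xi\right>}(Y)$ is a closed subspace of the separable Hilbert space $H^1(Y)$ (as noted right after Definition \ref{def:functspaces1}), hence itself a separable Hilbert space; therefore the weighted Bochner space $L^2(\Omega',\theta; H^1_{\left<\xi\right>}(Y))$ is a Hilbert space under the scalar product \eqref{eqn:bochnerescalarproduct2}, and the Cartesian product with the summed inner products is a Hilbert space. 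The bilinear form \eqref{eqn:vpht} is precisely the restriction of this inner product to $H(\theta)$, so it is automatically symmetric and positive-definite there.

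That $H(\theta)$ is a linear subspace is immediate: the operators ${Tr}^a_{\braket{\xi}}$ and ${Tr}^b$ are linear (Lemma \ref{lemma:traces} and the standard below-trace), so the set of pairs satisfying ${Tr}^a_{\braket{\xi}}(u^a)={Tr}^b(u^b)$ in $L^2(\Omega',\theta)$ is closed under linear combinations.

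The only genuinely nontrivial step, and the one I expect to be the main obstacle, is the closedness of $H(\theta)$ in $X$. I would take a sequence $(u^a_k,u^b_k)\in H(\theta)$ with $(u^a_k,u^b_k)\to(u^a,u^b)$ in $X$ and show that the limit still satisfies the coupling condition. By Lemma \ref{lemma:traces}, ${Tr}^a_{\braket{\xi}}$ is bounded from $L^2(\Omega',\theta; H^1_{\left<\xi\right>}(Y))$ into $L^2(\Omega',\theta)$, so ${Tr}^a_{\braket{\xi}}(u^a_k)\to {Tr}^a_{\braket{\xi}}(u^a)$ in $L^2(\Omega',\theta)$. On the other side, ${Tr}^b$ is bounded from $H^1(\Omega^b)$ into $L^2(\Omega')$, so ${Tr}^b(u^b_k)\to {Tr}^b(u^b)$ in $L^2(\Omega')$; since $0\leq\theta\leq 1$, the canonical embedding $L^2(\Omega')\hookrightarrow L^2(\Omega',\theta)$ of Lemma \ref{lemma:l2dense} is continuous (with constant $C_\theta=\sqrt{\sup_{\Omega'}\theta}\leq 1$), so this convergence also holds in $L^2(\Omega',\theta)$. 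Passing to the limit in the identity ${Tr}^a_{\braket{\xi}}(u^a_k)={Tr}^b(u^b_k)$, valid in $L^2(\Omega',\theta)$ for each $k$, and using uniqueness of limits in $L^2(\Omega',\theta)$, yields ${Tr}^a_{\braket{\xi}}(u^a)={Tr}^b(u^b)$ $\theta$-a.e. Hence $(u^a,u^b)\in H(\theta)$ and the subspace is closed.

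The delicate point to get right is that the coupling equality must be read inside a single space, $L^2(\Omega',\theta)$: the above trace already lands there, whereas the below trace naturally lives in $L^2(\Omega')$ and is transported into $L^2(\Omega',\theta)$ via the weight-$\theta$ embedding, which is available precisely because $\theta$ is bounded by $1$. Once both traces are regarded as elements of $L^2(\Omega',\theta)$, the remainder is the routine observation that bounded linear operators carry the limit through the equality, so no further estimate is needed.
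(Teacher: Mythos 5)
Your proof is correct and follows essentially the same route as the paper: both realize $H(\theta)$ as a closed subspace of the product Hilbert space $L^2(\Omega',\theta; H^1_{\left<\xi\right>}(Y))\times H^1(\Omega^b)$, using the boundedness of ${Tr}^a_{\braket{\xi}}$ and of ${Tr}^b$ together with the embedding $L^2(\Omega')\hookrightarrow L^2(\Omega',\theta)$ from Lemma \ref{lemma:l2dense}. The only cosmetic difference is that you verify closedness by passing to the limit along sequences, whereas the paper phrases it as $H(\theta)$ being the kernel of the continuous operator ${Tr}^a_{\braket{\xi}}-{Tr}^b$; these are equivalent.
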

\begin{proof}
	$L^2(\Omega',\theta; H^1_{\left<\xi\right>}(Y))$ and $H^1(\Omega^b)$ are Hilbert spaces, so their Cartesian product $L^2(\Omega',\theta; H^1_{\left<\xi\right>}(Y))\allowbreak\times H^1(\Omega^b)$ is a Hilbert space too with the scalar product \eqref{eqn:vpht}. Now, let us prove that $H(\theta)$ is a closed subspace of $L^2(\Omega',\theta; H^1_{\left<\xi\right>}(Y))\times H^1(\Omega^b)$. $Tr^b$ is a continuous linear operator from $H^1(\Omega^b)$ to $L^2(\Omega')$. But $L^2(\Omega')$, by Lemma \ref{lemma:l2dense}, is continuously embedded in $L^2(\Omega',\theta)$, so $Tr^b$ is a continuous linear operator from $H^1(\Omega^b)$ to $L^2(\Omega',\theta)$. In addition,
	${Tr}^a_{\braket{\xi}}$ is a continuous linear operator from $L^2(\Omega',\theta; H^1_{\left<\xi\right>}(Y))$ to $L^2(\Omega',\theta)$. Therefore, ${Tr}^a_{\braket{\xi}}-{Tr}^b$ can be interpreted as a continuous linear operator from $L^2(\Omega',\theta; H^1_{\left<\xi\right>}(Y))\times H^1(\Omega^b)$ to $L^2(\Omega',\theta)$, so $H(\theta)$, which is by definition, the preimage of $0$ of ${Tr}^a_{\braket{\xi}}-{Tr}^b$, is a closed subspace of $L^2(\Omega',\theta; H^1_{\left<\xi\right>}(Y))\times H^1(\Omega^b)$, hence, a Hilbert space endowed with its scalar product.
\end{proof}

\section{Derivation of the results}
\label{sec:jtxresults}

This section is devoted to proving the main result Theorem \ref{thm:main} stated in Section \ref{sec:statmentjtx}. The organization of the section is as follows.
Firstly, in subsection \ref{sec:conv} we will prove that, given an arbitrary bounded family of functions $u_\varepsilon\in H^1(\Omega_\varepsilon)$, we can obtain convergence of $u_\varepsilon\to u^b$ and $\te(u_\varepsilon)\to \theta u^a$ at least in some weak sense.
Although these results apply to arbitrary families of functions, we will apply them later to the specific case of the solutions of equation \eqref{eqn:main}.
Then, in subsection \ref{sec:test} we will describe the test functions we are going to use in \eqref{eqn:main} to identify the limit functions $(u^a,u^b)$. 
In subsection \ref{sec:limit} we will identify $(u^a,u^b)$ as the solutions of \eqref{eqn:thmmaineq3}. 
Finally, in subsection \ref{sec:energy} we will prove the convergence of energies, improving, as a consequence, the convergence of $u_\varepsilon$ and $\te(u_\varepsilon)$ to some strong convergence. Then, we will prove Theorem \ref{thm:main} as a consequence of the previous results.

\subsection{Convergences}
\label{sec:conv}
Let us see that given a uniformly bounded family of functions $u_\varepsilon\in H^1(\Omega_\varepsilon)$, after the application of the unfolding operator, it converges at least through a subsequence of $\varepsilon\to 0$.
\begin{prop}\label{prop:convergences}
	Let $u_\varepsilon$ be a family of functions satisfying $\|u_\eps\|_{H^1(\Omega_\eps)}\leq C_1$ for some $C_1>0$ independent of $\eps$. Then, $\te(u_\varepsilon)\in L^2(\Omega';H^1(Y))$ and there exists $(u^a, u^b)\in H(\theta)$ such that, through a subsequence of $\varepsilon\to 0$,
	\begin{enumerate}
		\item $u_\varepsilon \to u^b$ strongly in $L^2(\Omega^b)$ and $u_\varepsilon \wto u^b$ weakly in $H^1(\Omega^b)$
		\item $\te(u_\varepsilon)\wto \theta u^a$ weakly in $L^2(\Omega';H^1(Y))$.
	\end{enumerate}
	In addition,
	\begin{equation}
		\norm{u^b}_{H^1(\Omega^b)}\leq C_1, \qquad \norm{u^a}_{L^2(\Omega',\theta;H^1_{\braket{\xi}}(Y))}\leq 2C_1.
	\end{equation}
\end{prop}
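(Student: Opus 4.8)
The plan is to extract everything from weak compactness in the two relevant Hilbert spaces and then identify the limits using the structural properties of $\te$ from Proposition \ref{prop:prop} together with the weak-$*$ convergence \eqref{eqn:ass2}; I would pass to successive subsequences so that a single subsequence realizes all the convergences at once. First, the lower part: since $\Omega^b\subset\Omega_\eps$ is fixed, $\restr{u_\eps}{\Omega^b}$ is bounded by $C_1$ in $H^1(\Omega^b)$, so along a subsequence $u_\eps\wto u^b$ weakly in $H^1(\Omega^b)$; the compact embedding $H^1(\Omega^b)\hookrightarrow L^2(\Omega^b)$ (valid since $\Omega^b$ is bounded Lipschitz) upgrades this to strong $L^2(\Omega^b)$ convergence, and weak lower semicontinuity gives $\norm{u^b}_{H^1(\Omega^b)}\leq C_1$. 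Using compactness of the trace $H^1(\Omega^b)\to L^2(\Omega')$ I also record $Tr^b(u_\eps)\to Tr^b(u^b)$ strongly in $L^2(\Omega')$, to be used for the coupling condition.

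Second, the upper part: by \eqref{eqn:propnorm} the family $\te(u_\eps)$ is bounded in $L^2(\Omega';H^1(Y))$, so along a further subsequence $\te(u_\eps)\wto\Xi$ there. Two features of $\Xi$ come for free. On one hand \eqref{eqn:propderxi} gives $\nabla_\xi\te(u_\eps)=l_\eps^n\,\te(\nabla_x u_\eps)$, whose $L^2(W)$ norm is $O(\eps)$ by \eqref{eqn:ass1}, so $\nabla_\xi\Xi=0$, i.e. $\Xi(x)\in H^1_{\braket{\xi}}(Y)$ a.e. On the other hand $\te(u_\eps)$ is supported in $\omega_\eps\times Y$ and $\abs{\omega_\eps\cap\Theta_0}=\int_{\Theta_0}\chi_{\omega_\eps}\to\int_{\Theta_0}\theta=0$; testing against any $\Phi\in L^2(\Omega';H^1(Y))$ supported over $\Theta_0$ and using Cauchy--Schwarz with $\norm{\Phi}_{L^2(\omega_\eps\cap\Theta_0;H^1(Y))}\to0$ forces $\Xi=0$ a.e. on $\Theta_0\times Y$. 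Hence $u^a\defeq\Xi/\theta$ on $\{\theta>0\}$ (and $u^a\defeq0$ on $\Theta_0$) is a well-defined measurable function with $\theta u^a=\Xi$ and $u^a(x)\in H^1_{\braket{\xi}}(Y)$ a.e.

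Third, the weighted bound, which I expect to be the main obstacle, since this is exactly where the degeneracy $\theta=0$ must be tamed: one cannot push $\norm{\Xi}\leq C_1$ through the division by $\theta$. I would expand the trivial inequality $0\leq\int_{\Omega'}\chi_{\omega_\eps}\norm{\te(u_\eps)-\Phi}_{H^1(Y)}^2$ for an arbitrary $\Phi\in L^2(\Omega';H^1_{\braket{\xi}}(Y))$. Passing to the limit using $\chi_{\omega_\eps}\te(u_\eps)=\te(u_\eps)$, the weak convergence $\te(u_\eps)\wto\theta u^a$ for the cross term, the weak-$*$ convergence $\chi_{\omega_\eps}\wsto\theta$ tested against $\norm{\Phi(\cdot)}_{H^1(Y)}^2\in L^1(\Omega')$, and the bound $\limsup\int\chi_{\omega_\eps}\norm{\te(u_\eps)}^2\leq C_1^2$ (coming from \eqref{eqn:propL22}, \eqref{eqn:propdery} and the $O(\eps)$ estimate for \eqref{eqn:propderxi}), yields
\begin{equation}
	2\int_{\Omega'}\theta\braket{u^a,\Phi}_{H^1(Y)}-\int_{\Omega'}\theta\norm{\Phi}_{H^1(Y)}^2\leq C_1^2\qquad\forall\,\Phi\in L^2(\Omega';H^1_{\braket{\xi}}(Y)).
\end{equation}
Optimizing over $t\Phi$, $t>0$, this becomes $\abs{\int_{\Omega'}\theta\braket{u^a,\Phi}}\leq C_1\,\norm{\Phi}_{L^2(\Omega',\theta;H^1(Y))}$; by the density of $L^2(\Omega';H^1_{\braket{\xi}}(Y))$ in $L^2(\Omega',\theta;H^1_{\braket{\xi}}(Y))$ (Lemma \ref{lemma:l2dense}) and Riesz representation, $u^a$ lies in the weighted space with $\norm{u^a}_{L^2(\Omega',\theta;H^1_{\braket{\xi}}(Y))}\leq C_1\leq 2C_1$.

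Fourth, the coupling condition that places $(u^a,u^b)$ in $H(\theta)$. I would apply the weakly continuous operator $Tr^a$ to $\te(u_\eps)\wto\theta u^a$, giving $Tr^a(\te(u_\eps))\wto\theta\,Tr^a_{\braket{\xi}}(u^a)$ in $L^2(W_0)$. On the other hand, Proposition \ref{prop:tr} identifies $Tr^a(\te(u_\eps))(x,\xi)=\chi_{\omega_\eps}(x)\,Tr^b(u_\eps)(\bar x_\eps^n+l_\eps^n\xi)$; writing $\bar x_\eps^n+l_\eps^n\xi=x+O(\eps)$, I would split off the difference $Tr^b(u_\eps)-Tr^b(u^b)$ (controlled strongly in $L^2(\Omega')$ by the first step and transported through the unfolding change of variables as in \eqref{eqn:propL22}) and the translation error of $Tr^b(u^b)$ (vanishing by continuity of translations in $L^2$, exactly as in Lemma \ref{lemma:fconv}), so that the same weak limit equals $\theta\,Tr^b(u^b)$. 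Equating the two expressions and dividing where $\theta>0$ gives $Tr^a_{\braket{\xi}}(u^a)=Tr^b(u^b)$ $\theta$-a.e., hence $(u^a,u^b)\in H(\theta)$, completing the proof.
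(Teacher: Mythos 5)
Your proposal is correct, and its skeleton is the same as the paper's (weak compactness in $H^1(\Omega^b)$ and in $L^2(\Omega';H^1(Y))$, the support identity $\chi_{\omega_\varepsilon}\te(u_\varepsilon)=\te(u_\varepsilon)$ combined with $\chi_{\omega_\varepsilon}\wsto\theta$, and Proposition \ref{prop:tr} for the coupling condition), but you handle the two delicate points by genuinely different devices. For the identification of the limit as $\theta u^a$ together with the weighted bound, the paper never divides by $\theta$: it bounds the functional $\varphi\mapsto\int_W\tilde u\varphi$ by $C_1\norm{\varphi}_{L^2(W,\theta)}$ through a single Cauchy--Schwarz as in \eqref{eqn:wellpeq1}, extends by density (Lemma \ref{lemma:l2dense}) and invokes Riesz, so existence of $u^a$ and the estimate come simultaneously (done separately for $u_\varepsilon$ and $\partial_y u_\varepsilon$). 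You instead first prove $\Xi\equiv0$ over $\Theta_0$ (a fact the paper leaves implicit in $\tilde u=\theta u^a$), define $u^a=\Xi/\theta$ pointwise, and recover the weighted bound from the expanded square $\int_{\Omega'}\chi_{\omega_\varepsilon}\norm{\te(u_\varepsilon)-\Phi}^2_{H^1(Y)}\geq 0$ plus optimization over $t\Phi$; this is more constructive and makes transparent why the values of $u^a$ on $\Theta_0$ are immaterial, at the price of one glossed step: Riesz gives you some $v\in L^2(\Omega',\theta;H^1_{\braket{\xi}}(Y))$ representing your functional, and you must still identify $v=u^a$ $\theta$-a.e. (routine: test against $\chi_{\{\theta>\delta\}}(u^a-v)$ and let $\delta\to0$; or bypass Riesz entirely by taking $\Phi=\chi_{\{\theta>\delta\}}u^a$ in your optimized inequality and using monotone convergence). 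For the coupling condition, the paper transfers the smooth test function $\psi$ across the microscopic shift, using $\abs{\psi(\bar x_\varepsilon^n+l_\varepsilon^n\xi)-\psi(x)}\leq C\varepsilon$ and an exact change of variables in the $Tr^b(u_\varepsilon)$ factor, so it never translates a non-smooth function; you instead translate $Tr^b(u^b)$ itself, which works but obliges you to rerun the approximation argument of Lemma \ref{lemma:fconv} for unfolded shifts of a function that is merely $L^2(\Omega')$ (smooth approximation plus the isometry of the change of variables), a piece of work the paper's choice of test function avoids. Both routes are valid and yield the stated bounds.
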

\begin{proof}  We divide the proof in three steps.

\medskip	\noindent \textbf{Step 1: }
	Using Banach-Alaoglu theorem, we know that there exists $u^b\in H^1(\Omega^b)$ and a subsequence of $\varepsilon\to 0$ such that $u_\varepsilon\wto u^b$ weakly in $H^1(\Omega^b)$. Then, we trivially obtain $\norm{u^b}_{H^1(\Omega^b)}\leq C_1$. As $H^1(\Omega^b)$ is compactly embedded in $L^2(\Omega^b)$ (see \cite[Theorem 6.3]{adams}), we also obtain $u_\varepsilon\to u^b$ strongly in $L^2(\Omega^b)$. In the same way, as the space of traces of $H^1(\Omega^b)$ functions is compactly embedded in $L^2(\Omega')$ (see \cite[Theorem 6.3]{adams}), we have ${Tr}^b(u_\varepsilon^b)\to {Tr}^b(u^b)$ strongly in $L^2(\Omega')$.	
	
	Furthermore, by \eqref{eqn:propnorm}, we have that $\{\te(u_\varepsilon)\}_{\varepsilon>0}$ are uniformly bounded in $L^2(\Omega';H^1(Y))$, so, again by Banach-Alaoglu theorem, there exists $\tilde{u}\in L^2(\Omega';H^1(Y))$ and we can choose the subsequence such that  $\te(u_\varepsilon)\wto \widetilde{u}$ weakly in $L^2(\Omega';H^1(Y))$. In particular, from \eqref{eqn:propderxi}, we have $\norm{\nabla_\xi u_\varepsilon}_{L^2(W)}\leq C\varepsilon$ and therefore we obtain $\norm{\nabla_\xi u}_{L^2(W)}=0$ so
	$\widetilde{u}\in L^2(\Omega';H^1_ {\left<\xi\right>}(Y))$. In addition, as, by Lemma \ref{lemma:traces}, ${Tr}^a$ is a bounded linear operator from $L^2(\Omega';H^1(Y))$ to $L^2(W_0)$, we have ${Tr}^a(\te(u_\varepsilon))\wto {Tr}^a(\tilde{u})$ weakly in $L^2(W_0)$. Note that the Proposition states that $\te(u_\varepsilon)\wto \theta u^a$, so, in the next step, our goal will be to prove that $\tilde{u}=\theta u^a$ for some $u^a\in L^2(\Omega',\theta;H^1_{\braket{\xi}}(Y))$.

	\noindent\textbf{Step 2: } Let us see that $\widetilde{u}=\theta u^a$ for some $u^a\in L^2(\Omega',\theta;H^1_{\braket{\xi}}(Y))$.
	Define the following functional
	\begin{equation}
		\begin{aligned}
			& T:  && L^2(W) &&& \to &&&& \R \ \  \\
			& \ && \ \ \ \varphi &&& \mapsto &&&& \int_W \tilde{u}\varphi .
		\end{aligned}
	\end{equation}
	For any $\varphi \in L^2(W)$ we have
	\begin{equation}
		\label{eqn:wellpeq1}
		\abs{\int_{W} \widetilde{u} \varphi} = \lim_{\varepsilon\to 0}\abs{\int_{W} \te(u_\varepsilon)\varphi} \leq  \limsup_{\varepsilon\to 0} \left(\norm{\te(u_\varepsilon)}_{L^2(W)}\left(\int_{W} \chi_{\omega_\varepsilon} \varphi^2\right)^{1/2}\right)\myleq{\eqref{eqn:ass2}, \eqref{eqn:propL2}} C_1\left(\int_{W} \theta \varphi^2\right)^{1/2}
	\end{equation}
	Hence, $\abs{T(\varphi)}\leq C_1\norm{\varphi}_{L^2(W,\theta)}$ for every $\varphi\in L^2(W)$. As $L^2(W)$ is dense in $L^2(W,\theta)$ (see Lemma \ref{lemma:l2dense}),
	we can extend $T$ to $\tilde{T}\in (L^2(W,\theta))'$. Therefore, by the Riesz representation theorem, there exists $u^a\in L^2(W,\theta)$ satisfying $\norm{u^a}_{L^2(W,\theta)}\leq C_1$ such that, for every $\varphi\in L^2(W,\theta)$, we have $\tilde{T}(\varphi)=\int_W \theta u^a\varphi$. In particular, for any $\varphi \in L^2(W)$,
	\begin{equation}
		\int_W \widetilde{u}\varphi = T(\varphi) = \tilde{T}(\varphi) = \int_W \theta u^a \varphi,
	\end{equation}
	so $\widetilde{u}=\theta u^a$ for some $u^a\in L^2(W,\theta)$. One can repeat the argument with $\frac{\partial \widetilde{u}}{\partial y}$ to prove that $\frac{\partial \widetilde{u}}{\partial y}=\theta \frac{\partial u^a}{\partial y}$ and $\frac{\partial u^a}{\partial y}\in L^2(W,\theta)$ satisfying $\norm{\frac{\partial u^a}{\partial y}}_{L^2(W,\theta)}\leq C_1$. It is straightforward to prove that, for $\theta-$a.e. $x\in \Omega'$, $\nabla_{\xi} u^a(x)\equiv 0$ because, for any $\varphi\in C^\infty_c(\Omega')$ and $\psi\in C^\infty_c(Y)$,
	$$
	\int_{\Omega'}\theta \varphi \int_{Y}\frac{\partial \psi}{\partial \xi_i} u^a = \int_{W} \varphi\frac{\partial \psi}{\partial \xi_i} \tilde{u}=\int_{W}\varphi\psi\frac{\partial \widetilde{u}}{\partial \xi_i} = 0.
	$$
	Hence,  $u^a\in L^2(\Omega',\theta; H^1_{\braket{\xi}}(Y))$ and satisfy $\norm{u^a}_{L^2(\Omega',\theta;H^1_{\braket{\xi}}(Y))}\leq \norm{u^a}_{L^2(W,\theta)}+\norm{\frac{\partial u^a}{\partial y}}_{L^2(W,\theta)}\leq 2C_1$. In fact, by being more precise in the estimates, the constant $2$ can be removed, but we have not done it in order to keep the proof easier to read.
	
	\noindent \textbf{Step 3: }To finish the proof, we need to show that
	${Tr}^a_{\braket{\xi}}(u^a)(x)={Tr}^b(u^b)(x)$ for $\theta$-a.e. $x\in\Omega '$. As $\widetilde{u}=\theta u^a$, this is equivalent to proving
	${Tr}^a_{\braket{\xi}}(\widetilde{u})(x)=\theta(x){Tr}^b(u^b)(x)$ for a.e. $x\in\Omega '$,
	which is equivalent to proving that, for every $\psi\in C^\infty_c(\Omega')$,
	\begin{equation}
		\label{eqn:convergenceseq1}
		\int_{\Omega'}{Tr}^a_{\braket{\xi}}(\widetilde{u})(x) \psi(x) dx=\int_{\Omega'}\theta(x){Tr}^b(u^b)(x)\psi(x)dx.
	\end{equation}
	Let us prove \eqref{eqn:convergenceseq1}. First, as $Tr^b(u_\varepsilon^b)\to {Tr}^b(u^b)$ in $L^2(\Omega')$ and $\chi_{\omega_\varepsilon}\wsto \theta$ in $L^\infty(\Omega')$,  we have 
	\begin{equation}
		\label{eqn:convergenceseq2}
		\lim_{\varepsilon\to 0}\int_{\Omega'}\chi_{\omega_\varepsilon}(x){Tr}^b(u_\varepsilon^b)(x)\psi(x)dx=\int_{\Omega'}\theta(x){Tr}^b(u^b)(x)\psi(x)dx.
	\end{equation}
	On the other hand, as we have that $Tr^a(\te(u_\varepsilon))\wto {Tr}^a(\widetilde{u})$ weakly in $L^2(W_0)$,
	\begin{equation}
		\label{eqn:convergenceseq3}
		\lim_{\varepsilon\to 0}\int_{W^0} {Tr}^a(\te(u_\varepsilon))(x,\xi)\psi(x)d\xi dx=\int_{W^0}{Tr}^a(\widetilde{u})(x,\xi) \psi(x)d\xi dx
	\end{equation}
	and, as $\widetilde{u}\in L^2(\Omega',H^1_{\braket{\xi}}(Y))$ and $\abs{\omega}=1$, we have
	\begin{equation}
		\label{eqn:convergenceseq3b}
		\int_{W^0}{Tr}^a(\widetilde{u})(x,\xi) \psi(x)d\xi dx=\int_{\Omega'}{Tr}^a_{\braket{\xi}}(\widetilde{u})(x) \psi(x) dx.
	\end{equation} 	
	Hence, combining \eqref{eqn:convergenceseq2}, \eqref{eqn:convergenceseq3} and \eqref{eqn:convergenceseq3b}, we reduce the proof of \eqref{eqn:convergenceseq1} to proving
	\begin{equation}
		\label{eqn:convergenceseq1b}
		\lim_{\varepsilon\to 0}\int_{\Omega'}\chi_{\omega_\varepsilon}(x){Tr}^b(u_\varepsilon)(x)\psi(x)dx=\lim_{\varepsilon\to 0}\int_{W^0} {Tr}^a(\te(u_\varepsilon))(x,\xi)\psi(x)d\xi dx.
	\end{equation}
	Let us prove then \eqref{eqn:convergenceseq1b}. We expand the second term in \eqref{eqn:convergenceseq1b} using Proposition \ref{prop:tr} with $\varphi=u_\varepsilon$,
	\begin{equation}
		\label{eqn:convergenceseq5}
		\int_{W^0} {Tr}^a(\te(u_\varepsilon))(x,\xi)\psi(x)d\xi dx= \sum_{n=1}^{N_\varepsilon} \int_{\omega \times \omega_\varepsilon^n}{Tr}^b(u_\varepsilon)(\bar x_\varepsilon^n+l_\varepsilon^n \xi)\psi(x) d\xi dx.
	\end{equation}
	Now, from Remark \ref{rem:z} and the regularity of $\psi$, we have, for every $x\in \omega_\varepsilon^n$ and $\xi\in \omega$,
	\begin{equation}
		\label{eqn:convergenceseq6bis}
		\abs{\psi(\bar x^n_\varepsilon + l_\varepsilon^n \xi)-\psi(x)}\leq C\varepsilon
	\end{equation}
	where $C$ is independent of $x$, $\xi$ and $\varepsilon$. Therefore, using \eqref{eqn:convergenceseq6bis} as well as the change of variables $\widetilde{x}=\bar x^n_\varepsilon + l_\varepsilon^n \xi$,

	\begin{equation}
		\label{eqn:convergenceseq6}
		\begin{aligned}
			&\abs{ \int_{\omega \times \omega_\varepsilon^n}{Tr}^b(u_\varepsilon)(\bar x_\varepsilon^n+l_\varepsilon^n \xi)\psi(x) d\xi dx - \int_{\omega \times \omega_\varepsilon^n}{Tr}^b(u_\varepsilon)(\bar x_\varepsilon^n+l_\varepsilon^n \xi)\psi(\bar x_\varepsilon^n+l_\varepsilon^n \xi) d\xi dx}
			\\
			&\myleq{\eqref{eqn:convergenceseq6bis}}  C\varepsilon\int_{\omega \times \omega_\varepsilon^n}\abs{{Tr}^b(u_\varepsilon)(\bar x^n_\varepsilon + l_\varepsilon^n \xi)}d\xi dx
			= C\varepsilon\frac{\abs{\omega_\varepsilon^n}}{(l_\varepsilon^n)^{N}}\int_{ \omega_\varepsilon^n}\abs{{Tr}^b(u_\varepsilon)(\widetilde{x})}d\widetilde{x} \leq C\varepsilon \norm{{Tr}^b(u_\varepsilon)}_{L^1(\omega_\varepsilon^n)}
		\end{aligned}	 
	\end{equation}
	Now, using the same change of variables, we obtain
	\begin{equation}
		\label{eqn:convergenceseq7}
		\begin{aligned}
			\int_{\omega \times \omega_\varepsilon^n}{Tr}^b(u_\varepsilon)(\bar x_\varepsilon^n+l_\varepsilon^n \xi)\psi(\bar x_\varepsilon^n+l_\varepsilon^n \xi) d\xi dx & = \int_{\omega_\varepsilon^n \times \omega_\varepsilon^n}(l_\varepsilon^n)^{-N}{Tr}^b(u_\varepsilon)(\widetilde{x})\psi(\widetilde{x}) d\widetilde{x} dx \\
			& =\int_{\omega_\varepsilon^n}{Tr}^b(u^b)(\widetilde{x})\psi(\widetilde{x})d\widetilde{x}.
		\end{aligned}
	\end{equation}
	Hence, combining \eqref{eqn:convergenceseq5}, \eqref{eqn:convergenceseq6} and \eqref{eqn:convergenceseq7} we obtain

	\begin{equation}
		\label{eqn:convergenceseq9}
		\begin{aligned}
			&\abs{\int_{W^0} {Tr}^a(\te(u_\varepsilon))(x,\xi)\psi(x)d\xi dx-\sum_{n=1}^{N_\varepsilon}\int_{\omega_\varepsilon^n}{Tr}^b(u^b)(\widetilde{x})\psi(\widetilde{x})d\widetilde{x}}\leq 
			C\varepsilon \sum_{n=1}^{N_\varepsilon}\norm{{Tr}^b(u_\varepsilon)}_{L^1(\omega_\varepsilon^n)}
			\\
			= \ &C\varepsilon \norm{{Tr}^b(u_\varepsilon)}_{L^1(\omega_\varepsilon)}\leq C\varepsilon\abs{\omega_\varepsilon}\norm{{Tr}^b(u_\varepsilon)}_{L^2(\omega_\varepsilon)}\leq C\varepsilon \abs{\Omega'} \norm{u_\varepsilon}_{H^1(\Omega_\varepsilon)}\leq C\varepsilon.
		\end{aligned}
	\end{equation}
	or equivalently,
	\begin{equation}
		\abs{\int_{W^0} {Tr}^a(\te(u_\varepsilon))(x,\xi)\psi(x)d\xi dx-\int_{\Omega'}\chi_{\omega_\varepsilon}(x){Tr}^b(u^b)(x)\psi(x)dx}\leq 
		C\varepsilon .
	\end{equation}
	Therefore, by taking the limit when $\varepsilon\to 0$, we obtain \eqref{eqn:convergenceseq1b} and finish the proof.

\end{proof}

\subsection{Our test functions}
\label{sec:test}

Let us define the test functions we are going to use. As presented in the introduction, the limit problem is different in the upper and the lower part. In the lower part we simply have a limit problem in $\Omega^b$ while, in the upper part, we have the unfolded domain $W=\Omega'\times Y$. Our test functions will then have this behaviour too. 

\begin{wrapfigure}[14]{r}{0.3\textwidth}  
	\centering
	\vspace{-0.045\textheight}
	\includegraphics[width=0.3\textwidth]{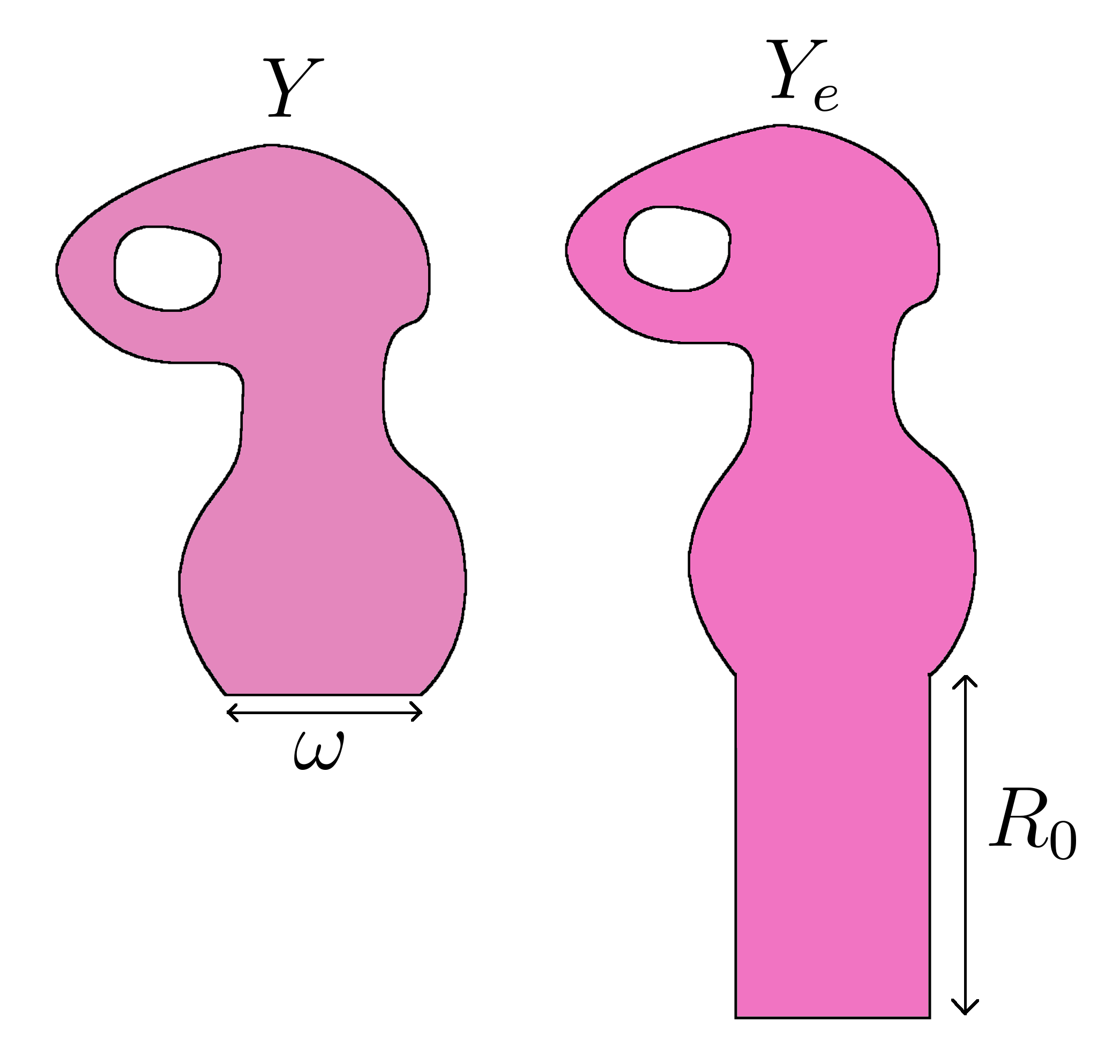}
	\caption{\small Example of an extended cell.}
	\label{fig:extendedcell}
\end{wrapfigure}

First of all, we define \textit{the extended cell}
$$
Y_e=Y\cup (\omega\times(-R_0,0])
$$
which is an open set due to assumption \eqref{eqn:delta0}. Recall that $R_0>0$ is such that $\Omega^b\subset B_{\R^{N+1}}(0,R_0)$. See Figure \ref{fig:extendedcell} for an example of an extended cell.

We need to consider test functions in $W$ as well as in $\Omega^b$, which are compatible. To do this, we will separate the dependence in $x$ from the dependence in $\xi$ and $y$. Our test functions will be of the form 
$$\phi(x)\psi(\xi,y)$$
with $\phi\in C^\infty(\RN)$ and $\psi\in H^1_{\braket{\xi}}(Y_e)$. Note that, by Proposition \ref{prop:constxi}, $\psi$ is then independent of $\xi$ when $y<0$ and there exists $\psi^b\in H^1((-R_0,0))$ such that 
\begin{equation}
	\label{eqn:psib}
	\psi^b(y)=\psi(\xi,y), \qquad \xi\in \omega, \ y\in (-R_0,0).
\end{equation} 
Analogously, we will denote as 
\begin{equation}
	\label{eqn:psia}
	\psi^a(\xi,y)=\psi(\xi,y), \qquad (\xi,y)\in Y,
\end{equation}
so $\psi^a\in H^1_{\braket{\xi}}(Y)$, although we will sometimes omit the super-index $a$ without risk of confusion, as it is just a restriction to a subdomain. Let us see how this function adapts to our domain $\Omega_\varepsilon$.

\begin{definition}[Adapted test function]
	\label{def:adapatedtest}
	Let $\phi \in C^\infty(\RN)$, $\psi\in H^1_{\braket{\xi}}(Y_e)$. Using the notation $\psi^a$ and $\psi^b$ from above, we define our $\varepsilon-$adapted test function:
	\begin{equation}
		U_\varepsilon(\phi,\psi)(x,y)= \left\{
		\begin{aligned}
			& \phi(x)\psi^a\left(\frac{x-\bar{x}^n_\varepsilon}{l^n_\varepsilon},y\right) \qquad & \text{if} \ \ & (x,y)\in Y^n_\varepsilon \\
			& \phi(x)\psi^b(y) \qquad & \text{if} \ \ & (x,y)\in \Omega^b
		\end{aligned}	
		\right.
	\end{equation}
\end{definition}
A consequence of the definition is that
\begin{equation}
	\te(U_\varepsilon(\phi,\psi))(x,\xi,y)=
	\left\{
	\begin{aligned}
		& \phi(\bar x_\varepsilon^n+l_\varepsilon^n \xi)\psi^a(\xi,y) \qquad  && (x,\xi,y)\in W, \ x\in \omega_\varepsilon^n \\
		& 0 \qquad && (x,\xi,y)\in W, \ x\notin \omega_\varepsilon.
	\end{aligned}
	\right.
\end{equation}

Now, let us see that $U_\varepsilon(\phi,\psi)$ is an adequate test function in \eqref{eqn:main}.
\begin{lemma}
	\label{lemma:adaptedtest}
	Let $\phi$ and $\psi$ be as in Definition \ref{def:adapatedtest} and denote
	$\varphi_\varepsilon= U_\varepsilon(\phi,\psi)$. Then $\varphi_\varepsilon\in H^1(\Omega_\varepsilon)$.
\end{lemma}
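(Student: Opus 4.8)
The plan is to establish $\varphi_\eps\in H^1(\Omega_\eps)$ by checking three things: that the restriction of $\varphi_\eps$ to the upper part $\Omega_\eps^a$ is $H^1$, that its restriction to the lower part $\Omega^b$ is $H^1$, and that the two restrictions share the same trace on the interface $\omega_\eps\times\{0\}$. Once these hold, the classical gluing principle for Sobolev functions across a flat interface---a function that is $H^1$ on either side of a portion of a hyperplane and whose one-sided traces coincide there is globally $H^1$, with weak gradient equal to the piecewise gradient---yields the claim. I expect the first two points to be routine and the trace compatibility to be the only genuine obstacle.

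For the upper part, on each tooth $Y^n_\eps$ the affine map $(x,y)\mapsto\bigl(\tfrac{x-\bar x^n_\eps}{l^n_\eps},y\bigr)$ is a diffeomorphism onto $Y$, so the composition $\psi^a\bigl(\tfrac{x-\bar x^n_\eps}{l^n_\eps},y\bigr)$ inherits $H^1(Y^n_\eps)$ regularity from $\psi^a\in H^1(Y)$; multiplying by the smooth bounded factor $\phi$ preserves it, and since the teeth are pairwise disjoint this gives $\varphi_\eps\in H^1(\Omega_\eps^a)$. For the lower part, $\Omega^b\subset B(0,R_0)\cap\{y<0\}$ forces $y\in(-R_0,0)$ there, and on $\Omega^b$ the function is the tensor product $\phi(x)\psi^b(y)$ of a smooth function of $x$ with the one-dimensional $H^1$ function $\psi^b\in H^1((-R_0,0))$, whence $\varphi_\eps\in H^1(\Omega^b)$.

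The heart of the argument---and the reason the extended cell $Y_e$ was introduced---is the trace identity on $\omega_\eps\times\{0\}$, which I would first settle in the reference configuration. The strip $\omega\times(-R_0,\delta_0)$ is a connected open subset of $Y_e$: it is contained in $(\omega\times(-R_0,0])\cup(\omega\times(0,\delta_0))$, the first set belonging to $Y_e$ by definition and the second to $Y$ by \eqref{eqn:delta0}, and all of its horizontal sections equal the connected set $\omega$. Applying Proposition \ref{prop:constxi} to $\psi$ on this strip produces a single $\widetilde\psi\in H^1((-R_0,\delta_0))$ with $\psi(\xi,y)=\widetilde\psi(y)$ there. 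By the one-dimensional Sobolev embedding $\widetilde\psi$ is continuous, and by construction it coincides with $\psi^b$ on $(-R_0,0)$ and with $\psi^a$ restricted to $\omega\times(0,\delta_0)$ above $y=0$. Consequently the above trace of $\psi^a$ on $\omega\times\{0\}$, which is constant in $\xi$ by Lemma \ref{lemma:traces}, equals $\widetilde\psi(0)$, and this is precisely the boundary value $\psi^b(0)$.

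Transporting this back to each tooth through the affine change of variables, the trace of $\varphi_\eps$ from above on $\omega^n_\eps\times\{0\}$ equals $\phi(x)\,\widetilde\psi(0)$, while its trace from below equals $\phi(x)\,\psi^b(0)=\phi(x)\,\widetilde\psi(0)$; hence the two traces agree a.e. on every $\omega^n_\eps\times\{0\}$ and therefore on the whole interior interface $\omega_\eps\times\{0\}$. Combining the two one-sided regularities with this matching trace, the gluing principle delivers $\varphi_\eps\in H^1(\Omega_\eps)$. The single delicate point is the trace identity itself, which I deliberately reduce to a one-dimensional continuity statement via Proposition \ref{prop:constxi} and Lemma \ref{lemma:traces}, rather than attempting a direct trace computation on the genuinely higher-dimensional cell $Y$.
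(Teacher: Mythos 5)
Your proof is correct, and all the tools you invoke are actually available in the paper, but your route is genuinely different from the paper's own. The paper never computes a trace: it observes that for an interface point $(x_0,0)$ with $x_0\in\omega_\eps^n$ and a ball $B((x_0,0),r)\subset\Omega_\eps$ (whose horizontal projection then lies in $\omega_\eps^n$), the single formula
\begin{equation}
\varphi_\eps(x,y)=\phi(x)\,\psi\left(\frac{x-\bar x^n_\eps}{l^n_\eps},y\right)
\end{equation}
holds on the \emph{whole} ball, above and below $y=0$, precisely because $\psi$ is independent of $\xi$ for $y\le 0$; so the piecewise definition collapses into one composition of the $H^1_{\braket{\xi}}(Y_e)$ function $\psi$ with a smooth affine map, which is $H^1$ on the ball, and locality of weak differentiability finishes the proof. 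In other words, the gluing across $y=0$ is already encoded in the hypothesis $\psi\in H^1_{\braket{\xi}}(Y_e)$ --- this is exactly what the extended cell $Y_e$ was designed for. You instead unpack that hypothesis into a trace identity on the reference strip $\omega\times(-R_0,\delta_0)$ (via Proposition \ref{prop:constxi} and the continuity of the one-dimensional representative $\widetilde\psi$), transport it to each tooth, and then re-glue using the matching-traces criterion of Lemma \ref{lemma:h1cont2}; every step is legitimate, and that gluing lemma is proved in the paper's appendix, so there is no gap. What the paper's argument buys is brevity: no trace operators and no gluing lemma at all. What yours buys is modularity: it only uses the two one-sided pieces $\psi^a$, $\psi^b$ together with equality of their boundary values, so it would survive unchanged if the test functions were prescribed as two separate pieces with matching traces rather than as a single function on $Y_e$, and it makes explicit the interface compatibility that the limit space $H(\theta)$ is built to encode. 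The slight cost is that you first convert the given ``glued'' regularity into a trace equality and then glue it back, a detour the paper avoids.
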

\begin{proof}
	By the chain rule, we trivially obtain $\varphi_\varepsilon$ belongs to  $H^1(\Omega_\varepsilon^a)$ and $H^1(\Omega_\varepsilon^b)$.
	Now, take $x\in \omega_\varepsilon^n$, and consider $r>0$ such that $B_{\R^{N+1}}((x,0),r)\subset \Omega_\varepsilon$. In particular, $B_{\RN}(x,r)\subset \omega_\varepsilon^n$. Then, we have that
	$$\varphi_\varepsilon(x,y)=\phi(x)\psi\left(\frac{x-\bar x_\varepsilon^n}{l^n_\varepsilon},y\right) \qquad (x,y)\in B_{\R^{N+1}}((x,0),r)$$
	because, when $y\leq 0$, $\varphi$ is independent of $\xi$. Therefore, $\varphi_\varepsilon\in H^1(B_{\R^{N+1}}((x,0),r))$. As $\omega_\varepsilon^n$ and $x\in \omega_\varepsilon^n$ were arbitrary, $\varphi\in H^1 (\Omega_\varepsilon)$.
\end{proof}

The following proposition shows how the test functions $ U_\varepsilon(\phi, \psi)$, constructed from $\phi$ and $\psi$, converge in the unfolded domain to the product $\phi \psi^a$ after applying the unfolding operator.
\begin{prop}
	\label{prop:test}
	Let $\phi$ and $\psi$ be as in Definition \ref{def:adapatedtest}. Let $\varphi_\varepsilon= U_\varepsilon((\phi,\psi))$. Then,
	\begin{equation}
		\label{eqn:testeq3}
		\lim_{\varepsilon\to 0}\int_{\Omega'}\norm{\te(\varphi_\varepsilon)(x)-\chi_{\omega_\varepsilon}(x)\phi(x)\psi^a}_{H^1(Y)}^2dx=0
	\end{equation}
\end{prop}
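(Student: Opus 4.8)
The plan is to compute the difference inside the integral explicitly and show that, in the $H^1(Y)$ norm, it is pointwise of order $\varepsilon$ uniformly in $x$; integrating over the bounded set $\Omega'$ then yields \eqref{eqn:testeq3}. Write $\varphi_\varepsilon=U_\varepsilon(\phi,\psi)$; by Lemma \ref{lemma:adaptedtest} and Proposition \ref{prop:prop} we have $\te(\varphi_\varepsilon)\in L^2(\Omega';H^1(Y))$, so the integrand is well defined. Using the explicit formula for $\te(U_\varepsilon(\phi,\psi))$ stated just after Definition \ref{def:adapatedtest}, I would set $D_\varepsilon(x)\defeq \te(\varphi_\varepsilon)(x,\cdot,\cdot)-\chi_{\omega_\varepsilon}(x)\phi(x)\psi^a\in H^1(Y)$. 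For $x\notin\omega_\varepsilon$ both terms vanish, so $D_\varepsilon(x)\equiv 0$ and it suffices to estimate $\norm{D_\varepsilon(x)}_{H^1(Y)}$ for $x\in\omega_\varepsilon^n$, where $D_\varepsilon(x)(\xi,y)=\big(\phi(\bar x^n_\varepsilon+l^n_\varepsilon\xi)-\phi(x)\big)\psi^a(\xi,y)$.

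The next step records two pointwise estimates stemming from the smoothness of $\phi$. Since all points $x\in\omega_\varepsilon$ and $\bar x^n_\varepsilon+l^n_\varepsilon\xi$ (with $(\xi,y)\in Y$, hence $\abs{\xi}<R_1$) lie in a fixed bounded neighbourhood of $\Omega'$ for $\varepsilon$ small, $\phi$ is Lipschitz there; combined with $\abs{\bar x^n_\varepsilon+l^n_\varepsilon\xi-x}=\abs{z_\varepsilon(x,\xi)}\leq C\varepsilon$ from \eqref{eqn:zeps}, this gives $\abs{\phi(\bar x^n_\varepsilon+l^n_\varepsilon\xi)-\phi(x)}\leq C\varepsilon$. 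Moreover $\nabla_\xi\big[\phi(\bar x^n_\varepsilon+l^n_\varepsilon\xi)\big]=l^n_\varepsilon\,\nabla\phi(\bar x^n_\varepsilon+l^n_\varepsilon\xi)$, so $\abs{\nabla_\xi\big[\phi(\bar x^n_\varepsilon+l^n_\varepsilon\xi)\big]}\leq C\varepsilon$ by \eqref{eqn:ass1}, with $C$ independent of $x$, $n$ and $\varepsilon$.

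Now I would compute the three contributions to $\norm{D_\varepsilon(x)}_{H^1(Y)}^2$ for $x\in\omega_\varepsilon^n$. The $L^2(Y)$ term and the $\partial_y$ term are immediate, since $\partial_y D_\varepsilon(x)=\big(\phi(\bar x^n_\varepsilon+l^n_\varepsilon\xi)-\phi(x)\big)\partial_y\psi^a$, so both are bounded by $C\varepsilon$ times $\abs{\psi^a}$ and $\abs{\partial_y\psi^a}$ respectively. The key point is the $\xi$-derivative: because $\phi(\bar x^n_\varepsilon+l^n_\varepsilon\,\cdot)$ is smooth and $\psi^a\in H^1(Y)$, the weak product rule gives $\nabla_\xi D_\varepsilon(x)=l^n_\varepsilon\nabla\phi(\bar x^n_\varepsilon+l^n_\varepsilon\xi)\,\psi^a+\big(\phi(\bar x^n_\varepsilon+l^n_\varepsilon\xi)-\phi(x)\big)\nabla_\xi\psi^a$; here I invoke $\psi^a\in H^1_{\braket{\xi}}(Y)$, so $\nabla_\xi\psi^a=0$ and only the first, $O(\varepsilon)\abs{\psi^a}$, term survives. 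Collecting the three bounds and using $\nabla_\xi\psi^a=0$ once more to identify $\int_Y(\abs{\psi^a}^2+\abs{\partial_y\psi^a}^2)=\norm{\psi^a}_{H^1(Y)}^2$, I obtain $\norm{D_\varepsilon(x)}_{H^1(Y)}^2\leq C\varepsilon^2\norm{\psi^a}_{H^1(Y)}^2$ for a.e.\ $x\in\omega_\varepsilon$, uniformly in $x$, $n$ and $\varepsilon$.

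Finally I would integrate over $\Omega'$: since $D_\varepsilon(x)=0$ off $\omega_\varepsilon$ and $\abs{\omega_\varepsilon}\leq\abs{\Omega'}$,
$$\int_{\Omega'}\norm{D_\varepsilon(x)}_{H^1(Y)}^2\,dx=\int_{\omega_\varepsilon}\norm{D_\varepsilon(x)}_{H^1(Y)}^2\,dx\leq \abs{\Omega'}\,C\varepsilon^2\norm{\psi^a}_{H^1(Y)}^2,$$
which tends to $0$ as $\varepsilon\to0$, establishing \eqref{eqn:testeq3}. I do not expect any genuine obstacle: the computation closely parallels Lemma \ref{lemma:fconv}, the only points requiring care being the justification of the weak product rule for $\nabla_\xi D_\varepsilon$ (licit because $\phi$ is smooth) and the systematic use of $\nabla_\xi\psi^a=0$, which simultaneously removes the potentially $O(1)$ term $\big(\phi(\cdots)-\phi(x)\big)\nabla_\xi\psi^a$ and lets the surviving bound be expressed through the full $H^1(Y)$ norm of $\psi^a$.
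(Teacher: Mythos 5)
Your proof is correct and follows essentially the same route as the paper's: identify $\te(\varphi_\varepsilon)(x,\xi,y)=\phi(\bar x^n_\varepsilon+l^n_\varepsilon\xi)\psi^a(\xi,y)$ on $\omega_\varepsilon^n$, use $\abs{z_\eps(x,\xi)}\leq C\varepsilon$ and the smoothness of $\phi$ to bound the difference in $H^1(Y)$ by $C\varepsilon\norm{\psi^a}_{H^1(Y)}$ uniformly in $x$, and integrate over $\Omega'$. Your treatment is in fact slightly more detailed than the paper's, since you spell out the $\xi$-derivative term $l^n_\varepsilon\nabla\phi(\bar x^n_\varepsilon+l^n_\varepsilon\xi)\,\psi^a$ and the role of $\nabla_\xi\psi^a=0$, which the paper leaves implicit.
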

\begin{proof}
	In this proof we omit the super-index in $\psi^a$. We have
	\begin{equation}
		\label{eqn:testeq1}
		\te(\varphi_\varepsilon)(x,\xi,y)=\phi(\bar x_\varepsilon^n+l_\varepsilon^n \xi)\psi(\xi,y) \qquad \forall (x,\xi,y)\in W, \ x\in \omega_\varepsilon^n.
	\end{equation}
	By assumption \eqref{eqn:ass1}, $\abs{x-\bar x_\varepsilon^n-l_\varepsilon^n\xi}\leq C\varepsilon$, so we have
	\begin{equation}
		\label{eqn:testeq2}
		\abs{\phi(\bar x_\varepsilon^n+l_\varepsilon^n \xi)-\phi(x)}\leq C\varepsilon \norm{\phi}_{C^1(\adh{\Omega})}
	\end{equation}
	Combining \eqref{eqn:testeq1} and \eqref{eqn:testeq2}, we obtain	
	\begin{equation}
		\norm{\te(\varphi_\varepsilon)(x)-\phi(x)\psi}_{H^1(Y)}
		\leq C\varepsilon\norm{\phi}_{C^1(\adh{\Omega})}\norm{\psi}_{H^1(Y)}
	\end{equation}
	so, integrating in $x$ we obtain the desired result.
\end{proof}

The following corollary will be useful when passing to the limit in \eqref{eqn:main} with our test functions. We remind the reader of conventions in Section~\ref{sec:notations}, which allow us to interpret equations~\eqref{eqn:testeq4} and~\eqref{eqn:testeq5}. Note that, as $\phi \in C^\infty(\mathbb{R}^N)$ and $\psi \in H^1_{\braket{\xi}}(Y)$, it makes sense to consider $\phi$, $\psi$ or their product $\phi \psi$ as a functions $\mathbb{R}^N \times Y$, and therefore in $W \subset \mathbb{R}^N \times Y$.

\begin{corollary}
	\label{cor:test}
	Let $\phi$ and $\psi$ as in Definition \ref{def:adapatedtest}, and denote $\varphi_\varepsilon= U_\varepsilon((\phi,\psi))$. If we have $v_\varepsilon\wto v$ in $L^2(W)$ and $\chi_{\omega_\varepsilon} v_\varepsilon = v_\varepsilon$ then,
	\begin{equation}
		\label{eqn:testeq4}
		\int_{W} v_\varepsilon\te(\varphi_\varepsilon)\to \int_{W}v\phi\psi^a
	\end{equation}
	and
	\begin{equation}
		\label{eqn:testeq5}
		\int_{W} v_\varepsilon\te\left(\frac{\partial \varphi_\varepsilon}{\partial y}\right)\to \int_{W}v\phi\frac{\partial \psi^a}{\partial y}
	\end{equation}
\end{corollary}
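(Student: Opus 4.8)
The plan is to combine the strong convergence of the unfolded test functions provided by Proposition~\ref{prop:test} with the weak convergence of $v_\varepsilon$, through the elementary ``weak-times-strong'' principle: if $a_\varepsilon \wto a$ weakly in $L^2(W)$ and $b_\varepsilon \to b$ strongly in $L^2(W)$, then $\int_W a_\varepsilon b_\varepsilon \to \int_W ab$. The whole corollary is essentially this principle applied twice.

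First I would extract from \eqref{eqn:testeq3} the two strong convergences I actually need. Since the $H^1(Y)$ norm dominates both the $L^2(Y)$ norm and the $L^2(Y)$ norm of the $y$-derivative, and since $\chi_{\omega_\varepsilon}$ and $\phi$ depend only on $x$ so that $\frac{\partial}{\partial y}(\chi_{\omega_\varepsilon}\phi\psi^a)=\chi_{\omega_\varepsilon}\phi\frac{\partial \psi^a}{\partial y}$, Proposition~\ref{prop:test} gives, after using property \eqref{eqn:propdery} to write $\frac{\partial}{\partial y}\te(\varphi_\varepsilon)=\te\big(\frac{\partial \varphi_\varepsilon}{\partial y}\big)$,
$$\te(\varphi_\varepsilon)-\chi_{\omega_\varepsilon}\phi\psi^a\to 0, \qquad \te\Big(\tfrac{\partial \varphi_\varepsilon}{\partial y}\Big)-\chi_{\omega_\varepsilon}\phi\tfrac{\partial \psi^a}{\partial y}\to 0 \qquad \text{strongly in } L^2(W).$$

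For \eqref{eqn:testeq4} I would split
$$\int_W v_\varepsilon\,\te(\varphi_\varepsilon)=\int_W v_\varepsilon\big(\te(\varphi_\varepsilon)-\chi_{\omega_\varepsilon}\phi\psi^a\big)+\int_W v_\varepsilon\,\chi_{\omega_\varepsilon}\phi\psi^a.$$
The first integral is bounded by $\norm{v_\varepsilon}_{L^2(W)}\norm{\te(\varphi_\varepsilon)-\chi_{\omega_\varepsilon}\phi\psi^a}_{L^2(W)}$, which tends to $0$ because a weakly convergent sequence is bounded and the second factor vanishes by the display above. In the second integral I use the hypothesis $\chi_{\omega_\varepsilon}v_\varepsilon=v_\varepsilon$ to rewrite it as $\int_W v_\varepsilon\,\phi\psi^a$; since $\phi\psi^a$ is a \emph{fixed} function of $L^2(W)$ (as $\Omega'$ is bounded, $\phi$ is bounded on $\adh{\Omega'}$ and $\psi^a\in H^1(Y)$), the weak convergence $v_\varepsilon\wto v$ gives $\int_W v_\varepsilon\,\phi\psi^a\to\int_W v\,\phi\psi^a$. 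Adding the two limits yields \eqref{eqn:testeq4}. The proof of \eqref{eqn:testeq5} is verbatim the same, with $\te(\varphi_\varepsilon)$ replaced by $\te\big(\frac{\partial \varphi_\varepsilon}{\partial y}\big)$ and $\psi^a$ by $\frac{\partial \psi^a}{\partial y}$, now invoking the second strong convergence in the display.

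There is no genuine obstacle here; the argument is routine once the strong convergence is in hand. The only points demanding a little care are the extraction of the two separate strong limits from the single $H^1(Y)$-norm statement \eqref{eqn:testeq3}, the use of \eqref{eqn:propdery} to commute $\te$ with $\partial_y$, and the role of the hypothesis $\chi_{\omega_\varepsilon}v_\varepsilon=v_\varepsilon$, which is exactly what lets us drop the factor $\chi_{\omega_\varepsilon}$ and test $v_\varepsilon$ against the $\varepsilon$-independent limit $\phi\psi^a$.
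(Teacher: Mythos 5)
Your proof is correct and follows essentially the same route as the paper: both split the integral into a term controlled by Cauchy--Schwarz together with the strong convergence from Proposition~\ref{prop:test} (using the hypothesis $\chi_{\omega_\varepsilon}v_\varepsilon=v_\varepsilon$ to insert or drop the characteristic function) and a term handled by testing the weak convergence $v_\varepsilon\wto v$ against the fixed function $\phi\psi^a$. Your explicit mention of \eqref{eqn:propdery} and of extracting the two separate strong limits from the $H^1(Y)$-norm statement makes precise what the paper compresses into ``the proof for the $y$-derivative is analogous,'' but the argument is the same.
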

\begin{proof}
	In this proof we omit the super-index in $\psi^a$. By the weak convergence of $v_\varepsilon$
	\begin{equation}
		\label{eqn:cortesteq1}
		\int_{W} (v_\varepsilon-v)\phi\psi\to 0
	\end{equation}
	when $\varepsilon\to 0$. In addition, by using Proposition \ref{prop:test},
	\begin{equation}	
		\label{eqn:cortesteq2}
		\begin{aligned}
			\abs{\int_{W}v_\varepsilon\te(\varphi_\varepsilon)-v_\varepsilon\phi\psi}&=\abs{\int_{W}v_\varepsilon\te(\varphi_\varepsilon)-v_\varepsilon\chi_\varepsilon\phi\psi}\leq
			\int_{W}\abs{v_\varepsilon}\abs{\te(\varphi_\varepsilon)-\chi_{\omega_\varepsilon}\phi\psi} \\
			&\leq \norm{v_\varepsilon}_{L^2(W)}\norm{\te(\varphi_\varepsilon)-\phi\psi\chi_{\omega_\varepsilon}}_{L^2(W)}\leq C\norm{\te(\varphi_\varepsilon)-\phi\psi\chi_{\omega_\varepsilon}}_{L^2(W)}\to 0
		\end{aligned}
	\end{equation}
	when $\varepsilon\to 0$. Hence, combining \eqref{eqn:cortesteq1} and \eqref{eqn:cortesteq2},
	\begin{equation}
		\begin{aligned}
			\abs{\int_{W}v_\varepsilon\te(\varphi_\varepsilon)-v\phi\psi}\leq 
			\abs{\int_{W}v_\varepsilon\te(\varphi_\varepsilon)-v_\varepsilon\phi\psi}	+\abs{\int_{W}(v_\varepsilon-v)\phi\psi}\to 0
		\end{aligned}
	\end{equation}
	The proof for the $y$-derivative is analogous.
\end{proof}
\subsection{Limit problem}
\label{sec:limit}

The following proposition is the first result where convergence to the limiting unfolded problem is obtained. The weak convergences established here will later be improved in order to prove the main Theorem \ref{thm:main}.
\begin{prop}
	\label{prop:main}
	Let $u_\varepsilon$ be the solution of \eqref{eqn:main}. Then, there exists $(u^a,u^b)\in H(\theta)$ such that $u_\varepsilon \to u^b$ strongly in $L^2(\Omega^b)$ and weakly in $H^1(\Omega^b)$, and $\te(u_\varepsilon)\wto \theta u^a$ weakly in $L^2(\Omega';H^1(Y))$. In addition, $(u^a,u^b)$ is the unique solution of
	\begin{equation}
		\label{eqn:limpro}
		\int_{\Omega^b}\left(\nabla u^b \nabla\varphi^b+u^b\varphi^b \right)+\int_{W}\theta\left( \frac{\partial u^a}{\partial y}\frac{\partial \varphi^a}{\partial y}+u^a \varphi^a\right)=\int_{\Omega^b}f\varphi^b+\int_{W}\theta f^*\varphi^a \qquad \forall (\varphi^a,\varphi^b)\in H(\theta)
	\end{equation}
	where $f^*(x,\xi,y)=f(x,y)$.
\end{prop}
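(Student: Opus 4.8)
The plan is to pass to the limit in the weak formulation \eqref{eqn:main} tested against the adapted test functions of Definition \ref{def:adapatedtest}, reading off each term of the limit problem \eqref{eqn:limpro}, and then to invoke Lax--Milgram to identify the limit and upgrade the convergence. First I would record the uniform a priori bound: taking $\varphi=u_\varepsilon$ in \eqref{eqn:main} and estimating $\abs{\int_{\Omega_\varepsilon}fu_\varepsilon}\le\norm{f}_{L^2(\R^{N+1})}\norm{u_\varepsilon}_{L^2(\Omega_\varepsilon)}$ yields $\norm{u_\varepsilon}_{H^1(\Omega_\varepsilon)}\le\norm{f}_{L^2(\R^{N+1})}=:C_1$, uniformly in $\varepsilon$. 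Proposition \ref{prop:convergences} then supplies, along a subsequence, a pair $(u^a,u^b)\in H(\theta)$ with $u_\varepsilon\to u^b$ in $L^2(\Omega^b)$, $u_\varepsilon\wto u^b$ in $H^1(\Omega^b)$ and $\te(u_\varepsilon)\wto\theta u^a$ in $L^2(\Omega';H^1(Y))$; in particular $\te(u_\varepsilon)\wto\theta u^a$ and $\partial_y\te(u_\varepsilon)\wto\theta\,\partial_y u^a$ weakly in $L^2(W)$.

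Next I would insert $\varphi_\varepsilon=U_\varepsilon(\phi,\psi)$, with $\phi\in C^\infty(\RN)$ and $\psi\in H^1_{\braket{\xi}}(Y_e)$ (admissible by Lemma \ref{lemma:adaptedtest}), into \eqref{eqn:main} and split the integral over $\Omega^b$ and $\Omega_\varepsilon^a$. On $\Omega^b$ the test function is the fixed, $\varepsilon$-independent function $\varphi^b(x,y)=\phi(x)\psi^b(y)$, so the weak $H^1(\Omega^b)$-convergence of $u_\varepsilon$ delivers $\int_{\Omega^b}(\nabla u^b\nabla\varphi^b+u^b\varphi^b-f\varphi^b)$ directly. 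On $\Omega_\varepsilon^a$ I would unfold via \eqref{eqn:propL2} and the product property of $\te$. Writing $\varphi^a=\phi\psi^a$, the zeroth-order and vertical-derivative terms are handled by Corollary \ref{cor:test} applied with $v_\varepsilon=\te(u_\varepsilon)$ and with $v_\varepsilon=\partial_y\te(u_\varepsilon)$ respectively (both satisfy $\chi_{\omega_\varepsilon}v_\varepsilon=v_\varepsilon$), producing $\int_W\theta(u^a\varphi^a+\partial_y u^a\,\partial_y\varphi^a)$; the source term on $\Omega_\varepsilon^a$ is treated by combining Lemma \ref{lemma:fconv}, Proposition \ref{prop:test} and the weak-$*$ convergence \eqref{eqn:ass2}, giving $\int_W\theta f^*\varphi^a$. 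Thus all the expected terms of \eqref{eqn:limpro} appear.

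The term that resists this scheme, and which I expect to be the main obstacle, is the horizontal gradient contribution $\int_{\Omega_\varepsilon^a}\nabla_x u_\varepsilon\cdot\nabla_x\varphi_\varepsilon$, which must disappear because \eqref{eqn:limpro} contains no horizontal derivative (recall $u^a,\varphi^a\in H^1_{\braket{\xi}}(Y)$). Since $\nabla_\xi\psi=0$ one has $\nabla_x\varphi_\varepsilon=\nabla\phi(x)\,\psi^a\big(\tfrac{x-\bar x^n_\varepsilon}{l^n_\varepsilon},y\big)$, whose $L^2(\Omega_\varepsilon^a)$-norm is bounded but does \emph{not} tend to zero, while $\te(\nabla_x u_\varepsilon)=\tfrac{1}{l^n_\varepsilon}\nabla_\xi\te(u_\varepsilon)$ by \eqref{eqn:propderxi}. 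The crude a priori bound $\norm{\nabla_\xi\te(u_\varepsilon)}_{L^2(W)}\le C\varepsilon$ used for the weak convergences only renders this contribution $O(1)$, so it is genuinely insufficient; here one must use the equation itself. The idea is that for the \emph{solution} the cross-sectional oscillation is much smaller than a general bounded family would allow: on each tooth the rescaled function $\te(u_\varepsilon)$ solves an elliptic equation whose transverse Laplacian carries the large factor $(l^n_\varepsilon)^{-2}$, and testing this equation against the cross-sectional fluctuation $\te(u_\varepsilon)-\langle\te(u_\varepsilon)\rangle_\xi$ (cut off near the base so as to be admissible) together with a Poincaré inequality in the sections shows that $\norm{\nabla_x u_\varepsilon}_{L^2(\Omega_\varepsilon^a)}\to0$ (indeed of order $\varepsilon$). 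Cauchy--Schwarz then annihilates the horizontal term. Establishing this refined transverse estimate is the crux of the argument.

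Combining these limits shows that $(u^a,u^b)$ satisfies \eqref{eqn:limpro} for every product pair $(\phi\psi^a,\phi\psi^b)$; since such pairs span a dense subspace of $H(\theta)$ (the coupling condition holds automatically because $\psi\in H^1_{\braket{\xi}}(Y_e)$ is continuous across $y=0$) and both sides of \eqref{eqn:limpro} are continuous in the test function, the identity extends to all of $H(\theta)$. The bilinear form on the left is exactly the square of the $H(\theta)$-norm when evaluated on the diagonal (using $\nabla_\xi u^a=0$), hence continuous and coercive, so Lax--Milgram yields a unique solution of \eqref{eqn:limpro}. Uniqueness makes the limit independent of the extracted subsequence, and the usual subsequence argument then promotes all the convergences to the full family $\varepsilon\to0$, which completes the proof.
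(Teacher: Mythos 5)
Your overall architecture coincides with the paper's: the a priori bound $\norm{u_\varepsilon}_{H^1(\Omega_\varepsilon)}\leq\norm{f}_{L^2(\R^{N+1})}$, the extraction of limits via Proposition \ref{prop:convergences}, the use of the adapted test functions $U_\varepsilon(\phi,\psi)$ with Corollary \ref{cor:test} and Lemma \ref{lemma:fconv}, the density of finite sums $\sum_i\phi_i\psi_i$ in $H(\theta)$, and Lax--Milgram plus uniqueness to upgrade from a subsequence to the whole family. You also correctly isolate the crux, namely showing $\int_W\te(\nabla_x u_\varepsilon)\te(\nabla_x\varphi_\varepsilon)\to0$. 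But your treatment of that term has a genuine gap. You propose to test the equation with the cross-sectional fluctuation $\te(u_\varepsilon)-\langle\te(u_\varepsilon)\rangle_\xi$ (cut off near the base) and use a Poincar\'e--Wirtinger inequality on horizontal sections to conclude $\norm{\nabla_x u_\varepsilon}_{L^2(\Omega_\varepsilon^a)}\to0$. This is not available under the paper's hypotheses: the model tooth $Y$ is merely an open subset of $B(0,R_1)\times(0,L)$ with Lipschitz base satisfying \eqref{eqn:delta0}; its horizontal sections may be non-Lipschitz and, crucially, \emph{disconnected} --- this is one of the paper's main geometric generalizations and is precisely what produces the graph structure of Section \ref{sec:graph}. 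For disconnected sections the average must be taken per connected component, components merge and split as $y$ varies, so $\langle\te(u_\varepsilon)\rangle_\xi$ is in general not even continuous in $y$ and the fluctuation is not an admissible $H^1$ test function; nor is there any Poincar\'e constant uniform in $y$ when sections pinch or degenerate. Even in the connected cylindrical case your sketch is incomplete: when sections vary with $y$, the derivative $\frac{d}{dy}\langle\te(u_\varepsilon)\rangle_\xi$ carries transport (boundary) terms you do not control, and the cut-off only bounds $\nabla_x u_\varepsilon$ on $\{y>\delta\}$, leaving the region near the base untreated. Finally, note that the strong statement $\norm{\nabla_x u_\varepsilon}_{L^2(\Omega_\varepsilon^a)}\to0$ is exactly what the paper can only prove \emph{after} this Proposition, via energy convergence (Theorem \ref{thm:strongconv}); it is neither needed here nor obtainable by your route.

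What is actually needed is only the weaker pairing statement, and the paper proves it in Lemma \ref{lemma:derivativex} by a Brizzi--Chalot type argument that uses no regularity or connectedness of the sections whatsoever: one tests \eqref{eqn:main} with the moment function $w_\varepsilon(x,y)=\eta(y)\sum_{j}\frac{\partial\varphi_\varepsilon}{\partial x_j}(x,y)\,(x-\bar x^n_\varepsilon)_j$ on $Y_\varepsilon^n$, extended by zero for $y\leq0$, with $\eta\in C^\infty_c((0,\infty))$. Since $\abs{x-\bar x^n_\varepsilon}\leq C\varepsilon$ on each tooth and $\nabla_\xi\psi\equiv0$, one gets $\te(w_\varepsilon)\to0$ and $\te(\partial_y w_\varepsilon)\to0$ in $L^2(W)$, so the tested equation forces $\int_W\te(\nabla_x u_\varepsilon)\te(\nabla_x w_\varepsilon)\to0$; on the other hand $\norm{\te(\nabla_x w_\varepsilon)-\eta\,\te(\nabla_x\varphi_\varepsilon)}_{L^2(W)}\leq C\varepsilon$, whence $\int_W\eta\,\te(\nabla_x u_\varepsilon)\te(\nabla_x\varphi_\varepsilon)\to0$ for every such $\eta$. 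The missing region $\{(x,\xi,y)\in W:y\leq\delta\}$ is then handled not through $u_\varepsilon$ but through the test function: $\norm{\te(\nabla_x\varphi_\varepsilon)}_{L^2(\{y\leq\delta\})}\leq\norm{\phi}_{C^1(\adh{\Omega'})}\norm{\psi}_{L^2(\{(\xi,y)\in Y:y\leq\delta\})}\to0$ as $\delta\to0$, uniformly in $\varepsilon$. Unless you can produce a sectional Poincar\'e argument valid for arbitrary, possibly disconnected sections (which seems out of reach), you should replace your step by this one; the rest of your proof then goes through as written.
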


\begin{proof}
	The proof of this result follows the method briefly outlined in Section \ref{sec:jtxoverview}. To facilitate the presentation, we divide the proof into a series of steps.
	
	\textbf{Step 1 (Convergence): }
	By using $u_\varepsilon\in H^1(\Omega_\varepsilon)$ as a test function in \eqref{eqn:main}, we obtain
	\begin{equation}
		\norm{u_\varepsilon}^2_{H^1(\Omega_\varepsilon)}=\int_{\Omega_\varepsilon}\abs{\nabla u_\varepsilon}^2 + \int_{\Omega_\varepsilon}\abs{u_\varepsilon}^2 \myeq{\eqref{eqn:main}} \int_{\Omega_\varepsilon} fu_\varepsilon  \leq \norm{f}_{L^2(\R^{N+1})}\norm{u_\varepsilon}_{H^1(\Omega_\varepsilon)}.
	\end{equation}
	Hence, we have the a priori estimate $\norm{u_\varepsilon}_{H^1(\Omega_\varepsilon)}\leq \norm{f}_{L^2(\R^{N+1})}$,
	which in particular implies by Proposition \ref{prop:prop}
	\begin{equation}
		\label{eqn:propmaineq8}
		\norm{u_\varepsilon}_{H^1(\Omega^b)}\leq \norm{f}_{L^2(\R^{N+1})}, \qquad \norm{\te(u_\varepsilon)}_{L^2(\Omega';H^1(Y))}\leq  C\norm{f}_{L^2(\R^{N+1})}.
	\end{equation}
	By Proposition \ref{prop:convergences}, there exists $(u^a, u^b)\in H(\theta)$ satisfying
	\begin{equation}
		\label{eqn:propmaineq7}
		\norm{u^b}_{H^1(\Omega^b)}\leq \norm{f}_{L^2(\R^{N+1})}, \qquad \norm{u^a}_{L^2(\Omega',\theta;H^1_{\braket{\xi}}(Y))}\leq C\norm{f}_{L^2(\R^{N+1})}
	\end{equation}
	such that, through a subsequence of $\varepsilon\to 0$, 
	\begin{equation}
		\label{eqn:propmaineq9}
		u_\varepsilon\to u^b \quad s-L^2(\Omega^b), \qquad u_\varepsilon\wto u^b \quad w-H^1(\Omega^b), \qquad \te(u_\varepsilon)\wto \theta u^a \quad w-L^2(\Omega';H^1(Y)).
	\end{equation}
	
	\noindent\textbf{Step 2 (Weak Formulation): }Now, let $\phi\in C^\infty_c(\RN)$ and $\psi\in H^1_{\braket{\xi}}(Y_e)$ and consider $\varphi_\varepsilon=U_\varepsilon(\phi,\psi)$ as in Definition \ref{def:adapatedtest}. By Lemma \ref{lemma:adaptedtest}, $\varphi_\varepsilon\in H^1(\Omega_\varepsilon)$, so we can use it as a test function in the variational formulation \eqref{eqn:main} to obtain
	\begin{equation}
		\int_{\Omega_\varepsilon} \nabla u_\varepsilon \nabla \varphi_\varepsilon+\int_{\Omega_\varepsilon}u_\varepsilon \varphi_\varepsilon = \int_{\Omega_\varepsilon} \varphi_\varepsilon f
	\end{equation}
	Now, we split the integral in $\Omega^b$ and $\Omega_\varepsilon^a$, apply the unfolding operator on the integrand of the $\Omega_\varepsilon^a$ term and obtain, using Proposition \ref{prop:prop},
	\begin{equation}
		\label{eqn:propmaineq1}
		\begin{aligned}
			\int_{\Omega^b}\left(\nabla u_\varepsilon \nabla \varphi_\varepsilon +u_\varepsilon\varphi_\varepsilon\right)+\int_{W}\left(\te(\nabla_x u_\varepsilon)\te(\nabla_x \varphi_\varepsilon)+\te(\frac{\partial u_\varepsilon}{\partial y})\te(\frac{\partial \varphi_\varepsilon}{\partial y})+\te(u_\varepsilon)\te(\varphi_\varepsilon)\right) \\
			= \int_{\Omega^b}\varphi_\varepsilon f + \int_{W}\te(\varphi_\varepsilon) \te(f)
		\end{aligned}
	\end{equation}
	
	\noindent\textbf{Step 3 (Taking limits): }Let us study each integral term in \eqref{eqn:propmaineq1}. First, using the convergences of \eqref{eqn:propmaineq9}, we have
	\begin{equation}
		\label{eqn:propmaineq2}
		\int_{\Omega^b}\left(\nabla u_\varepsilon \nabla \varphi_\varepsilon +u_\varepsilon\varphi_\varepsilon\right)=\int_{\Omega^b}\left(\nabla u_\varepsilon \nabla (\phi\psi^b) +u_\varepsilon\phi\psi^b\right)\to \int_{\Omega^b}\left(\nabla u^b \nabla(\phi\psi^b)+u^b\phi\psi^b \right)
	\end{equation}
	when $\varepsilon\to 0$. In addition, due to \eqref{eqn:propdery} and \eqref{eqn:propmaineq9}, $\te(\frac{\partial u_\varepsilon}{\partial y})=\frac{\partial\te( u_\varepsilon)}{\partial y}\wto \theta\frac{\partial u^a}{\partial y}$ in $L^2(W)$ and $\te(\frac{\partial u_\varepsilon}{\partial y})=\te(\frac{\partial u_\varepsilon}{\partial y})\chi_{\omega_\varepsilon}$. Hence, we can use Corollary \ref{cor:test} so that
	\begin{equation}
		\label{eqn:propmaineq3}
		\int_{W}\te\left(\frac{\partial u_\varepsilon}{\partial y}\right)\te\left(\frac{\partial \varphi_\varepsilon}{\partial y}\right)\to \int_{W}\theta\frac{\partial u^a}{\partial y} \phi\frac{\partial\psi^a}{\partial y}
	\end{equation}
	The same argument is applied to $\te(u_\varepsilon)$ and we obtain
	\begin{equation}
		\label{eqn:propmaineq4}
		\int_{W}\te( u_\varepsilon)\te(\varphi_\varepsilon)\to \int_{W}\theta u^a\phi\psi^a
	\end{equation}
	About the term
	\begin{equation}
		\int_{W}\te(\nabla_{x} u_\varepsilon)\te(\nabla_x \varphi_\varepsilon),
	\end{equation}
	it can be neglected when $\varepsilon\to 0$. The proof is a bit tedious, so we add it later in Lemma \ref{lemma:derivativex}.
	
	Finally, let us study the source term. We have, due to Lemma \ref{lemma:fconv}, that
	\begin{equation}
		\lim_{\varepsilon\to 0}\int_{W}\abs{\te\left(f\right)-f^*\chi_{\omega_\varepsilon}}^2=0
	\end{equation}
	where $f^*(x,\xi,y)=f(x,y)$.
	Furthermore, $f^*\chi_{\omega_\varepsilon}\wto \theta f^*$ weakly in $L^2(W)$ so $\te(f)\wto \theta f$ weakly in $L^2(W)$. Hence, we can use Corollary \ref{cor:test} and obtain
	\begin{equation}
		\label{eqn:propmaineq5}
		\int_{W}\te(f)\te(\varphi)\to \int_{W} f^*\theta \phi\psi.
	\end{equation}
	
	Combining \eqref{eqn:propmaineq1}, \eqref{eqn:propmaineq2}, \eqref{eqn:propmaineq3}, \eqref{eqn:propmaineq4}, \eqref{eqn:propmaineq5} and Lemma \ref{lemma:derivativex}, we obtain
	\begin{equation}
		\label{eqn:limiteq1}
		\int_{\Omega^b}\left(\nabla u^b \nabla(\phi\psi^b)+u^b\phi\psi^b \right)+\int_{W}\theta\left( \frac{\partial u^a}{\partial y}\frac{\partial (\phi\psi^a)}{\partial y}+u^a\phi\psi\right)=\int_{\Omega^b}f\phi\psi^b+\int_{W}\theta f^*\phi\psi^a.
	\end{equation}
	\noindent\textbf{Step 4 (Limit problem): }
	Let us obtain \eqref{eqn:limpro} from \eqref{eqn:limiteq1}. As \eqref{eqn:limiteq1} is linear in $\phi\psi$, for any linear combination of such functions, $\sum_{i=1}^k \phi_i\psi_i$ we obtain
	\begin{equation}
		\label{eqn:limiteq1b}
		\sum_{i=1}^k\left(\int_{\Omega^b}\left(\nabla u^b \nabla(\phi_i\psi_i^b)+u^b\phi_i\psi_i^b-f\phi_i\psi_i^b \right)+\int_{W}\theta\left( \frac{\partial u^a}{\partial y}\frac{\partial (\phi_i\psi_i^a)}{\partial y}+u^a\phi_i\psi_i-f^*\phi_i\psi_i^a\right)\right)=0.
	\end{equation}
	It turns out that the linear combination of functions of the form $\phi_i\psi_i$ is dense in $H(\theta)$. This is proved in Appendix \ref{app:density}. Then, let $(\varphi^a,\varphi^b)\in H(\theta)$. By Corollary \ref{cor:density}, we can find a finite number of functions $\{\phi_i\}_{i=1}^k\subset C^\infty_c(\RN)$ and $\{\psi_i\}_{i=1}^k\subset H^1_{\braket{\xi}}(Y_e)$ such that
	\begin{equation}
		\label{eqn:propmaineq6}
		\bnorm{\varphi^a-\sum_{i=1}^k \phi_i\cdot \psi^a_i}_{L^2(\Omega',\theta;H^1_{\braket{\xi}}(Y))}+\bnorm{\varphi^b-\sum_{i=1}^k \phi_i\cdot \psi^b_i}_{H^1(\Omega^b)}\leq \delta
	\end{equation}
	Then, by using Cauchy-Schwartz inequality, \eqref{eqn:propmaineq7} and \eqref{eqn:propmaineq6},
	\begin{equation}
		\label{eqn:limiteq8}
		\begin{aligned}
			\abs{\int_{\Omega^b}\left(\nabla u^b \nabla \varphi^b+u^b\varphi^b-f\varphi^b \right)-
				\sum_{i=1}^k\int_{\Omega^b}\left(\nabla u^b \nabla(\phi_i\psi_i^b)+u^b\phi_i\psi_i^b-f\phi_i\psi_i^b \right)	
			} 
			\\
			\leq \left(2\norm{u^b}_{H^1(\Omega^b)}+\norm{f}_{L^2(\Omega^b)}\right)\bnorm{\varphi^b-\sum_{i=1}^k \phi_i\cdot \psi^b_i}_{H^1(\Omega^b)}\leq C\delta \norm{f}_{L^2(\R^{N+1})}.
		\end{aligned}
	\end{equation}
	In an analogous way,
	\begin{equation}
		\label{eqn:limiteq9}
		\begin{aligned}
			\abs{\int_{W}\theta\left( \frac{\partial u^a}{\partial y}\frac{\partial \varphi^a}{\partial y}+u^a \varphi^a-f^*\varphi^a \right)-\sum_{i=1}^k\int_{W}\theta\left( \frac{\partial u^a}{\partial y}\frac{\partial (\phi_i\psi_i^a)}{\partial y}+u^a\phi_i\psi_i-f^*\phi_i\psi_i^a\right)}
			\\
			\leq
			\left(2\norm{u^a}_{L^2(\Omega',\theta;H^1_{\braket{\xi}}(Y))}+\norm{f^*}_{L^2(W,\theta)}\right)\bnorm{\varphi^b-\sum_{i=1}^k \phi_i\cdot \psi^b_i}_{L^2(\Omega',\theta;H^1_{\braket{\xi}}(Y))}\leq C\delta \norm{f}_{L^2(\R^{N+1})}.
		\end{aligned}
	\end{equation}
	Therefore, by combining \eqref{eqn:limiteq1b}, \eqref{eqn:limiteq8} and \eqref{eqn:limiteq9}, we obtain
	\begin{equation}
		\abs{
			\int_{\Omega^b}\left(\nabla u^b \nabla\varphi^b+u^b\varphi^b-f\varphi^b \right)+\int_{W}\theta\left( \frac{\partial u^a}{\partial y}\frac{\partial \varphi^a}{\partial y}+u^a \varphi^a-f^*\varphi^a\right)}\leq C\delta\norm{f}_{L^2(\R^{N+1})},
	\end{equation}
	which, as $\delta>0$ was arbitrary, leads to \eqref{eqn:limpro}.
	
	\noindent\textbf{Step 5 (Existence and uniqueness): }The existence and uniqueness of solutions therefore follows from Lax-Milgram theorem because 
	\begin{equation}
		\label{eqn:bilinear}
		a(u,\varphi)=\int_{\Omega^b}\left(\nabla u^b \nabla\varphi^b+u^b\varphi^b \right)+\int_{W}\theta\left(  \frac{\partial u^a}{\partial y}\frac{\partial \varphi^a}{\partial y}+u^a\varphi^a\right)
	\end{equation}
	is a bilinear coercive form in $H(\theta)$ and $\mathcal{L}_{f}(\varphi)=\int_{\Omega^b}f\varphi^b+\int_{W}\theta f^*\varphi^a$ is a linear form in $H(\theta)$. As $(u^a,u^b)\in H(\theta)$ is determined uniquely by \eqref{eqn:limpro}, the convergence of $u_\varepsilon$ is independent of the chosen subsequence of $\varepsilon>0$, so it is true for the whole sequence.
\end{proof}
Now, we have proved that our solutions converge to $(u^a,u^b)\in H(\theta)$. However, this convergence is not completely satisfactory as it is not strong in the upper part. We will improve this convergence in the following section. But first, let us improve the integrability of $u^a$. In principle, $u^a$ belongs to $L^2(\Omega', \theta;H^1_ {\left<\xi\right>}(Y))$, but we can improve this and show that, in fact, $u^a\in L^2(\Omega';H^1_ {\left<\xi\right>}(Y))$. However, note that,
although $u^a$, as a representative of its class of the Bochner space $L^2(\Omega',\theta;H^1_{\braket{\xi}}(Y))$, is defined in $\Theta_0$, its value there is not relevant because the class $u^a$ represents has not a well defined value in $\Theta_0$ because the weight $\theta$ vanishes. Therefore, we must redefine the value of $u^a$ in $\Theta_0$ to have an $L^2(\Omega';H^1_ {\left<\xi\right>}(Y))$ function.
\begin{prop}
	\label{prop:uaisnice}
	Let $u^a \in L^2(\Omega',\theta;H^1_ {\left<\xi\right>}(Y))$ be the function defined in Proposition \ref{prop:main}. If we redefine $u^a(x)=0$ for every $x\in \Theta_0$, then $u^a$ belongs to $L^2(\Omega';H^1_ {\left<\xi\right>}(Y))$.
\end{prop}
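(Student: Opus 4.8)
The plan is to exploit the fact that $u^a$ is not an arbitrary element of the weighted space $L^2(\Omega',\theta;H^1_{\braket{\xi}}(Y))$, but rather the solution of the limit problem \eqref{eqn:limpro}, which forces a \emph{fibre-wise} elliptic structure on $u^a(x)$ with a pointwise energy estimate that is uniform in $x$. The weighted a priori bound $\norm{u^a}_{L^2(\Omega',\theta;H^1_{\braket{\xi}}(Y))}\le C\norm{f}_{L^2(\R^{N+1})}$ only controls $u^a$ on the set where $\theta$ is bounded below, so the upgrade to the unweighted space must come from the equation itself rather than from the convergence estimates.

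First I would isolate the fibre equation. Since \eqref{eqn:limpro} holds for every $(\varphi^a,\varphi^b)\in H(\theta)$, I may test it with pairs $(\varphi^a,\varphi^b)=(\phi\,\psi,0)$, where $\phi\in C^\infty_c(\Omega')$ and $\psi\in H^1_{\braket{\xi}}(Y)$ has zero trace (so that $(\phi\psi,0)\in H(\theta)$, as its above-trace is $\phi(x)\,tr(\psi)=0$). This gives
\[\int_{\Omega'}\theta(x)\,\phi(x)\left(\int_Y\Big(\frac{\partial u^a}{\partial y}(x)\frac{\partial \psi}{\partial y}+u^a(x)\psi-f(x,\cdot)\psi\Big)\,d\xi\,dy\right)dx=0.\]
Varying $\phi$ and using separability of the zero-trace subspace of $H^1_{\braket{\xi}}(Y)$ to select a common null set, I obtain that for $\theta$-a.e. $x\in\Omega'$ the fibre $u^a(x)\in H^1_{\braket{\xi}}(Y)$ solves
\[\braket{u^a(x),\psi}_{H^1(Y)}=\int_Y f(x,\cdot)\,\psi \qquad \text{for all }\psi\in H^1_{\braket{\xi}}(Y)\text{ with } tr(\psi)=0,\]
together with the boundary condition $tr(u^a(x))={Tr}^b(u^b)(x)$ built into the definition of $H(\theta)$. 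Here I crucially use that on $H^1_{\braket{\xi}}(Y)$ one has $\nabla_\xi=0$, so the bilinear form of the fibre problem coincides with the full $H^1(Y)$ inner product and is coercive with constant $1$, independently of $x$.

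The core estimate is then pointwise. Fix a bounded linear extension $E:\R\to H^1_{\braket{\xi}}(Y)$ with $tr(E(d))=d$, for instance $E(d)(\xi,y)=d\,\eta(y)$ with $\eta\in C^\infty([0,L])$, $\eta(0)=1$ (so $E(d)$ is constant in $\xi$ and $\norm{E(d)}_{H^1(Y)}\le C_Y\abs{d}$, with $C_Y$ depending only on $Y$ and $\eta$). Writing $u^a(x)=E(d_x)+w(x)$ with $d_x={Tr}^b(u^b)(x)$ and $w(x)$ of zero trace, and testing the fibre equation with $\psi=w(x)$, Cauchy--Schwarz yields $\norm{w(x)}_{H^1(Y)}\le \norm{f(x,\cdot)}_{L^2(Y)}+C_Y\abs{d_x}$, hence
\[\norm{u^a(x)}_{H^1(Y)}\le \norm{f(x,\cdot)}_{L^2(Y)}+C_Y\,\abs{{Tr}^b(u^b)(x)}, \qquad \theta\text{-a.e. }x\in\Omega',\]
with $C_Y$ independent of $x$ and of $\theta$. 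Integrating over $\Omega'$, noting that on $\Theta_0$ the redefined $u^a$ vanishes and contributes nothing, gives
\[\int_{\Omega'}\norm{u^a(x)}_{H^1(Y)}^2\,dx\le C\left(\norm{f^*}_{L^2(W)}^2+\norm{{Tr}^b(u^b)}_{L^2(\Omega')}^2\right)\le C\left(\norm{f}_{L^2(\R^{N+1})}^2+\norm{u^b}_{H^1(\Omega^b)}^2\right)<\infty,\]
using $\int_{\Omega'}\norm{f(x,\cdot)}_{L^2(Y)}^2\,dx=\norm{f^*}_{L^2(W)}^2$ and boundedness of ${Tr}^b$ on $H^1(\Omega^b)$. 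Measurability of the redefined map $x\mapsto u^a(x)$ into $H^1(Y)$ is clear, since on $\Omega'\setminus\Theta_0$ one has $u^a=\theta^{-1}\,\widetilde u$ with $\widetilde u=\theta u^a\in L^2(\Omega';H^1(Y))$ Bochner measurable and $\theta^{-1}\chi_{\Omega'\setminus\Theta_0}$ a measurable scalar. This proves $u^a\in L^2(\Omega';H^1_{\braket{\xi}}(Y))$.

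I expect the main obstacle to be the passage from the single integrated identity to a genuine fibre-wise problem valid $\theta$-a.e.: this requires the separability/common-null-set argument and care that the trace coupling only holds $\theta$-a.e. (so the pointwise estimate is only available there, which is exactly why redefining $u^a=0$ on $\Theta_0$ is harmless). Once the fibre problem is in place, the decisive point is the observation that the coercivity constant equals $1$ and the extension bound $C_Y$ are both independent of $x$ and of $\theta$; it is precisely this uniformity that converts the weak weighted bound into a strong unweighted one.
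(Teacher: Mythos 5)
Your proof is correct, but it takes a genuinely different route from the paper's. The paper also starts by subtracting a lift of the trace datum (it uses $\adh{u}=u^a-{Tr}^a_{\braket{\xi}}(u^a)$, extended constant in $(\xi,y)$, instead of your cutoff lift $E(d_x)=d_x\eta(y)$), but it then stays \emph{global} in $x$: it tests the limit problem \eqref{eqn:limpro} with the single pair $(\adh{u}/\theta_\delta,\,0)$, where $\theta_\delta=\max(\delta,\theta)$, uses $\theta/\theta_\delta\le 1$ and Cauchy--Schwarz to get the bound $\bnorm{\sqrt{\theta/\theta_\delta}\,\adh{u}}_{L^2(\Omega';H^1_{\braket{\xi}}(Y))}\le\norm{\adh{f}}_{L^2(W)}$ uniformly in $\delta$, and concludes by monotone convergence as $\delta\to 0$. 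You instead \emph{localize} in $x$: testing with $(\phi\psi,0)$ for zero-trace $\psi$ and invoking separability to fix a common null set, you derive the fibre-wise Dirichlet problem solved by $u^a(x)$ on $Y$ for $\theta$-a.e.\ $x$, and then exploit that the coercivity constant (equal to $1$, since $\nabla_\xi$ vanishes on $H^1_{\braket{\xi}}(Y)$) and the lift constant $C_Y$ are independent of $x$ and of $\theta$. Your route costs extra machinery — the countable dense family of test functions, the common-null-set argument, and the care (which you correctly exercise) that the fibre problem and the trace coupling are only available $\theta$-a.e., which is precisely where the redefinition $u^a=0$ on $\Theta_0$ enters — whereas the paper's regularized weight $\theta_\delta$ handles the degeneracy with no fibre-wise well-posedness at all. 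In exchange, your argument yields strictly more information: the pointwise estimate $\norm{u^a(x)}_{H^1(Y)}\le\norm{f(x,\cdot)}_{L^2(Y)}+C_Y\abs{{Tr}^b(u^b)(x)}$ for $\theta$-a.e.\ $x$, from which the integrated bound follows, and it makes explicit the fibre structure of the limit problem (each fibre solves an elliptic problem on $Y$ with Dirichlet datum ${Tr}^b(u^b)(x)$ at the base), a structure the paper only brings out later in the graph interpretation of Section \ref{sec:graph}.
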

\begin{proof}
	First of all, since $(u^a,u^b)\in H(\theta)$, we have that ${Tr}^a_{\braket{\xi}}(u^a)(x)={Tr}^b(u^b)(x)$ $\theta$-a.e. $x\in \Omega'$. Hence, ${Tr}^a_{\braket{\xi}}(u^a)(x)={Tr}^b(u^b)(x)$ a.e. $x\in \Omega'\setminus \Theta_0$. As ${Tr}^b(u^b)\in L^2(\Omega')$, we have ${Tr}^a_{\braket{\xi}}(u^a)\in L^2(\Omega'\setminus \Theta_0)$, but, as $u^a(x)=0$ when $x\in \Theta_0$, we obtain ${Tr}^a_{\braket{\xi}}(u^a)\in L^2(\Omega')$. Therefore, if we define
	\begin{equation}
		\begin{aligned}
			&\adh{u}(x,\xi,y)\defeq u^a(x,\xi,y)-{Tr}^a_{\braket{\xi}}(u^a)(x) \ && a.e. \ (x,\xi,y)\in W, \\
			& \adh{f}(x,\xi,y)=f(x,y)-{Tr}^a_{\braket{\xi}}(u^a)(x) \ && a.e. \ (x,\xi,y)\in W,
		\end{aligned}
	\end{equation}
	we have $\adh{f}\in L^2(W)$ and the problem is reduced to proving that $\adh{u}$ and its derivatives are in $L^2(W)$ too.
	
	Now, let $0<\delta<1$ and define $\theta_\delta(x)\defeq \max(\delta,\theta(x))$. Consider $(\frac{\adh{u}^a}{\theta_\delta},0)\in H(\theta)$.
	Hence, we can use \eqref{eqn:limpro} with $(\frac{\adh{u}^a}{\theta_\delta},0)$ to obtain
	\begin{equation}
		\int_{W}\frac{\theta}{\theta_\delta} \left( \abs{\frac{\partial \adh{u}}{\partial y}}^2+\abs{\adh{u}}^2\right)=
		\int_{W}\frac{\theta}{\theta_\delta} \adh{f} \adh{u}\leq \left(\int_{W}\frac{\theta}{\theta_\delta}\abs{\adh{f}}^2\right)^{1/2} \left(\int_{W}\frac{\theta}{\theta_\delta}\abs{\adh{u}}^2\right)^{1/2}
	\end{equation}
	so then, as $\frac{\theta}{\theta_\delta}\leq 1$, we obtain
	\begin{equation}
		\bnorm{\sqrt{\frac{\theta}{\theta_\delta}}\adh{u}}_{L^2(\Omega';H^1_ {\left<\xi\right>}(Y))}\leq \norm{\adh{f}}_{L^2(W)}.
	\end{equation}
	Hence, as $\frac{\theta}{\theta_\delta}\adh{u}^2\to \adh{u}^2$ and $\frac{\theta}{\theta_\delta}\left(\frac{\partial\adh{u}}{\partial y}\right)^2\to \left(\frac{\partial\adh{u}}{\partial y}\right)^2$ pointwisely and monotonically when $\delta\to 0$, using the monotone convergence theorem we obtain $\norm{\adh{u}}_{L^2(\Omega';H^1_ {\left<\xi\right>}(Y))}\leq \norm{\adh{f}}_{L^2(W)}$.
\end{proof}
To finish the section, we state the auxiliary lemma we used in Proposition \ref{prop:main}.
\begin{lemma}
	\label{lemma:derivativex}
	Let $\phi\in C^\infty(\RN)$, $\psi\in H^1_{\braket{\xi}}(Y_e)$ and denote $\varphi_\varepsilon=U_\varepsilon(\phi,\psi)$ as in Definition \ref{def:adapatedtest}. Let $u_\varepsilon$ be the solutions of \eqref{eqn:main}. Then,
	\begin{equation}
		\int_{W}\te(\nabla_x u_\varepsilon)\te(\nabla_x \varphi_\varepsilon)\to 0
	\end{equation}
	when $\varepsilon\to 0$.
\end{lemma}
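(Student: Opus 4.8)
The plan is to reduce the statement to a single scalar estimate on the horizontal gradient of $u_\eps$ and then to produce the required smallness by an energy argument carried out tooth by tooth. Writing $\varphi_\eps=U_\eps(\phi,\psi)$, the crucial structural fact is that $\psi\in H^1_{\braket{\xi}}(Y_e)$, so $\nabla_\xi\psi^a\equiv 0$; hence in \eqref{eqn:propderxi} the factor $1/l^n_\varepsilon$ that normally accompanies a horizontal derivative disappears and $\te(\nabla_x\varphi_\eps)(x,\xi,y)=\nabla\phi(\bar x^n_\varepsilon+l^n_\varepsilon\xi)\psi^a(\xi,y)$ stays uniformly bounded in $L^2(W)$, with $\norm{\nabla_x\varphi_\eps}_{L^2(\Omega_\varepsilon^a)}\le C$. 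Using the product rule of Proposition~\ref{prop:prop} componentwise together with the integral-preservation identity behind \eqref{eqn:propL2}, I would rewrite $\int_W\te(\nabla_x u_\eps)\te(\nabla_x\varphi_\eps)=\int_{\Omega_\varepsilon^a}\nabla_x u_\eps\cdot\nabla_x\varphi_\eps$ and estimate by Cauchy--Schwarz:
\begin{equation}
	\abs{\int_{W}\te(\nabla_x u_\eps)\,\te(\nabla_x\varphi_\eps)}\le\norm{\nabla_x u_\eps}_{L^2(\Omega_\varepsilon^a)}\,\norm{\nabla_x\varphi_\eps}_{L^2(\Omega_\varepsilon^a)}\le C\,\norm{\nabla_x u_\eps}_{L^2(\Omega_\varepsilon^a)}.
\end{equation}
Since $\norm{\te(\nabla_x u_\eps)}_{L^2(W)}=\norm{\nabla_x u_\eps}_{L^2(\Omega_\varepsilon^a)}$ by \eqref{eqn:propL2}, the whole lemma reduces to the single claim that the horizontal gradient on the teeth is asymptotically negligible, i.e. $\norm{\nabla_x u_\eps}_{L^2(\Omega_\varepsilon^a)}\to 0$.

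The point is that boundedness is not enough and the global a priori bound $\norm{u_\eps}_{H^1(\Omega_\eps)}\le\norm{f}_{L^2(\R^{N+1})}$ only gives that; likewise $\norm{\nabla_\xi\te(u_\eps)}_{L^2(W)}\le C\varepsilon$ from Proposition~\ref{prop:convergences} is insufficient, because recovering $\nabla_x u_\eps$ from $\nabla_\xi\te(u_\eps)$ costs the uncontrolled factor $1/l^n_\varepsilon$. To genuinely gain smallness I would exploit that each tooth is thin in the horizontal directions (diameter $\le 2R_1 l^n_\varepsilon\le C\varepsilon$) through a Poincar\'e--Wirtinger inequality on the horizontal slices $(Y^n_\varepsilon)_y=\{x\in\RN:(x,y)\in Y^n_\varepsilon\}$. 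Setting $\bar u^n_\varepsilon(y)=\fint_{(Y^n_\varepsilon)_y}u_\eps(\cdot,y)$, one has $\norm{u_\eps-\bar u^n_\varepsilon}_{L^2(Y^n_\varepsilon)}\le C\,l^n_\varepsilon\norm{\nabla_x u_\eps}_{L^2(Y^n_\varepsilon)}$, and $\nabla_x(u_\eps-\bar u^n_\varepsilon)=\nabla_x u_\eps$. Testing the equation on $Y^n_\varepsilon$ with $u_\eps-\bar u^n_\varepsilon$, the homogeneous Neumann condition annihilates the lateral and top boundary and leaves
\begin{equation}
	\norm{\nabla_x u_\eps}_{L^2(Y^n_\varepsilon)}^2\le\int_{Y^n_\varepsilon}(f-u_\eps)(u_\eps-\bar u^n_\varepsilon)\;-\;\int_{\omega^n_\varepsilon\times\{0\}}\frac{\partial u_\eps}{\partial y}\,(u_\eps-\bar u^n_\varepsilon).
\end{equation}
Summing over $n$ and inserting the Poincar\'e bound, the first term is controlled by $C(\max_n l^n_\varepsilon)\norm{f-u_\eps}_{L^2(\Omega_\varepsilon^a)}\norm{\nabla_x u_\eps}_{L^2(\Omega_\varepsilon^a)}\le C\varepsilon\,\norm{\nabla_x u_\eps}_{L^2(\Omega_\varepsilon^a)}$, which carries the decay I am after.

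The hard part — and, I expect, precisely the ``tedious'' computation the authors refer to — is the remaining interface term $\sum_n\int_{\omega^n_\varepsilon\times\{0\}}\partial_y u_\eps\,(u_\eps-\bar u^n_\varepsilon)$, living on the base where each tooth meets $\Omega^b$. There $u_\eps-\bar u^n_\varepsilon$ has zero horizontal average on the base, so I would again use the horizontal thinness, now through an anisotropic (scaled) trace inequality passing from the base into the bulk of the tooth, together with the uniform $H^1(\Omega_\eps)$ bound (and, if necessary, interior elliptic regularity to make sense of $\partial_y u_\eps$ on the junction), to show that this sum is also $O(\varepsilon)$, or at least $o(1)$. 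Closing the resulting inequality $\norm{\nabla_x u_\eps}_{L^2(\Omega_\varepsilon^a)}^2\le C\varepsilon\,\norm{\nabla_x u_\eps}_{L^2(\Omega_\varepsilon^a)}+o(1)$ then forces $\norm{\nabla_x u_\eps}_{L^2(\Omega_\varepsilon^a)}\to 0$, and the Cauchy--Schwarz bound of the first step concludes the proof. The delicate bookkeeping lies entirely in reconciling the different horizontal and vertical scalings at the tooth--body junction when estimating this base/trace contribution.
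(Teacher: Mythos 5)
Your opening reduction is sound as far as it goes: since $\nabla_\xi\psi^a\equiv 0$, the unfolded gradient $\te(\nabla_x\varphi_\varepsilon)$ is bounded in $L^2(W)$ uniformly in $\varepsilon$, so by Cauchy--Schwarz and the isometry \eqref{eqn:propL2} the lemma would follow from $\norm{\nabla_x u_\varepsilon}_{L^2(\Omega^a_\varepsilon)}\to 0$. That claim is indeed true, but in the paper it is part of Theorem \ref{thm:strongconv}, which is proved \emph{after} and \emph{by means of} the present lemma (the lemma is needed to identify the limit problem in Proposition \ref{prop:main}, which is needed for the energy convergence, which finally gives the strong convergence). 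So you must prove it independently, and that is exactly where your argument has genuine gaps.

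The fatal step is the slice-wise Poincar\'e--Wirtinger inequality $\norm{u_\varepsilon-\bar u^n_\varepsilon}_{L^2(Y^n_\varepsilon)}\leq C\,l^n_\varepsilon\norm{\nabla_x u_\varepsilon}_{L^2(Y^n_\varepsilon)}$. The horizontal sections of the model tooth $Y$ are \emph{not} assumed connected --- allowing disconnected sections is precisely the class of geometries this paper is designed to cover, and it is the reason the limit problem must be read as a PDE on a graph (Section \ref{sec:graph}). On a disconnected slice the inequality is simply false: take a function equal to two different constants on two components of the slice; the left-hand side is positive while the right-hand side vanishes. Even when every slice is connected, the rescaled constant is $l^n_\varepsilon\sup_y C_{PW}(Y_y)$, and nothing in the assumptions on $Y$ makes $\sup_y C_{PW}(Y_y)$ finite (slices may pinch near the bifurcation heights). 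There are two further problems: the slice average $\bar u^n_\varepsilon(y)$ can jump at heights where the topology or the measure of the section changes (the section measure is only lower semicontinuous), so $u_\varepsilon-\bar u^n_\varepsilon$ need not belong to $H^1(Y^n_\varepsilon)$ and is not an admissible test function; and the base term $\sum_n\int_{\omega^n_\varepsilon\times\{0\}}\partial_y u_\varepsilon\,(u_\varepsilon-\bar u^n_\varepsilon)$, which you defer to an unspecified ``anisotropic trace inequality'', is exactly the $O(1)$ flux that couples the teeth to the body in the limit problem, so its smallness cannot be presumed; it would have to come from the same zero-mean control that fails. The paper's proof sidesteps all of this: it never estimates $\nabla_x u_\varepsilon$ by itself, but inserts into \eqref{eqn:main} the Brizzi--Chalot-type test function $w_\varepsilon=\eta(y)\sum_j \partial_{x_j}\varphi_\varepsilon\,(x-\bar x^n_\varepsilon)_j$, which is $O(\varepsilon)$ in $L^2(W)$ together with $\partial_y w_\varepsilon$, while $\nabla_x w_\varepsilon=\eta\,\nabla_x\varphi_\varepsilon+O(\varepsilon)$; passing to the limit in the weak formulation and then removing the cutoff $\eta$ (using that $\norm{\te(\nabla_x\varphi_\varepsilon)}_{L^2(W_\delta)}\to 0$ as $\delta\to 0$ uniformly in $\varepsilon$) yields the paired convergence directly, without ever needing $\norm{\nabla_x u_\varepsilon}_{L^2(\Omega^a_\varepsilon)}\to 0$.
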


\begin{proof}	
	The idea of this proof is inspired by the techniques introduced in \cite{brizzichalot} and its use in \cite[Section 7]{gaudiello}.
	First of all, take $\eta\in C^\infty_c((0,\infty))$ and define
	\begin{equation}
		w_\varepsilon(x,y)=\left\{
		\begin{aligned}
			& \eta(y)\sum_{j=1}^N \frac{\partial \varphi_\varepsilon }{\partial x_j}(x,y)\cdot (x-\bar x_\varepsilon^n)_j \qquad && (x,y)\in Y_\varepsilon^n \\
			& 0 \qquad && y\leq 0
		\end{aligned}
		\right.
	\end{equation}
	where $(x-\bar x_\varepsilon^n)_j$ represents the $j$-th component of $(x-\bar x_\varepsilon^n)$. Then, $w_\varepsilon\in H^1(\Omega_\varepsilon)$ and its support is concentrated in $\Omega_\varepsilon^a$. We use then the weak formulation \eqref{eqn:main} with $w_\varepsilon$ and apply the unfolding operator
	\begin{equation}
		\label{eqn:derivativexeq2}
		\int_{W}\left(\te(\nabla_{x} u_\varepsilon)\te(\nabla_x w_\varepsilon)+\te(\frac{\partial u_\varepsilon}{\partial y})\te(\frac{\partial w_\varepsilon}{\partial y})+\te( u_\varepsilon)\te(w_\varepsilon)\right)
		= \int_{W}\te(w_\varepsilon)\te(f).
	\end{equation}	
	
	First of all, by \eqref{eqn:ass2}, $\abs{x-\bar x_\varepsilon^n}\leq C\varepsilon$ in $Y_\varepsilon^n$, so
	\begin{equation}
		\label{eqn:derivativexeq1}
		\norm{\te(w_\varepsilon)}_{L^2(W)}\leq C\varepsilon \sum_{j=1}^N \bnorm{\te\left(\frac{\partial \varphi_\varepsilon }{\partial x_j}\right)}_{L^2(W)}.
	\end{equation}
	Now, given $(x,y)\in Y_\varepsilon^n$, we have
	$
		\frac{\partial \varphi_\varepsilon }{\partial x_j}(x,y)=\frac{\partial \phi}{\partial x_j}(x)\psi\left(\frac{x-\bar x_\varepsilon^n}{l_\varepsilon^n},y\right),
	$
	just because $\nabla_{{\xi}}\psi\equiv 0$. Therefore, given $(x,{\xi},y)\in W$, $x\in \omega_\varepsilon^i$,
	\begin{equation}
		\label{eqn:derivativexeq5}
		\te\left(\frac{\partial \varphi_\varepsilon }{\partial x^j}\right)(x,{\xi},y)=\frac{\partial \phi}{\partial x_j}(\bar x_\varepsilon^n+l_\varepsilon^n \xi)\psi(\xi,y),
	\end{equation}
	so then $\bnorm{\te\left(\frac{\partial \varphi_\varepsilon }{\partial x^j}\right)}_{L^2(W)}\leq \norm{\phi}_{C^\infty(\adh{\Omega'})}\norm{\psi}_{L^2(Y_e)}$, and, by \eqref{eqn:derivativexeq1}, we obtain 
		$
		\te(w_\varepsilon)\to 0 \qquad \text{in } L^2(W)
		$
	when $\varepsilon\to 0$. In an analogous way we obtain 
	$
		\te\left(\frac{\partial w_\varepsilon}{\partial y}\right)\to 0 \qquad \text{in } L^2(W)
	$
	when $\varepsilon\to 0$. Then, as from \eqref{eqn:propmaineq8}, we have  $\norm{\te(u_\varepsilon)}_{L^2(\Omega';H^1(Y))}\leq C\norm{f}_{L^2(\R^{N+1})}$, taking the limit when $\varepsilon\to 0$ in \eqref{eqn:derivativexeq2}, we obtain
	\begin{equation}
		\label{eqn:derivativexeq3}
		\int_{W}\te(\nabla_{x} u_\varepsilon)\te(\nabla_x w_\varepsilon)\to 0.
	\end{equation}
	Now, let us study $\te(\nabla_{x}w_\varepsilon)$. For any $k=1, \dots, N$,
	\begin{equation}
		\frac{\partial w_\varepsilon}{\partial x_k}(x,y)=\eta(y)\frac{\partial \varphi_\varepsilon }{\partial x_k}(x,y)+\eta(y)\sum_{j=1}^N (x-\bar x_\varepsilon^n)_j\cdot \frac{\partial^2  \phi}{\partial x_j\partial x_k}(x)\psi\left(\frac{x-\bar x_\varepsilon^n}{l_\varepsilon^n},y\right).
	\end{equation}
	Hence,
	\begin{equation}
		\te\left(\frac{\partial w_\varepsilon}{\partial x_k}\right)(x,{\xi},y)=\eta(y)\te\left(\frac{\partial \varphi_\varepsilon }{\partial x_k}\right)(x,{\xi},y)+\eta(y)\sum_{j=1}^N l_\varepsilon^n {\xi}_j\frac{\partial^2  \phi}{\partial x_j\partial x_k}(\bar x_\varepsilon^n+l_\varepsilon^n\xi)\psi(\xi,y)
	\end{equation}
	Now, using $l_\varepsilon^n\leq C\varepsilon$ from assumption \eqref{eqn:ass1}, the fact that $\phi\in C^\infty(\adh{\Omega'})$ and $\psi\in H^1_ {\left<\xi\right>}(Y_e))$, we have
	\begin{equation}
		\label{eqn:derivativexeq4}
		\norm{\te(\nabla_x w_\varepsilon)-\eta\te(\nabla_x\varphi_\varepsilon )}_{L^2(W)}\leq C\varepsilon\norm{\phi}_{C^2(\adh{\Omega'})}\norm{\psi}_{H^1_ {\left<\xi\right>}(Y_e)}.
	\end{equation}
	Therefore, from \eqref{eqn:derivativexeq3} and \eqref{eqn:derivativexeq4} we obtain
	$
		\int_{W}\eta\te(\nabla_{x} u_\varepsilon)\te(\nabla_x \varphi_\varepsilon )\to 0
	$
	for any $\eta\in C^\infty_c(0,\infty)$. Now, take $0\leq \eta\leq 1$ such that $\eta(y)= 1$ for every $y\geq \delta$. Then
	\begin{equation}
		\begin{aligned}
			\bigg|\int_{W}\te(\nabla_{x} u_\varepsilon) & \te(\nabla_x \varphi_\varepsilon )\bigg|  \leq \abs{\int_{W}\eta \te(\nabla_{x} u_\varepsilon)\te(\nabla_x \varphi_\varepsilon )}+ \abs{\int_{W}(1-\eta)\te(\nabla_{x} u_\varepsilon)\te(\nabla_x \varphi_\varepsilon )}
			\\
			& \leq \abs{\int_{W}\eta \te(\nabla_{x} u_\varepsilon)\te(\nabla_x \varphi_\varepsilon )}+ \norm{\te(\nabla_{x} u_\varepsilon)}_{L^2(W)}\norm{\te(\nabla_x \varphi_\varepsilon )}_{L^2(W_\delta)}
		\end{aligned}
	\end{equation}
	where $W_\delta\defeq \{(x,{\xi},y)\in W: y\leq \delta\}$, so we just need to prove that $\norm{\te(\nabla_x \varphi_\varepsilon )}_{L^2(W_\delta)}\to 0$ when $\delta\to 0$ uniformly in $\varepsilon$ to conclude the proof. From \eqref{eqn:derivativexeq5}, we have that $\norm{\te(\nabla_x \varphi_\varepsilon )}_{L^2(W_\delta)}\leq \norm{\phi}_{C^\infty(\adh{\Omega'})}\norm{\psi}_{L^2(Y_\delta)}$ where $Y_\delta=\{(\xi,y)\in Y \ : \ y\leq \delta\}$ so, as $\norm{\psi}_{L^2(Y_\delta)}\to 0$ when $\delta\to 0$, we have $\norm{\te(\nabla_x \varphi_\varepsilon )}_{L^2(W_\delta)}\to 0$ when $\delta\to 0$ uniformly in $\varepsilon$ and we conclude the proof.
\end{proof}

\subsection{Energy and strong convergence}
\label{sec:energy}

With the previous result, Theorem \ref{thm:main}, we only get strong $L^2$ and weak $H^1$ convergence of solutions $u_\varepsilon$ in the lower part $\Omega^b$. In this section, we will improve the convergences in Proposition \ref{prop:main} to obtain strong $H^1(\Omega^b)$ convergence of solutions $u_\varepsilon$ in the lower part and strong $L^2(\Omega';H^1(Y))$ convergence of the unfolded solutions $\te(u_\varepsilon)$ in the unfolded domain. We will obtain these results via the energy estimates techniques of \cite{gaudiello}. 

In the following proposition, we define the notion of energy for the problem \eqref{eqn:main} and the limit problem \eqref{eqn:limpro}, and then prove the convergence when $\varepsilon\to 0$.
\begin{prop}[\textbf{Energy convergence}]
	\label{prop:convenergy}
	Let $u_\varepsilon$ be the solution of \eqref{eqn:main} and let $(u^a,u^b)\in H(\theta)$ be as in Proposition \ref{prop:main}. Define the following energy quantities,
	\begin{equation}
		E_\varepsilon(u_\varepsilon)\defeq \int_{\Omega^b}\left(\abs{\nabla u_\varepsilon}^2  +\abs{u_\varepsilon}^2\right)+\int_{W}\left(\abs{\te(\nabla_{x} u_\varepsilon)}^2+\abs{\te\left(\frac{\partial u_\varepsilon}{\partial y}\right)}^2+\abs{\te(u_\varepsilon)}^2\right)
	\end{equation}
	and
	\begin{equation}
		E(u^a,u^b)\defeq \int_{\Omega^b}\left(\abs{\nabla u^b}^2 + \abs{u^b}^2\right)+\int_{W}\theta\left(\abs{\frac{\partial u^a}{\partial y}}^2+\abs{u^a}^2\right).
	\end{equation}
	We have that
	\begin{equation}
		\lim_{\varepsilon\to 0}E_\varepsilon(u_\varepsilon)= E(u^a,u^b).
	\end{equation}
\end{prop}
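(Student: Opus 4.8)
The plan is to exploit the fact that, by the isometry and product properties of the unfolding operator, the unfolded energy $E_\varepsilon(u_\varepsilon)$ is nothing but the genuine $H^1(\Omega_\varepsilon)$-energy of the solution, which the weak formulation pins down exactly. First I would observe that, applying the $L^2$-preservation property \eqref{eqn:propL2} to $u_\varepsilon$, to each $\frac{\partial u_\varepsilon}{\partial x_i}$ and to $\frac{\partial u_\varepsilon}{\partial y}$ (recall $|\nabla u_\varepsilon|^2=|\nabla_x u_\varepsilon|^2+|\frac{\partial u_\varepsilon}{\partial y}|^2$), one gets
\begin{equation}
	\int_{W}\left(\abs{\te(\nabla_{x} u_\varepsilon)}^2+\abs{\te(\tfrac{\partial u_\varepsilon}{\partial y})}^2+\abs{\te(u_\varepsilon)}^2\right)=\int_{\Omega_\varepsilon^a}\left(\abs{\nabla u_\varepsilon}^2+\abs{u_\varepsilon}^2\right),
\end{equation}
so that $E_\varepsilon(u_\varepsilon)=\int_{\Omega_\varepsilon}(|\nabla u_\varepsilon|^2+|u_\varepsilon|^2)=\norm{u_\varepsilon}_{H^1(\Omega_\varepsilon)}^2$, since $\Omega_\varepsilon=\Omega^b\cup\Omega_\varepsilon^a$ up to a null set. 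Testing \eqref{eqn:main} with $\varphi=u_\varepsilon$ then yields the identity $E_\varepsilon(u_\varepsilon)=\int_{\Omega_\varepsilon} f u_\varepsilon=\int_{\Omega^b} f u_\varepsilon+\int_{\Omega_\varepsilon^a} f u_\varepsilon$, which reduces the whole problem to passing to the limit in these two integrals.

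Next I would take limits termwise. The lower part is immediate: since $u_\varepsilon\to u^b$ strongly in $L^2(\Omega^b)$ (Proposition \ref{prop:main}) and $f\in L^2$, we have $\int_{\Omega^b} f u_\varepsilon\to\int_{\Omega^b} f u^b$. For the upper part I would rewrite it through the unfolding operator using the integral-preservation and product properties of $\te$ (Proposition \ref{prop:prop}, in particular \eqref{eqn:propprod}), obtaining $\int_{\Omega_\varepsilon^a} f u_\varepsilon=\int_{W}\te(f)\,\te(u_\varepsilon)$. This is a product of two merely weakly convergent sequences, which is the delicate point. To resolve it I would use Lemma \ref{lemma:fconv}, which gives $\te(f)-f^*\chi_{\omega_\varepsilon}\to 0$ strongly in $L^2(W)$; pairing this strongly vanishing difference against the bounded sequence $\te(u_\varepsilon)$ kills one factor, leaving $\int_{W}\te(f)\te(u_\varepsilon)=\int_{W}f^*\chi_{\omega_\varepsilon}\te(u_\varepsilon)+o(1)$. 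Crucially, the zero-extension built into $\te$ means $\chi_{\omega_\varepsilon}\te(u_\varepsilon)=\te(u_\varepsilon)$, so the remaining integral is $\int_{W}f^*\te(u_\varepsilon)$, now a \emph{fixed} $L^2(W)$ function tested against the weakly convergent $\te(u_\varepsilon)\wto\theta u^a$; hence it converges to $\int_{W}f^*\theta u^a$. Combining both parts gives
\begin{equation}
	\lim_{\varepsilon\to 0} E_\varepsilon(u_\varepsilon)=\int_{\Omega^b} f u^b+\int_{W}\theta f^* u^a.
\end{equation}

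Finally I would identify this limit with $E(u^a,u^b)$ by closing the loop through the limit problem. Since $(u^a,u^b)\in H(\theta)$, it is an admissible test pair $(\varphi^a,\varphi^b)=(u^a,u^b)$ in \eqref{eqn:limpro}, which produces exactly
\begin{equation}
	\int_{\Omega^b}\left(\abs{\nabla u^b}^2+\abs{u^b}^2\right)+\int_{W}\theta\left(\abs{\tfrac{\partial u^a}{\partial y}}^2+\abs{u^a}^2\right)=\int_{\Omega^b} f u^b+\int_{W}\theta f^* u^a,
\end{equation}
whose left-hand side is precisely $E(u^a,u^b)$. Comparing the two displays concludes the proof. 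The main obstacle is the single nonlinear step, the weak--weak product $\int_W\te(f)\te(u_\varepsilon)$; everything else is bookkeeping with the unfolding identities. The trick that makes it work is the combination of the strong $L^2$-approximation of $\te(f)$ from Lemma \ref{lemma:fconv} with the zero-extension property of $\te$, which together convert the problematic product into a fixed test function paired against a weakly convergent family.
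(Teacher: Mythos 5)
Your proof is correct and follows essentially the same route as the paper: both convert $E_\varepsilon(u_\varepsilon)$ into $\int_{\Omega^b} f u_\varepsilon + \int_W \te(f)\te(u_\varepsilon)$ via the weak formulation and the unfolding identities, pass to the limit using Lemma \ref{lemma:fconv} together with the convergences of Proposition \ref{prop:main}, and close by testing \eqref{eqn:limpro} with $(u^a,u^b)$. If anything, your handling of the product $\int_W \te(f)\te(u_\varepsilon)$ — using the zero-extension identity $\chi_{\omega_\varepsilon}\te(u_\varepsilon)=\te(u_\varepsilon)$ to reduce it to a fixed $L^2$ function paired against the weakly convergent sequence — is spelled out more carefully than in the paper's own (rather terse) justification.
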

\begin{proof}
	By using the weak formulation of our problem we have
	\begin{equation}
		E_\varepsilon(u_\varepsilon) = \int_{\Omega^b} u_\varepsilon f+\int_{W}\te(u_\varepsilon)\te(f)
	\end{equation}
	By Proposition \ref{prop:convergences}, $u_\varepsilon\to u^b$ strongly in $L^2(\Omega^b)$ and $\te(u_\varepsilon)\wto \theta u^a$ weakly in $L^2(W)$, by Lemma \ref{lemma:fconv}, $\int_{W}\abs{\te(f)-\chi_{\omega_\varepsilon} f^*}^2\to 0$ when $\varepsilon\to 0$ and by \eqref{eqn:ass2}, $\chi_{\omega_\varepsilon}f^*\wto \theta f^*$ in $L^2(\Omega)$, so we have
	\begin{equation}
		E_\varepsilon(u_\varepsilon)=\int_{\Omega^b} u_\varepsilon f+\int_{W}\te(u_\varepsilon)\te(f)\to \int_{\Omega^b} u^b f+\int_{W}\theta u^a f^*
	\end{equation}
	But, choosing $(\varphi^a,\varphi^b)=(u^a,u^b)$ in \eqref{eqn:limpro} we have 
	\begin{equation}
		\int_{\Omega^b} u^b f+\int_{W}\theta u^a f^*=E(u^a,u^b)
	\end{equation}
	so we are done.
\end{proof}
Now, using the convergence of the energies, we will prove that the convergences of Proposition \ref{prop:main} are strong.
\begin{theorem}
	\label{thm:strongconv}
	Let $u_\varepsilon$ be the solution of \eqref{eqn:main} and $(u^a,u^b)\in H(\theta)$ as in Proposition \ref{prop:convergences}. We have strong convergence,
	\begin{equation}
		\label{eqn:strongconveq1}
		\lim_{\varepsilon\to 0}\norm{\te(u_\varepsilon)-\chi_{\omega_\varepsilon} u^a}_{L^2(\Omega';H^1(Y))}=0, \qquad \lim_{\varepsilon\to 0}\norm{u_\varepsilon-u^b}_{H^1(\Omega^b)}\to 0.
	\end{equation}
	In particular, $\te\left(\nabla_{x} u_\varepsilon\right)\to 0$ in $L^2(W)$.
\end{theorem}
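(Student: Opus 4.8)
The plan is to run the classical ``convergence of norms plus weak convergence implies strong convergence'' argument, adapted to the fact that the ambient Hilbert structure is split between $\Omega^b$ and the unfolded domain $W$. Concretely, I would introduce the nonnegative defect quantity
\begin{equation}
	D_\varepsilon\defeq \int_{\Omega^b}\left(\abs{\nabla(u_\varepsilon-u^b)}^2+\abs{u_\varepsilon-u^b}^2\right)+\int_{W}\left(\abs{\te(\nabla_x u_\varepsilon)}^2+\abs{\te(\tfrac{\partial u_\varepsilon}{\partial y})-\chi_{\omega_\varepsilon}\tfrac{\partial u^a}{\partial y}}^2+\abs{\te(u_\varepsilon)-\chi_{\omega_\varepsilon}u^a}^2\right),
\end{equation}
whose three upper-part integrands mirror exactly the three terms of $E_\varepsilon$ (note that the limit carries no $\nabla_x$ contribution, so nothing is subtracted from the first one). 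Here $u^a\in L^2(\Omega';H^1(Y))$ after the redefinition on $\Theta_0$ given by Proposition \ref{prop:uaisnice}, so all subtracted functions are genuine elements of $L^2(W)$. Since $D_\varepsilon\ge 0$, once I prove $D_\varepsilon\to 0$ each summand tends to $0$, yielding at once $u_\varepsilon\to u^b$ in $H^1(\Omega^b)$, $\te(\nabla_x u_\varepsilon)\to 0$ in $L^2(W)$, and the $L^2$- and $\partial_y$-parts of the desired unfolded convergence.

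The core computation is to expand $D_\varepsilon=E_\varepsilon(u_\varepsilon)-2C_\varepsilon+R_\varepsilon$, where $C_\varepsilon$ gathers the mixed products and $R_\varepsilon$ the squares of the limit quantities. For $R_\varepsilon$ I would use $\chi_{\omega_\varepsilon}^2=\chi_{\omega_\varepsilon}$ and the weak-$*$ convergence $\chi_{\omega_\varepsilon}\wsto\theta$ tested against the fixed $L^1(\Omega')$ functions $x\mapsto\int_Y\abs{u^a}^2$ and $x\mapsto\int_Y\abs{\partial_y u^a}^2$, to obtain $R_\varepsilon\to E(u^a,u^b)$.

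The key point, and the main obstacle, is the cross term $C_\varepsilon$, since its upper-part contribution is tested against the $\varepsilon$-dependent functions $\chi_{\omega_\varepsilon}u^a$ and $\chi_{\omega_\varepsilon}\partial_y u^a$, so weak convergence does not apply directly. The trick is to invoke the support property of the unfolding operator, $\te(\varphi)\chi_{\omega_\varepsilon}=\te(\varphi)$, to rewrite $\int_W\te(u_\varepsilon)\chi_{\omega_\varepsilon}u^a=\int_W\te(u_\varepsilon)u^a$ and likewise for the $y$-derivative; now the test functions $u^a$ and $\partial_y u^a$ are \emph{fixed} elements of $L^2(W)$, and the weak convergences $\te(u_\varepsilon)\wto\theta u^a$ and $\te(\partial_y u_\varepsilon)=\partial_y\te(u_\varepsilon)\wto\theta\partial_y u^a$ in $L^2(W)$ give the limits $\int_W\theta\abs{u^a}^2$ and $\int_W\theta\abs{\partial_y u^a}^2$. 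The lower-part piece of $C_\varepsilon$ converges to $\norm{u^b}_{H^1(\Omega^b)}^2$ by the weak $H^1(\Omega^b)$ convergence of Proposition \ref{prop:convergences}. Hence $C_\varepsilon\to E(u^a,u^b)$, and combining with $R_\varepsilon\to E(u^a,u^b)$ and $E_\varepsilon(u_\varepsilon)\to E(u^a,u^b)$ from Proposition \ref{prop:convenergy} gives $D_\varepsilon\to 0$.

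Finally I would assemble the full $L^2(\Omega';H^1(Y))$ statement. The $L^2$- and $\partial_y$-parts of $\norm{\te(u_\varepsilon)-\chi_{\omega_\varepsilon}u^a}^2_{L^2(\Omega';H^1(Y))}$ already vanish from $D_\varepsilon\to 0$, using $\partial_y(\chi_{\omega_\varepsilon}u^a)=\chi_{\omega_\varepsilon}\partial_y u^a$. It remains to control the $\nabla_\xi$-part, which does not appear in $D_\varepsilon$: since $\nabla_\xi(\chi_{\omega_\varepsilon}u^a)=0$ because $u^a\in H^1_{\braket{\xi}}(Y)$, and, by Proposition \ref{prop:prop}(iv) together with the a priori bound $\norm{u_\varepsilon}_{H^1(\Omega_\varepsilon)}\le C$, one has $\norm{\nabla_\xi\te(u_\varepsilon)}_{L^2(W)}\le C\varepsilon\to 0$, this term also vanishes. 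Adding the three pieces yields the first convergence in \eqref{eqn:strongconveq1}; the second is already contained in $D_\varepsilon\to 0$, and $\te(\nabla_x u_\varepsilon)\to 0$ in $L^2(W)$ follows from the same defect.
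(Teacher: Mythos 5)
Your proposal is correct and follows essentially the same route as the paper's proof: the identical defect quantity, expanded as $E_\varepsilon(u_\varepsilon)+\adh{E}_\varepsilon(u^a,u^b)-2C_\varepsilon$, with Proposition \ref{prop:convenergy} for the energy term, the weak convergences of Proposition \ref{prop:convergences} for the cross term, and the support identity $\te(u_\varepsilon)\chi_{\omega_\varepsilon}=\te(u_\varepsilon)$ to turn the $\varepsilon$-dependent test functions into fixed ones. Your explicit treatment of the $\nabla_\xi$-component of the $L^2(\Omega';H^1(Y))$ norm, via $\nabla_\xi(\chi_{\omega_\varepsilon}u^a)=0$ and $\norm{\nabla_\xi\te(u_\varepsilon)}_{L^2(W)}\leq C\varepsilon$, is a detail the paper leaves implicit, but it is the same argument.
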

\begin{proof}
	First, we write \eqref{eqn:strongconveq1} in terms of the energies from Proposition \ref{prop:convenergy}.
	\begin{equation}
		\label{eqn:strongconveq6}
		\begin{aligned}
			& \int_{\Omega^b}\left(\abs{\nabla u_\varepsilon-\nabla u^b}^2  +\abs{u_\varepsilon-u^b}^2\right)+\int_{W}\left(\abs{\te(\nabla_{x} u_\varepsilon)}^2+\abs{\te(\frac{\partial u_\varepsilon}{\partial y})-\chi_{\omega_\varepsilon}\frac{\partial u^a}{\partial y}}^2+\abs{\te(u_\varepsilon)-\chi_{\omega_\varepsilon} u^a}^2\right)  \\
			& = E_\varepsilon(u_\varepsilon)+\adh{E}_\varepsilon(u^a,u^b)-2\left(\int_{\Omega^b}\left(\nabla u_\varepsilon\nabla u^b +u_\varepsilon u^b\right)+\int_{W}\left(\te(\frac{\partial u_\varepsilon}{\partial y})\frac{\partial u^a}{\partial y}+\te(u_\varepsilon) u^a\right)\right)\\
		\end{aligned}	
	\end{equation}
	where 
	\begin{equation}
		\label{eqn:strongconveq4}
		\adh{E}_\varepsilon(u^a,u^b)=\int_{\Omega^b}\left(\abs{\nabla u^b}^2 +\abs{u^b}^2\right)+\int_{W}\left(\abs{\frac{\partial u^a}{\partial y}}^2\chi_{\omega_\varepsilon}+\abs{u^a}^2\chi_{\omega_\varepsilon}\right)\to E(u^a, u^b)
	\end{equation}
	when $\varepsilon\to 0$ because $\chi_{\omega_\varepsilon}\wsto \theta$ in $L^\infty(\Omega')$. In addition, by the weak convergences of Proposition \ref{prop:convergences},
	\begin{equation}
		\label{eqn:strongconveq5}
		\begin{aligned}
			\int_{\Omega^b}\left(\nabla u_\varepsilon\nabla u^b +u_\varepsilon u^b\right)+\int_{W}\left(\te(\frac{\partial u_\varepsilon}{\partial y})\frac{\partial u^a}{\partial y}+\te(u_\varepsilon) u^a\right) \\ \to 
			\int_{\Omega^b}\left(\abs{\nabla u^b}^2 + \abs{u^b}^2\right)+\int_{W}\theta\left(\abs{\frac{\partial u^a}{\partial y}}^2+\abs{u^a}^2\right)=E(u^a,u^b)
		\end{aligned}
	\end{equation}
	Finally, using Proposition \ref{prop:convenergy}, we have $E_\varepsilon(u_\varepsilon)\to E(u^a,u^b)$, which, in combination with \eqref{eqn:strongconveq5}, \eqref{eqn:strongconveq4} and \eqref{eqn:strongconveq6}, leads to the desired result.
\end{proof}

Now that we have shown the strong convergence we can prove Theorem \ref{thm:main} as a corollary of the obtained results.\\

\begin{proof}[\textbf{Proof of Theorem \ref{thm:main}: }]
	By Proposition \ref{prop:main},
	there exist $(u^a,u^b)\in H(\theta)$ such that
	they are solutions of \eqref{eqn:thmmaineq3}, $u_\varepsilon\wto u^b$ weakly in $H^1(\Omega^b)$ and $\te(u_\varepsilon)\wto \theta u^a$ weakly in $L^2(W)$. By Theorem~\ref{thm:strongconv}, the convergence is improved so $u_\varepsilon\to u^b$ strongly in $H^1(\Omega^b)$. Hence, by definition of $\bar{u}_\varepsilon$ in $\Omega^b$, we obtain the right hand side convergence in \eqref{eqn:thmmaineq2}.
	
	Let us prove the left hand side convergence of \eqref{eqn:thmmaineq2}. By Proposition \ref{prop:uaisnice}, redefining $u^a(x)=0$ when $x\in \Theta_0$, we improve the integrability of $u^a$ so $u^a\in L^2(\Omega';H^1_{\braket{\xi}}(Y))$.
	By Theorem \ref{thm:strongconv} we improve the weak convergence of $\te(u_\varepsilon)$ to a strong convergence,
	\begin{equation}
		\label{eqn:mainproofeq0}
		\lim_{\varepsilon\to 0}\norm{\te(u_\varepsilon)-\chi_{\omega_\varepsilon} u^a}_{L^2(\Omega';H^1(Y))}=0.
	\end{equation}
	To obtain the convergence of \eqref{eqn:thmmaineq2} in $\Omega_\varepsilon^a$, note that
	$$
	\te(\bar{u}_\varepsilon)(x,{\xi},y)=\fint_{\omega_\varepsilon^n}u^a(z,{\xi},y)dz \qquad x\in \omega_\varepsilon^n, \ ({\xi},y)\in Y.
	$$
	Then,
	\begin{equation}
		\label{eqn:mainproofeq0bis}
		\begin{aligned}
			\norm{u_\varepsilon-\bar{u}_\varepsilon^a}^2_{L^2(\Omega_\varepsilon^a)} &\myeq{\eqref{eqn:propL2}} \norm{\te(u_\varepsilon)-\te(\bar{u}_\varepsilon^a)}^2_{L^2(W)}=\sum_{n=1}^{N_\varepsilon}\int_{\omega_\varepsilon^n}\int_Y\abs{u_\varepsilon(\bar{x}_\varepsilon^n+l_\varepsilon^n\xi,y)-\fint_{\omega_\varepsilon^n} u^a(z,\xi,y)dz}^2d\xi dy dx
			\\
			&=
			\sum_{n=1}^{N_\varepsilon}\abs{\omega_\varepsilon^n}\int_Y\abs{u_\varepsilon(\bar{x}_\varepsilon^n+l_\varepsilon^n\xi,y)-\fint_{\omega_\varepsilon^n} u^a(z,\xi,y)dz}^2d\xi dy
			\\
			&\leq 
			\sum_{n=1}^{N_\varepsilon}\abs{\omega_\varepsilon^n}\fint_{\omega_\varepsilon^n}\int_Y\abs{u_\varepsilon(\bar{x}_\varepsilon^n+l_\varepsilon^n\xi,y)- u^a(z,\xi,y)}^2d\xi dy dz 
			\\
			&=\sum_{n=1}^{N_\varepsilon}\int_{\omega_\varepsilon^n}\int_Y\abs{u_\varepsilon(\bar{x}_\varepsilon^n+l_\varepsilon^n\xi,y)- u^a(z,\xi,y)}^2d\xi dy dz  = \norm{\te(u_\varepsilon)-\chi_{\omega_\varepsilon}u^a}_{L^2(W)}
		\end{aligned}
	\end{equation}
	Therefore, combining \eqref{eqn:mainproofeq0} and \eqref{eqn:mainproofeq0bis} we obtain $\lim_{\varepsilon\to 0}\norm{u_\varepsilon-\bar{u}_\varepsilon^a}_{L^2(\Omega_\varepsilon^a)}$. In an analogous way, and using \eqref{eqn:propdery}, one obtains $\lim_{\varepsilon\to 0}\norm{\frac{\partial u_\varepsilon}{ \partial y}-\frac{\partial \bar{u}_\varepsilon^a}{\partial y}}_{L^2(\Omega_\varepsilon^a)}$. So, to finish the proof we only need to prove that $\lim_{\varepsilon\to 0}\norm{\nabla_{x} u_\varepsilon-\nabla_{x}\bar{u}_\varepsilon^a}_{L^2(W)}= 0$
	
	We have, due to Theorem \ref{thm:strongconv},
	\begin{equation}
		\label{eqn:mainproofeq5}
		\norm{\te\left(\nabla_{x} u_\varepsilon\right)}_{L^2(W)}\to 0,
	\end{equation}
	when $\varepsilon\to 0$. In addition, by the chain rule, for $(x,y)\in Y_\varepsilon^n$, we have
	\begin{equation}
		\label{eqn:mainproofeq6}
		\nabla_x \bar{u}_\varepsilon(x,y)=\frac{1}{l_\varepsilon^n}\left(\fint_{\omega_\varepsilon^n} \nabla_{\xi}u^a(z)dz\right)\left(\frac{x-{\bar x^n_\varepsilon}}{l_\varepsilon^n},y\right)=0,
	\end{equation}
	because $u^a(z)\in H^1_{\braket{\xi}}(Y)$ for every $z\in \Omega'$. Therefore,
	\begin{equation}
		\label{eqn:mainproofeq7}
		\lim_{\varepsilon\to 0}\norm{\nabla_{x}u_\varepsilon-\nabla_{x}\bar{u}_\varepsilon}_{L^2(\Omega_\varepsilon^a)} \myeq{\eqref{eqn:mainproofeq6}} \lim_{\varepsilon\to 0}\norm{\nabla_{x}u_\varepsilon}_{L^2(\Omega_\varepsilon^a)} \myeq{\eqref{eqn:propL2}} \lim_{\varepsilon\to 0}\norm{\te(\nabla_{x}u_\varepsilon)}_{L^2(W)} \myeq{\eqref{eqn:mainproofeq5}} 0.
	\end{equation}
	concluding the proof.
\end{proof}
The convergence of $u_\varepsilon$ in $\Omega^b$ can be improved if $f$ has better integrability properties and $\Omega^b$ is more regular. For example, if $f\in L^\infty(\R^{N+1})$ this is shown with bootstrapping regularity arguments in \cite{jtxtheta0}.

\section{The graph interpretation}
\label{sec:graph}
The solutions of the limit problem for the convergence Theorem \ref{thm:main} is split into two parts:  the lower problem, which is standard, and the convergence in the upper part, which requires to consider an auxiliary function constructed from the limit function $u^a$ defined in the unfolded domain $W$. This convergence may be non-intuitive, and in addition, it may not be clear which the classical interpretation of the limit problem \eqref{eqn:thmmaineq3} is. However, if our base teeth $Y$ is good enough, the limit problem has a beautiful interpretation in terms of graphs. This is the goal of the section, explaining this interpretation under reasonable assumptions. To make the graph interpretation more accessible, we will use images and specific examples to guide the reader. We will not focus on finding the optimal regularity conditions needed for a graph-based interpretation. Instead, we will simply adopt reasonable assumptions that allow us to obtain an interesting interpretation of the problem in terms of graph problems, without delving into overly technical details. This section is entirely dedicated to facilitate the understanding of the limit problem in terms of graphs and it is just connected to the rest of the paper via Theorem \ref{thm:graphprob}. Therefore, the reader can skip the whole section and still follow the main result of the paper. This section is strongly inspired by Section 6 of \cite{prizzi2001effect}.\\

First, we start defining what we call the projection in the $y$-axis.

\begin{wrapfigure}[14]{r}{0.4\textwidth}
	\centering
	\vspace{-1.4em}
	\includegraphics[width=0.4\textwidth]{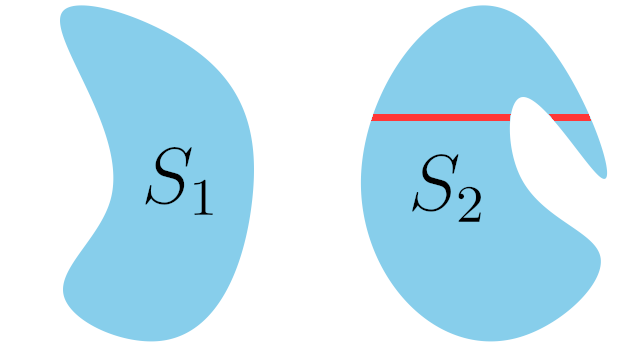}
	\caption{\small The above figure has two 2-dimensional domains. ${S}_1$ has connected horizontal sections, while ${S}_2$ does not. The red lines represent a horizontal section which is not connected.}
	\label{fig:domains}
\end{wrapfigure}
\begin{definition}
	\label{def:connhorsec}
	Given $S\subset \R^{N+1}$, we define $P_{S}:{S}\longrightarrow \R$ as $P_{S}((\xi,y))=y$, where $\xi\in \RN$ and $y\in \R$. In addition, we will say that ${S}$ has connected horizontal sections if for all $y\in \R$,  the section at height $y$, $P^{-1}_{S}(y)\subset \R^{N+1}$ is a connected set (which may be empty).

\end{definition}

Now, we define $p_{S}$ which is measuring the thickness of each connected horizontal section, which is the same as the one in \eqref{eqn:constxieq1}.
\begin{definition}
	Given ${S}\subset \R^{N+1}$ with connected horizontal sections, we define, for $y\in P_{S}({S})$,
	\begin{equation}
		\label{eqn:psigma}
		p_{S}(y)=\abs{\{\xi\in \RN: (\xi,y)\in P^{-1}_{S}(y)\}},
	\end{equation}
	where $\abs{\cdot}$ is the $N$-dimensional Lebesgue measure. The above expression is equivalent to the one used in Proposition \ref{prop:constxi},
	\begin{equation}
		\label{eqn:psigma2}
		p_{S}(y)=\abs{\{\xi\in \RN: (\xi,y)\in {S}\}}.
	\end{equation}
\end{definition}

\begin{prop}
	Let ${S}\subset \R^{N+1}$ be an open set with connected horizontal sections. Then,
	\begin{enumerate}
		\item $p_{S}$ is lower semicontinuous.
		\item $P_{S}({S})$ is an open bounded interval $(a_{S}^-,a_{S}^+)$ for certain $a_{S}^-,a_{S}^+\in \R$ with $a_{S}^-<a_{S}^+$.
	\end{enumerate}
\end{prop}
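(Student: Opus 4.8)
The plan is to treat the two assertions by different mechanisms: (i) is a measure-theoretic lower-semicontinuity statement that I would obtain from Fatou's lemma applied to the vertical slices, while (ii) is a topological fact about the projection of a connected open set.

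For (i), I would first note that, $S$ being open, for each height $y$ the horizontal slice $S_y=\{\xi\in\RN:(\xi,y)\in S\}$ is open, hence measurable, so $p_S(y)=|S_y|=\int_{\RN}\chi_S(\xi,y)\,d\xi$ is well defined. The crucial point is that the indicator $\chi_S$ is lower semicontinuous on $\R^{N+1}$ precisely because $S$ is open. Concretely, for any sequence $y_n\to y_0$ I would establish the pointwise inequality $\liminf_n\chi_S(\xi,y_n)\ge\chi_S(\xi,y_0)$ for every $\xi\in\RN$: it is trivial where $(\xi,y_0)\notin S$, and where $(\xi,y_0)\in S$ the openness of $S$ provides a whole neighbourhood contained in $S$, so $(\xi,y_n)\in S$ for all large $n$ and the left-hand side equals $1$. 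Fatou's lemma applied to the nonnegative functions $\xi\mapsto\chi_S(\xi,y_n)$ then gives
\begin{equation}
\liminf_{n\to\infty}p_S(y_n)\ge\int_{\RN}\liminf_{n\to\infty}\chi_S(\xi,y_n)\,d\xi\ge\int_{\RN}\chi_S(\xi,y_0)\,d\xi=p_S(y_0),
\end{equation}
which is sequential lower semicontinuity at $y_0$; as $\R$ is metrizable this is equivalent to lower semicontinuity of $p_S$.

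For (ii), I would use that the last-coordinate projection $\pi:\R^{N+1}\to\R$, whose restriction to $S$ is $P_S$, is both continuous and open, so that $P_S(S)=\pi(S)$ is an open subset of $\R$. Since $S$ is connected, its continuous image $P_S(S)$ is connected, and a connected open subset of $\R$ is an open interval; boundedness of $S$ makes this interval bounded, and since $S$ is a nonempty open set its projection is nonempty and open, hence a nondegenerate interval. Writing its endpoints as $a_S^-<a_S^+$ then yields $P_S(S)=(a_S^-,a_S^+)$.

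The genuinely analytic step, and the one I expect to require the most care, is the pointwise $\liminf$ estimate underlying (i), where the openness of $S$ is used in an essential way; everything else (openness and continuity of $\pi$, measurability of the slices) is routine. I would also flag that (ii) relies on the connectedness of $S$: for a merely open set with connected horizontal sections the projection need not be an interval (e.g.\ two disjoint balls placed over disjoint height ranges), so this is exactly where the domain hypothesis must enter.
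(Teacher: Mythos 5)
Your proof is correct, but it takes a genuinely different route from the paper's on both parts. For (i), the paper does not use Fatou's lemma: writing $l=p_S(y_0)$, it picks by inner regularity of Lebesgue measure a compact $K\subset S_{y_0}$ with $\abs{K}\geq l-\epsilon$, and then uses compactness of $K\times\{y_0\}$ (a tube-lemma argument inside the open set $S$) to find $\delta>0$ with $K\times(y_0-\delta,y_0+\delta)\subset S$, whence $p_S(y)>l-\epsilon$ on that interval. Your Fatou argument is softer and shorter; the paper's is more elementary (only measures of compact and open sets) and quantitative, exhibiting an explicit neighbourhood on which $p_S$ exceeds $p_S(y_0)-\epsilon$. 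For (ii), the paper makes part (ii) depend on part (i): it extends $p_S$ by zero, notes the extension is still lower semicontinuous, and obtains openness of $P_S(S)=p_S^{-1}((0,\infty))$ from that, finishing with continuity of $P_S$ and connectedness; you instead invoke the open-mapping property of the coordinate projection, which decouples (ii) from (i) and is arguably cleaner. Finally, your two flags are well taken and apply equally to the paper's own argument: the hypotheses as stated say only ``open set'', yet the interval conclusion needs $S$ connected (the paper's proof indeed writes ``as $S$ is connected''), and boundedness of the interval needs $S$ bounded, which neither the statement's hypotheses nor the paper's proof addresses; both properties do hold in every application in the paper, where $S$ is a bounded domain or a connected component of one.
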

\begin{proof}
	(i) Let $P_{S}(y_0)=l$. Given $\epsilon>0$, choose a compact set $K\subset \{\xi\in \RN: (\xi,y)\in P^{-1}_{S}(y)\}$ such that $\abs{K}\geq l-\epsilon$. Now, as $K\times\{y_0\}\subset {S}$ is a compact set, we can find $\delta>0$ such that $K\times (y_0-\delta,y_0+\delta)\subset {S}$. Therefore, for every $y\in (y_0-\delta,y_0+\delta)$ we have $p_{S}(y)>l-\epsilon$.\\
	
	\noindent (ii) $p_{S}$ can be extended to $\R$ with the same equation \eqref{eqn:psigma}, which is equivalent to extend by zero, and it is still a lower semicontinuous function. Then, due to the lower semicontinuity, $P_{S}({S})=p_{S}^{-1}((0,\infty))$ is an open set. In addition $P_{S}$ is a continuous function, so, as ${S}$ is connected, $P_{S}({S})$ is too.
\end{proof}
Regarding these domains with connected horizontal sections, it is worth recalling the results obtained in Proposition \ref{prop:constxi}, which we will use later. Adapted to the notation we are using in this section, the proposition states that, if we have ${S}\subset \R^{N+1}=\{(\xi,y):\xi\in\RN, \ y\in \R\}$ domain with connected horizontal sections and $u\in H^1_{\braket{\xi}}({S})$, then, there exists $v\in H^1((a_{S}^-, a_{S}^+), p_{S})$ such that
\begin{equation}
	\label{eqn:prizzieq1}
	u(\xi,y)=v(y), \quad \frac{\partial u}{\partial y}(\xi,y)=\frac{d v}{dy}(y) \qquad a.e. \ (\xi,y)\in {S}.
\end{equation}
In addition, 
\begin{equation}
	\label{eqn:prizzieq2}
	\norm{v}_{H^1((a_{S}^-, a_{S}^+), p_{S})}=\norm{u}_{H^1({S})}.
\end{equation}
Conversely, if $v\in H^1((a_{S}^-, a_{S}^+),p_{S})$, it induces the definition of the function $u\in H^1_{\braket{\xi}}({S})$ satisfying \eqref{eqn:prizzieq1} and \eqref{eqn:prizzieq2}.

Now, we define what we call a nicely decomposable $Y$-domain. This definition is inspired by Section 6 of \cite{prizzi2001effect}.
\begin{definition}
	\label{def:nicedomain}
	Let $Y\subset \R^{N+1}$ be a model tooth as in Section \ref{sec:desc}. We will say that $Y$ is nicely decomposed if there exist $0=a_0<a_1<\ldots<a_M$ such that
	\begin{enumerate}
		\item For every $i\in I=\{1, \ldots M\}$,  the set $Y\cap \left(\RN\times(a_{i-1},a_{i})\right)$ is made up by a finite (positive) number $m_i$ of disjoint connected components that we denote $Y_i^{1},Y_i^{2},\dots, Y_i^{m_i}$.
		\item $m_1=1$, that is, $Y_1^1=Y\cap \left(\RN\times(a_0,a_{1})\right)$ is connected.
		\item $Y_i^j$ has connected horizontal sections for $i\in I$, $j\in J_i=\{1,\ldots,m_i\}$. 
		\item For every $i\in I$, $j\in J_i$ and $y\in (a_i,a_{i+1})$, defining 
		\begin{equation}
			\label{eqn:pij}
			p_i^j(y)= \left\{
			\begin{aligned}
				& p_{Y_i^j}(y) = \abs{\{\xi\in \RN: (\xi,y)\in Y_i^j\}}, \qquad && y\in (a_{i-1},a_{i}), \\
				& \lim_{s\to a_{i}} p_{Y_i^j}(s), \qquad && y=a_{i}, \\
				& \lim_{s\to a_{i-1}} p_{Y_i^j}(s), \qquad && y=a_{i-1},
			\end{aligned}
			\right.
		\end{equation}
		we will assume that $p_i^j\in C([a_i,a_{i+1}])\cap C^{1}((a_i,a_{i+1}))$ and $p_i^j>0$. As a consequence,
		\begin{equation}
			\label{eqn:pijbound}
			p_i^j(y)>c_0, \qquad y\in (a_{i-1}, a_i).
		\end{equation}
		for some constant $c_0>0$ independent of $i\in I$ and $j\in J_i$.
	\end{enumerate}
\end{definition}
In Figure \ref{fig:example} we present a particular example of a 2-dimensional nicely decomposable $Y$-domain.

\begin{figure}[h]
	\centering
	\includegraphics[width=0.8\textwidth]{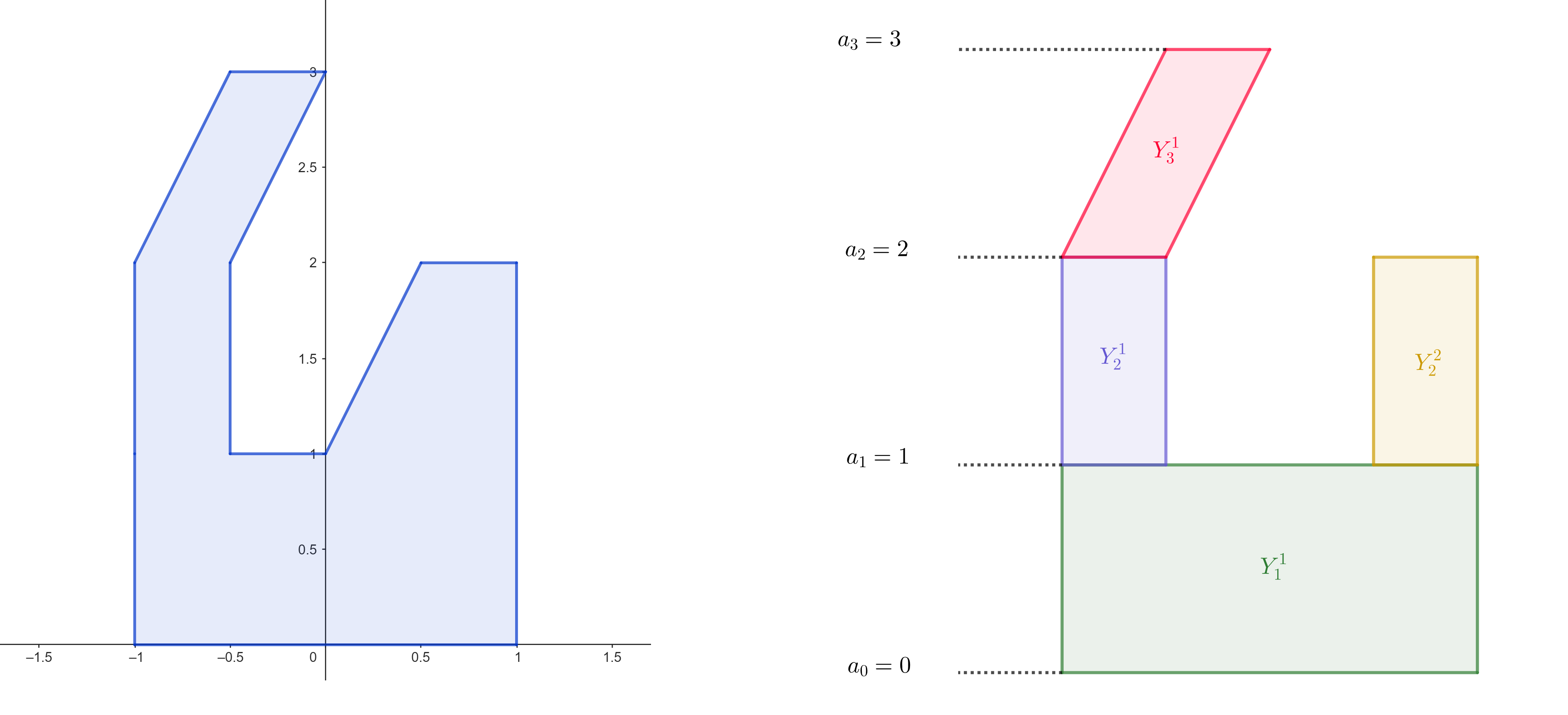}
	\caption{\small An example of a nicely decomposable domain along with a possible decomposition shown beside it. Note that each $Y_i^j$ has connected horizontal sections. We have $p_1^1 \equiv 2$, $p_2^1 \equiv 1/2$, $p_2^2(y) = (3 - y)/2$, and $p_3^1 \equiv 1$. Observe that $p_2^1(3) = 1$, not $0$, due to the definition in \eqref{eqn:pij}.}
	\label{fig:example}
\end{figure}

\par\medskip
An straightforward consequence of the first assumption is the following lemma.
\begin{lemma}
	\label{lemma:ballsjoint}
	Let $Y$ be as in Definition \ref{def:nicedomain}. Given a ball $B\subset Y$, $i\in I$ and $j,j'\in J_i$, if $B\cap Y_i^j$ and $B\cap Y_i^{j'}$ are non-empty, then $j=j'$.
\end{lemma}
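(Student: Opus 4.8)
The plan is to reduce everything to the elementary topological fact that a connected set contained in a disjoint union of open sets must lie entirely inside one of them, together with the convexity of the ball. The starting point is to record what the $Y_i^k$ really are: by Definition \ref{def:nicedomain}(i), for the fixed index $i$ the sets $Y_i^1,\dots,Y_i^{m_i}$ are precisely the connected components of $Y\cap(\RN\times(a_{i-1},a_i))$. This latter set is open, being the intersection of the open set $Y$ with the open slab $\RN\times(a_{i-1},a_i)$, and since $\R^{N+1}$ is locally connected its connected components are themselves open. Thus $\{Y_i^k\}_{k=1}^{m_i}$ is a partition of $Y\cap(\RN\times(a_{i-1},a_i))$ into pairwise disjoint open sets.

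Next I would intersect the ball with the slab. Set $C=B\cap(\RN\times(a_{i-1},a_i))$. Both $B$ and the open slab are convex, so their intersection $C$ is convex and therefore connected. Because $B\subset Y$ we have $C\subset Y\cap(\RN\times(a_{i-1},a_i))=\bigsqcup_{k=1}^{m_i}Y_i^k$; and since each $Y_i^k$ is contained in the slab, intersecting with $B$ or with $C$ gives the same thing, i.e. $B\cap Y_i^k=C\cap Y_i^k$ for every $k$. The hypotheses $B\cap Y_i^j\neq\emptyset$ and $B\cap Y_i^{j'}\neq\emptyset$ then say that $C$ (which is consequently nonempty) meets both $Y_i^j$ and $Y_i^{j'}$.

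Finally I would invoke connectedness of $C$: the family $\{C\cap Y_i^k\}_k$ consists of pairwise disjoint relatively open subsets of $C$ whose union is $C$, so connectedness forces all but one of them to be empty. As $C\cap Y_i^j$ and $C\cap Y_i^{j'}$ are both nonempty, they must be the same (unique) nonempty piece, whence $j=j'$. The only point that requires a word of justification is that the components $Y_i^k$ are open, which I expect to be the sole (and very mild) obstacle; it is handled once by local connectedness of Euclidean space. No genuine computation is needed, as the argument rests entirely on the convexity of balls and slabs.
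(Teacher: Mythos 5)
Your proof is correct and takes essentially the same route as the paper's: intersect $B$ with the open slab $\RN\times(a_{i-1},a_i)$, observe that this intersection is connected (the paper says this directly, you get it from convexity), and conclude from the fact that it sits inside the disjoint union of the components $Y_i^k$. The only cosmetic difference is that the paper invokes maximality of connected components (a connected subset of a space lies in exactly one of its components), which makes your openness/local-connectedness step unnecessary, but the argument is the same.
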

\begin{proof}
	Let $B_i=B\cap \{(\xi,y): y\in (a_{i-1},a_i)\}$. $B_i$ is a subset of the union $\cup_{j\in J_i}Y_i^j$ of disjoint connected components. Therefore, by the connectivity of $B_i$, there exists $j\in J_i$ such that $B_i\subset Y_i^j$. As for $j'\neq j$ we have $Y_i^j\cap Y_i^{j'}= \emptyset$, the result follows.
\end{proof}

\begin{wrapfigure}[21]{r}{0.4\textwidth} 
	\centering
	\vspace{-0.04\textheight}
	\includegraphics[width=0.4\textwidth]{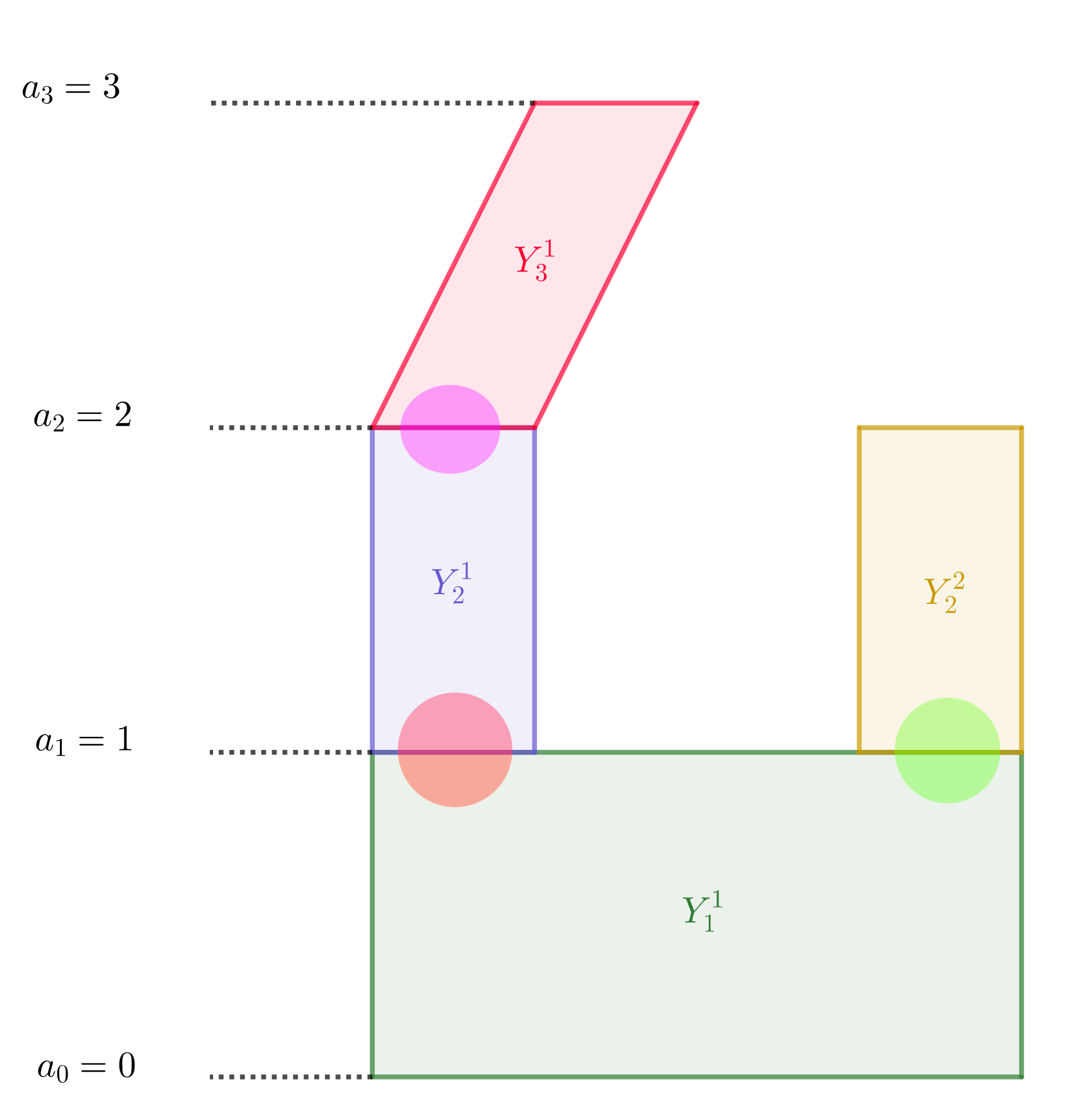}
	\caption{\small In the example shown in the figure, we can observe that $Y_1^1 \stackrel{i}{\sim} Y_2^1$, as indicated by the red ball; $Y_1^1 \stackrel{i}{\sim} Y_2^2$, as indicated by the green ball; and $Y_2^1 \stackrel{i}{\sim} Y_3^1$, as indicated by the pink ball.
	}
	\label{fig:example2}
\end{wrapfigure}

Now, we define what it means that $Y_{i}^j$ joins $Y_{i+1}^{j'}$.
\begin{definition}
	\label{def:join}
	Let $Y$ be as in Definition \ref{def:nicedomain}. Given $i\in I$, $j\in J_i$ and $j'\in J_{i+1}$, we will say that $Y_{i}^j$ joins $Y_{i+1}^{j'}$ at the $i$-th stage,
	and we will denote it like
	\begin{equation}
		Y_{i}^j \ \stackrel{i}{\sim} \ Y_{i+1}^{j'} \quad \text{or} \quad  Y_{i+1}^{j'} \ \stackrel{i}{\sim} \ Y_{i}^j.
	\end{equation}
	if there exists a ball $B\subset Y$ such that
	\begin{equation}
		\label{eqn:defjoin}
		B\cap Y_{i}^j\neq \emptyset \quad \text{and} \quad B\cap Y_{i+1}^{j'}\neq \emptyset.
	\end{equation}
\end{definition}
See Figure \ref{fig:example2} for a graphical interpretation of Definition \ref{def:join}.

Thanks to Lemma \ref{lemma:ballsjoint}, we can improve the conditions satisfied by the ball of the above definition.
\begin{lemma}
	\label{lemma:improvejoin}
	Let $Y$ be as in Definition \ref{def:nicedomain}. If, for some $i\in I$, $j\in I_j$, $j'\in J_{i+1}$, $Y_i^j \stackrel{i}{\sim} Y_{i+1}^{j'}$ in the sense of Definition \ref{def:join}, there exists $\xi_0\in \RN$ and a ball $B((\xi_0,a_i),r)\subset \mathscr{Y}_i=Y \cap \left(\RN\times (a_{i-1},a_{i+1})\right)$ which satisfies \eqref{eqn:defjoin} and
	\begin{equation}
		\label{eqn:improvejoineq1}
		B_i=\{(\xi,y)\in B: y<a_i\}\subset Y_i^j, \qquad B_{i+1}=\{(\xi,y)\in B: y>a_{i}\}\subset Y_{i+1}^{j'}.
	\end{equation}
\end{lemma}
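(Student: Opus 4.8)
The plan is to take the abstract witnessing ball $B$ furnished by Definition \ref{def:join} and to slide and shrink it into a ball centred exactly on the interface level $\{y=a_i\}$, whose two halves then fall into the prescribed components. First I would extract witness points: since $Y_i^j \stackrel{i}{\sim} Y_{i+1}^{j'}$, fix a ball $B\subset Y$ with $B\cap Y_i^j\neq\emptyset$ and $B\cap Y_{i+1}^{j'}\neq\emptyset$, and pick $(\xi_1,y_1)\in B\cap Y_i^j$ and $(\xi_2,y_2)\in B\cap Y_{i+1}^{j'}$. Because, by Definition \ref{def:nicedomain}, $Y_i^j\subset\RN\times(a_{i-1},a_i)$ and $Y_{i+1}^{j'}\subset\RN\times(a_i,a_{i+1})$, we automatically have $y_1<a_i<y_2$.

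Next I would use convexity: the closed segment joining $(\xi_1,y_1)$ and $(\xi_2,y_2)$ lies in the open ball $B$, and at the parameter $t=(a_i-y_1)/(y_2-y_1)\in(0,1)$ it meets the level $\{y=a_i\}$ at an interior point $(\xi_0,a_i)\in B\subset Y$. This $\xi_0$ is the centre we want. Since $Y$ and $B$ are open, I would then choose $r>0$ small enough that $\tilde B\defeq B((\xi_0,a_i),r)$ satisfies both $\tilde B\subset B$ and $r<\min\{a_i-a_{i-1},\,a_{i+1}-a_i\}$. The first inclusion keeps $\tilde B$ inside $Y$, while the bound on $r$ forces $\tilde B\subset\RN\times(a_{i-1},a_{i+1})$, so altogether $\tilde B\subset Y\cap(\RN\times(a_{i-1},a_{i+1}))=\mathscr{Y}_i$, as required.

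It then remains to verify the two inclusions in \eqref{eqn:improvejoineq1}, and here the key tool is the connectivity argument underlying Lemma \ref{lemma:ballsjoint}. The slice $B\cap(\RN\times(a_{i-1},a_i))$, being the intersection of the convex set $B$ with a slab, is connected, hence contained in a single connected component of $Y\cap(\RN\times(a_{i-1},a_i))$; since it contains $(\xi_1,y_1)\in Y_i^j$, that component must be $Y_i^j$. As $\tilde B\subset B$ and $\tilde B\subset\RN\times(a_{i-1},a_{i+1})$, the lower half $\tilde B_i=\{(\xi,y)\in\tilde B:y<a_i\}$ is contained in $B\cap(\RN\times(a_{i-1},a_i))\subset Y_i^j$; the symmetric argument at stage $i+1$ yields $\tilde B_{i+1}\subset Y_{i+1}^{j'}$. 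Finally, both half-balls are non-empty because $\tilde B$ is centred on $\{y=a_i\}$, which simultaneously gives \eqref{eqn:defjoin} for $\tilde B$ and completes the verification.

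The only delicate point — and the closest thing to an obstacle — is the bookkeeping of the level constraints, namely ensuring that the shrunk ball stays inside the double slab $\mathscr{Y}_i$ while each of its two halves lands in the correct single component. Both requirements are controlled purely by convexity of balls together with Lemma \ref{lemma:ballsjoint}, so no genuine estimate or analytic difficulty arises; the argument is essentially a geometric separation statement.
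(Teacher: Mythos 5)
Your proof is correct and follows essentially the same route as the paper's: take the witnessing ball from Definition \ref{def:join}, locate a point $(\xi_0,a_i)$ on the interface inside it (you make the convexity/segment argument explicit where the paper leaves it implicit), recenter a small ball there inside $\mathscr{Y}_i$, and use the connectivity argument of Lemma \ref{lemma:ballsjoint} to force each half-ball into the correct component. The only cosmetic difference is that the paper cites Lemma \ref{lemma:ballsjoint} as a black box applied to the original ball, while you rerun its connectivity argument on the slab slice; the two are equivalent.
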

\begin{proof}
	As $Y_i^j \stackrel{i}{\sim} Y_{i+1}^{j'}$, by Definition \ref{def:join}, there exists a ball $\tilde{B}\subset Y$ satisfying $\tilde{B}\cap Y_{i}^j\neq \emptyset$ and $\tilde{B}\cap Y_{i+1}^{j'}\neq \emptyset$. 
	Therefore, there exists $\xi_0\in \RN$ such that $(\xi_0,a_i)\in \tilde{B}$. Take then $B=B((\xi_0,a_i),r)$ and choose $r>0$ small enough such that $B\subset \{(\xi,y)\in \tilde{B}: a_{i-1}<y<a_{i+1}\}\subset \mathscr{Y}_i$. By Lemma \ref{lemma:ballsjoint}, for any $\tilde{j}\neq j$ we have $\tilde{B}\cap Y_i^{\tilde{j}}=\emptyset$. Therefore, since $B_i\subset\tilde{B}$, for any $\tilde{j}\neq j$, $B_i\cap Y_i^{\tilde{j}}=\emptyset$, so  $B_i\subset Y_i^j$ necessarily. In an analogous way, $B_{i+1}\subset Y_{i+1}^{j'}$.
\end{proof}

Now, we need to define the joints at the i-th stage.
\begin{definition}
	\label{def:joint}
	Let $Y$ be as in Definition \ref{def:nicedomain}. Given $i\in I$, $i\neq M$, consider the connected components of $\mathscr{Y}_i=Y \cap \left(\RN\times (a_{i-1},a_{i+1})\right)$ and denote them as $$T_i^{1},T_i^{2},\dots T_i^{r_i}\subset Y \cap \left(\RN\times (a_{i-1},a_{i+1})\right).$$ Then, given any $Y_{i}^{j}$ for some $j\in J_i$, there exists $k\in K_i=\{1,\dots, r_i\}$ such that $Y_{i}^{j}\subset T_i^k$. In the same way, given any $Y_{i+1}^{j}$ for some $j\in J_{i+1}$, there exists $k\in K_i$ such that $Y_{i+1}^{j}\subset T_i^k$. Therefore, given $k\in K_i$, we can define the set of indices
	\begin{equation}
		B_i^k\defeq \{j\in\{1,\dots,m_{i-1}\}: Y_{i}^j\subset T_i^k\}, \qquad A_i^k\defeq \{j\in\{1,\dots,m_{i}\}: Y_{i+1}^j\subset T_i^k\}.
	\end{equation}
	We will call them the set of indices of the \textbf{lower k-th joint at the i-th stage} and the set of indices of the \textbf{upper k-th joint at the i-th stage}. Note that $A$ and $B$ stand for \textit{above} and \textit{below}.
\end{definition}
In Figure \ref{fig:example3} we present the sets defined above for the example of Figure \ref{fig:example}.
\begin{figure}[H]
	\centering
	\includegraphics[width=1\textwidth]{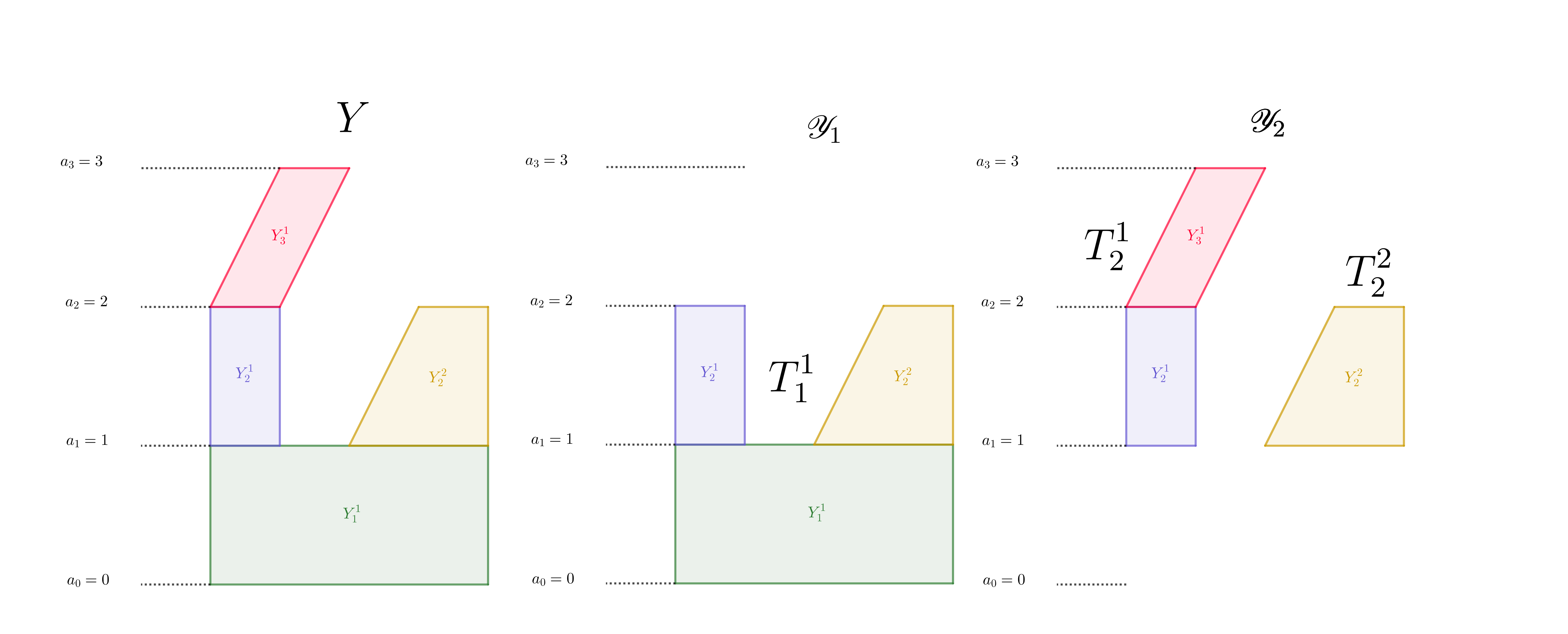}
	\caption{\small In the figure, the cell $Y$ from Figure \ref{fig:example} and its associated sets $\mathscr{Y}_1$ and $\mathscr{Y}_2$ are shown. As can be observed, $\mathscr{Y}_1$ consists of a single connected component $T_1^1$, while $\mathscr{Y}_2$ consists of two connected components, $T_2^1$ and $T_2^2$. Then, $B_1^1 = \{1\}$, $A_1^1 = \{1,2\}$, $B_2^1 = \{1\}$, $A_2^1 = \{1\}$, $B_2^2 = \{2\}$, and $A_2^2 = \emptyset$.
	}
	\label{fig:example3}
\end{figure}

The following lemma gives a relationship between Definitions \ref{def:join} and \ref{def:joint}.
\begin{lemma}
	\label{lemma:continuityjoints}
	Given $i\in I$ and $k\in K_i$, if $j\in B_i^k$ (index of the lower k-th joint at the i-th stage) and $j'\in A_i^k$ (index of the upper k-th joint at the i-th stage). Then, there exists a sequence of indices $j=j_1,\dots , j_l=j'$ such that
	\begin{equation}
		\label{eqn:continuityjointseq1}
		Y_{i}^{j_1} \ \stackrel{i}{\sim} \ Y_{i+1}^{j_2} \ \stackrel{i}{\sim} \ Y_{i}^{j_3} \ \stackrel{i}{\sim} \ Y_{i+1}^{{j}_{4}} \ \stackrel{i}{\sim} \ \dots  \stackrel{i}{\sim} Y_{i}^{j_{l-1}} \ \stackrel{i}{\sim} \ Y_{i+1}^{j_l}.
	\end{equation}
	The converse is also true, that is, if such a chain exists, then $j\in B_i^k$ and $j'\in A_i^k$ for the same $k$.
\end{lemma}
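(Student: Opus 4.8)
The plan is to recast the statement in purely graph-theoretic terms and then translate the topological connectedness of the component $T_i^k$ into connectedness of a suitable bipartite graph. Fix $i\in I$ with $i\neq M$ and $k\in K_i$. I would introduce the bipartite graph $\mathcal{G}$ whose vertices are the lower pieces $\{Y_i^j : j\in B_i^k\}$ together with the upper pieces $\{Y_{i+1}^{j'} : j'\in A_i^k\}$, with an edge between $Y_i^j$ and $Y_{i+1}^{j'}$ precisely when $Y_i^j \stackrel{i}{\sim} Y_{i+1}^{j'}$. Since the relation $\stackrel{i}{\sim}$ only ever links a lower piece with an upper piece, every edge of $\mathcal{G}$ crosses between the two sides, so any path in $\mathcal{G}$ automatically alternates between lower and upper indices; a path joining $Y_i^j$ to $Y_{i+1}^{j'}$ is therefore exactly a chain of the form \eqref{eqn:continuityjointseq1}. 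Thus the forward implication reduces to showing that $\mathcal{G}$ is connected, and the converse to showing that a chain forces all its pieces into a common $T_i^k$.

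For the converse, which is shorter, I would argue first. Given a chain, each elementary join $Y_i^{j_{2m-1}} \stackrel{i}{\sim} Y_{i+1}^{j_{2m}}$ provides, by Lemma \ref{lemma:improvejoin}, a ball $B\subset \mathscr{Y}_i$ meeting both pieces. As $B$ is connected it lies in a single connected component $T_i^k$ of $\mathscr{Y}_i$, and since each of $Y_i^{j_{2m-1}}$ and $Y_{i+1}^{j_{2m}}$ is connected and meets $T_i^k$, both are contained in $T_i^k$. Consecutive pieces of the chain thus share a component, and by transitivity along the chain all its pieces lie in one $T_i^k$; in particular $Y_i^{j_1}\subset T_i^k$ and $Y_{i+1}^{j_l}\subset T_i^k$, i.e. $j\in B_i^k$ and $j'\in A_i^k$ for the same $k$.

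The heart of the matter is the forward direction, which I would prove by contradiction from the connectedness of $T_i^k$. Removing the interface slice one has
\[
T_i^k \setminus (\RN\times\{a_i\}) = \Big(\bigcup_{j\in B_i^k} Y_i^j\Big)\cup\Big(\bigcup_{j'\in A_i^k} Y_{i+1}^{j'}\Big),
\]
a disjoint union of the (open) vertices of $\mathcal{G}$. The decisive observation, which rests on Lemma \ref{lemma:ballsjoint}, is that each interface point $(\xi_0,a_i)\in T_i^k$ canonically determines one lower and one upper piece: a small ball $B\subset T_i^k$ about it has connected lower and upper halves, so these halves lie in unique pieces $Y_i^j$ and $Y_{i+1}^{j'}$, which are then edge-joined and both contained in $T_i^k$. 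Supposing the vertices of $\mathcal{G}$ split into two nonempty sets $\mathcal{C},\mathcal{C}'$ with no edge between them, I would set $U$ to be the union of the pieces in $\mathcal{C}$ together with all interface points of $T_i^k$ whose associated pair lies in $\mathcal{C}$, and define $V$ analogously from $\mathcal{C}'$.

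Finally I would verify that $U$ and $V$ witness a disconnection of $T_i^k$, which is the step I expect to be the main obstacle. That $U$ and $V$ are disjoint, nonempty, and cover $T_i^k$ is immediate once one notes that the two pieces associated to any interface point are always edge-joined, hence never separated by $(\mathcal{C},\mathcal{C}')$. Openness is the subtle point: for an interface point $(\xi_0,a_i)\in U$ one takes a ball $B\subset T_i^k$ and invokes Lemma \ref{lemma:ballsjoint} to see that the whole lower half of $B$ lies in one piece $Y_i^j$ and the whole upper half in one piece $Y_{i+1}^{j'}$; consequently every interface point of $B$ carries the same associated pair $(Y_i^j,Y_{i+1}^{j'})\subset \mathcal{C}$, whence $B\subset U$. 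Points of $U$ off the interface are interior to a single open piece and present no difficulty. The resulting partition $T_i^k = U\sqcup V$ into nonempty open sets contradicts the connectedness of $T_i^k$, so $\mathcal{G}$ is connected and the desired alternating chain exists.
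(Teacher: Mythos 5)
Your proof is correct, but it takes a genuinely different route from the paper on the forward implication, which is the substantial direction. The paper argues constructively: since $Y_i^j$ and $Y_{i+1}^{j'}$ lie in the same component $T_i^k$, it picks a smooth path $\gamma$ in $\mathscr{Y}_i$ joining them, uses compactness of $\gamma([0,1])$ to fix a uniform radius $r$ with $B(\gamma(s),r)\subset \mathscr{Y}_i$, and then follows the path, defining $s_{m+1}$ as the first parameter at which the ball detaches from the current piece; Lemma \ref{lemma:ballsjoint} identifies the next piece in the chain, and the estimate $s_{m+1}-s_m\geq r/\|\gamma'\|_\infty$ guarantees termination after finitely many steps. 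You instead reduce the claim to connectedness of the bipartite join graph on the pieces contained in $T_i^k$, and prove that by contradiction: a separation $(\mathcal{C},\mathcal{C}')$ of the graph would induce a partition of $T_i^k$ into two disjoint nonempty open sets, because each interface point determines via Lemma \ref{lemma:ballsjoint} a unique, edge-joined lower/upper pair and can therefore be assigned unambiguously (and openly) to one side; this contradicts connectedness of $T_i^k$. Your converse coincides in substance with the paper's Step 1, just phrased with connected components rather than concatenated paths. The trade-off: your argument is purely topological and shorter --- no smooth path-connectedness, no uniform-radius compactness step, no termination estimate --- and it isolates the clean structural fact that the join graph of each component $T_i^k$ is connected; the paper's argument is constructive, exhibiting the chain as the sequence of pieces a path actually traverses, with an implicit bound on its length. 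One detail you should make explicit: the pair associated to an interface point is independent of the chosen ball (a smaller ball contained in two given balls has its half-balls inside both pairs of half-balls, so Lemma \ref{lemma:ballsjoint} forces the same pieces); this well-definedness is what your word ``canonically'' is silently carrying, and it is also what makes the assignment of interface points to $U$ or $V$ consistent.
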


\begin{proof}

	As a reminder, recall that we denote $\mathscr{Y}_i=Y \cap \left(\RN\times (a_{i-1},a_{i+1})\right)$.
	
	\noindent \textbf{Step 1: (converse result)} Let us start by proving that, if \eqref{eqn:continuityjointseq1} is satisfied, then $j\in B_i^k$ and $j'\in A_i^k$. First of all, as $Y_{i}^{j_1} \stackrel{i}{\sim} Y_{i+1}^{j_2}$, we can use Lemma \ref{lemma:improvejoin} to find a ball $B\subset \mathscr{Y}_i$ such that $B\cap Y_i^j$ and $B\cap Y_{i+1}^{j_2}$ are non-empty.
	Then, there exists a continuous path in $\mathscr{Y}_i$ connecting a point in $Y_{i}^j$ to a point in $Y_{i+1}^{j_2}$. As both sets are path-connected, we can connect any point in $Y_{i}^j$ to any point in $Y_{i+1}^{j_2}$ by a continuous path on $\mathscr{Y}_i$. Repeating the argument, as $Y_{i+1}^{j_2} \ \stackrel{i}{\sim} \ Y_{i}^{j_3}$ ,we can connect any point in $Y_{i+1}^{j_2}$ to any point in $Y_{i+1}^{j_3}$ by a continuous path in $\mathscr{Y}_i$. Therefore, by transitivity, we can connect any point in $Y_{i}^{j_1}$ to any point in $Y_{i+1}^{j_3}$ by a continuous path in $\mathscr{Y}_i$. Repeating the argument through the chain, we obtain that we can connect any point in $Y_{i}^{j_1}$ to any point in $Y_{i+1}^{j_l}$ by a continuous path in $\mathscr{Y}_i$. This implies that $Y_{i}^{j}=Y_{i}^{j_1}$ and $Y_{i+1}^{j'}=Y_{i+1}^{j_l}$ belong to the same connected component $T_i^k$ of $\mathscr{Y}_i$ so $j\in B_i^k$ and $j'\in A_i^k$.
	
	\textbf{Step 2: (direct result)} Now we prove that, if $j\in B_i^k$ and $j'\in A_i^k$ then the chain \eqref{eqn:continuityjointseq1} exits. If $j\in B_i^k$ and $j'\in A_i^k$, this means that $Y_i^j$ and $Y_{i+1}^{j'}$ belong to the same connected component $T_i^k$ of $\mathscr{Y}_i$. Therefore, there exists a $C^\infty$ path connecting two points of $Y_i^j$ and $Y_{i+1}^{j'}$, that is, there exists $\gamma:[0,1] \to \mathscr{Y}_i$ such that $\gamma(0)\in Y_i^j$ and $\gamma(1)\in Y_{i+1}^{j'}$. Now, by the compactness of $\gamma([0,1])$ we can find $r>0$ such that $B(\gamma(s),r)\subset \mathscr{Y}_i$ for every $s\in [0,1]$. We also choose $r>0$ smaller if necessary in order to have that $B(\gamma(0),r)\subset Y_i^j$.
	Define then $s_1=0$ and
	\begin{equation}
		s_2 = \inf\{s\in [s_1,1]:B(\gamma(s),r)\cap Y_i^j=\emptyset\}.
	\end{equation}
	Now, let us prove that there exists $j_2$ such that 
	$$\gamma(s_2)\in Y_{i+1}^{j_2}.$$
	To do this, we will prove that $B(\gamma(s_2),r)\subset \{(\xi,y)\in Y: y>a_i\}$. As $B(\gamma(s_2),r)$ and $Y$ are open sets, this is equivalent to proving that $B(\gamma(s_2),r) \cap \{(\xi,y)\in Y: y<a_i\}=\emptyset$, which is equivalent to proving that $B(\gamma(s_2),r)\cap Y_i^{\tilde{j}}=\emptyset$ for every $\tilde{j}\in J_i$. Assume, by contradiction that $B(\gamma(s_2),r)\cap Y_i^{\tilde{j}}$ is non-empty. By the continuity of $\gamma$, there exists $\tilde{s}\in (s_1,s_2)$ such that $B(\gamma(\tilde{s}),r)\cap Y_i^{\tilde{j}}$ is non-empty. But, by using Lemma \ref{lemma:ballsjoint}, we obtain $\tilde{j}=j$, which is a contradiction because, by definition of $s_2$, $B(\gamma(s_2),r)\cap Y_i^{j}=\emptyset$. Then, we have that $\gamma(s_2)\in Y_{i+1}^{j_2}$. 
	
	Now, using the continuity of $\gamma$, we can find $\tilde{s}\in (s_1,s_2)$ such that $B(\gamma(\tilde{s}),r)\cap Y_{i+1}^{j_2}$ is non-empty. In addition, by the definition of $s_2$, we have $B(\gamma(\tilde{s}),r)\cap Y_{i}^{j}$. Therefore, 
	$$Y_{i}^{j} \stackrel{i}{\sim} Y_{i+1}^{j_2}.$$
	
	Repeating the argument, we can find $s_3>s_2$ and $j_3\in 1,\ldots, m_i$ such that $\gamma(s_3)\in Y_i^{j_2}$ and 
	$$Y_{i+1}^{j_2} \stackrel{i}{\sim} Y_i^{j_3}.$$
	Iterating, we generate the chain like \eqref{eqn:continuityjointseq1}. However, we must ensure that the process terminates in a finite number of steps. To do this, we will prove that, $s_{l+1}-s_{l}\geq r/\norm{\gamma'}_\infty$ for any index $l$, so, in a finite number of steps we will have $s_l=1$ and the process terminates. We prove the case $l=2$ as the other cases are analogous. We have 
	$B(\gamma(s_2),r)\subset \{(\xi,y)\in Y: y>a_i\}$. Then, choosing $\delta<r/\norm{\gamma'}_\infty$, we have
	$$
	\abs{\gamma(s_2+\delta)-\gamma(s_2)}\leq \delta \norm{\gamma'}_\infty \leq r
	$$
	so $\gamma(s_2+\delta)\in B(\gamma(s_2),r)\subset \{(\xi,y)\in Y: y>a_i\}$ so $\gamma(s_2+\delta)\not\in Y_i^{j_3}$, so $s_3>s_2+\frac{r}{\norm{\gamma'}_\infty}$ and we finish the proof.
\end{proof}
In Figure \ref{fig:example4} we present a graphical sketch of the proof of Lemma \ref{lemma:continuityjoints}.

\begin{figure}[H]
	\centering
	\includegraphics[width=0.5\textwidth]{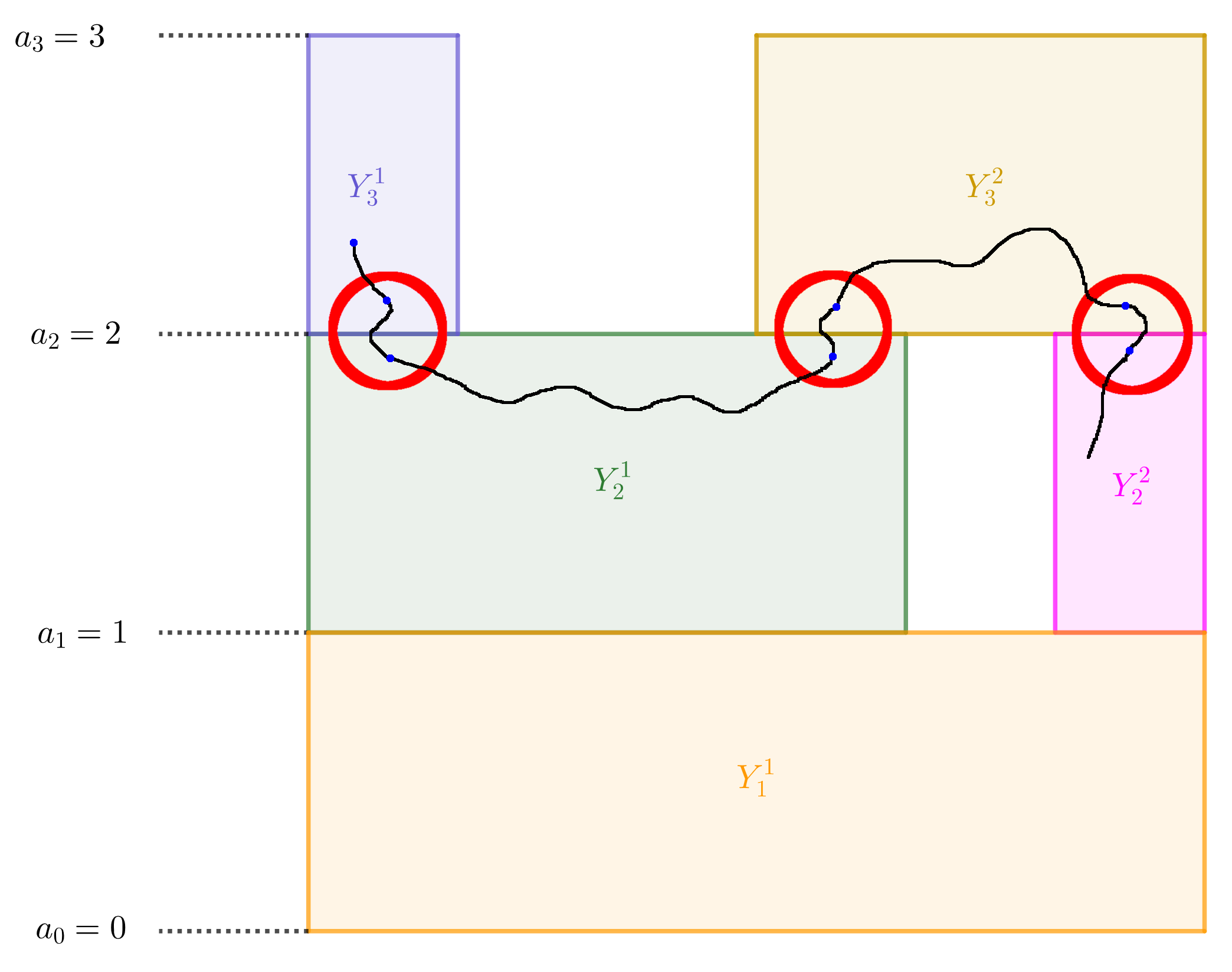}	
	\caption{\small The figure above represents the argument used in the proof of Lemma \ref{lemma:continuityjoints} for a particular example of $Y$. Specifically, it shows the argument for the converse result. The goal is to prove that there exists a path in $\mathscr{Y}_2$ from $Y_3^1$ to $Y_2^2$. To do this, as $Y_3^1 \stackrel{2}{\sim} Y_2^2$, we can find a ball $B_1$ intersecting both $Y_3^1$ and $Y_2^2$. Then, any point $p_1\in Y_3^1$ can be connected to a point $p_2 \in B_1$. Moreover, the point in this ball can be connected by a path to another point $p_3\in Y_2^1$. Since $Y_2^1\stackrel{2}{\sim}Y_3^2$, there exists another ball $B_2$ intersecting $Y_2^1$ and $Y_3^2$. Thus, $p_3$ can be connected to the point $p_4\in B_2$, and so on, until a path is constructed from any point in $Y_3^1$ to any point in $Y_2^2$.
	}
	\label{fig:example4}
\end{figure}

Now, we define a function associated to the generated graph of $Y$
\begin{definition}
	Let $Y$ be as in Definition \ref{def:nicedomain}. We will say that $\boldsymbol{\varphi} =\{\varphi_i^j\}_{i\in I, j\in I_j}$ is a function associated to the graph generated by $Y$ if, for every $i\in I$ and $j\in J_i$, $\varphi_i^j:[a_{i-1},a_{i}]\longrightarrow \R$.
	
	In addition we will say that $\boldsymbol{\varphi}$ satisfy the continuity conditions if, for every $i\in I$, $j\in J_i$ and $j'\in J_{i+1}$ such that $Y_i^{j}\stackrel{i}{\sim} Y_{i+1}^{j'}$, we have that $\varphi_{i}^j$ and $\varphi_{i+1}^{j'}$ are continuous in $a_i$ and, in addition
	\begin{equation}
		\label{eqn:continuitycond2}
		\varphi_{i}^j(a_i)=\varphi_{i+1}^{j'}(a_i).
	\end{equation}
\end{definition}
\begin{remark}
	Note that, if $\bm{\varphi}$ satisfy the continuity conditions, by Lemma \ref{lemma:continuityjoints}, we also have that, for every $i\in I$, $k\in K_i$, $j\in B_i^k$ and $j'\in A_i^k$, we have
	\begin{equation}
		\label{eqn:continuitycond}
		\varphi_{i}^j(a_i)=\varphi_{i+1}^{j'}(a_i).
	\end{equation}
	This is due to the fact that, if $j\in B_i^k$ and $j'\in A_i^k$, we can consider the chain \eqref{eqn:continuityjointseq1} and, by \eqref{eqn:continuitycond2} we obtain
	\begin{equation}
		\varphi_{i}^j(a_i)=\varphi_{i+1}^{j_2}(a_i)=\varphi_{i}^{j_3}(a_i)=\ldots=\varphi_{i}^{j_{l-1}}(a_i)=\varphi_{i+1}^{j'}(a_i).
	\end{equation}
\end{remark}

We now define a Hilbert space consisting of functions associated with the graph generated by $Y$. This space will be useful later on for defining our solution space.
\begin{definition}
	\label{def:HY}
	Let $Y$ be as in Definition \ref{def:nicedomain}. We will call $\mathscr{H}(Y)$ the vector space of functions associated to the graph generated by $Y$, $\boldsymbol{\varphi}=\{\varphi_i^j\}_{i\in I, j\in J_i}$, satisfying the continuity conditions and such that, for every $i\in I$, $j\in J_i$, $\varphi_i^j\in H^1((a_i,a_{i+1}),p_i^j)$. In addition, we will equip this space with the following norm
	\begin{equation}
		\label{eqn:normHY}
		\norm{\boldsymbol{\varphi}}_{\mathscr{H}(Y)}\defeq \left(\sum_{i\in I}\sum_{j\in J_i}\|\varphi_i^j\|^2_{H^1((a_i,a_{i+1}),p_i^j)}\right)^{1/2}.
	\end{equation}
\end{definition}
\begin{remark}
	\label{remark:uijcont}
	Note that, due to \eqref{eqn:pijbound} and the Sobolev embeddings, $H^1((a_{i-1},a_i),p_i^j)\subset C([a_{i-1},a_i])$ for every $i\in I$ and $j\in J_i$. This allows the continuity conditions \eqref{eqn:continuitycond} to make sense in the Definition \ref{def:HY}.
\end{remark}
Let us see that $\mathscr{H}(Y)$ is a Hilbert space.
\begin{lemma}
	Let $Y$ be as in Definition \ref{def:nicedomain}. Then, the space $\mathscr{H}(Y)$ equipped with the scalar product
	\begin{equation}
		\label{eqn:escHY}
		\braket{\boldsymbol{\varphi},\boldsymbol{\psi}}= \sum_{i\in I}\sum_{j\in J_i}\braket{\varphi_i^j,\psi_i^j}_{H^1((a_i,a_{i+1}),p_i^j)}
	\end{equation}
	is a Hilbert space and its norm is given by \eqref{eqn:normHY}.
\end{lemma}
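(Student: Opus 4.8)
The plan is to realize $\mathscr{H}(Y)$ as a closed subspace of a finite product of Hilbert spaces. First I would check that \eqref{eqn:escHY} is a genuine inner product inducing the norm \eqref{eqn:normHY}. This is immediate: bilinearity, symmetry and positive-definiteness are inherited termwise from the inner products on the factors $H^1((a_i,a_{i+1}),p_i^j)$, and since $I$ and each $J_i$ are finite the sum \eqref{eqn:escHY} is well defined with $\braket{\boldsymbol\varphi,\boldsymbol\varphi}=\norm{\boldsymbol\varphi}_{\mathscr{H}(Y)}^2$.

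Second, I would verify that each factor is a Hilbert space. By assumption (iv) of Definition \ref{def:nicedomain}, $p_i^j$ is continuous on a compact interval and bounded below by $c_0>0$, hence bounded above and below; therefore the weighted norm on $H^1((a_i,a_{i+1}),p_i^j)$ is equivalent to the standard (unweighted) $H^1$ norm, so this factor coincides as a set, with equivalent norm, with the complete standard space $H^1$ of that interval, and is thus a Hilbert space. Consequently the finite product
$$\mathscr{P}\defeq \prod_{i\in I}\prod_{j\in J_i} H^1((a_i,a_{i+1}),p_i^j)$$
is a Hilbert space for the inner product \eqref{eqn:escHY}, and $\mathscr{H}(Y)$ is, by definition, the subset of $\mathscr{P}$ singled out by the continuity conditions \eqref{eqn:continuitycond2}.

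The key step is then to show this subset is a closed linear subspace. As recorded in Remark \ref{remark:uijcont}, the lower bound \eqref{eqn:pijbound} together with the one-dimensional Sobolev embedding yields a continuous inclusion of each factor into the space $C$ of continuous functions on the corresponding closed interval, so every endpoint evaluation $\varphi_i^j\mapsto \varphi_i^j(a_i)$ is a bounded linear functional on its factor. Hence, for each pair with $Y_i^{j}\stackrel{i}{\sim}Y_{i+1}^{j'}$, the map $\boldsymbol\varphi\mapsto \varphi_i^j(a_i)-\varphi_{i+1}^{j'}(a_i)$ is a continuous linear functional on $\mathscr{P}$, and $\mathscr{H}(Y)$ is exactly the intersection of the kernels of this finite family of functionals. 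A finite intersection of kernels of continuous linear functionals is a closed subspace, so $\mathscr{H}(Y)$ is closed in $\mathscr{P}$, and therefore complete for the restricted inner product \eqref{eqn:escHY}; i.e. it is a Hilbert space whose norm is \eqref{eqn:normHY}.

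The main obstacle is the continuity of the endpoint evaluations used to encode \eqref{eqn:continuitycond2}: this is precisely where the nondegeneracy $p_i^j>c_0$ enters, both to make the weighted space embed into $C^0$ and to give meaning to the traces $\varphi_i^j(a_i)$. Once this is granted, closedness, and hence completeness, follows routinely from the finiteness of the index set.
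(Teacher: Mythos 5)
Your proof is correct and follows essentially the same route as the paper: realize $\mathscr{H}(Y)$ as the intersection of kernels of the bounded endpoint-evaluation functionals inside the finite product of weighted $H^1$ spaces, and conclude closedness, hence completeness. Your extra verification that each factor $H^1((a_{i-1},a_i),p_i^j)$ is itself a Hilbert space (via the norm equivalence furnished by \eqref{eqn:pijbound}) is a detail the paper takes for granted, and does no harm.
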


\begin{proof}
	First of all, as, for every $i\in I$, $j\in J_i$, $H^1((a_i,a_{i+1}),p_i^j)$ is a Hilbert space, the Cartesian product $$\prod_{i\in I}\prod_{j\in J_i} H^1((a_i,a_{i+1}),p_i^j)$$ equipped with the scalar product  \eqref{eqn:escHY} is a Hilbert space too. Now, $\mathscr{H}(Y)$ is the subset of elements of $\prod_{i,j} H^1((a_i,a_{i+1}),p_i^j)$ which satisfy the continuity conditions.
	These continuity conditions \eqref{eqn:continuitycond} are of the form $T(\bm{\varphi})=0$ where
	\begin{equation}
		\begin{aligned}
			& T: && \prod_{i\in I}\prod_{j\in J_i} H^1((a_i,a_{i+1}),p_i^j) &&& \to &&&& \R \qquad\qquad \\
			& \ && \quad\qquad\qquad \varphi &&& \mapsto &&&& \varphi_{i_1}^{j_1}(a_{i_1})-\varphi_{i_1+1}^{j_2}(a_{i_1})
		\end{aligned}
	\end{equation}
	for some indices $i_1\in I$, $j_1\in J_{i_1}$ and $j_2\in J_{i_1+1}$. $T$ is linear and bounded because $\varphi_{i_1}^{j_1}(a_i)$ is the trace of $\varphi_{i_1}^{j_1}$ in $y=a_i$ which is a linear bounded operator in $H^1((a_{i_1},a_{i_1+1}),p_{i_1}^{j_1})$, and similarly for $\varphi_{i_1+1}^{j_2}(a_{i_1})$. Then, as $T$ is a bounded linear operator, its kernel is a closed subspace. Therefore, $\mathscr{H}(Y)$ is the finite intersection of closed subspaces, hence, a closed subspace, so a Hilbert space with the inherited scalar product.
\end{proof}

Now, we need the following proposition which states that a function belonging to $\mathscr{H}(Y)$ defines a function in $H^1_{\braket{\xi}}(Y)$.
\begin{prop}
	\label{prop:graphtoh1}
	Let $Y$ be as in Definition \ref{def:nicedomain} and $\boldsymbol{\varphi}=\{\varphi_i^j\}_{i\in I, j\in I_j}\in \mathscr{H}(Y)$. Then, the function
	\begin{equation}
		\label{eqn:graphtoh1eq1}
		\varphi(\xi,y)\defeq \left\{
		\begin{aligned}
			\varphi_i^j(y), \qquad & (\xi,y)\in Y_i^j, \ i\in I, \ j\in J_i \\
			\varphi_i^j(a_i), \qquad & (\xi,a_i)\in Y \cap \adh{Y_i^j}, \ i\in I, \ j\in J_i.
		\end{aligned}	
		\right.
	\end{equation}
	is well defined and, in addition, $\varphi\in H^1_{\braket{\xi}}(Y)$ with $\frac{\partial \varphi}{\partial y}(\xi,y)=\frac{\partial \varphi_i^j}{\partial y}(y)$ for every $(\xi,y)\in Y_i^j$. As a consequence
	\begin{equation}
		\label{eqn:graphtoh1eq3}
		\norm{\varphi}_{H^1_{\braket{\xi}}(Y)}^2=\sum_{i\in I}\sum_{j\in J_i}\int_{a_{i-1}}^{a_{i}}p_i^j\left(\left(\varphi_i^j\right)^2+\left(\frac{\partial \varphi_i^j}{\partial y}\right)^2\right)=\norm{\boldsymbol{\varphi}}^2_{\mathscr{H}(Y)}
	\end{equation}
\end{prop}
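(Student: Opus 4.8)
The plan is to construct $\varphi$ component by component on the cylindrical pieces $Y_i^j$, where Proposition~\ref{prop:constxi} already describes exactly what an $H^1_{\braket{\xi}}$ function supported on a set with connected horizontal sections looks like, and then to show that the continuity conditions encoded in $\mathscr{H}(Y)$ are precisely what is needed to glue these pieces into a single $H^1$ function across the interface slices $\{y=a_i\}$. First I would check well-definedness. The only possible ambiguity in \eqref{eqn:graphtoh1eq1} occurs at an interface point $(\xi_0,a_i)\in Y$. As $Y$ is open, choose a ball $B\subset Y$ centred there; the lower half-ball $B\cap\{y<a_i\}$ is connected and contained in $\bigcup_j Y_i^j$, so by Lemma~\ref{lemma:ballsjoint} it lies in a single $Y_i^j$, and likewise the upper half-ball lies in a single $Y_{i+1}^{j'}$. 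Hence $(\xi_0,a_i)$ is reached from below by exactly one component and from above by exactly one component. If it is reached from both, then $B\cap Y_i^j\neq\emptyset$ and $B\cap Y_{i+1}^{j'}\neq\emptyset$, i.e. $Y_i^j\stackrel{i}{\sim}Y_{i+1}^{j'}$, and the continuity condition \eqref{eqn:continuitycond2} forces $\varphi_i^j(a_i)=\varphi_{i+1}^{j'}(a_i)$; if it is reached from only one side there is nothing to check. By \eqref{eqn:pijbound} each weight $p_i^j$ is bounded below, so $\varphi_i^j$ is an ordinary one-dimensional $H^1$ function, continuous up to the endpoints (Remark~\ref{remark:uijcont}), and the pointwise values $\varphi_i^j(a_i)$ make sense.

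The heart of the argument is to prove $\varphi\in H^1(Y)$ with $\nabla_\xi\varphi=0$ and $\partial_y\varphi=\partial_y\varphi_i^j$ on $Y_i^j$, which I would establish locally since being $H^1$ is a local property: it suffices to produce, around every point of $Y$, a ball $B\subset Y$ with $\varphi|_B\in H^1_{\braket{\xi}}(B)$. There are two cases. If $B$ lies inside a single slab $\RN\times(a_{i-1},a_i)$, then by Lemma~\ref{lemma:ballsjoint} $B\subset Y_i^j$ for one $j$; since $\varphi_i^j\in H^1((a_{i-1},a_i),p_i^j)$, the converse part of Proposition~\ref{prop:constxi} yields $\varphi|_{Y_i^j}\in H^1_{\braket{\xi}}(Y_i^j)$, and restriction to $B\subset Y_i^j$ keeps this. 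If instead $B$ meets an interface $\{y=a_i\}$ (taking $B$ small enough to meet no other interface), then $B\cap\{y<a_i\}\subset Y_i^j$ and $B\cap\{y>a_i\}\subset Y_{i+1}^{j'}$ for unique joined components, and on $B$ the function depends only on $y$, equalling $\varphi_i^j(y)$ below $a_i$ and $\varphi_{i+1}^{j'}(y)$ above, with matching value at $a_i$. The one-variable function obtained by gluing two $H^1$ pieces continuously at an interior point is again $H^1$ on the whole $y$-interval, because continuity annihilates the jump term in the distributional derivative; since $B$ has connected horizontal sections, the converse of Proposition~\ref{prop:constxi} then gives $\varphi|_B\in H^1_{\braket{\xi}}(B)$ with $\partial_y\varphi$ given piecewise and $\nabla_\xi\varphi=0$.

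I would then patch the local information together. For $\rho\in C^\infty_c(Y)$, cover the compact $\operatorname{supp}\rho$ by finitely many such balls $B_k$, take a subordinate partition of unity $\{\zeta_k\}$ with $\sum_k\zeta_k\equiv 1$ on $\operatorname{supp}\rho$, and integrate by parts in each $B_k$ using $\rho\zeta_k\in C^\infty_c(B_k)$; summing recovers $\int_Y\varphi\,\partial_{x_\ell}\rho=-\int_Y g_\ell\,\rho$, where $g_y=\partial_y\varphi_i^j$ on $Y_i^j$ and $g_{\xi_k}=0$. This shows $\varphi\in H^1(Y)$ with $\nabla_\xi\varphi=0$ almost everywhere, hence $\varphi\in H^1_{\braket{\xi}}(Y)$. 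Finally, the norm identity \eqref{eqn:graphtoh1eq3} follows by decomposing $Y$ into the pairwise disjoint, full-measure pieces $Y_i^j$ and slicing with Fubini exactly as in \eqref{eqn:prizzieq2bis}, so that $\int_{Y_i^j}\big(|\varphi|^2+|\partial_y\varphi|^2\big)=\int_{a_{i-1}}^{a_i}p_i^j\big((\varphi_i^j)^2+(\partial_y\varphi_i^j)^2\big)$, and summation over $i,j$ produces $\|\boldsymbol{\varphi}\|^2_{\mathscr{H}(Y)}$. The main obstacle, and the only place where the hypotheses genuinely intervene, is the interface gluing of the second case: one must verify that continuity of the traces across $\{y=a_i\}$ (supplied precisely by the join continuity conditions) upgrades ``piecewise $H^1$'' to genuine $H^1$, ruling out any singular contribution to $\partial_y\varphi$ concentrated on the measure-zero interface.
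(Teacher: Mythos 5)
Your proof is correct and follows essentially the same path as the paper's: well-definedness via Lemma \ref{lemma:ballsjoint} together with the continuity conditions, the converse part of Proposition \ref{prop:constxi} to handle each piece $Y_i^j$, and the fact that matching values across the interfaces $\{y=a_i\}$ permit gluing into a single $H^1_{\braket{\xi}}(Y)$ function with the stated norm identity. The only difference is one of packaging: where the paper first proves $\varphi\in C(Y)$ and then invokes its appendix gluing result (Corollary \ref{cor:h1cont2} with Remark \ref{remark:h1cont2}), you re-derive that gluing locally --- via the one-dimensional ``continuity kills the jump'' argument inside interface balls followed by a partition-of-unity patching --- which is precisely the mechanism underlying the paper's Lemma \ref{lemma:h1cont2}.
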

\begin{proof}
	Let us see that $\varphi$ is well defined. As the sets $Y_i^j$ are disjoint, we only have to focus on points of the form $(\xi_0,a_i)\in Y$ for some $i\in I$. In this case, $(\xi_0,a_i)$ may belong to $Y\cap \adh{Y_{i}^j}$ for, a priori, multiples values of $j$, so the definition \eqref{eqn:graphtoh1eq1} may be inconsistent. However, this is not the case. Taking a ball $B=B((\xi_0,a_i),r)\subset Y$, Lemma \ref{lemma:ballsjoint} guarantees that there exists, at most, one index $j\in J_i$ such that $B\cap Y_i^j\neq\emptyset$. The other possibility for an inconsistency would be that $(\xi_0,a_i)$ belong to $Y\cap \adh{Y_i^j}$ and $Y\cap \adh{Y_{i+1}^{j'}}$ for some $j\in J_i$ and $j'\in J_{i+1}$ but in that case $B\cap Y_i^j$ and $B\cap Y_{i+1}^{j'}$ are non-empty so $Y_i^j \stackrel{i}{\sim} Y_{i+1}^{j'}$ and, by the continuity conditions \eqref{eqn:continuitycond}, the definition is consistent.

	Now, let us check that $\varphi\in H^1_{\braket{\xi}}(Y)$.
	First of all, note that by Remark \ref{remark:uijcont}, $\varphi_i^j\in C([a_{i-1},a_{i}])$. Therefore, $\varphi\in C(Y_i^j)$ for every $Y_i^j$. Let us see that $\varphi$ is continuous in $Y$. We only need to prove the continuity for every $(\xi_0,a_i)\in Y$ where $\xi_0\in \RN$ and $i\in I$. Taking a ball $B=B((\xi_0,a_i),r)\subset \mathscr{Y}_i=Y \cap \left(\RN\times (a_{i-1},a_{i+1})\right)$, we have, due to Lemma \ref{lemma:ballsjoint}, that $B_i=\{(\xi,y)\in B: y<a_i\}\subset Y_i^j$ for some $j\in J_i$ and $B_{i+1}=\{(\xi,y)\in B: y>a_i\}\subset Y_{i+1}^{j'}$ for some $j'\in J_{i+1}$. Therefore,
	\begin{equation}
		\label{eqn:graphtoh1eq2}
		\restr{\varphi}{B}(\xi,y)=\left\{
		\begin{aligned}
			& \varphi_{i+1}^{j'}(y) \qquad && y> a_i \\
			& \varphi_i^j(y) \qquad && y\leq a_i.
		\end{aligned}
		\right. , \qquad (\xi,y)\in B.
	\end{equation}
	But, as $B\cap Y_i^{j}$ and $B\cap Y_{i+1}^{j'}$ are non-empty, $Y_{i}^j \stackrel{i}{\sim} Y_{i+1}^{j'}$, so, by the continuity conditions, $\varphi_i^j(a_i)=\varphi_{i+1}^{j'}(a_i)$ and, due to \eqref{eqn:graphtoh1eq2}, we have that $\restr{\varphi}{B}$ is continuous, so $\varphi$ is continuous in $(\xi_0,a_i)$. As $(\xi_0,a_i)$ was arbitrary, $\varphi\in C(Y)$.
	
	Now, by using Proposition \ref{prop:constxi}, we have that $\varphi\in H^1_{\braket{\xi}}(Y_i^j)$ for every $Y_i^j$ and  $\frac{\partial \varphi}{\partial y}(\xi,y)=\frac{\partial \varphi_i^j}{\partial y}(y)$ for every $(\xi,y)\in Y_i^j$. But then, as $\varphi$ is continuous, by using Corollary \ref{cor:h1cont2} and Remark \ref{remark:h1cont2} we automatically have that $\varphi\in H^1_{\braket{\xi}}(Y)$ and satisfy \eqref{eqn:graphtoh1eq3}.
\end{proof}
In the same way, if we have a function $\varphi\in H^1_{\braket{\xi}}(Y)$,  $\varphi$ defines a function in the associated graph.
\begin{prop}
	\label{prop:h1tograph}
	Let $Y$ be as in Definition \ref{def:nicedomain} and $\varphi\in H^1_{\braket{\xi}}(Y)$. Then, for $i\in I$ and $j\in J_i$, the functions
	\begin{equation}
		\label{eqn:h1tographeq1}
		\varphi_i^j(y)=\varphi(\xi,y) \qquad (\xi,y)\in Y_i^j
	\end{equation}
	are well defined and $\boldsymbol{\varphi}=\{\varphi_i^j\}_{i\in I, j\in J_i}$ is a function associated to the graph generated by $Y$ which satisfies the continuity conditions \eqref{eqn:continuitycond}. In addition, $\varphi_i^j$ belongs to $H^1((a_i,a_{i+1}),p_i^j)$ and $\frac{\partial \varphi_i^j}{\partial y}=\frac{\partial \varphi}{\partial y}$ in $(a_i,a_{i+1})$. As a consequence $\bm{\varphi}\in \mathscr{H}(Y)$ and
	\begin{equation}
		\label{eqn:h1tographeq2}
		\norm{\boldsymbol{\varphi}}^2_{\mathscr{H}(Y)}=\sum_{i\in I}\sum_{j\in J_i}\int_{Y_i^j}\left(\varphi^2+\left(\frac{\partial \varphi}{\partial y}\right)^2\right)=\norm{\varphi}^2_{H^1(Y)}
	\end{equation}
\end{prop}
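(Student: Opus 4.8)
The plan is to reverse the construction of Proposition~\ref{prop:graphtoh1}: first produce each edge function $\varphi_i^j$ separately on its own component, then glue them at the nodes $a_i$ through the join relation $\stackrel{i}{\sim}$. Fix $i\in I$ and $j\in J_i$. By assumption~(iii) of Definition~\ref{def:nicedomain} the component $Y_i^j$ has connected horizontal sections, and since $\nabla_{\xi}\varphi=0$ a.e. on $Y\supset Y_i^j$, the restriction $\restr{\varphi}{Y_i^j}$ lies in $H^1_{\braket{\xi}}(Y_i^j)$. Applying Proposition~\ref{prop:constxi} to $S=Y_i^j$ gives at once: $\varphi$ is independent of $\xi$ on $Y_i^j$ (so \eqref{eqn:h1tographeq1} is well defined), the resulting one-dimensional function $\varphi_i^j$ belongs to $H^1((a_{i-1},a_i),p_i^j)$ with $p_i^j=p_{Y_i^j}$, its weak derivative satisfies $\frac{\partial\varphi_i^j}{\partial y}=\frac{\partial\varphi}{\partial y}$ on $Y_i^j$, and the per-component norm identity $\norm{\varphi_i^j}_{H^1((a_{i-1},a_i),p_i^j)}=\norm{\varphi}_{H^1(Y_i^j)}$ holds by \eqref{eqn:prizzieq2}.

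The heart of the argument, and the step I expect to be the main obstacle, is the verification of the continuity conditions~\eqref{eqn:continuitycond2}. Fix $i\in I$, $j\in J_i$ and $j'\in J_{i+1}$ with $Y_i^j\stackrel{i}{\sim}Y_{i+1}^{j'}$. By Remark~\ref{remark:uijcont}, the lower bound~\eqref{eqn:pijbound} together with the one-dimensional Sobolev embedding makes $\varphi_i^j$ and $\varphi_{i+1}^{j'}$ continuous up to $a_i$, so the values $\varphi_i^j(a_i)$ and $\varphi_{i+1}^{j'}(a_i)$ are meaningful. To see they coincide I would use Lemma~\ref{lemma:improvejoin} to obtain a ball $B=B((\xi_0,a_i),r)\subset\mathscr{Y}_i$ with $\{(\xi,y)\in B:y<a_i\}\subset Y_i^j$ and $\{(\xi,y)\in B:y>a_i\}\subset Y_{i+1}^{j'}$. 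Since $\restr{\varphi}{B}\in H^1_{\braket{\xi}}(B)$ and a ball has connected horizontal sections, Proposition~\ref{prop:constxi} applied on $B$ furnishes a single $g\in H^1((a_i-r,a_i+r),p_B)$ with $\varphi(\xi,y)=g(y)$ a.e. on $B$. Comparing with the first paragraph forces $g=\varphi_i^j$ on $(a_i-r,a_i)$ and $g=\varphi_{i+1}^{j'}$ on $(a_i,a_i+r)$. The weight $p_B$ is continuous and strictly positive near $a_i$ (the horizontal cross-section of a ball is largest at its centre height), so $g$ is continuous at $a_i$; hence its one-sided limits agree and $\varphi_i^j(a_i)=\varphi_{i+1}^{j'}(a_i)$, which is exactly~\eqref{eqn:continuitycond2}.

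Having built the edge functions and checked the gluing, $\boldsymbol{\varphi}=\{\varphi_i^j\}_{i\in I,j\in J_i}$ is a function associated to the graph generated by $Y$ satisfying the continuity conditions, with each $\varphi_i^j\in H^1((a_{i-1},a_i),p_i^j)$; thus $\boldsymbol{\varphi}\in\mathscr{H}(Y)$. Finally I would recover the norm identity~\eqref{eqn:h1tographeq2} by summing the per-component equality of the first paragraph over all $i\in I$ and $j\in J_i$. This summation is legitimate because the components exhaust $Y$ up to the finite union of horizontal slices $\bigcup_{i}\big((\RN\times\{a_i\})\cap Y\big)$, which has zero $(N+1)$-dimensional Lebesgue measure; consequently $\norm{\varphi}_{H^1(Y)}^2=\sum_{i,j}\norm{\varphi}_{H^1(Y_i^j)}^2=\sum_{i,j}\norm{\varphi_i^j}_{H^1((a_{i-1},a_i),p_i^j)}^2=\norm{\boldsymbol{\varphi}}_{\mathscr{H}(Y)}^2$, completing the plan.
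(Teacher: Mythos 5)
Your proposal is correct and follows essentially the same route as the paper's own proof: Proposition~\ref{prop:constxi} applied per component $Y_i^j$, Remark~\ref{remark:uijcont} for continuity up to the endpoints, and Lemma~\ref{lemma:improvejoin} combined with Proposition~\ref{prop:constxi} on the joining ball (where $p_B$ is continuous and positive) to verify the continuity conditions. Your explicit summation argument for the norm identity~\eqref{eqn:h1tographeq2}, using that the slices $\RN\times\{a_i\}$ have measure zero, is a welcome detail that the paper leaves implicit.
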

\begin{proof}
	For $i\in I$ and $j\in J_i$, due to Proposition \ref{prop:constxi}, $\varphi_i^j\in H^1((a_i,a_{i+1}),p_i^j)$ and $\frac{\partial u_i^j}{\partial y}=\frac{\partial u}{\partial y}$ in $(a_i,a_{i+1})$. In addition, by Remark \ref{remark:uijcont}, $\varphi_i^j\in C([a_i,a_{i+1}])$. 
	
	Let us see that $\{\varphi_i^j\}_{i\in I, j\in J_i}$ satisfies the continuity conditions. If $Y_i^j \stackrel{i}{\sim} Y_{i+1}^{j'}$, then, by Lemma \ref{lemma:improvejoin}, there exists a ball $B=B((\xi_0,y_0),r)\subset \mathscr{Y}_i$ such that
	\eqref{eqn:defjoin} and \eqref{eqn:improvejoineq1} are satisfied.  Then, as $\varphi\in H^1_{\braket{\xi}}(B)$ and $B$ has connected horizontal sections, by Proposition \ref{prop:constxi}, there exists $w\in H^1((y_0-r,y_0+r),p_B)$ such that
	\begin{equation}
		w(y)=\varphi(\xi,y) \qquad \forall (\xi,y)\in B.
	\end{equation}
	Note that $p_B$ is continuous and positive. In addition, by \eqref{eqn:defjoin}, we have $a_i\in (y_0-r,y_0+r)$. Therefore, $w$ is continuous in a neighbourhood of $a_i$. But, from \eqref{eqn:h1tographeq1} and \eqref{eqn:improvejoineq1}, we have that $w(y)=\varphi_i^j(y)$ for every $y<a_i$ and $w(y)=\varphi_{i+1}^{j'}(y)$ for every $y>a_i$. Hence, by the continuity of $w$, $\varphi_i^j(a_i)=\varphi_{i+1}^{j'}(a_{i+1})$. As $i,j$ and $j'$ were arbitrary, the continuity conditions are satisfied.
\end{proof}
\begin{remark}
	\label{remark:isometric}
	Note that Propositions \ref{prop:graphtoh1} and \ref{prop:h1tograph} imply that the spaces $H^1_{\braket{\xi}}(Y)$ and $\mathscr{H}(Y)$ are isometric.
\end{remark}
In Figure \ref{fig:Y_pint} we present a graphical interpretation of the results of Proposition \ref{prop:h1tograph}.

\begin{figure}[h]
	\centering
	\includegraphics[width=0.8\textwidth]{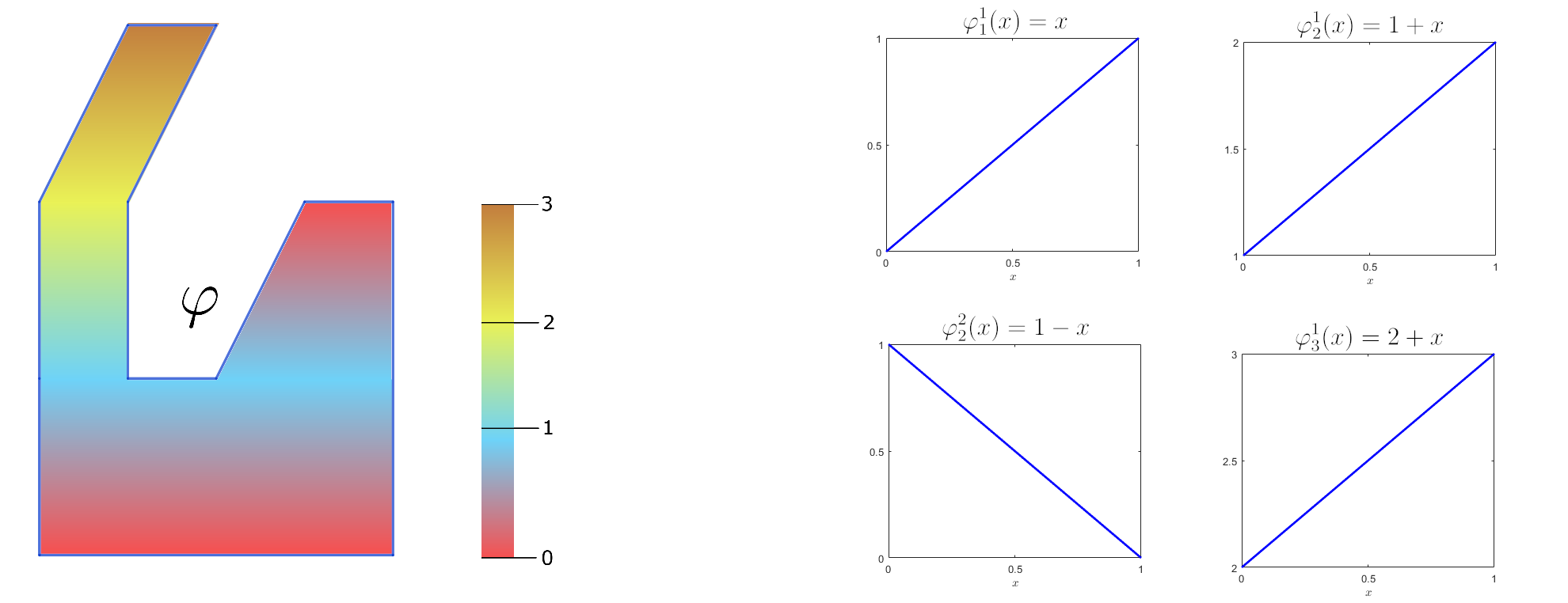}
	\caption{\small The figure shows an example of a function $\varphi \in H^1(Y)$, where $Y$ is the nicely decomposable domain from Figure \ref{fig:example}. The values of $\varphi$ are represented by colours, as indicated in the legend. On the right-hand side, we display the functions $\varphi_i^j$ constructed according to Proposition \ref{prop:h1tograph}. As we can observe, the continuity conditions are satisfied since $\varphi_0^0(1) = \varphi_1^0(1) = \varphi_1^1(1) = 1$ and $\varphi_1^0(2) = \varphi_2^0(2) = 2$.
	}
	\label{fig:Y_pint}
\end{figure}
\par\medskip

Now, we will describe the space of solutions for our problem in graphs. Its definition is analogous to $H(\theta)$ from \eqref{def:spaceofsolY} but instead of having, for each $x\in \Omega'\setminus \Theta_0$, functions in $H^1_{\braket{\xi}}(Y)$, we have functions in $\mathscr{H}(Y)$, which, by Remark \ref{remark:isometric} are isometric spaces. 
\begin{definition} 
	\label{def:spaceofsol}
	Let $Y$ be as in Definition \ref{def:nicedomain}. We define our space of solutions $\mathbb{H}(\theta)$, as the pairs of functions $(\bm{u^a},u^b)$ where
	$u^b\in H^1(\Omega^b)$, $\bm{u^a}\in L^2(\Omega',\theta;\mathscr{H}(Y))$ such that, denoting $\bm{u^a}(x)=\{u_i^j(x)\}_{i\in I,j\in J_i}$, we have
	\begin{equation}
		{Tr}^b(u^b)(x)=u_0^0(x)(0) \qquad a.e. \ x\in \Omega'\setminus \Theta_0.
	\end{equation}
\end{definition}
Note that in the above definition, $u_0^0(x)(0)$ is well defined because, by Remark \ref{remark:uijcont}, $u_0^0(x)\in C([a_0,a_1])$.

The following lemma justify that $\mathbb{H}(\theta)$ is a Hilbert space.
\begin{lemma}
	The space $\mathbb{H}(\theta)$, equipped with the scalar product
	\begin{equation}
		\label{eqn:spaceofsolescalar}
		\braket{(\bm{u^a},u^b),(\bm{v^a},v^b)}_{\mathbb{H}(\theta)}=\braket{u^b, v^b}_{H^1(\Omega^b)}+\braket{\bm{u^a}, \bm{v^a}}_{L^2(\Omega',\theta;\mathscr{H}(Y))},
	\end{equation}
	is a Hilbert space.
\end{lemma}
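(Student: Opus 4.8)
The plan is to reproduce, almost verbatim, the argument used for $H(\theta)$ in the preceding subsection: I will realize $\mathbb{H}(\theta)$ as the kernel of a bounded linear operator defined on an ambient Hilbert product space, and then invoke the elementary fact that a closed subspace of a Hilbert space, equipped with the restricted inner product, is itself a Hilbert space. The inner product \eqref{eqn:spaceofsolescalar} is exactly the one restricted from the product, so once closedness is established the statement follows.

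First I would set up the ambient space. By the preceding lemma, $\mathscr{H}(Y)$ is a Hilbert space; it is moreover separable, being a closed subspace of the finite product $\prod_{i\in I}\prod_{j\in J_i} H^1((a_i,a_{i+1}),p_i^j)$ of separable weighted Sobolev spaces (each weight $p_i^j$ is positive, continuous and bounded below by \eqref{eqn:pijbound}). Separability is what allows the weighted Bochner space $L^2(\Omega',\theta;\mathscr{H}(Y))$ to be defined as in Section \ref{sec:weightedbochner}, where it is a Hilbert space. Together with $H^1(\Omega^b)$ this yields the Hilbert product $L^2(\Omega',\theta;\mathscr{H}(Y))\times H^1(\Omega^b)$, whose scalar product is precisely \eqref{eqn:spaceofsolescalar}. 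It then remains only to check that $\mathbb{H}(\theta)$ is a closed subspace of this product.

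Next I would construct the two bounded linear maps whose difference defines the coupling condition. On the upper side, by Remark \ref{remark:uijcont} the embedding $H^1((a_0,a_1),p_0^0)\hookrightarrow C([a_0,a_1])$ (using the lower bound \eqref{eqn:pijbound}) makes the pointwise evaluation $\gamma(\bm{\varphi})\defeq \varphi_0^0(0)$ a bounded linear functional on $\mathscr{H}(Y)$. Lifting it pointwise, $\Gamma(\bm{u^a})(x)\defeq \gamma(\bm{u^a}(x))=u_0^0(x)(0)$ defines a bounded linear operator $\Gamma:L^2(\Omega',\theta;\mathscr{H}(Y))\to L^2(\Omega',\theta)$, exactly as in Lemma \ref{lemma:boundtra}, the norm bound $\norm{\Gamma}\le\norm{\gamma}$ following from $\int_{\Omega'}\theta(x)\,|\gamma(\bm{u^a}(x))|^2\,dx\le \norm{\gamma}^2\int_{\Omega'}\theta(x)\,\norm{\bm{u^a}(x)}_{\mathscr{H}(Y)}^2\,dx$. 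On the lower side, $Tr^b:H^1(\Omega^b)\to L^2(\Omega')$ is bounded, and since $0\le\theta\le 1$ the space $L^2(\Omega')$ embeds continuously into $L^2(\Omega',\theta)$ by Lemma \ref{lemma:l2dense}; hence $Tr^b$ is also bounded as a map into $L^2(\Omega',\theta)$.

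Finally I would combine these into the bounded linear operator $(\bm{u^a},u^b)\mapsto \Gamma(\bm{u^a})-Tr^b(u^b)$ from $L^2(\Omega',\theta;\mathscr{H}(Y))\times H^1(\Omega^b)$ into $L^2(\Omega',\theta)$. By Definition \ref{def:spaceofsol}, the requirement $Tr^b(u^b)(x)=u_0^0(x)(0)$ for a.e.\ $x\in\Omega'\setminus\Theta_0$ is, as an identity in $L^2(\Omega',\theta)$, exactly the vanishing of this operator; so $\mathbb{H}(\theta)$ is its kernel, hence a closed subspace, hence a Hilbert space with the inherited scalar product \eqref{eqn:spaceofsolescalar}. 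The one point deserving care, and the main obstacle, is the lifting step: one must verify that $x\mapsto\gamma(\bm{u^a}(x))$ is genuinely $\theta$-measurable and that the ambiguity of the representative of $\bm{u^a}$ on $\Theta_0$ does not affect $\Gamma(\bm{u^a})$ as an element of $L^2(\Omega',\theta)$. Both follow because $\gamma$ is continuous—so it preserves Bochner $\mu$-measurability through approximation by simple functions—and because $\theta\equiv 0$ on $\Theta_0$, mirroring the discussion following Lemma \ref{lemma:l2dense}.
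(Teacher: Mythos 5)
Your proposal is correct and follows essentially the same route as the paper: both realize $\mathbb{H}(\theta)$ as the kernel of the bounded linear operator $(\bm{u^a},u^b)\mapsto u_0^0(\cdot)(0)-Tr^b(u^b)$ on the product Hilbert space $L^2(\Omega',\theta;\mathscr{H}(Y))\times H^1(\Omega^b)$, concluding closedness and hence completeness. Your version is in fact slightly more careful than the paper's (which informally writes the codomain as $\R$), since you spell out that the operator lands in $L^2(\Omega',\theta)$ and verify the $\theta$-measurability of the lifted evaluation map.
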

\begin{proof}
	The Cartesian product $H^1(\Omega^b)\times L^2(\Omega',\theta;\mathscr{H}(Y))$ is a Hilbert space equipped with the scalar product \eqref{eqn:spaceofsolescalar}, just because $L^2(\Omega',\theta;\mathscr{H}(Y))$ and $H^1(\Omega^b)$ are Hilbert spaces. In addition, the operator
	\begin{equation}
		\begin{aligned}
			& T: &&  L^2(\Omega',\theta;\mathscr{H}(Y))\times H^1(\Omega^b) &&& \to &&&& \R \qquad\qquad \\
			& \ && \quad\qquad\qquad (\bm{u^a},u^b) &&& \mapsto &&&& {Tr}^b(u^b)(x)-u_0^0(x)(0)
		\end{aligned}
	\end{equation}
	is linear and bounded because the trace operators in $H^1(\Omega^b)$ and $H^1((a_0,a_1), p_0^0)$ are bounded and linear.
	Since $\mathbb{H}(\theta)$ is the preimage of $0$ under the operator $T$, $\mathbb{H}(\theta)$ is a closed subspace, so a Hilbert space with the inherited scalar product.
\end{proof}

Now, let us state which the limit problem is. The following theorem is equivalent to Theorem \ref{thm:main} when $Y$ is nicely decomposed. The limit problem \eqref{eqn:thmmaineq1} transform into a limit problem in the space $\mathbb{H}(\theta)$ via the isometry between the spaces $H^1_{\braket{\xi}}(Y)$ and $\mathscr{H}(Y)$ noted in Remark \ref{remark:isometric}. The gross of the proof consists on justifying the regularity of solutions so that we can integrate by parts to obtain a classical formulation.
\begin{theorem}
	\label{thm:graphprob}
	Let $Y$ be as in Definition \ref{def:nicedomain}. Let $u_\varepsilon$ be the solution of \eqref{eqn:main}. Then, there exists $u^b\in H^1(\Omega^b)$ and $\bm{u^a}\in L^2(\Omega';\mathscr{H}(Y))$ such that, denoting $\bm{u^a}(x)=\{u_i^j(x)\}_{i\in I,j\in J_i}$ and defining
	\begin{equation}
		\label{eqn:graphprobeq6}
		\bar{u}^a_\varepsilon(x,y) \defeq 
			\left(\fint_{\omega_\varepsilon^n}  u_i^j(z)dz\right)(y) \qquad  (x,y)\in Y_\varepsilon^n \hbox{ and }\left(\frac{x-{\bar x^n_\varepsilon}}{l_\varepsilon^n},y\right)\in Y_i^j,
			\ \ \begin{aligned}
				&\text{\small{$n=1,\ldots, N_\eps$}} \\
				&\text{\small{$i\in I$, $j\in J_i$}}
			\end{aligned}
	\end{equation}
	we have that
	\begin{equation}
		\label{eqn:graphprobeq5}
		\lim_{\varepsilon\to 0}\norm{u_\varepsilon-\bar{u}^a_\varepsilon }_{H^1(\Omega^a_\varepsilon)}=0, \qquad	\lim_{\varepsilon\to 0}\norm{u_\varepsilon-u^b}_{H^1(\Omega^b)}=0.
	\end{equation}
	In addition,
	$(\bm{u^a},u^b)$ belongs to $\mathbb{H}(\theta)$ and is the only function of $\mathbb{H}(\theta)$ satisfying
	\begin{equation}
		\label{eqn:graphprobeq4}
		\int_{\Omega^b}\left(\nabla u^b\nabla \varphi^b+u^b\varphi^b-f\varphi^b\right)+\int_{\Omega'}\theta\sum_{i\in I}\sum_{j\in J_i}\int_{a_{i-1}}^{a_{i}} p_i^j\left(\frac{\partial u_i^j}{\partial y}\frac{\partial \varphi_i^j}{\partial y}+u_i^j\varphi_i^j-f\varphi_i^j\right)=0.
	\end{equation}
	for every $(\boldsymbol{\varphi}^{\bm{a}},\varphi^b)\in \mathbb{H}(\theta)$.
	
	Moreover, $u^b\in H^2_{loc}(\Omega^b)\cap H^1(\Omega^b)$, $u_i^j\in L^2(\Omega';H^2(a_i,a_{i+1}))$ for $i\in I$, $j\in J_i$  and satisfy the equations
	\begin{equation}
		\label{eqn:limgrapheq1}
		\left\{
		\begin{aligned}
			-\Lap u^b(x,y)+u^b(x,y) & = f(x,y) \qquad && a.e. \ (x,y)\in\Omega^b \\
			\frac{1}{p_i^j}\frac{d}{d y}\left(p_i^j\frac{d}{dy}\left(u_i^j(x)\right)\right)(y)+u_i^j(x)(y) & =f(x,y)  \qquad && a.e. \ x\in \Omega'\setminus \Theta_0, \ y\in (a_i,a_{i+1}) \ \forall i\in I_i, \ j\in J_i,
		\end{aligned}
		\right.
	\end{equation}
	the continuity conditions
	\begin{equation}
		\label{eqn:limgrapheq3}
		\left\{
		\begin{aligned}
			{Tr}^b(u^b)(x) & = u_0^0(x)(0) \qquad &&  a.e. \ x\in\Omega'\setminus \Theta_0, \\
			u_i^j(x)(a_i)& =u_{i+1}^{j'}(x)(a_i) \qquad && a.e. \ x\in \Omega'\setminus \Theta_0, \ \forall i\in I, \ \forall k\in K_i, \ \forall j\in B_i^k, \ \forall j'\in A_i^k,
		\end{aligned}
		\right.
	\end{equation}
	and the flux conditions
	\begin{equation}
		\label{eqn:limgrapheq2}
		\left\{
		\begin{aligned}
			\frac{\partial u^b}{\partial n}(x,y) & = 0 \qquad &&  a.e. \ (x,y)\in \partial\Omega\setminus (\Omega'\times\{0\}) \\
			\frac{\partial u^b}{\partial y}(x,0) & = \theta p_0^0 \frac{\partial u_0^0(x)}{\partial y}(0) \qquad && a.e. \ x\in \Omega', \\
			\sum_{j\in B_i^k}p_i^j(a_i)\frac{\partial u_i^j}{\partial y}(x,a_i)& =\sum_{j\in A_i^k}p_{i+1}^j(a_i)\frac{\partial u_{i+1}^j}{\partial y}(x,a_i) \qquad && a.e. \ x\in \Omega'\backslash \Theta_0, \ \forall i\in I, \ k\in K_i \\
			\frac{\partial u_M^j(x)}{\partial y}(a_M) & =0 \qquad && a.e. \ x\in\Omega'\backslash \Theta_0, \ \forall j\in J_M.
		\end{aligned}
		\right.
	\end{equation}
	where $\frac{\partial u^b}{\partial y}$ on $\partial \Omega$ is understood in the sense of distributions (see Remark \ref{remark:normal} below) and $A_i^k$, $B_i^k$ are as in Definition \ref{def:joint}. 
\end{theorem}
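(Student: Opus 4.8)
The plan is to transport the already-established result of Theorem \ref{thm:main} and Proposition \ref{prop:main} from the unfolded space $H(\theta)$ into the graph space $\mathbb{H}(\theta)$ by means of the pointwise isometry $H^1_{\braket{\xi}}(Y)\cong \mathscr{H}(Y)$ recorded in Remark \ref{remark:isometric}, and then to upgrade the weak graph formulation \eqref{eqn:graphprobeq4} to the classical system \eqref{eqn:limgrapheq1}--\eqref{eqn:limgrapheq2} via elliptic and Sturm--Liouville regularity together with integration by parts. First I would take $(u^a,u^b)\in H(\theta)$ to be the solution furnished by Proposition \ref{prop:main}, with $u^a$ redefined to be $0$ on $\Theta_0$ so that $u^a\in L^2(\Omega';H^1_{\braket{\xi}}(Y))$ by Proposition \ref{prop:uaisnice}. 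Applying Proposition \ref{prop:h1tograph} to $u^a(x)$ for a.e. $x$ defines $\bm{u^a}(x):=\{u_i^j(x)\}_{i\in I,\,j\in J_i}$; the norm identity \eqref{eqn:h1tographeq2} shows $\bm{u^a}\in L^2(\Omega',\theta;\mathscr{H}(Y))$, and in $L^2(\Omega';\mathscr{H}(Y))$ after the redefinition, while measurability in $x$ is inherited since the decomposition into the components $Y_i^j$ is a fixed, $x$-independent geometric operation.

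Next I would check that $(u^a,u^b)\mapsto(\bm{u^a},u^b)$ restricts to an isometric isomorphism $H(\theta)\to\mathbb{H}(\theta)$. The trace coupling (2) defining $H(\theta)$, namely ${Tr}^a_{\braket{\xi}}(u^a)={Tr}^b(u^b)$ on $\Omega'\setminus\Theta_0$, is exactly the condition ${Tr}^b(u^b)(x)=u_0^0(x)(0)$ of Definition \ref{def:spaceofsol}, because by Proposition \ref{prop:graphtoh1} the trace of $u^a(x)$ on $\omega\times\{0\}$ is the value at $y=0$ of the base graph function. Splitting the $Y$-integral in \eqref{eqn:thmmaineq3} over the components $Y_i^j$ and integrating out the horizontal variable $\xi$ (whose section at height $y$ has measure $p_i^j(y)$, and noting $f^*$ is $\xi$-independent) converts the bilinear form and right-hand side of \eqref{eqn:thmmaineq3} termwise into those of \eqref{eqn:graphprobeq4}; hence $(\bm{u^a},u^b)\in\mathbb{H}(\theta)$ solves \eqref{eqn:graphprobeq4}, and uniqueness transfers from Proposition \ref{prop:main} (or by re-running Lax--Milgram directly on $\mathbb{H}(\theta)$). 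The convergence \eqref{eqn:graphprobeq5} is then just a restatement of \eqref{eqn:thmmaineq2}: the function $\bar{u}^a_\varepsilon$ of \eqref{eqn:graphprobeq6} coincides with the one in \eqref{eqn:thmmaineq1}, since on $Y_i^j$ one has $u^a(z,\xi,y)=u_i^j(z)(y)$, so averaging in $z$ yields the same object.

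The substantive part is deriving the strong form \eqref{eqn:limgrapheq1}--\eqref{eqn:limgrapheq2}. Testing \eqref{eqn:graphprobeq4} with $\bm{\varphi}^a=0$ and $\varphi^b\in C^\infty_c(\Omega^b)$ gives $-\Lap u^b+u^b=f$ weakly, whence $u^b\in H^2_{loc}(\Omega^b)$ and the first equation of \eqref{eqn:limgrapheq1} a.e.\ by interior elliptic regularity; allowing $\varphi^b$ nonzero on $\partial\Omega\setminus(\Omega'\times\{0\})$ yields the homogeneous Neumann condition there. For the graph functions I would test with $u^b=0$ and separated functions $\bm{\varphi}^a(x)=\phi(x)\bm{\psi}$ supported, for a fixed pair $(i,j)$, in the interior of $(a_{i-1},a_i)$; since $p_i^j\in C^1$ and $p_i^j\geq c_0>0$ by \eqref{eqn:pijbound}, the resulting weighted Sturm--Liouville identity gives $u_i^j(x)\in H^2(a_{i-1},a_i)$ with $L^2(\Omega';\cdot)$-control and, after integration by parts, the second equation of \eqref{eqn:limgrapheq1} a.e.\ in $x$ and $y$. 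The continuity conditions \eqref{eqn:limgrapheq3} are automatic: the first is membership in $\mathbb{H}(\theta)$, and the junction continuities follow from the continuity conditions built into $\mathscr{H}(Y)$ combined with Lemma \ref{lemma:continuityjoints}.

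Finally, the flux conditions \eqref{eqn:limgrapheq2} are the boundary terms retained in the integrations by parts. Subtracting the strong equations from \eqref{eqn:graphprobeq4} reduces it to an identity involving only boundary and junction terms, valid for all $(\bm{\varphi}^a,\varphi^b)\in\mathbb{H}(\theta)$; one then selects test families free at a single interface to isolate each condition. Functions free at $y=a_M$ give the tip Neumann conditions; functions whose only coupling is across one cluster $T_i^k$ — where admissibility in $\mathscr{H}(Y)$ forces a common value on all $Y_i^j$, $j\in B_i^k$, and $Y_{i+1}^{j'}$, $j'\in A_i^k$ — force the coefficient of that common value to vanish, yielding the Kirchhoff balance $\sum_{j\in B_i^k}p_i^j(a_i)\,\partial_y u_i^j=\sum_{j\in A_i^k}p_{i+1}^j(a_i)\,\partial_y u_{i+1}^j$; and functions coupling $u^b$ to $u_0^0$ at $y=0$ produce the transmission condition $\partial_y u^b(x,0)=\theta\,p_0^0\,\partial_y u_0^0(x)(0)$. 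I expect this last family to be the main obstacle: one must interpret the normal derivatives in the appropriate distributional/dual sense (as flagged in Remark \ref{remark:normal}), carry the possibly-degenerate weight $\theta$ through the boundary integral so that the coupling is read off only on $\Omega'\setminus\Theta_0$, and justify the separation-of-variables testing through the density of products $\phi(x)\bm{\psi}$ in $\mathbb{H}(\theta)$ (cf.\ the argument of Appendix \ref{app:density}) in order to extract the pointwise-a.e.-in-$x$ conditions.
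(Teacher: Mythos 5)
Your proposal is correct and follows essentially the same route as the paper's proof: transporting $(u^a,u^b)$ into the graph space via the pointwise isometry of Propositions \ref{prop:graphtoh1}--\ref{prop:h1tograph}, rewriting \eqref{eqn:thmmaineq3} by fibering the $Y$-integral over the pieces $Y_i^j$ with weight $p_i^j$, obtaining uniqueness by Lax--Milgram on $\mathbb{H}(\theta)$, and then deriving \eqref{eqn:limgrapheq1}--\eqref{eqn:limgrapheq2} by interior elliptic and Sturm--Liouville regularity (using \eqref{eqn:pijbound}) together with integration by parts against test functions $\phi(x)\psi(y)$ localized at each junction, with the $y=0$ transmission condition read through the distributional normal derivative of Remark \ref{remark:normal}. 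The points you flag as delicate (the degenerate weight $\theta$, localization in $x$ via arbitrariness of $\phi\in C^\infty_c(\Omega')$, and admissibility of the junction test families in $\mathscr{H}(Y)$) are exactly the ones the paper handles in its Steps 4 and 5.
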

\begin{remark}
	\label{remark:normal}
	In the first and second equations of \eqref{eqn:limgrapheq2}, the term $\frac{\partial u^b}{\partial n}$ is not a priori well-defined, not even in the sense of traces, since $u^b \in H^1(\Omega^b)$ only implies that $\frac{\partial u^b}{\partial y} \in L^2(\Omega^b)$, and there is no bounded trace operator from $L^2(\Omega^b)$ to the boundary. However, due to the fact that $\Delta u^b = u^b - f \in L^2(\Omega^b)$, it is possible to define the normal derivative in a distributional sense (see \cite[Page 299]{mitidieri}). The normal derivative $\frac{\partial u^b}{\partial n}$ is then understood as an element of $H^{-1/2}(\partial \Omega)$, the dual of $H^{1/2}(\partial \Omega)$, and acts as follows. Given $\varphi \in H^{1/2}(\partial \Omega^b)$, we consider an extension $\varphi \in H^1(\Omega^b)$ (which is possible since $\Omega^b$ is Lipschitz; see \cite[Theorem 1.5.1.2]{grisvard}), and denote it by the same symbol. Then,
	\begin{equation}
		\label{eqn:defnormalder}
		\braket{\varphi,\frac{\partial u^b}{\partial n}}\defeq \int_{\Omega^b}\nabla \varphi \nabla u^b+\int_{\Omega^b}\Lap u^b \varphi.
	\end{equation}
	Note that, in the case where the domain $\Omega^b$ and $u^b$
	are smooth enough, the definition above coincides with the usual definition of the trace of the normal derivative (which will lie in $L^2(\partial \Omega^b)$) acting on an $L^2(\partial \Omega^b)$ function with the canonical dual correspondence.
	
	The first two equations of \eqref{eqn:limgrapheq2} then state that the above distribution, $\partial_n u^b$ is in fact an $L^2(\partial\Omega)$ function given by
	\begin{equation}
		g(x,y)=
		\left\{
		\begin{aligned}
			0 \qquad (&x,y)\in \partial \Omega \setminus( \Theta_0\times\{0\}) \\
			\theta p_0^0\frac{\partial u_0^0(x)}{\partial y}(0) \qquad &x\in \Omega'\backslash\Theta_0, \ y=0.
		\end{aligned}
		\right.
	\end{equation}
	which acts over $\varphi$ as
	\begin{equation}
		\braket{\varphi,\frac{\partial u^b}{\partial n}}=\int_{\partial \Omega} \varphi g.
	\end{equation}
\end{remark}
\begin{remark}
	Note that solutions to the weak problem \eqref{eqn:graphprobeq4} satisfy that $\bm{u^a}\in L^2(\Omega',\theta;\mathscr{H}(Y))$, so, although $\bm{u^a}(x)$ is defined for $x\in \Theta_0$ for a particular representative, its value is not relevant because it differs for every representative of the class of $L^2(\Omega',\theta;\mathscr{H}(Y))$. However, by redefining $\bm{u^a}(x)=0$ when $x\in \Theta_0$, it is proved that $\bm{u^a}\in L^2(\Omega';\mathscr{H}(Y))$. This is the same that happened in Proposition \ref{prop:uaisnice}.
\end{remark}

\begin{proof}
	\noindent\textbf{Step 1 (proving the convergence): }Consider the functions $(u^a,u^b)$ from Theorem \ref{thm:main}. We have that $u^a\in L^2(\Omega';H^1(Y))$, and, as $(u^a,u^b)\in H(\theta)$, we have $u^a\in L^2(\Omega';H^1_{\braket{\xi}}(Y))$. By Proposition \ref{prop:h1tograph}, as for $a.e$ $x\in\Omega'$, $u^a\in H^1_{\braket{\xi}}(Y)$, we can define $\bm{u^a}(x)\defeq \{u_i^j(x)\}_{i\in I,j\in J_i}$ where
	\begin{equation}
		\label{eqn:graphprobeq7}
		u_i^j(x)(y)=u^a(x)({\xi},y), \qquad ({\xi},y)\in Y_i^j,
	\end{equation}
	so that $\bm{u^a}(x)\in \mathscr{H}(Y)$. In addition, $\bm{u^a}\in L^2(\Omega';\mathscr{H}(Y))$ because
	$$
	\norm{\bm{u^a}}^2_{L^2(\Omega';\mathscr{H}(Y))}=\int_\Omega\norm{\bm{u^a}}^2_{\mathscr{H}(Y)}dx\myeq{\eqref{eqn:h1tographeq2}}\int_\Omega\norm{u^a}^2_{H^1_{\braket{\xi}}(Y)}dx=\norm{u^a}^2_{L^2(\Omega'; H^1_{\braket{\xi}}(Y))}.
	$$

	Now, the definition of $\bar{u}^a_\varepsilon$ from \eqref{eqn:thmmaineq1} and \eqref{eqn:graphprobeq6} coincide. It follows from \eqref{eqn:graphprobeq7} that
	\begin{equation}
		u_i^j(z)(y)=u^a(z)\left(\frac{x-\bar x_\varepsilon^n}{l_\varepsilon^n},y\right), \qquad  a.e. \ z\in \Omega', \ \ a.e.  \ \left(\frac{x-{\bar x^n_\varepsilon}}{l_\varepsilon^n},y\right)\in Y_i^j,
	\end{equation}
	for $i\in I$ and $j\in J_i$. Therefore the convergence \eqref{eqn:graphprobeq5} follows from \eqref{eqn:thmmaineq2}.\\
	
	\noindent\textbf{Step 2 (weak formulation): }Now, let us prove that $(\bm{u^a},u^b)\in \mathbb{H}(\theta)$ and satisfy \eqref{eqn:graphprobeq4}. Since $(u^a,u^b)\in H(\theta)$, we have ${Tr}^b(u^b)(x)=u^a(x)(\xi,0)$ for a.e. $x\in \Omega'\setminus \Theta_0$ and a.e. $\xi \in \omega$. Therefore, ${Tr}^b(u^b)(x)=u_0^0(x)(0)$ for a.e. $x\in \Omega'\setminus \Theta_0$ and $(\bm{u^a},u^b)\in \mathbb{H}(\theta)$. Now, the weak formulation \eqref{eqn:graphprobeq4} is obtained from \eqref{eqn:thmmaineq3}. In the same way we did with $u^a$, given $\varphi^a\in L^2(\Omega',\theta;H^1_{\braket{\xi}}(Y))$, we can define, via Proposition \ref{prop:h1tograph}, for a.e. $x\in \Omega'\setminus \Theta_0$, $\boldsymbol{\varphi}^{\bm{a}}(x)=\{\varphi_i^j(x)\}_{i\in I, j\in J_i}$ given by
	\begin{equation}
		\varphi_i^j(x)(y)=\varphi^a(x)({\xi},y) \qquad ({\xi},y)\in \Omega_i^j.
	\end{equation}
	Then, using that $Y$ can be decomposed in the sets $\{Y_i^j\}_{i\in I,j\in J_i}$ and some other sets of measure $0$ (see Definition \ref{def:nicedomain}), we have
	\begin{equation}
		\label{eqn:graphprobeq8}
		\begin{aligned}
			&\int_{\Omega'}\theta\int_Y\left( \frac{\partial u^a}{\partial y}\frac{\partial \varphi^a}{\partial y}+u^a \varphi^a-f^*\varphi^a\right)=
			\int_{\Omega'}\theta\sum_{i\in I}\sum_{j\in J_i}\int_{Y_i^j}\left( \frac{\partial u^a}{\partial y}\frac{\partial \varphi^a}{\partial y}+u^a \varphi^a-f^*\varphi^a\right)
			\\
			\myeq{\eqref{eqn:graphprobeq7}}
			&\int_{\Omega'}\theta(x)\sum_{i\in I}\sum_{j\in J_i}\int_{Y_i^j}\left(\frac{\partial u_i^j}{\partial y}(x)(y)\frac{\partial \varphi_i^j}{\partial y}(x)(y)+u_i^j(x)(y)\varphi_i^j(x)(y)-f(x,y)\varphi_i^j(x)(y)\right)d\xi dy dx
			\\
			= & \int_{\Omega'}\theta\sum_{i\in I}\sum_{j\in J_i}\int_{a_{i-1}}^{a_{i}} p_i^j\left(\frac{\partial u_i^j}{\partial y}\frac{\partial \varphi_i^j}{\partial y}+u_i^j\varphi_i^j-f\varphi_i^j\right)
		\end{aligned}
	\end{equation}
	so, from \eqref{eqn:thmmaineq3} and \eqref{eqn:graphprobeq8} we obtain \eqref{eqn:graphprobeq4}.
	
	\noindent\textbf{Step 3 (existence and uniqueness): }Now, the existence and uniqueness of solutions of \eqref{eqn:graphprobeq4} is straightforward by the use of Lax-Milgram Theorem just because
	\begin{equation}
		a(u, \varphi)=\int_{\Omega^b}\left(\nabla u^b\nabla \varphi^b+u^b\varphi^b\right)+\int_{\Omega'}\theta\sum_{i=0}^{M}\sum_{j=0}^{m_i}\int_{a_{i-1}}^{a_{i}} p_i^j\left(\frac{\partial u_i^j}{\partial y}\frac{\partial \varphi_i^j}{\partial y}+u_i^j\varphi_i^j\right)=0
	\end{equation}
	is a bilinear coercive form in $\mathbb{H}(\theta)$ and
	\begin{equation}
		L_f(\varphi)=\int_{\Omega^b}f\varphi^b+\int_{\Omega'}\theta\sum_{i=0}^{M}\sum_{j=0}^{m_i}\int_{a_{i-1}}^{a_{i}} p_i^j f\varphi_i^j
	\end{equation}
	is a linear form, so Lax-Milgram gives the existence and uniqueness of solutions. 
	
	\noindent\textbf{Step 4 (regularity): }Now we need to prove that $u^b\in H^2_{loc}(\Omega^b)\cap H^1(\Omega^b)$, $u_i^j\in L^2(\Omega';H^2(a_i,a_{i+1}))$ for $i\in I$, $j\in J_i$, and they satisfy equations \eqref{eqn:limgrapheq1}, \eqref{eqn:limgrapheq3} and \eqref{eqn:limgrapheq2}.

	First, to prove that $u^b\in H^2_{loc}(\Omega^b)$, we take $\varphi^b\in C^\infty_c(\Omega^b)$ and $\varphi_i^j\equiv 0$ for every $i\in I$ and $j\in J_i$, so, as the continuity conditions are satisfied, $(\bm{\varphi^a},\varphi^b)\in \mathbb{H}(\theta)$ and \eqref{eqn:graphprobeq4} reduces to
	\begin{equation}
		\label{eqn:graphprobeq9}
		\int_{\Omega^b}\left(\nabla u^b\nabla \varphi^b+u^b\varphi^b-f\varphi^b\right)=0.
	\end{equation}
	As \eqref{eqn:graphprobeq9} is true for any $\varphi\in C^\infty_c(\Omega^b)$, using local regularity results (see \cite[Section 6.3.1, Theorem 1]{evans}) we have $u^b\in H^2_{loc}(\Omega^b)$ and $-\Lap u^b + u^b = f$ in $\Omega^b$.
	
	Now, let us study $\bm{u^a}$. We first have that $u_i^j(x)=0$ for $i\in I$, $j\in J_i$ and a.e. $x\in \Theta_0$ by definition (see Theorem \ref{thm:main}), so  $u_i^j(x)\in H^2((a_{i-1},a_i))$ for every $x\in \Theta_0$. Hence, we have to study $u^a$ in $\Omega'\setminus\Theta_0$. In order to do that, given $i\in I$ and $j\in J_i$, we take $\varphi_i^j(x)(y)=\phi(x)\psi(y)$ for $x\in\Omega'$, $y\in (a_{i-1},a_i)$  where $\phi\in C^\infty_c(\Omega')$ and $\psi\in C^\infty_c((a_{i-1},a_i))$. We also define $\varphi_{i'}^{j'}\equiv0$ for any other $i'\in I$, $j'\in J_{i'}$, $(i',j')\neq (i,j)$ and we take $\varphi^b\equiv 0$. Then, as the continuity conditions are satisfied, $(\bm{\varphi^a},\varphi^b)\in \mathbb{H}(\theta)$ and \eqref{eqn:graphprobeq4} reduces to
	\begin{equation}
		\int_{\Omega'}\phi(x)\theta(x)\int_{a_{i-1}}^{a_{i}}p_i^j(y)\left(\frac{\partial u_i^j(x)}{\partial y}(y)\frac{\partial \psi}{\partial y}(y)+u_i^j(x)(y)\psi(y)-f(x,y)\psi(y)\right)dydx = 0.
	\end{equation}
	Therefore, as $\phi\in C^\infty_c(\Omega)$ is arbitrary, we have for a.e. $x\in \Omega'$,
	\begin{equation}
		\theta(x)\int_{a_{i-1}}^{a_{i}}p_i^j(y)\left(\frac{\partial u_i^j(x)}{\partial y}(y)\frac{\partial \psi}{\partial y}(y)+u_i^j(x)(y)\psi(y)-f(x,y)\psi(y)\right)dy = 0.
	\end{equation}
	Hence, for a.e. $x\in \Omega'\setminus \Theta_0$,
	\begin{equation}
		\label{eqn:graphprobeq10}
		\int_{a_{i-1}}^{a_{i}}p_i^j(y)\left(\frac{\partial u_i^j(x)}{\partial y}(y)\frac{\partial \psi}{\partial y}(y)+u_i^j(x)(y)\psi(y)-f(x,y)\psi(y)\right)dydx = 0.
	\end{equation}
	As $\psi\in C^\infty_c((a_{i-1},a_i))$, \eqref{eqn:graphprobeq10} means that $u_i^j(x)$
	is a weak solution of
	\begin{equation}
		\label{eqn:graphprobeq11b}
		\frac{1}{p_i^j}\frac{d}{d y}\left(p_i^j\frac{d}{dy}\left(u_i^j(x)\right)\right)(y)+u_i^j(x)(y) =f(x,y), \qquad y\in (a_{i-1},a_i).
	\end{equation}
	Therefore, using local regularity results (see \cite[Section 6.3.1, Theorem 1]{evans}), we obtain $u_i^j(x)\in H^2_{loc}((a_{i-1},a_i))$. But then, we can expand \eqref{eqn:graphprobeq11b} to obtain
	\begin{equation}
		\label{eqn:graphprobeq11}
		\frac{d^2(u_i^j(x))}{dy^2}(y) =f(x,y)-\frac{(p_i^j)'(y)}{p_i^j(y)}\frac{d u_i^j(x)}{d y}(y)-u_i^j(x)(y), \qquad a.e. \ y\in (a_{i-1},a_i).
	\end{equation}
	From Fubini's theorem, we have that $f(x,\cdot)\in L^2((a_{i-1},a_i))$ for a.e. $x\in \Omega'$. In addition, since $u_i^j\in L^2(\Omega';H^1((a_{i-1},a_i)))$, $u_i^j(x)\in H^1((a_{i-1},a_i))$ for a.e. $x\in \Omega'\setminus\Theta_0$. Therefore, from \eqref{eqn:pijbound} and
	\eqref{eqn:graphprobeq11}, we obtain that $\frac{d^2(u_i^j(x))}{dy^2}\in L^2((a_{i-1},a_i))$ for a.e. $x\in \Omega'\setminus \Theta_0$, so $u_i^j(x)\in H^2((a_{i-1},a_i))$ for a.e. $x\in \Omega'\setminus\Theta_0$. In addition, from \eqref{eqn:graphprobeq11} and the fact that $u_i^j\in L^2(\Omega'; H^1(a_{i-1},a_{i}))$ and $f\in L^2(\R^{N+1})=L^2(\RN; L^2(\R))$ (see Lemma \ref{lemma:bochnerproduct} and Remark \ref{remark:bochnerproduct}), we obtain
	\begin{equation}
		\bnorm{\frac{d^2 u_i^j(\cdot)}{dy^2}}_{L^2(\Omega';L^2((a_{i-1},a_{i})))}\leq \norm{f}_{L^2(\R^{N+1})}+\norm{\bm{u^a}}_{L^2(\Omega';\mathscr{H}(Y))}<\infty.
	\end{equation}
	so $u_i^j \in L^2(\Omega';H^2((a_{i-1},a_{i})))$.
	
	\noindent \textbf{Step 5 (pointwise equations): }
	In the previous step we have already proved the equations \eqref{eqn:limgrapheq1}. In addition, from Step 1 and 2, we have that $(\bm{u^a},u^b)\in \mathbb{H}(\theta)$, so the continuity conditions \eqref{eqn:limgrapheq3} are satisfied. Therefore, we only need to obtain the flux conditions \eqref{eqn:limgrapheq2}. Let us obtain them from the weak formulation \eqref{eqn:graphprobeq4}.
	
	In order to do that, given $i\in I$, $i\neq M$ and $k\in K_i$, we take $\varphi_i^j=\phi\psi_1$ for every $j\in B_i^k$ and $\varphi_i^{j'}=\phi\psi_2$ for every $j\in A_i^k$ where $\phi\in C^\infty_c(\Omega')$, $\psi_1\in C^\infty_c((a_{i-1},a_i])$ and $\psi_2\in C^\infty_c([a_{i},a_{i+1}))$ satisfying $\psi_1(a_i)=\psi_2(a_i)$. We also define $\varphi_{i'}^{j'}=0$ for the rest of the indices $i'\in I$ and $j'\in J_{i'}$, and we take $\varphi^b=0$ too. Then, as the continuity conditions are satisfied, $(\bm{\varphi^a},\varphi^b)\in \mathbb{H}(\theta)$ and \eqref{eqn:graphprobeq4} reduces to
	\begin{equation}
		\begin{aligned}
			&\int_{\Omega'}\phi(x)\theta(x)\left(\sum_{j\in B^k_i}\int_{a_{i-1}}^{a_{i}}p_i^j(y)\left(\frac{\partial u_i^j(x)}{\partial y}(y)\frac{\partial \psi_1}{\partial y}(y)+u_i^j(x)(y)\psi_1(y)-f(x,y)\psi_1(y)\right)dy\right.
			\\ +& \left.\sum_{j\in A_i^k}\int_{a_{i}}^{a_{i+1}}p_{i+1}^j(y)\left(\frac{\partial u_{i+1}^j(x)}{\partial y}(y)\frac{\partial \psi_2}{\partial y}(y)+u_{i+1}^j(x)(y)\psi_2(y)-f(x,y)\psi_2(y)\right)dy\right)
			dx = 0.
		\end{aligned}
	\end{equation}
	so, as $\phi\in C^\infty_c(\Omega')$ is arbitrary, for a.e. $x\in\Omega'\setminus \Theta_0$,
	\begin{equation}
		\begin{aligned}
			&\sum_{j\in B^k_i}\int_{a_{i-1}}^{a_{i}}p_i^j(y)\left(\frac{\partial u_i^j(x)}{\partial y}(y)\frac{\partial \psi_1}{\partial y}(y)+u_i^j(x)(y)\psi_1(y)-f(x,y)\psi_1(y)\right)dy
			\\ +& \sum_{j\in A_i^k}\int_{a_{i}}^{a_{i+1}}p_{i+1}^j(y)\left(\frac{\partial u_{i+1}^j(x)}{\partial y}(y)\frac{\partial \psi_2}{\partial y}(y)+u_{i+1}^j(x)(y)\psi_2(y)-f(x,y)\psi_2(y)\right)dy
			= 0.
		\end{aligned}
	\end{equation}
	Then, integrating by parts, we obtain
	\begin{equation}
		\begin{aligned}
			&\sum_{j\in B^k_i}\int_{a_{i-1}}^{a_{i}}
			p_i^j(y)\left(
			-\frac{1}{p_i^j}\left( \frac{\partial}{\partial y}\left(p_i^j\frac{\partial u_i^j(x)}{\partial y}\right)\right)
			+u_i^j(x)-f
			\right)\psi_1
			\\ +& \sum_{j\in A_i^k}\int_{a_{i}}^{a_{i+1}}
			p_{i+1}^j(y)\left(
			-\frac{1}{p_{i+1}^j}\left( \frac{\partial}{\partial y}\left(p_{i+1}^j\frac{\partial u_{i+1}^j(x)}{\partial y}\right)\right)
			+u_{i+1}^j(x)-f
			\right)\psi_2
			\\
			+
			& \sum_{j\in B^k_i} p_i^j(a_i)\frac{\partial u_i^j(x)}{\partial y}(a_i)-\sum_{j\in A_i^k}p_{i+1}^j(a_i)\frac{\partial u_{i+1}^j(x)}{\partial y}(a_i)
			= 0.
		\end{aligned}
	\end{equation}
	so using \eqref{eqn:limgrapheq1} we obtain the third equation in \eqref{eqn:limgrapheq2}. 
	
	The fourth equation in \eqref{eqn:limgrapheq2} can be obtained in a similar way. Given $j\in J_M$, choose $\varphi_M^j(x)(y)=\phi(x)\psi(y)$ for $x\in \Omega'$ and $y\in (a_{M-1},a_M)$, where $\phi\in C^\infty_c(\Omega')$ and $\psi\in C^\infty_c((a_{M-1},a_M])$. For the rest of the indices $i'\in I$, $j'\in J_i$, $(i',j')\neq (i,j)$, choose $\varphi_{i'}^{j'}=0$, and take $\varphi^b=0$ too. Then, as the continuity conditions are satisfied, $(\varphi^b,\bm{\varphi^a})\in \mathbb{H}(\theta)$ and \eqref{eqn:graphprobeq4} reduces to
	\begin{equation}
		\begin{aligned}
			\int_{\Omega'}\phi(x)\theta(x)\int_{a_{M-1}}^{a_{M}}p_M^j(y)\left(\frac{\partial u_M^j(x)}{\partial y}(y)\frac{\partial \psi}{\partial y}(y)+u_M^j(x)(y)\psi(y)-f(x,y)\psi(y)\right)dy		
			dx = 0.
		\end{aligned}
	\end{equation}
	so, as $\phi\in C^\infty_c(\Omega')$ was arbitrary, for a.e. $x\in \Omega'\setminus \Theta_0$, we obtain
	\begin{equation}
		\begin{aligned}
			\int_{a_{M-1}}^{a_{M}}p_M^j(y)\left(\frac{\partial u_M^j(x)}{\partial y}(y)\frac{\partial \psi}{\partial y}(y)+u_M^j(x)(y)\psi(y)-f(x,y)\psi(y)\right)dy		
			= 0.
		\end{aligned}
	\end{equation}
	and integrating by parts
	\begin{equation}
		\begin{aligned}
			\int_{a_{M-1}}^{a_{M}}p_M^j\left(-\frac{1}{p_M^j}\frac{\partial}{\partial y}\left(p_M^j\frac{\partial u_M^j(x)}{\partial y}(y)\right)+u_M^j(x)-f(x,\cdot)\right)\psi+ p_M^j(a_M)\frac{\partial u_M^j(x)}{\partial y}(a_M) 		
			= 0.
		\end{aligned}
	\end{equation}
	so using \eqref{eqn:limgrapheq1} we obtain the fourth equation in \eqref{eqn:limgrapheq2}.

	Finally, let us obtain the first two equations in \eqref{eqn:limgrapheq2}. In order to do that, take $\varphi^b \in C^\infty(\adh{\Omega^b})$. Then, define $\phi(x)=\varphi^b(x,0)$ for any $x\in \Omega'$ so $\phi\in C^\infty(\adh{\Omega'})$ and take $\psi\in C^\infty_c([a_0,a_1))$ with $\psi(0)=1$. Define then $u_0^0(x)(y)=\phi\psi$ for $x\in \Omega'$ and $y\in (a_{M-1},a_M)$, and $u_i^j\equiv 0$ for $i\in I$, $j\in J_i$, $(i,j)\neq (0,0)$. Then, as the continuity conditions are satisfied, $(\bm{\varphi^a},\varphi^b)\in \mathbb{H}(\theta)$ and \eqref{eqn:graphprobeq4} reduces to
	\begin{equation}
		\int_{\Omega^b}\left(\nabla u^b\nabla \varphi^b+u^b\varphi^b-f\varphi^b\right)+\int_{\Omega'}\phi\theta\int_{a_0}^{a_1} p_0^0\left(\frac{\partial u_0^0}{\partial y}\frac{\partial \psi}{\partial y}+u_0^0\psi-f\psi\right)=0.
	\end{equation}
	so, using the definition of the distributional normal derivative \eqref{eqn:defnormalder} in the first integral and integrating by parts in the second one, we have
	\begin{equation}
		\begin{aligned}
			& \int_{\Omega^b}\left(-\Lap u^b+u^b-f\right)\varphi^b+\int_{\Omega'}\phi\theta\int_{a_0}^{a_1} p_0^0\left(\frac{1}{p_0^0}\frac{\partial}{\partial y}\left(p_0^0\frac{\partial u_0^0}{\partial y}\right)+u_0^0-f\right)\psi
			\\
			& +\braket{\frac{\partial u^b}{\partial n},\varphi^b}-\int_{\Omega'}\phi(x)\psi(a_0)\theta(x) p_0^0(a_0) \frac{\partial u_0^0(x)}{\partial y}(a_0)dx.
		\end{aligned}
	\end{equation}
	so using equation \eqref{eqn:graphprobeq4} and the fact that $a_0=0$, $\phi(x)=\varphi^b(x,0)$ and $\psi(0)=1$, we obtain
	\begin{equation}
		\braket{\frac{\partial u^b}{\partial n},\varphi^b}-\int_{\Omega'}\varphi^b(x)\theta(x) p_0^0(0) \frac{\partial u_0^0(x)}{\partial y}(0)dx=0,
	\end{equation}
	so, defining
	\begin{equation}
		g(x,y)\defeq \left\{
		\begin{aligned}
			& 0, \qquad &&(x,y)\in \partial \Omega^b \setminus \{\Omega'\times 0\} \\
			& \theta(x) p_0^0(0) \frac{\partial u_0^0(x)}{\partial y}(0) \qquad && x\in \Omega', \ y=0.
		\end{aligned}
		\right.
	\end{equation}
	we have
	\begin{equation}
		\braket{\frac{\partial u^b}{\partial n},\varphi^b} = \int_{\partial \Omega} g\varphi^b
	\end{equation}
	so $g=\frac{\partial u^b}{\partial n}$ in the sense of Remark \ref{remark:normal} and we obtain the first and second equations in \eqref{eqn:limgrapheq2}. Note that $g$ belongs to $L^2(\partial \Omega^b)$ because $u_0^0\in L^2(\Omega';H^2((a_0,a_1)))$ so $\frac{\partial u_0^0}{\partial y}\in L^2(\Omega';H^1((a_0,a_1)))$ so, using that the trace on $y=0$ is bounded from $H^1((a_0,a_1))$ to $\R$, we obtain $\frac{\partial u_0^0(\cdot)}{\partial y}(0)\in L^2(\Omega')$.

\end{proof}

\section*{Acknowledgments}
The author JDdT expresses his gratitude to Professor Antonio Gaudiello (University of Cassino and Southern Lazio) for his warm hospitality and the fruitful discussions held during the Sixth Workshop on Thin Structures in 2023, which helped giving rise to this work.

\textbf{Funding.} Both authors JMA and JDdT were partially supported by grants PID2022-137074NB-I00, CEX2023-001347-S, and (for JDdT) CEX2019-000904-S; the last two from the ``Severo Ochoa Programme for Centres of Excellence in R\&D''. All these grants were funded by MCIN/AEI/\allowbreak10.13039/501100011033. Both authors also received support from ``Grupo de Investigaci\'on 920894 -
CADEDIF'', UCM, Spain.

	\appendix

	\section{Lebesgue-Bochner spaces}
	\label{app:bochner}
	In this section, we prove some technical results of measure theory that we needed in the proofs, but which we decided to separate in order to facilitate the readability of the proofs. Through the section, we will use the term $\theta$-measurability. When considering the (Lebesgue) bounded measurable function $\theta: \Omega \to [0,\infty)$, we can consider the associated measure $d\theta = \theta dx$ where $dx$ is the Lebesgue measure. However, the space $(\Omega, \Sigma, d\theta)$, where $\Sigma$ denotes the Lebesgue $\sigma$-algebra, is not necessarily a complete measure space (because of the non-measurable sets of $\Omega$ where $\theta$ vanishes). However, it can easily completed adding the necessary sets to the $\sigma$-algebra, $(\Omega, \Sigma_\theta, d\theta)$. We will refer to $\theta$-measurability, to distinguish from standard (Lebesgue) measurability, to the measurability of functions with respect to that $\sigma$-algebra.
	
	We begin by proving a result that relates product spaces to Bochner spaces when ${\hil}$ is a Lebesgue or Sobolev space.
	\begin{lemma}
		\label{lemma:bochnerproduct}
		Let $\Omega'$, $\theta$ and $Y$ as in Section \ref{sec:jtxtools}. Recall the notation $W=\Omega'\times Y$. Then,
		\begin{equation}
			L^2(\Omega',\theta;L^2(Y)) = L^2(W, \theta)
		\end{equation}
		in the sense that, given $u\in L^2(\Omega',\theta;L^2(Y))$, there exists $\hat{u}\in L^2(W, \theta)$ such that, for $\theta$-a.e. $x\in \Omega'$,
		\begin{equation}
			\label{eqn:bochnerproducteq1}
			\hat{u}(x,\xi,y)=u(x)(\xi,y), \qquad  a.e. \ (\xi,y)\in Y,
		\end{equation}
		The converse is also true, that is, if $\hat{u}$ belongs to $L^2(W,\theta)$, then, $u$ defined as in \eqref{eqn:bochnerproducteq1} belongs to $L^2(\Omega',\theta;L^2(Y))$. In the same way $L^2(\Omega';L^2(Y)) = L^2(W)$.
	\end{lemma}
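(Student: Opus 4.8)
The plan is to treat this as a Fubini--Tonelli identification of the $L^2(Y)$-valued Bochner space with a scalar $L^2$-space over the product $W=\Omega'\times Y$, reducing the weighted statement to the classical unweighted one. First I would record the unweighted identification $L^2(\Omega';L^2(Y))=L^2(W)$, which is exactly the classical Bochner--Fubini theorem for $L^2$ of a $\sigma$-finite product (see \cite{diestel1977vector} or \cite[Chapter 1]{banachspacesI}): the map $\Phi\colon u\mapsto \hat u$ with $\hat u(x,\xi,y)=u(x)(\xi,y)$ is a well-defined isometric isomorphism, and for simple functions $u=\sum_k \chi_{A_k}v_k$ (with $A_k\subset\Omega'$ measurable and $v_k\in L^2(Y)$) it is given by the obvious formula $\hat u(x,\xi,y)=\sum_k\chi_{A_k}(x)v_k(\xi,y)$, so joint measurability of $\hat u$ on $W$ is immediate there and passes to the limit. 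This settles the last sentence of the statement, and it is the seed from which I would grow the weighted version.

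To obtain the weighted identity I would transfer $\Phi$ by density. The key point is that, since the weight depends only on $x$, Tonelli applied to the nonnegative integrand $\theta\,|\hat u|^2$ gives
\[
\|u\|_{L^2(\Omega',\theta;L^2(Y))}^2=\int_{\Omega'}\theta(x)\int_Y |u(x)(\xi,y)|^2\,d\xi\,dy\,dx=\|\hat u\|_{L^2(W,\theta)}^2,
\]
so $\Phi$ is simultaneously an isometry for the \emph{weighted} norms on the dense subspace $L^2(\Omega';L^2(Y))$. By Lemma \ref{lemma:l2dense} (with $\hil=L^2(Y)$) this subspace is dense in $L^2(\Omega',\theta;L^2(Y))$, and likewise $L^2(W)$ is dense in $L^2(W,\theta)$; hence $\Phi$ extends uniquely to an isometric isomorphism $\overline\Phi$ of the two weighted spaces. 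It then remains to check that $\overline\Phi$ is still given by the pointwise formula \eqref{eqn:bochnerproducteq1}: given $u$ in the weighted space, I would pick $u_n\in L^2(\Omega';L^2(Y))$ with $u_n\to u$ in the weighted norm, pass to a subsequence so that $\|u_n(x)-u(x)\|_{L^2(Y)}\to 0$ for $\theta$-a.e.\ $x$ while $\hat u_n\to\overline\Phi(u)$ both in $L^2(W,\theta)$ and $\theta$-a.e.\ on $W$, and reconcile the two limits with Fubini to conclude $\overline\Phi(u)(x,\cdot)=u(x)$ for $\theta$-a.e.\ $x$. The converse direction is the same argument read backwards.

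The step I expect to be the genuine obstacle is measurability, precisely because $\theta$ may vanish and the measure $d\theta=\theta\,dx$ is not complete (this is the $\theta$-measurability issue flagged in the footnote and at the start of this appendix). Two representatives of a class in $L^2(\Omega',\theta;L^2(Y))$ may differ arbitrarily on $\Theta_0=\{\theta=0\}$, and correspondingly classes in $L^2(W,\theta)$ ignore $\Theta_0\times Y$; I would therefore carry out all measurability arguments with respect to the completed $\sigma$-algebra $\Sigma_\theta$ and verify explicitly that $\Phi$ respects these equivalence classes. The tool that makes this routine is the separability of $L^2(Y)$: by Pettis' theorem, strong $\theta$-measurability coincides with weak $\theta$-measurability and with being a $\theta$-a.e.\ limit of simple functions, which is exactly what legitimizes the approximation by simple functions above and guarantees that $\hat u$ is jointly measurable on $W$ with respect to the completed product structure.
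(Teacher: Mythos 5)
Your proof is correct, but it follows a genuinely different organization than the paper's. The paper works from scratch in the weighted setting: for the forward direction it approximates $u$ by simple functions directly in $L^2(\Omega',\theta;L^2(Y))$, checks $\theta$-measurability of the unfolded simple functions by hand, shows they form a Cauchy sequence in $L^2(W,\theta)$, and identifies the limit with the slice formula; for the converse it defines $u$ by slicing $\hat u$, proves \emph{weak} $\theta$-measurability of $x\mapsto u(x)$ by testing against $v\in L^2(Y)$ and applying Fubini, and uses separability of $L^2(Y)$ (Pettis) to upgrade to strong measurability. You instead take the classical unweighted identification $L^2(\Omega';L^2(Y))=L^2(W)$ as a seed, note that the unfolding map is simultaneously an isometry for the weighted norms because the weight depends only on $x$ (Tonelli), and transfer to the weighted spaces by density on both sides via Lemma \ref{lemma:l2dense}, recovering the pointwise formula \eqref{eqn:bochnerproducteq1} through $\theta$-a.e.\ convergent subsequences. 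Your route buys modularity: the weighted statement is exhibited as a formal consequence of the classical theorem plus the density lemma, and the scheme would work verbatim for any bounded weight and any separable Hilbert space $\hil$. The paper's route buys self-containedness (the unweighted case is itself part of the statement, so the paper cannot outsource it the way you do) and a converse that avoids subsequence bookkeeping. If you write yours up in full, two points deserve to be explicit: first, surjectivity of the extended isometry (its range contains the dense subspace $L^2(W)$ and is closed since the domain is complete), together with completeness of $L^2(\Omega',\theta;L^2(Y))$ — this is what your ``read backwards'' converse actually rests on; second, the fact that $\theta$-a.e.\ equality with a Bochner $\theta$-measurable function yields $\Sigma_\theta$-measurability of the slice-defined $u$, which is precisely where the completion you flag enters and is your substitute for the paper's Pettis argument.
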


	\begin{proof}
		\noindent\textbf{Step 1 (direct result): }Let $u\in L^2(\Omega',\theta;L^2(Y))$. As $u$ is square integrable, we can find a sequence $\{u_n\}_{n\in\mathbb{N}}$ of simple functions satisfying $\lim_{n\to\infty} \int_{\Omega'}\theta(x)\norm{u(x)-u_n(x)}^2_{L^2(Y)}dx = 0$.
		Now, define $\hat{u}_n (x,\xi,y) = u_n(x)(\xi,y)$ for $(x,\xi,y)\in \Omega'\times Y$. Let us see that $\hat{u}_n$ is $\theta$-measurable. As $u_n$ is a simple function, let us express it as
		\begin{equation}
			u_{n}(x)= \sum_{i\in I_n}\chi_{{\Omega'}_{n,i}}(x) u_{n,i} \qquad x\in \Omega',
		\end{equation}
		where $u_{n,i}\in L^2(Y)$ and ${\Omega'}_{n,i}$ are $\theta$-measurable subsets of $\Omega'$ for every $i\in I_n$. Therefore, given $E\subset \R$ Borel set, we have $\hat{u}_n^{-1}(E)=\bigcup_{i\in I_n}\left(\Omega_{n,i}'\times u_{n,i}^{-1}(E)\right)$
		which is a $\theta$-measurable set as it is the finite union of $\theta$-measurable sets. Therefore, $\hat{u}_n$ is $\theta$-measurable. In addition, $\hat{u}_n\in L^2(\Omega'\times Y)$ because, applying Tonelli's theorem
		\begin{equation}
			\int_{\Omega'\times Y} \theta(x)\hat{u}^2_n(x,\xi,y)dx d\xi dy = \int_{\Omega'} \theta(x)\int_{Y}u_n^2(x)(\xi,y)d\xi dy dx = \int_{\Omega'}\theta(x) \norm{u_n(x)}^2_{L^2(Y)}dx < \infty.
		\end{equation}
		Now, with the same argument
		\begin{equation}
			\lim_{n,m\to\infty} \norm{\hat{u}_n(x)-\hat{u}_m(x)}^2_{L^2(\Omega'\times Y,\theta)} = \lim_{n,m\to\infty}\int_{\Omega'}\theta(x)\norm{u_n(x)-u_m(x)}^2_{L^2(Y)}dx = 0.
		\end{equation}
		Hence, $\{\hat{u}_n\}_{n\in\mathbb{N}}$ is a Cauchy sequence in $L^2(\Omega'\times Y,\theta)$, so we can find $\hat{u}\in L^2(\Omega'\times Y,\theta)$ such that, through a subsequence, that we denote the same, $\hat{u}_n \to \hat{u}$ in $L^2(\Omega'\times Y,\theta)$. Finally, let us prove that $\hat{u}$ satisfies \eqref{eqn:bochnerproducteq1}
		\begin{equation}
			\begin{aligned}
				& \int_{\Omega'} \theta(x)\int_Y \abs{u(x)(\xi,y)-\hat{u}(x,\xi,y)}^2  d\xi dy dx 
				\leq  \lim_{n\to \infty}\left(  \int_{\Omega'} \theta(x)\norm{u(x)-u_n(x)}_{L^2(Y)}^2dx\right.
				\\+ &\left.\int_{\Omega'}\theta(x)\int_Y \abs{\hat{u}_n(x,\xi,y)-\hat{u}(x,\xi,y)}^2d\xi dy dx\right) =
				\lim_{n\to \infty}  \int_{\Omega'\times Y} \theta(x)\abs{\hat{u}_n(x,\xi,y)-\hat{u}(x,\xi,y)}^2dx d\xi dy
				= 0.
			\end{aligned}
		\end{equation}
		\noindent\textbf{Step 2 (converse result): } 
		Given $\hat{u}\in L^2(W,\theta)$, define $u$ as in \eqref{eqn:bochnerproducteq1}. Let us prove that $u: \Omega' \to L^2(Y)$ is $\theta$-measurable. As $L^2(Y)$ is separable, weak and strong $\theta$-measurability are equivalent, so we only need to prove that $u$ is weakly $\theta$-measurable. As $L^2(Y)$ is a reflexive space, for every $v\in L^2(Y)$, we need to check the $\theta$-measurability of the numerical function $x\mapsto \int_{Y}u(x)(\xi,y)v(\xi,y)d\xi dy=\int_{Y}\hat{u}(x,\xi,y)v(\xi,y)d\xi dy$,
		which is a straightforward consequence of Fubini's theorem because 
		$$\int_{W}\theta(x)\hat{u}(x,\xi,y)v(\xi,y)dxd\xi dy \leq \norm{\hat{u}}_{L^2(W,\theta)}\int_{\Omega'}\theta(x)\norm{v}_{L^2(Y)}dx<\infty.$$ The $L^2$ integrability of $u$ then follows from Fubini's theorem again,
		\begin{equation}
			\label{eqn:bochnerproducteq4}
			\begin{aligned}
				& \int_{W} \theta(x)\abs{\hat{u}(x,\xi,y)}^2 dxd\xi dy  =
				\int_{\Omega'}\theta(x) \int_{Y}\abs{\hat{u}(x,\xi,y)}^2d\xi dy dx 
				\\ 
				= & \int_{\Omega'}\theta(x) \int_{Y}\abs{u(x)(\xi,y)}^2d\xi dy dx 
				= \int_{\Omega'}\theta(x)\norm{u(x)}_{L^2(Y)}^2dx = \norm{u}_{L^2(\Omega',\theta;L^2(Y))}.
			\end{aligned}
		\end{equation}
	\end{proof}
	Following the same idea of the previous lemma, we have
	\begin{lemma}
		\label{lemma:bochnerproductb}
		Let $\Omega'$, $\theta$ and $Y$ as in Section \ref{sec:jtxtools}. Recall the notation $W=\Omega'\times Y$. Then,
		\begin{equation}
			\label{eqn:bochnerproductbeq2}
			L^2(\Omega',\theta;H^1(Y)) = \left\{ \hat{u} \in L^2(W, \theta): \frac{\partial \hat{u}}{\partial y}\in L^2(W, \theta), \ \nabla_{\xi} \hat{u} \in (L^2(W, \theta))^N\right\},
		\end{equation}
		in the sense that, given $u\in L^2(\Omega',\theta;H^1(Y))$, there exists $\hat{u}\in L^2(W,\theta)$
		\begin{equation}
			\label{eqn:bochnerproducteqb}
			\hat{u}(x,\xi,y)=u(x)(\xi,y), \qquad \theta-a.e. \ x\in \Omega', \ a.e. \ (\xi,y)\in Y,
		\end{equation}
		so that $\frac{\partial \hat{u}}{\partial y}\in L^2(W, \theta)$, $\nabla_{\xi} \hat{u} \in (L^2(W, \theta))^N$ and
		\begin{equation}
			\label{eqn:bochnerproductbeq3}
			\frac{\partial \hat{u}}{\partial y}(x,\xi,y)=\frac{\partial u(x)}{\partial y}(\xi,y), \ \ \ \nabla_{\xi}\hat{u}(x,\xi,y)=\nabla_{\xi}u(x)(\xi,y) \qquad \theta-a.e. \ x\in \Omega', \ a.e. \ (\xi,y)\in Y.
		\end{equation}
		The converse is also true, that is, if $\hat{u}$ belongs to the space in the right hand side of \eqref{eqn:bochnerproductbeq2}, then, $u$ defined as in \eqref{eqn:bochnerproducteqb} belongs to $L^2(\Omega',\theta;H^1(Y))$ and satisfies \eqref{eqn:bochnerproductbeq3}.
		In the same way $L^2(\Omega';H^1(Y)) = \{ \hat{u} \in L^2(W): \frac{\partial \hat{u}}{\partial y}\in L^2(W), \ \nabla_{\xi} \hat{u} \in (L^2(W))^N\}$,.
	\end{lemma}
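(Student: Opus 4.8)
The plan is to bootstrap from the $L^2$-level identification already proved in Lemma~\ref{lemma:bochnerproduct}, applying it not only to $u$ itself but separately to each of its fibrewise derivatives $\partial_y u$ and $\partial_{\xi_i} u$, and then to check that the resulting functions on $W$ are genuinely the weak derivatives of $\hat u$ in the $y$ and $\xi$ directions. The guiding observation is that the weight $\theta=\theta(x)$ depends only on the macroscopic variable $x$, while all differentiation is carried out only in the $(\xi,y)$ directions; since $\theta$ is constant in those directions, the weighted weak derivative of a class in $L^2(W,\theta)$ is well defined, because modifying a representative on a $\theta$-null set does not alter the identities $\int_W\theta\,\hat u\,\partial_y\varphi=-\int_W\theta\,g\,\varphi$ for $\varphi\in C^\infty_c(W)$. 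The unweighted statement at the end of the lemma is then the special case $\theta\equiv 1$.

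First I would treat the direct inclusion. Let $u\in L^2(\Omega',\theta;H^1(Y))$. Since $v\mapsto\partial_y v$ and $v\mapsto\partial_{\xi_i}v$ are bounded from $H^1(Y)$ into $L^2(Y)$, the maps $x\mapsto\partial_y u(x)$ and $x\mapsto\partial_{\xi_i}u(x)$ belong to $L^2(\Omega',\theta;L^2(Y))$, with norms controlled by $\norm{u}_{L^2(\Omega',\theta;H^1(Y))}$. Applying Lemma~\ref{lemma:bochnerproduct} to $u$, to $\partial_y u$ and to each $\partial_{\xi_i}u$ produces $\hat u,g_0,g_1,\dots,g_N\in L^2(W,\theta)$ satisfying $\hat u(x,\cdot)=u(x)$, $g_0(x,\cdot)=\partial_y u(x)$ and $g_i(x,\cdot)=\partial_{\xi_i}u(x)$ for $\theta$-a.e.\ $x$. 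It then remains to identify $g_0=\partial_y\hat u$ and $g_i=\partial_{\xi_i}\hat u$ in the weak sense on $W$: for $\varphi\in C^\infty_c(W)$ I would use Fubini to write $\int_W\theta\,\hat u\,\partial_y\varphi$ as an iterated integral, note that for each fixed $x$ the slice $\varphi(x,\cdot)$ lies in $C^\infty_c(Y)$, integrate by parts on $Y$ using $u(x)\in H^1(Y)$, and reassemble against the weight $\theta$. This yields $\int_W\theta\,\hat u\,\partial_y\varphi=-\int_W\theta\,g_0\,\varphi$, and analogously for each $\xi_i$, giving both membership in the right-hand space of \eqref{eqn:bochnerproductbeq2} and the derivative formulas \eqref{eqn:bochnerproductbeq3}.

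For the converse I would start from $\hat u\in L^2(W,\theta)$ whose weak $y$- and $\xi_i$-derivatives $g_0,g_i$ lie in $L^2(W,\theta)$, define $u(x):=\hat u(x,\cdot)$, and invoke Lemma~\ref{lemma:bochnerproduct} to get $u\in L^2(\Omega',\theta;L^2(Y))$ together with elements of $L^2(\Omega',\theta;L^2(Y))$ induced by $g_0$ and the $g_i$. The substantive point is to show that, for $\theta$-a.e.\ $x$, the slice $u(x)$ belongs to $H^1(Y)$ with $\partial_y u(x)=g_0(x,\cdot)$ and $\partial_{\xi_i}u(x)=g_i(x,\cdot)$. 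To this end I would test the weak identity against product functions $\phi(x)\psi(\xi,y)$ with $\phi\in C^\infty_c(\Omega')$ and $\psi\in C^\infty_c(Y)$; since $\phi$ is arbitrary and the numerical maps $x\mapsto\int_Y u(x)\,\partial_y\psi$ and $x\mapsto\int_Y g_0(x,\cdot)\,\psi$ are $\theta$-measurable by Fubini, for each fixed $\psi$ the slice identity $\int_Y u(x)\,\partial_y\psi=-\int_Y g_0(x,\cdot)\,\psi$ holds for $\theta$-a.e.\ $x$, and likewise in $\xi_i$.

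The delicate point, and the main obstacle, is upgrading this ``for each fixed $\psi$, off a $\theta$-null set'' into a single exceptional $\theta$-null set valid simultaneously for all test functions. For this I would fix a countable family $\{\psi_k\}\subset C^\infty_c(Y)$ that is dense in the sense that any $\psi\in C^\infty_c(Y)$ is approximated, together with its first derivatives, uniformly on a common compact set; taking the countable union of the corresponding exceptional sets gives one $\theta$-null set $N$ such that for $x\notin N$ the slice identities hold for every $\psi_k$, hence, passing to the limit, for every $\psi\in C^\infty_c(Y)$. This step requires the same care with the $\theta$-completed $\sigma$-algebra $\Sigma_\theta$ as in Lemma~\ref{lemma:bochnerproduct}, so that the Fubini slicing and the ``$\theta$-a.e.'' statements are legitimate. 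Once $N$ is fixed, $u(x)\in H^1(Y)$ for $x\notin N$ with the prescribed derivatives, and the identity
\begin{equation}
	\int_{\Omega'}\theta(x)\norm{u(x)}_{H^1(Y)}^2\,dx=\norm{\hat u}_{L^2(W,\theta)}^2+\norm{g_0}_{L^2(W,\theta)}^2+\sum_{i=1}^N\norm{g_i}_{L^2(W,\theta)}^2<\infty
\end{equation}
shows $u\in L^2(\Omega',\theta;H^1(Y))$ and completes the proof; the unweighted case follows verbatim with $\theta\equiv 1$.
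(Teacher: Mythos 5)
Your proposal is correct and follows essentially the same route as the paper's proof: both directions rest on applying Lemma \ref{lemma:bochnerproduct} to $u$ and to its fibrewise derivatives, and then identifying the weak derivatives on $W$ via Fubini's theorem, testing against $C^\infty_c(W)$ functions in the direct part and against products $\phi(x)\psi(\xi,y)$ with $\phi\in C^\infty_c(\Omega')$, $\psi\in C^\infty_c(Y)$ in the converse part. You are in fact more careful than the paper on one point: the paper passes silently from ``for each fixed $\psi$ the slice identity holds $\theta$-a.e.'' to ``$u(x)\in H^1(Y)$ for $\theta$-a.e.\ $x$'', whereas you make the countable-test-family argument that justifies a single exceptional null set explicit. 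The only step you omit, in the converse direction, is that membership in $L^2(\Omega',\theta;H^1(Y))$ also requires $u$ to be Bochner $\theta$-measurable as an $H^1(Y)$-valued map, not merely to have $\theta$-a.e.\ values in $H^1(Y)$ with finite weighted norm; the paper checks this via separability of $H^1(Y)$ and weak $\theta$-measurability (Pettis), and it follows in one line from ingredients you already have, since for any $v\in H^1(Y)$ the pairing $\langle u(x),v\rangle_{H^1(Y)}$ is a sum of $L^2(Y)$-pairings of $v$ and its derivatives against $u(x)$, $g_0(x)$ and $g_i(x)$, each of which is $\theta$-measurable in $x$. This is a minor, easily repaired omission rather than a flaw in the approach.
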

	\begin{proof}
		\noindent\textbf{Step 1 (direct result): }Given $u\in L^2(\Omega',\theta;H^1(Y))$, take $\hat{u}$ as in \eqref{eqn:bochnerproducteqb}. The proof of the $\theta$-measurability and integrability of $\hat{u}$ is the same as in the proof of Lemma \ref{lemma:bochnerproduct}. Denote now $v(x,\xi,y) = \frac{\partial u(x)}{\partial y}(\xi,y)$ $\theta$-a.e. $x\in \Omega'$, a.e. $(\xi,y)\in Y$. With a similar argument, the $\theta$-measurability and integrability of $v$ is proved. Therefore, we only need to prove that $v=\frac{\partial \hat{u}}{\partial y}$. Take $\varphi \in C^\infty_c(W)$.
		 Then, as $\int_W\abs{\hat{u}\frac{\partial \varphi}{\partial y} \theta}<\infty$ by using Fubini's theorem
		\begin{equation}
			\begin{aligned}
				& \int_W \theta(x)\hat{u}(x,\xi,y)\frac{\partial \varphi}{\partial y}(x,\xi,y) dxd\xi dy = \int_{\Omega'} \theta(x)\int_Y\ u(x)(\xi,y)\frac{\partial \varphi}{\partial y}(x,\xi,y)d\xi dy dx 
				\\
				= -& \int_{\Omega'}\theta(x)\int_Y\frac{\partial u(x)}{\partial y}(\xi,y)\varphi(x,\xi,y)d\xi dy dx = \int_W \theta(x) v(x,\xi,y)\varphi(x,\xi,y)  dx d\xi dy
			\end{aligned}
		\end{equation}
		so $v=\frac{\partial \hat{u}}{\partial y}$. The argument for the $\xi-$derivatives is analogous.
		
		\noindent\textbf{Step 2 (converse result): } 
		Let $\hat{u}\in L^2(W,\theta)$ such that $\frac{\partial \hat{u}}{\partial y}\in L^2(W, \theta)$ and $\nabla_{\xi} \hat{u} \in (L^2(W, \theta))^N$. Define $u$ as in \eqref{eqn:bochnerproducteq1} and denote $v(x)(\xi,y)=\frac{\partial \hat{u}}{\partial y}(x,\xi,y)$ and $w(x)(\xi,y)=\nabla_{\xi} \hat{u}(x,\xi,y)$ for $\theta-$a.e. $x\in \Omega'$, a.e. $(\xi,y)\in Y$. By the same arguments as in the proof of Lemma \ref{lemma:bochnerproduct} we have that $u,v$ as functions $\Omega'\to L^2(Y)$ and $w$ as a function $\Omega'\to (L^2(Y))^N$, are $\theta$-measurable and $u,v\in L^2(\Omega',\theta;L^2(Y))$, $w\in (L^2(\Omega',\theta;L^2(Y)))^N$.
		
		Let us prove that, for $\theta$-a.e. $x\in \Omega'$, $v(x)=\frac{\partial u(x)}{\partial y}$. We take $\varphi \in C^\infty_c(\Omega')$ and $\psi\in C^\infty_c(Y)$. Then, by using Fubini's theorem
		\begin{equation}
			\begin{aligned}
				\int_{\Omega'}\theta(x)\varphi(x)\int_Y u(x)(\xi,y)\frac{\partial\psi}{\partial y}(\xi,y) d\xi dy dx = \int_W \varphi(x)\theta(x)\hat{u}(x,\xi,y)\frac{\partial \psi}{\partial y}(\xi,y)dxd\xi dy
				\\
				= -\int_W \theta(x)\varphi(x)\frac{\partial \hat{u}}{\partial y}(x,\xi,y)\psi(\xi,y)dxd\xi dy
				= -\int_{\Omega'}\theta(x)\varphi(x)\int_Y v(x)(\xi,y)\psi(\xi,y) d\xi dy dx.
			\end{aligned}
		\end{equation}
		As the above equality is true for every $\varphi \in C^\infty_c(\Omega')$, we have, for $\theta$-a.e. $x\in \Omega'$,
		\begin{equation}
			\int_Y u(x)(\xi,y)\frac{\partial\psi}{\partial y}(\xi,y) d\xi dy=-\int_Y v(x)(\xi,y)\psi(\xi,y) d\xi dy
		\end{equation}
		so $v(x)=\frac{\partial u(x)}{\partial y}$. In a similar way, it is proved that $w(x)=\nabla_{\xi} u(x)$.
		
		With the above arguments, we have proved that for $\theta$-a.e. $x\in \Omega'$, $u(x)\in H^1(Y)$ and \eqref{eqn:bochnerproductbeq3}. Let us prove that $u$ is $\theta$-measurable as a function $\Omega'\to H^1(Y)$. Again, as in the proof of Lemma \ref{lemma:bochnerproduct}, as $H^1(Y)$ is separable, we only need to prove the weak $\theta$-measurability. As $H^1(Y)$ is a reflexive space, for every $v\in H^1(Y)$, we need to check the $\theta-$measurability of the numerical function
		\begin{equation}
			x\mapsto \int_Y \left(u(x)(\xi,y)v(\xi,y)+\frac{\partial u(x)}{\partial y}\frac{\partial v}{\partial y}(\xi,y)+\nabla_{\xi}u(x)(\xi,y)\nabla_{\xi}v(\xi,y)\right)d\xi dy
		\end{equation}
		which is a straightforward consequence of Fubini's theorem because
		\begin{equation}
			\int_W \left(\hat{u}v+\frac{\partial \hat{u}}{\partial y}\frac{\partial v}{\partial y}+\nabla_{\xi} \hat{u}\nabla_{\xi} v\right) \theta \leq \left(\norm{\hat{u}}_{L^2(W,\theta)}+\norm{\frac{\partial \hat{u}}{\partial y}}_{L^2(W,\theta)}+\norm{\nabla_{\xi}\hat{u}}_{(L^2(W,\theta))^N}\right)\norm{v}_{H^1(Y)}<\infty.
		\end{equation}
		Finally, the $\theta$-(square-)integrability of $u$ as a function $\Omega'\to H^1(Y)$. follows from the fact that $u, v\in L^2(\Omega',\theta;L^2(Y))$ and $w\in (L^2(\Omega',\theta;L^2(Y))^N$.
	\end{proof}
	
	\begin{remark}
		\label{remark:bochnerproduct}
		Note that in Lemmas~\ref{lemma:bochnerproduct} and \ref{lemma:bochnerproductb} there is nothing specific about $\Omega'$, $\theta$, or $Y$ that is relevant for the proofs. In fact, the lemmas hold in a more general setting, but we have stated and proved it in this context to facilitate readability.
	\end{remark}

	\subsection{Density in Lebesgue-Bochner spaces}
	\label{app:density}
	
	The following theorems are density results of smooth functions in Bochner spaces. These results were used to derive the variational equations of the limit problem in Theorem \ref{thm:main} proved in Section~\ref{sec:jtxresults}. That is why we will use the notations of that section.
	
	In the following theorems, we will make a slight abuse of notation by comparing functions that belong to different functional spaces. Therefore, before stating the theorems, let us clarify what we mean by certain notations.
	
	We will deal with functions $C^\infty_c(\RN;H^1_{\braket{\xi}}(Y_e))$, for which we will study the norms $L^2(\Omega',\theta;H^1_{\braket{\xi}}(Y))$ and $H^1(\Omega^b)$. The first norm is straightforward to interpret. Note that, if $\varphi\in C^\infty_c(\RN;H^1_{\braket{\xi}}(Y))$, then $\varphi\in L^2(\Omega';H^1_{\braket{\xi}}(Y))$ and
	\begin{equation}
		\label{eqn:varphiinterpretHt}
		\norm{\varphi}_{L^2(\Omega',\theta;H^1_{\braket{\xi}}(Y))}=\int_{\Omega'}\theta(x)\norm{\varphi(x)}^2_{H^1_{\braket{\xi}}(Y)}dx \leq C \norm{\varphi}_{C(\RN;H^1_{\braket{\xi}}(Y_e))}.
	\end{equation}
	where, if we would like to be strictly rigorous, we would have to use
	$\restr{\varphi}{\Omega'}$, the restriction of $\varphi$ to $\Omega'$ and $\restr{\varphi(x)}{Y}$ the restriction of $\varphi(x)$ to $Y$, but we are using Convention 4. from Section \ref{sec:notations}.
	
	The norm $H^1(\Omega^b)$ may be more confusing. That is why we include the following lemma.
	\begin{lemma}
		\label{lemma:interpret}
		Let $\varphi\in C^\infty_c(\RN;H^1_{\braket{\xi}}(Y_e))$. Then, there exists $\hat{\varphi}\in H^1(\Omega^b)$ such that
		\begin{equation}
			\label{eqn:lemmainterpreteq0}
			\varphi(x)(\xi,y)=\hat{\varphi}(x,y) \qquad a.e \ (x,y)\in \Omega^b, \ a.e. \ \xi\in\omega.
		\end{equation}
		In addition,
		\begin{equation}
			\nabla_x \hat{\varphi}(x,y) = \nabla_x \varphi(x)(\xi,y), \quad \frac{\partial\hat{\varphi}}{\partial y}(x,y) = \frac{\partial \varphi(x)}{\partial y}(\xi,y) \qquad a.e \ (x,y)\in \Omega^b, \ a.e. \ \xi\in\omega.
		\end{equation}
		As a consequence,
		\begin{equation}
			\label{eqn:lemmainterpreteq0b}
			\norm{\hat{\varphi}}_{H^1(\Omega^b)}\leq \norm{\varphi}_{L^2(\RN;H^1_{\braket{\xi}}(Y_e))}+\norm{\nabla_x\varphi}_{L^2(\RN\times Y_e)} \leq C \norm{\varphi}_{C^1(\RN;L^2(Y_e))}
		\end{equation}
	\end{lemma}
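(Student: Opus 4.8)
The plan is to reduce everything to a one–dimensional, $\xi$–free description of $\varphi$ on the part of the extended cell lying below $y=0$, and then to recognise the resulting object as an honest $H^1$ function on a cylinder containing $\Omega^b$. First I would look at $Y_e^b:=Y_e\cap\{y<0\}=\omega\times(-R_0,0)$. Since $\omega$ is connected, $Y_e^b$ has connected horizontal sections with $p_{Y_e^b}\equiv|\omega|=1$, so for each fixed $x\in\RN$ Proposition \ref{prop:constxi} applies to $\restr{\varphi(x)}{Y_e^b}\in H^1_{\braket{\xi}}(Y_e^b)$: there is a unique $\varphi^b(x)\in H^1((-R_0,0))$ with
\begin{equation}
	\varphi(x)(\xi,y)=\varphi^b(x)(y),\quad \tfrac{\partial\varphi(x)}{\partial y}(\xi,y)=(\varphi^b(x))'(y)\qquad a.e.\ (\xi,y)\in Y_e^b,
\end{equation}
and $\norm{\varphi^b(x)}_{H^1((-R_0,0))}=\norm{\varphi(x)}_{H^1(Y_e^b)}$ by \eqref{eqn:prizzieq2bis}. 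The assignment $R:\psi\mapsto\psi^b$ is a linear operator from $H^1_{\braket{\xi}}(Y_e)$ to $H^1((-R_0,0))$ which, again by \eqref{eqn:prizzieq2bis}, is bounded with $\norm{R}\le1$. Composing with $\varphi\in C^\infty_c(\RN;H^1_{\braket{\xi}}(Y_e))$ gives $\varphi^b=R\circ\varphi\in C^\infty_c(\RN;H^1((-R_0,0)))$, and since $R$ is linear and bounded, $\partial_{x_i}(\varphi^b)=R(\partial_{x_i}\varphi)=(\partial_{x_i}\varphi)^b$ for every $i$.

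Next I would set $Q:=B_{\RN}(0,R_0)\times(-R_0,0)\supset\Omega^b$ and define $\hat\varphi(x,y):=\varphi^b(x)(y)$; this is legitimate because $\varphi(x)$ is $\xi$–independent for $y<0$, so \eqref{eqn:lemmainterpreteq0} holds by construction. It remains to prove $\hat\varphi\in H^1(Q)$ with the asserted derivatives, after which restriction to $\Omega^b\subset Q$ finishes the identification and the derivative formulas. That $\hat\varphi\in L^2(Q)$, and that its distributional $y$-derivative coincides with $(x,y)\mapsto(\varphi^b(x))'(y)$, is exactly the Bochner/product–space identification of Lemmas \ref{lemma:bochnerproduct} and \ref{lemma:bochnerproductb} applied (with weight $\equiv1$, horizontal variable $x\in B_{\RN}(0,R_0)$ and fibre $(-R_0,0)$, which is admissible by Remark \ref{remark:bochnerproduct}) to the map $x\mapsto\varphi^b(x)\in H^1((-R_0,0))$.

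For the horizontal derivatives I would test $\hat\varphi$ against $\rho\in C^\infty_c(Q)$: by Fubini, and using that the $H^1((-R_0,0))$–valued difference quotients $\tfrac1h\bigl(\varphi^b(x+he_i)-\varphi^b(x)\bigr)$ converge to $\partial_{x_i}\varphi^b(x)$ uniformly in $x$ (compact support together with $C^1$ regularity of $\varphi^b$), one identifies $\partial_{x_i}\hat\varphi(x,y)=(\partial_{x_i}\varphi^b)(x)(y)=\partial_{x_i}\varphi(x)(\xi,y)$ in $L^2(Q)$, hence $\nabla_x\hat\varphi(x,y)=\nabla_x\varphi(x)(\xi,y)$. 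Together with the previous paragraph this yields $\hat\varphi\in H^1(Q)$ and the middle derivative identities of the lemma.

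Finally, the norm bound \eqref{eqn:lemmainterpreteq0b} follows by a direct computation: using $|\omega|=1$, the pointwise relation $\int_\omega|\varphi(x)(\xi,y)|^2\,d\xi=|\varphi^b(x)(y)|^2$ and the isometry \eqref{eqn:prizzieq2bis}, one estimates $\norm{\hat\varphi}^2_{H^1(\Omega^b)}\le\norm{\hat\varphi}^2_{H^1(Q)}$ by $\int_{\RN}\bigl(\norm{\varphi(x)}^2_{H^1(Y_e)}+\norm{\nabla_x\varphi(x)}^2_{L^2(Y_e)}\bigr)\,dx$, which is the middle term in \eqref{eqn:lemmainterpreteq0b}; the compact support of $\varphi$ in $\RN$ then bounds this quantity by a constant times the relevant $C^\infty_c$–norm of $\varphi$. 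I expect the only genuinely non-routine step to be the horizontal-derivative identification of the third paragraph, namely passing from $H^1$–valued differentiability in $x$ to the existence and identification of the distributional derivative $\partial_{x_i}\hat\varphi$ of the scalar function on the product domain $Q$; the reduction to the below part, the $\partial_y$–identification and the norm estimates are all routine consequences of Proposition \ref{prop:constxi} and the Bochner identifications of Lemmas \ref{lemma:bochnerproduct} and \ref{lemma:bochnerproductb}.
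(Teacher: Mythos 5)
Your proposal is correct, and its overall strategy coincides with the paper's: restrict to $\omega\times(-R_0,0)$, use Proposition \ref{prop:constxi} fiberwise to produce $\varphi^b(x)\in H^1((-R_0,0))$, define $\hat\varphi(x,y)=\varphi^b(x)(y)$, verify the two weak derivatives of the scalar function on a product domain containing $\Omega^b$, and conclude via the isometry coming from $\abs{\omega}=1$. The differences lie in how the routine steps are executed, and each of your variants is sound. First, you package the fiberwise reduction as a bounded linear operator $R:H^1_{\braket{\xi}}(Y_e)\to H^1((-R_0,0))$ with $\norm{R}\le 1$, so that $\varphi^b=R\circ\varphi\in C^\infty_c(\RN;H^1((-R_0,0)))$ and $\partial_{x_i}\varphi^b=R(\partial_{x_i}\varphi)$; the paper never states this operator but uses exactly this fact implicitly when it asserts that $x\mapsto\tilde\varphi(x)$ lies in $C^\infty_c(\RN;L^2((-R_0,0)))$, so your version makes a tacit step explicit. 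Second, for the $y$-derivative you invoke the Bochner identification of Lemmas \ref{lemma:bochnerproduct}--\ref{lemma:bochnerproductb} (legitimately, via Remark \ref{remark:bochnerproduct}, with weight $1$ and fibre an interval), whereas the paper redoes the corresponding Fubini computation by hand; these are equivalent, since the lemma's proof is that computation. Third, for $\nabla_x\hat\varphi$ you use difference quotients, relying on the uniform-in-$x$ convergence of $\tfrac1h(\varphi^b(x+he_i)-\varphi^b(x))$ to $\partial_{x_i}\varphi^b(x)$ in $L^2((-R_0,0))$ (which indeed follows from $C^1$ regularity plus compact support), while the paper instead differentiates the compactly supported smooth scalar function $x\mapsto\braket{\tilde\varphi(x),\psi(x,\cdot)}_{L^2((-R_0,0))}$ and integrates its exact derivative over $\RN$ to obtain the integration-by-parts identity; both are standard and correct. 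One cosmetic remark: for the change of variables in your difference-quotient step you should regard $\hat\varphi$ as defined on all of $\RN\times(-R_0,0)$ (which the formula $\hat\varphi(x,y)=\varphi^b(x)(y)$ permits, and which is what the paper does) rather than only on $Q=B_{\RN}(0,R_0)\times(-R_0,0)$, so that the shifted arguments $x-he_i$ always make sense; this is a trivial adjustment, not a gap.
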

	\begin{proof}
		First, since $Y_e=Y\cup(\omega\times (-R_0,0])$, for every $x\in \RN$, we will have $\varphi(x)\in H^1_{\braket{\xi}}(\omega\times (-R_0,0))$. Thus, using Proposition \ref{prop:constxi} and the fact that $\abs{\omega}=1$, we know that, for every $x\in \RN$, there exists $\tilde{\varphi}(x)\in H^1((-R_0,0))$ such that
		\begin{equation}
			\varphi(x)(\xi,y)=\tilde{\varphi}(x)(y), \quad  \frac{\partial \varphi (x)}{\partial y}(\xi,y)=\frac{d \tilde{\varphi}(x)}{d y}(y)  \quad a.e. \ y\in (-R_0,0)
		\end{equation}
		and
		\begin{equation}
			\label{eqn:lemmainterpreteq1}
			\norm{\tilde{\varphi}(x)}_{H^1((-R_0,0))}=\norm{\varphi(x)}_{H^1_{\braket{\xi}}(\omega\times(-R_0,0))}\leq \norm{\varphi(x)}_{H^1_{\braket{\xi}}(Y_e)}.
		\end{equation}
		Therefore, we can define
		$
			\hat{\varphi}(x,y)=\tilde{\varphi}(x)(y)$ for $x\in \RN, \ y\in (-R_0,0),
		$
		and its definition is consistent with \eqref{eqn:lemmainterpreteq0}. Now, let us prove that $\hat{\varphi}\in L^2(\Omega^b)$,
		\begin{equation}
			\label{eqn:lemmainterpreteq2}
			\begin{aligned}
				\int_{\Omega^b} \hat{\varphi}^2 &\leq \int_{\RN\times(-R_0,0)}\hat{\varphi}^2\leq
				\int_{\RN}\int_{-R_0}^0 \hat{\varphi}^2(x,y)dydx =
				\int_{\RN}\int_{-R_0}^0 \tilde{\varphi}^2(x)(y)dydx 
				\\
				&=
				\int_{\RN} \norm{\tilde{\varphi}(x)}^2_{L^2((-R_0,0))}dx \myleq{\eqref{eqn:lemmainterpreteq1}} \int_{\RN} \norm{\varphi(x)}^2_{H^1_{\braket{\xi}}(Y_e)}dx\leq \norm{\varphi}_{L^2(\RN;H^1_{\braket{\xi}}(Y))}<\infty.
			\end{aligned}
		\end{equation}
		where we have used Fubini's theorem in the first inequality.
		
		Now, let us study the weak differentiability of $\hat{\varphi}$. 
		We first define $w(x,y)\defeq \frac{\partial \varphi(x)}{\partial y}(y)$ for a.e. $x\in \RN$, $y\in (-R_0,0)$ and we will prove that $w=\frac{\partial \hat{\varphi}}{\partial y}$. In an analogous way as in \eqref{eqn:lemmainterpreteq2}, we obtain
		$
			\int_{\Omega^b} w^2 \leq \norm{\varphi}_{L^2(\RN;H^1_{\braket{\xi}}(Y))}<\infty.
		$
		Now, we take $\psi\in C^\infty_c(\Omega^b)$. We can extend $\psi$ by $0$ so $\psi\in C^\infty_c(\R^{N}\times (-R_0,0))$. Then,
		\begin{equation}
			\label{eqn:lemmainterpreteq4}
			\begin{aligned}
				\int_{\Omega^b} \hat{\varphi}\frac{\partial \psi}{\partial y} =\int_{\RN\times(-R_0,0)}\hat{\varphi}\frac{\partial \psi}{\partial y}=
				\int_{\RN}\int_{-R_0}^0 \hat{\varphi}(x,y)\frac{\partial \psi}{\partial y}(x,y)dydx  =
				\int_{\RN}\int_{-R_0}^0 \tilde{\varphi}(x)(y)\frac{\partial \psi}{\partial y}(x,y)dydx
			\end{aligned}
		\end{equation}
		where we could use Fubini's theorem because $\int_{\Omega^b} \abs{\hat{\varphi}\frac{\partial \psi}{\partial y}}\leq \norm{\hat{\varphi}}_{L^2(\Omega^b)}\norm{\psi_y}_{L^2(\Omega^b)}\ensuremath{\stackrel{\text{\eqref{eqn:lemmainterpreteq2}}}{<}}\infty$.
		Then, we use that, for fixed $x\in \RN$, $\psi(x,\cdot)\in C^\infty_c((-R_0,0))$, so
		\begin{equation}
			\label{eqn:lemmainterpreteq5}
			\begin{aligned}
				\int_{\RN}\int_{-R_0}^0 \tilde{\varphi}(x)(y)\frac{\partial \psi}{\partial y}(x,y)dydx = -\int_{\RN}\int_{-R_0}^0 \frac{\partial \tilde{\varphi}(x)}{\partial y}(y)\psi(x,y)dydx  = -\int_{\Omega^b}w\psi.
			\end{aligned}
		\end{equation}
		where we could use Fubini's theorem because $\int_{\Omega^b} \abs{w\psi}\leq \norm{w}_{L^2(\Omega^b)}\norm{\psi}_{L^2(\Omega^b)}<\infty$. Hence, from \eqref{eqn:lemmainterpreteq4}, \eqref{eqn:lemmainterpreteq5} and the arbitrariness of $\psi\in C^\infty_c(\Omega^b)$, we have that $\frac{\partial \hat{\varphi}}{\partial y}=w$. Finally, let us prove the weak $x-$differentiability of $\hat{\varphi}$. We define $v(x,y)\defeq \nabla_x\varphi(x)(y)$ for a.e. $(x,y)\in \Omega^b$.
		In an analogous way as we did in \eqref{eqn:lemmainterpreteq2}, we obtain
		$
			\int_{\Omega^b} v^2 \leq \norm{\nabla_x \varphi}_{L^2(\RN;(H^1_{\braket{\xi}}(Y))^N)}<\infty.
		$
		We take again $\psi\in C^\infty_c(\Omega^b)$ and extend it to $\psi\in C^\infty_c(\R^{N+1})$. 
		Then,
		\begin{equation}
			\label{eqn:lemmainterpreteq7}
			\begin{aligned}
				\int_{\Omega^b} \hat{\varphi}\nabla_x\psi & =\int_{\RN\times(-R_0,0)}\hat{\varphi}\nabla_x\psi=
				\int_{\RN}\int_{-R_0}^0 \hat{\varphi}(x,y)\nabla_x\psi(x,y)dydx  \\ & =
				\int_{\RN}\int_{-R_0}^0 \tilde{\varphi}(x)(y)\nabla_x\psi(x,y)dydx = \int_{\RN}\braket{ \tilde{\varphi}(x),\nabla_x\psi(x)}_{L^2((-R_0,0))}dx
			\end{aligned}
		\end{equation}
		where we could use Fubini's theorem because $\int_{\Omega^b} \abs{\hat{\varphi}\nabla_x\psi}\leq \norm{\hat{\varphi}}_{L^2(\Omega^b)}\norm{\nabla_x\psi}_{L^2(\Omega^b)}\ensuremath{\stackrel{\text{\eqref{eqn:lemmainterpreteq2}}}{<}}\infty$.
		Now, $x\mapsto \tilde{\varphi}(x)$ and $x\mapsto \psi(x,\cdot)$ are functions belonging to $C^\infty_c(\RN;L^2(-R_0,0))$ and $(f,g) \mapsto \braket{f,g}_{L^2((-R_0,0))}$ is a bilinear continuous form, so, using the chain rule, $$x\mapsto \braket{\tilde{\varphi}(x),\psi(x,\cdot)}_{L^2((-R_0,0))}, \qquad x\in\RN,$$ is a $C^\infty_c(\RN)$ function whose derivative is given by
		\begin{equation}
			x\mapsto \braket{\nabla_x \tilde{\varphi}(x),\psi(x,\cdot)}_{L^2((-R_0,0))}+\braket{ \tilde{\varphi}(x),\nabla_x\psi(x,\cdot)}_{L^2((-R_0,0))}.
		\end{equation}
		Hence, integrating in $\RN$, we obtain
		\begin{equation}
			\label{eqn:lemmainterpreteq8}
			\int_{\RN}\left(\braket{\nabla_x \tilde{\varphi}(x),\psi(x,\cdot)}_{L^2((-R_0,0))}+\braket{ \tilde{\varphi}(x),\nabla_x\psi(x,\cdot)}_{L^2((-R_0,0))}\right)dx=0.
		\end{equation}
		Therefore, combining \eqref{eqn:lemmainterpreteq7} and \eqref{eqn:lemmainterpreteq8} we obtain
		\begin{equation}
			\label{eqn:lemmainterpreteq9}
			\begin{aligned}
				\int_{\Omega^b} \hat{\varphi}\nabla_x\psi & \myeq{\eqref{eqn:lemmainterpreteq7}} \int_{\RN}\braket{ \tilde{\varphi}(x),\nabla_x\psi(x,\cdot)}_{L^2((-R_0,0))}dx \myeq{\eqref{eqn:lemmainterpreteq8}}
				-\int_{\RN}\braket{ \nabla_x\tilde{\varphi}(x),\psi(x,\cdot)}_{L^2((-R_0,0))}
				\\
				& = -\int_{\RN}\int_{-R_0}^0\nabla_x\tilde{\varphi}(x)(y)\psi(x,y) = -\int_{\RN}\int_{-R_0}^0 v\psi = -\int_{\Omega^b} v\psi
			\end{aligned}
		\end{equation}
		where we could use Fubini's theorem because $\int_{\Omega^b} \abs{v\psi}\leq \norm{v}_{L^2(\Omega^b)}\norm{\psi}_{L^2(\Omega^b)}<\infty$. Hence, from \eqref{eqn:lemmainterpreteq9} and the arbitrariness of $\psi\in C^\infty_c(\Omega^b)$, we have that $ \nabla_x\hat{\varphi}=v$.
	\end{proof}
	
	\begin{remark}
		\label{remark:interpret}
		Now, thanks to \eqref{eqn:varphiinterpretHt} and Lemma \ref{lemma:interpret}, we know how to interpret a function $\varphi\in C^\infty_c(\RN;H^1_{\braket{\xi}}(Y))$ in our space of solutions $H(\theta)$. Therefore, given any $\varphi \in C^\infty_c(\RN;H^1_{\braket{\xi}}(Y_e))$, when there is no risk of confusion, we will use the same notation $\varphi$ or $\varphi^a$ to refer to an $L^2(\RN;H^1_{\braket{\xi}}(Y))$ which is just the restriction of $\varphi$ to $\RN\times Y$ and we will use the same notation $\varphi$ or $\varphi^b$ to refer to an $H^1(\Omega^b)$ function given by the $\hat{\varphi}$ constructed in the Lemma \ref{lemma:interpret}. In particular, the $H^1(\Omega^b)$ of $\varphi$ will be given by
		\begin{equation}
			\label{eqn:defh1cinf}
			\norm{\varphi}_{H^1(\Omega^b)}\defeq \norm{\hat{\varphi}}_{H^1(\Omega^b)},
		\end{equation}
		where $\hat{\varphi}$ is as in Lemma \ref{lemma:interpret}.
		
	\end{remark}
	
	The following theorem states that any function belonging to $C^\infty_c(\RN;H^1_{\braket{\xi}})$ can be approximated with the topology of $H(\theta)$ by sum of separable functions, i.e. functions of the form $\sum_i\phi_i(x)\psi_i(\xi,y)$ where $\phi_i\in C^\infty_c(\RN)$ and $\psi_i\in H^1_{\braket{\xi}}(Y_e)$.

	\begin{theorem}
		\label{thm:density2}
		Given $\varphi \in C^\infty_c(\RN; H^1_{\braket{\xi}}(Y_e))$ and $\epsilon>0$, we can find a finite number of functions $\{\phi_i\}_{i=1}^k\subset C^\infty_c(\RN)$ and $\{\psi_i\}_{i=1}^k\subset H^1_{\braket{\xi}}(Y_e)$ such that
		\begin{equation}
			\label{eqn:densitypropeq1}
			\normdos{\varphi-\sum_{i=1}^k \phi_i\cdot \psi_i}_{L^2(\Omega',\theta;H^1_{\braket{\xi}}(Y))}+\normdos{\varphi-\sum_{i=1}^k \phi_i\cdot \psi_i}_{H^1(\Omega^b)}\leq \epsilon
		\end{equation} 
		Recall that the $L^2(\Omega',\theta;H^1_{\braket{\xi}}(Y))$ and $H^1(\Omega^b)$ norm for these functions is given in Remark \ref{remark:interpret}.
		
		Equation \eqref{eqn:densitypropeq1} means, in other words, that the set of finite linear combination of product of functions $\phi\psi$, where $\phi\in C^\infty(\adh{\Omega})$ and $\psi \in H^1_{\braket{\xi}}(Y_e)$ is dense in $C^\infty_c(\RN; H^1_{\braket{\xi}}(Y_e))$ with the $H(\theta)$ topology.
	\end{theorem}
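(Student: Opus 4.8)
The plan is to exploit the Hilbert-space structure of $H^1_{\braket{\xi}}(Y_e)$ and to approximate $\varphi$ by truncating its expansion in an orthonormal basis, rather than by a partition of unity in the $x$-variable. Writing $H\defeq H^1_{\braket{\xi}}(Y_e)$, which is a separable Hilbert space (a closed subspace of the separable space $H^1(Y_e)$), I would fix an orthonormal basis $\{e_k\}_{k\geq 1}\subset H$. Since $\varphi\in C^\infty_c(\RN;H)$, the scalar coefficients $c_k(x)\defeq \braket{\varphi(x),e_k}_H$ are smooth, supported inside $\operatorname{supp}\varphi$, and satisfy $\partial^\alpha c_k(x)=\braket{\partial^\alpha\varphi(x),e_k}_H$, because pairing with a fixed vector is a continuous linear functional that commutes with the Bochner derivatives. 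Hence $c_k\in C^\infty_c(\RN)$ and $e_k\in H^1_{\braket{\xi}}(Y_e)$, so each truncation $\Phi_K\defeq\sum_{k=1}^K c_k\,e_k$ is exactly a finite sum of products of the required form.

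The key point is that $\Phi_K=P_K\circ\varphi$, where $P_K$ is the orthogonal projection of $H$ onto $\operatorname{span}\{e_1,\dots,e_K\}$, and consequently $\nabla_x\Phi_K=P_K\circ\nabla_x\varphi$ (again because $P_K$ is a fixed bounded operator commuting with differentiation in $x$). Therefore both $\varphi-\Phi_K=(I-P_K)\circ\varphi$ and $\nabla_x\varphi-\nabla_x\Phi_K=(I-P_K)\circ\nabla_x\varphi$ are governed by how well $P_K$ reproduces the images of the maps $\varphi$ and $\partial_{x_j}\varphi$. These images are compact subsets of $H$ (continuous images of the compact set $\operatorname{supp}\varphi$, together with the point $0$), and since $P_K\to I$ strongly on $H$ and strong convergence is uniform on compact sets, I obtain $\sup_{x\in\RN}\|(I-P_K)\varphi(x)\|_H\to 0$ and $\sup_{x\in\RN}\|(I-P_K)\partial_{x_j}\varphi(x)\|_H\to 0$ as $K\to\infty$, for each $j$.

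It then remains to convert these uniform estimates into the two norms appearing in \eqref{eqn:densitypropeq1}. For the weighted Bochner norm, using that restriction $H^1_{\braket{\xi}}(Y_e)\to H^1_{\braket{\xi}}(Y)$ is bounded, that $\theta\in L^\infty$ and $\Omega'$ is bounded, I would bound $\|\varphi-\Phi_K\|_{L^2(\Omega',\theta;H^1_{\braket{\xi}}(Y))}$ by $C\,\sup_x\|(I-P_K)\varphi(x)\|_H$ as in \eqref{eqn:varphiinterpretHt}. For the $H^1(\Omega^b)$ norm, since $\varphi-\Phi_K\in C^\infty_c(\RN;H^1_{\braket{\xi}}(Y_e))$ and the interpretation map of Lemma \ref{lemma:interpret} (see Remark \ref{remark:interpret}) is linear, I apply the estimate \eqref{eqn:lemmainterpreteq0b} to $\varphi-\Phi_K$, controlling $\|\widehat{\varphi-\Phi_K}\|_{H^1(\Omega^b)}$ by $\|\varphi-\Phi_K\|_{L^2(\RN;H^1_{\braket{\xi}}(Y_e))}+\|\nabla_x(\varphi-\Phi_K)\|_{L^2(\RN\times Y_e)}$; both terms are supported in $\operatorname{supp}\varphi$ and are therefore dominated by $|\operatorname{supp}\varphi|^{1/2}$ times the uniform suprema of the previous paragraph (the second only requiring the weaker $L^2(Y_e)$-control, which follows a fortiori from the $H$-control). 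Choosing $K$ large enough that both suprema are below a suitable multiple of $\epsilon$ yields \eqref{eqn:densitypropeq1}.

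I expect the main obstacle to be precisely the simultaneous control of the function and of its $x$-gradient: a naive partition-of-unity approximation $\sum_i\phi_i(x)\varphi(x_i)$ is uniformly close to $\varphi$, but its gradient differs from $\nabla_x\varphi$ by terms $\sum_i\nabla\phi_i(x)\bigl(\varphi(x)-\varphi(x_i)\bigr)$ whose $H$-norm is of order $\bigl(\sum_i|\nabla\phi_i|\bigr)\cdot\sup|\varphi(x)-\varphi(x_i)|\sim r^{-1}\cdot r$, which does not tend to zero as the mesh size $r\to0$. The basis-truncation argument circumvents this because the projection $P_K$ commutes with $\nabla_x$, so one and the same finite-rank approximation works at the level of $\varphi$ and of $\nabla_x\varphi$ at once.
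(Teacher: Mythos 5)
Your proof is correct and follows essentially the same route as the paper's: the paper invokes the approximation property of Hilbert spaces (citing Schaefer--Wolff) to produce a finite-rank operator $T$ approximating the identity uniformly on the compact set $\varphi(\RN)\cup\{\nabla_x\varphi(x)(v):x\in\RN,\ \abs{v}=1\}$, and then uses exactly your key observation that a fixed bounded operator commutes with $\nabla_x$, so one and the same finite-rank approximation controls both $\varphi$ and its gradient, after which the conclusion follows from \eqref{eqn:varphiinterpretHt} and \eqref{eqn:lemmainterpreteq0b} just as you do. Your orthonormal-basis truncation $P_K$ is simply an explicit, self-contained realization of that finite-rank operator, with the uniform-on-compacts convergence (strong convergence plus $\norm{I-P_K}\leq 1$) proved by hand rather than cited.
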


	\begin{proof}
		The proof follows from the fact that, as $H^1_{\braket{\xi}}(Y_e)$ is a Hilbert space, it satisfies the approximation property (see \cite[Section III.9]{MR1741419}). This is the following: for every $K\subset H^1_{\braket{\xi}}(Y_e)$ compact and $\epsilon>0$, there exists a linear operator of finite range
		\begin{equation}
			\begin{aligned}
				T: \ \ & H^1_{\braket{\xi}}(Y_e) && \longrightarrow &&& H^1_{\braket{\xi}}(Y_e) \ \ \\
				\ & \ \ \ \psi && \ \mapsto &&& \sum_{i=1}^k \alpha_i(\psi)\psi_i,
			\end{aligned}
		\end{equation}
		where $\alpha_i \in (H^1_{\braket{\xi}}(Y_e))^*$ for every $i=1,\dots, k$, such that
		\begin{equation}
			\label{eqn:density2eq2}
			\norm{T\psi - \psi}_{H^1_{\braket{\xi}}(Y_e)} \leq \epsilon.
		\end{equation}
		Now, take $K_0=\varphi(\RN)\subset H^1_{\braket{\xi}}(Y_e)$ which is a compact set, because $\varphi$ is continuous and of compact support. Consider also $K_1=\{\nabla_x\varphi(x)(v) : x\in \RN, \ v\in \RN, \ \abs{v}=1\}\subset H^1_{\braket{\xi}}(Y_e)$ which is compact for the same reason. Let $K=K_0\cup K_1$ and $\epsilon>0$ and consider the operator $T$ from the approximation property. Define
		\begin{equation}
			\varphi_\epsilon(x) = T(\varphi(x))=\sum_{i=1}^k \alpha_i(\varphi_k(x))\psi_i.
		\end{equation}
		Note that $\varphi_\epsilon$ is a function of the form $\sum_{i=1}^k\phi_i\psi_i$ where $\phi_i\in C^\infty_c(\RN)$ because the scalar product is linear and $\varphi\in C^\infty_c(\RN)$. Now, for every $x\in \RN$, $\varphi(x)\in K_0\subset K$ so
		\begin{equation}
			\label{eqn:density2eq3}
			\norm{\varphi_\epsilon(x)-\varphi(x)}_{H^1_{\braket{\xi}}(Y_e)}=\norm{T(\varphi(x))-\varphi}_{H^1_{\braket{\xi}}(Y_e)}\myleq{\eqref{eqn:density2eq2}} \epsilon
		\end{equation}
		In the same way, for every $x\in \RN$ and $v\in \RN$ with $\abs{v}=1$, as $\nabla_x\varphi(x)(v)\in K_1\subset K$ and $T$ is a linear operator,
		\begin{equation}
			\label{eqn:density2eq4}
			\norm{\nabla_x\varphi_\epsilon(x)(v)-\nabla_x\varphi(x)(v)}_{H^1_{\braket{\xi}}(Y_e)} = \norm{T(\nabla_x\varphi(x)(v))-\nabla_x\varphi(x)(v)}_{H^1_{\braket{\xi}}(Y_e)} \leq \epsilon.
		\end{equation}
		The combination of \eqref{eqn:density2eq2} and \eqref{eqn:density2eq3} leads to $\norm{\varphi_\epsilon-\varphi}_{C^1(\RN;H^1_{\braket{\xi}}(Y_e))}\myleq{\eqref{eqn:density2eq2}} \epsilon$. The inequality \eqref{eqn:densitypropeq1} then follows from \eqref{eqn:varphiinterpretHt} and \eqref{eqn:lemmainterpreteq0b} by choosing $\epsilon$ smaller if necessary.
	\end{proof}

	\begin{theorem}
		\label{thm:density}
		The space of test functions $C^\infty_c(\RN;H^1_ {\left<\xi\right>}(Y_e))$ is dense in $H(\theta)$ in the sense that, for every $(v^a,v^b)\in H(\theta)$ and $\epsilon>0$, there exists
		$v_\epsilon\in C^\infty_c(\RN;H^1_{\braket{\xi}}(Y_e))$ such that 
		\begin{equation}
			\norm{v^a-v_\epsilon}_{L^2(\Omega',\theta;H^1_{\braket{\xi}}(Y))}\leq \epsilon
		\end{equation}
		and
		\begin{equation}
			\norm{v^b-v_\epsilon}_{H^1(\Omega^b)}\leq \epsilon
		\end{equation}
		where $v_\epsilon$ is interpreted as an $L^2(\Omega',\theta;H^1_{\braket{\xi}}(Y))$ or $H^1(\Omega^b)$ function via Remark \ref{remark:interpret}. 
	\end{theorem}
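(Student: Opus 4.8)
The plan is to reduce to a decoupled, zero-interface-trace situation and then approximate the two components separately. Throughout, write $g:={Tr}^b(v^b)\in L^2(\Omega')$, which is a genuine function defined everywhere on $\Omega'$ (unlike ${Tr}^a_{\braket{\xi}}(v^a)$, whose value on $\Theta_0$ is immaterial), and fix a cut-off $\phi_0\in C^\infty(\R)$ with $\phi_0(0)=1$ and $\operatorname{supp}\phi_0\subset(-\delta_0,\delta_0)$. Since $\phi_0$ is $\xi$-independent, $\rho:=\restr{\phi_0}{Y}\in H^1_{\braket{\xi}}(Y)$ with $tr_{\braket{\xi}}(\rho)=1$. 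I would first check that the splitting
\begin{equation}
	(v^a,v^b)=(g\rho,\,v^b)+(v^a-g\rho,\,0)
\end{equation}
lies in $H(\theta)\times H(\theta)$: the first pair has ${Tr}^a_{\braket{\xi}}(g\rho)=g={Tr}^b(v^b)$, while for the second pair ${Tr}^a_{\braket{\xi}}(v^a-g\rho)={Tr}^a_{\braket{\xi}}(v^a)-g$ vanishes $\theta$-a.e. because $(v^a,v^b)\in H(\theta)$. Thus $w^a:=v^a-g\rho$ lies in $L^2(\Omega',\theta;H^1_{\braket{\xi},0}(Y))$, where $H^1_{\braket{\xi},0}(Y)$ denotes the closed subspace of base-trace-zero functions. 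Approximating each summand by a test function and adding (test functions form a vector space) then finishes the proof by the triangle inequality.

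For the first pair $(g\rho,v^b)$ the idea is to drive everything from an approximation of $v^b$. Since $\Omega^b$ is a bounded Lipschitz domain it admits a bounded extension operator, so $\restr{C^\infty_c(\R^{N+1})}{\Omega^b}$ is dense in $H^1(\Omega^b)$; pick $W\in C^\infty_c(\R^{N+1})$ with $\|\restr{W}{\Omega^b}-v^b\|_{H^1(\Omega^b)}$ small. Because $W$ is smooth with compact support, $x\mapsto W(x,\cdot)$ defines an element of $C^\infty_c(\RN;H^1((-R_0,0)))$ and $x\mapsto W(x,0)$ an element of $C^\infty_c(\RN)$. I would then assemble the test function $\varphi\in C^\infty_c(\RN;H^1_{\braket{\xi}}(Y_e))$ whose below profile is $W(x,\cdot)$ and whose above profile is $W(x,0)\,\phi_0$; the two profiles agree at $y=0$ (both equal $W(x,0)$) so they glue into an $H^1_{\braket{\xi}}(Y_e)$ function. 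By Lemma \ref{lemma:interpret} its $\Omega^b$-interpretation is exactly $\restr{W}{\Omega^b}\to v^b$ in $H^1(\Omega^b)$, and its above part is $W(\cdot,0)\rho$, which converges to $g\rho$ in $L^2(\Omega',\theta;H^1_{\braket{\xi}}(Y))$ because $W(\cdot,0)\to g$ in $L^2(\Omega')$ by continuity of the trace and
\begin{equation}
	\|W(\cdot,0)\rho-g\rho\|^2_{L^2(\Omega',\theta;H^1_{\braket{\xi}}(Y))}\le\|\rho\|^2_{H^1_{\braket{\xi}}(Y)}\,\|W(\cdot,0)-g\|^2_{L^2(\Omega')}.
\end{equation}

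For the second pair $(w^a,0)$ the below part is already $0$, so only $w^a$ has to be approximated, now in the trace-zero space. Here I would use Lemma \ref{lemma:l2dense} with $\hil=H^1_{\braket{\xi},0}(Y)$ to first approximate $w^a$ in the weighted norm by some $h\in L^2(\Omega';H^1_{\braket{\xi},0}(Y))$ (unweighted); then approximate $h$ by $C^\infty_c(\RN;H^1_{\braket{\xi},0}(Y))$ in $L^2(\Omega';H^1_{\braket{\xi},0}(Y))$ (mollification in $x$ together with the approximation property of the Hilbert space $H^1_{\braket{\xi},0}(Y)$, exactly as in the proof of Theorem \ref{thm:density2}), controlling the weighted norm by the unweighted one via the embedding of Lemma \ref{lemma:l2dense}. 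Finally, extension by zero into the region $y<0$ maps $H^1_{\braket{\xi},0}(Y)$ isometrically into $H^1_{\braket{\xi}}(Y_e)$ (the base trace being $0$ is precisely what makes the zero-extension weakly differentiable across $y=0$), turning each such approximant into an admissible test function whose below interpretation is $0$.

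The main obstacle is the interaction between the degeneracy of the weight $\theta$ on $\Theta_0$ and the trace-matching constraint: one cannot simply invoke the density of $L^2(\Omega';\hil)$ in $L^2(\Omega',\theta;\hil)$ on $v^a$ directly, since the approximants of Lemma \ref{lemma:l2dense} annihilate the function where $\theta$ is small and would thereby destroy the coupling ${Tr}^a_{\braket{\xi}}(v^a)={Tr}^b(v^b)$ there. The zero-trace reduction in the first paragraph is what removes this difficulty: after subtracting the explicit lift $g\rho$, the remaining $a$-component lives in the subspace $H^1_{\braket{\xi},0}(Y)$ on which the trace is identically zero, so Lemma \ref{lemma:l2dense} can be applied to it without ever conflicting with the interface condition. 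The remaining care is bookkeeping: verifying that the below and above profiles always glue into a genuine $\xi$-independent $H^1(Y_e)$ function (matching values at $y=0$, and no derivative jump for the zero-extension), which follows from Proposition \ref{prop:constxi} and the collar $\omega\times(-\delta_0,\delta_0)\subset Y_e$.
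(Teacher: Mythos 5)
Your proof is correct, and although your opening decomposition coincides in spirit with the paper's own final reduction (Step 6 of its proof), the way you treat the two pieces is genuinely different. The paper builds a general-purpose machine first: it extends $(v^a,v^b)$ to all of $\RN$, glues the two components into a single map $v\in L^2(\RN;H^1_{\braket{\xi}}(Y_e))$ using the interface condition and Lemma \ref{lemma:h1cont2}, and mollifies in $x$ (its Steps 1--5); only afterwards does it reduce the weighted case to the unweighted one via the constant lift $v_1^a(x,\xi,y)={Tr}^b(v^b)(x)$ and truncation on $A_\delta=\{\theta\ge\delta\}$. You perform the reduction first (with the cutoff lift $g(x)\phi_0(y)$ in place of the constant lift), and then: (i) for the lifted pair you avoid Bochner-valued mollification altogether, transporting a standard smooth approximation $W$ of $v^b$ in $H^1(\Omega^b)$ upward through the profile $W(x,0)\phi_0(y)$ and using continuity of ${Tr}^b$ to get convergence of the above part — a clean shortcut that reuses classical density in $H^1(\Omega^b)$; (ii) for the zero-trace remainder, your appeal to Lemma \ref{lemma:l2dense} on the subspace $H^1_{\braket{\xi},0}(Y)$ is precisely the paper's $A_\delta$ truncation in abstract form (that truncation is exactly how Lemma \ref{lemma:l2dense} is proved), and your zero-extension across $y=0$ is the same gluing mechanism (Lemma \ref{lemma:h1cont2}), here isometric because the extension vanishes. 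What the paper's route buys is uniformity — one mollification argument serves both pieces; what yours buys is economy — Banach-valued mollification is needed only for the remainder. Two minor repairs: the convergence $h_\epsilon\to h$ in $L^2(\RN;\hil)$ that you need (including the reduction to continuous representatives via Proposition \ref{prop:approxc0}) is established in the paper's own proof of this theorem, Steps 3--5, not in Theorem \ref{thm:density2}, which concerns finite-rank approximation; and when placing $w^a$ in $L^2(\Omega',\theta;H^1_{\braket{\xi},0}(Y))$ you should state that you redefine it (say, as $0$) on the $\theta$-null set where its base trace fails to vanish — composing with the orthogonal projection onto this closed subspace keeps Bochner $\theta$-measurability. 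Both points are routine and do not affect the validity of the argument.
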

	\begin{proof}
		We want to approximate $(v^a,v^b)\in H(\theta)$ by a $C^\infty_c(\RN;H^1_ {\left<\xi\right>}(Y_e))$. We will split the proof into six steps. First of all, we will make the proof assuming
		that $v^a\in L^2(\Omega';H^1_ {\left<\xi\right>}(Y))\subset L^2(\Omega',\theta;H^1_ {\left<\xi\right>}(Y))$, and in Step 6, we will generalize to the case $v^a\in  L^2(\Omega',\theta;H^1_ {\left<\xi\right>}(Y))$.
		
		\noindent\textbf{\underline{Step 1}: }
		As $\Omega^b$ is Lipschitz, we can extend $v^b$ to $\tilde{v}^b\in H^1(\RN\times(-R_0,0))$ with compact support (see e.g. \cite[Theorem 4.7]{evansmeasure}). In the same way, we can extend $v^a$ as follows.
		\begin{equation}
			\tilde{v}^a (x,{\xi},y) = \left\{
			\begin{aligned}
				& v^a(x,{\xi},y) \qquad && (x,{\xi},y)\in W \\
				& {Tr}^b(v^b)(x) \qquad && x\in \RN\setminus \Omega', \ ({\xi},y)\in Y
			\end{aligned}
			\right.
		\end{equation}
		where ${Tr}^b(v^b)\in L^2(\RN)$ denotes the trace of $v^b$ in $\{(x,0)\in \R^{N+1}:x\in \RN\}$. Then, $\tilde{v}^a\in L^2(\RN; H^1_ {\left<\xi\right>}(Y))$, so without loss of generality we can assume $\Omega'=\RN$, $\Omega^b=\RN\times(-R_0,0)$ and $v^a, v^b$ are compactly supported in the $x$ component. From now on we will make that assumption.
		
		\noindent\textbf{\underline{Step 2}: }
		Define
		\begin{equation}
			v(x,\xi,y)\defeq \left\{
			\begin{aligned}
				& v^a(x,\xi,y) \qquad && x\in \RN, (\xi,y)\in Y \\
				& \restr{v}{y<0}(x,\xi,y)\defeq v^b(x,y) \qquad && x\in \RN, \ \xi\in \omega,  \ y\in(-R_0,0).
			\end{aligned}
			\right.
		\end{equation}
		Let us prove that $v\in L^2(\RN; H^1_{\braket{\xi}}(Y_e))$. We start by proving that, for a.e. $x\in\RN$, $v(x,\cdot,\cdot\cdot)\in H^1_{\braket{\xi}}(Y_e)$. For a.e. $x\in \RN$, $v^a\in H^1_{\braket{\xi}}(Y)$ and $v^b(x,\cdot)\in H^1((-R_0,0))$. In particular, due to Proposition \ref{prop:constxi}, we have that $\restr{v}{y<0}\in H^1_{\braket{\xi}}(\omega\times(-R_0,0))$. As $(v^a,v^b)\in H(\theta)$, for a.e. $x\in \RN$, we have ${Tr}^a_{\braket{\xi}}(v^a)(x)={Tr}^b(v^b)(x)$, so for a.e. $x\in \RN$ and $\xi\in \omega$, ${Tr}^a(v^a)(x,\xi)={Tr}^b(\restr{v}{y<0})(x,\xi)$. Therefore, by using Lemma \ref{lemma:h1cont2}, we have that, for a.e. $x\in \RN$, $v(x,\cdot,\cdot \cdot)\in H^1_{\xi}(Y_e)$.
		
		Now, by using Fubini's theorem, 
		\begin{equation}
			\begin{aligned}
				\int_{\RN} & \norm{v(x)}_{L^2(Y_e)} = \int_{\RN}\int_Y (v^a(x,\xi,y))^2d\xi dy dx + \int_{\RN}\int_{\omega\times(-R_0,0)}(\restr{v}{y<0}(x,\xi,y))^2d\xi dy dx
				\\
				= & \norm{v^a}^2_{L^2(\RN;L^2(Y))}+\int_\RN \int_{-R_0}^0 (v^b(x,y))^2dy dx = \norm{v^a}^2_{L^2(\RN;L^2(Y))} + \norm{v^b}^2_{L^2(\RN\times(-R_0,0))}<\infty.
			\end{aligned}
		\end{equation}
		so $v\in L^2(\RN;L^2(Y_e))$. Using the same argument with the $y$-derivatives and $\xi$-derivatives, we obtain $v\in L^2(\RN;H^1_{\braket{\xi}}(Y_e))$.

		\noindent\underline{\textbf{Step 3:}} 
		We consider the set of mollifiers,
		\begin{equation}
			\eta_\epsilon(x)\defeq \left\{
			\begin{aligned}
				& \frac{c}{\epsilon^N}\exp\left(\frac{1}{\abs{\frac{x}{\epsilon}}^2-1}\right) \qquad && \abs{x}\leq \epsilon, \\
				& 0 \qquad && \abs{x}\geq \epsilon,
			\end{aligned}
			\right. \quad x\in\RN,
		\end{equation}
		where $\epsilon>0$ and $c>0$ is a normalization constant so that $\int_\RN \eta_\epsilon=1$. 
		Now, define
		\begin{equation}
			\label{eqn:densityeq4}
			v_\epsilon(x,\xi,y) = \int_{\RN}\eta_\epsilon(x-\bar x) v(\bar x,\xi,y)d\bar x, \qquad x\in \RN, \ (\xi,y)\in Y_e.
		\end{equation}
		Let us see that $v_\epsilon\in L^2(\RN; H^1_{\braket{\xi}}(Y_e))$.
		First, by using Fubini's theorem as well as Young's convolution inequality, we have		
		\begin{equation}
			\label{eqn:densityeq1}
			\begin{aligned}
				\int_{\RN}\int_{Y_e} & \abs{\int_{\RN} \eta_\epsilon(x-\bar x) v(x,\xi,y)d\bar x}^2 d\xi dy d x = \int_{Y_e}\left(\int_{\RN}  \abs{\int_{\RN} \eta_\epsilon(x-\bar x) v(x,\xi,y)d\bar x}^2 d x\right)d\xi dy 
				\\
				& \ \myleq{Young's} 
				\int_{Y_e}\left(\int_{\RN} \eta_\epsilon(x) dx\right)^2 \left(\int_{\RN}\abs{v(\bar x,\xi, y)}^2d\bar x\right) d\xi dy = \norm{v}^2_{L^2(\RN;L^2(Y_e))}<\infty.
			\end{aligned}
		\end{equation}
		where we have used $\int_{\RN} \eta_\epsilon(x) dx=1$. Then, $v_\epsilon\in L^2 (\RN; L^2(Y_e))$ and
		\begin{equation}
			\label{eqn:densityeq5}
			\norm{v_\epsilon}_{L^2 (\RN; L^2(Y_e))}\leq \norm{v}^2_{L^2(\RN;L^2(Y_e))}.
		\end{equation}
		We define $w(x,\xi,y)=\int_\RN \eta_\epsilon (x-\bar x)\frac{\partial v}{\partial y}(\bar x,\xi,y)$ and try to prove that $w(x)=\frac{\partial v_\epsilon}{\partial y}$. Using the same procedure as in \eqref{eqn:densityeq1} we have $w\in L^2 (\RN; L^2(Y_e))$, so $w(x)\in L^2(Y_e)$ for a.e. $x\in \RN$. Now, take $\psi\in C^\infty_c(Y_e)$. Then, for a.e. $x\in \RN$,
		\begin{equation}
			\begin{aligned}
				& \int_{Y_e}v_\epsilon(x,\xi,y)\psi_y(\xi,y)d\xi dy =
				\int_{Y_e}\left(\int_{\RN} \eta_\epsilon(x - \bar x)v(\bar x,\xi,y)d\bar x\right)\psi_y(\xi,y) d\xi dy 
				\\
				= & \int_{\RN} \eta_\epsilon(x - \bar x)\left(\int_{Y_e}v(\bar x,\xi,y)\psi_y(\xi,y) d\xi dy\right) d\bar x 
				=  \int_{\RN} \eta_\epsilon(x - \bar x)\left(\int_{Y_e}v_y(\bar x,\xi,y)\psi(\xi,y) d\xi dy\right) d\bar x 
				\\ =  & \int_{Y_e} \left(\int_{\RN}\eta_\epsilon(x - \bar x)v_y(\bar x,\xi,y)d\bar x\right)\psi(\xi,y)  d\xi dy 
				= \int_{Y_e}w(x,\xi,y)\psi(\xi,y)d\xi dy
			\end{aligned}
		\end{equation}
		where we could use Fubini's theorem because $\int_{Y_e} \abs{v_\epsilon(x)\psi_y}\leq \norm{v_\epsilon(x)}_{L^2(Y_e)}\norm{\psi_y}_{L^2(Y_e)}<\infty$ and \\ $\int_{Y_e} \abs{w(x)\psi_y}\leq \norm{w(x)}_{L^2(Y_e)}\norm{\psi}_{L^2(Y_e)}<\infty$ for a.e. $x\in \RN$. Therefore $w(x)=\frac{\partial v_\epsilon (x)}{\partial y}$ for a.e. $x\in \RN$. In an analogous way it is proved the case of the $\xi$-derivatives and we obtain $v_\epsilon\in L^2(\RN;H^1_{\braket{\xi}}(Y_e))$. Moreover, from equation \eqref{eqn:densityeq5}  together with the analogous equations that can be obtained for the derivatives in $\xi$ and $y$, one obtains
		\begin{equation}
			\label{eqn:densityeq6}
			\norm{v_\epsilon}_{L^2 (\RN; H^1_{\braket{\xi}}(Y_e))}\leq \norm{v}^2_{L^2(\RN;H^1_{\braket{\xi}}(Y_e))}.
		\end{equation}
		
		Finally, let us prove that $v_\epsilon\in C^\infty_c(\RN;H^1_{\braket{\xi}}(Y_e))$. Define $w(x,\xi,y) = \int_\RN (\eta_\epsilon)_{x_i}(x-\bar x)v(\bar x, \xi, y)d\bar x$. Let us prove that $w(x)=\frac{\partial v_\epsilon}{\partial x_i}(x)$ for every $x\in \RN$. Given $x\in \RN$,
		\begin{equation*}
			\begin{aligned}
				& \bnorm{\frac{v_\epsilon(x+h e_i)-v_\epsilon(x)}{h}-w(x)}_{L^2(Y_e)} \\
				= & \int_{Y_e}\abs{ \int_{\RN}\left(\frac{\eta_\epsilon(x-\bar x+h e_i)-\eta_\epsilon(x-\bar x)}{h}-(\eta_\epsilon)_{x_i}(x-\bar x)\right)v(\bar x, \xi, y)d\bar x}^2 d\xi dy
				\\
				\leq 
				& \sup_{z\in\RN} \left\{\frac{\eta_\epsilon(z+h e_i)-\eta_\epsilon(z)}{h}-(\eta_\epsilon)_{x_i}(z)\right\}\norm{v}_{L^2(\RN\times Y_e)}\to 0
			\end{aligned}
		\end{equation*}
		when $h\to 0$. The same can be done with the $y$-derivatives and $\xi$-derivatives so one obtains
		\begin{equation*}
			\bnorm{\frac{v_\epsilon(x+h e_i)-v_\epsilon(x)}{h}-w(x)}_{H^1_{\braket{\xi}}(Y_e)} \to 0
		\end{equation*}
		when $h\to 0$. Therefore, $v_\epsilon\in C^1(\RN; H^1_{\braket{\xi}}(Y_e))$ and $\frac{\partial v}{\partial x_i}(x)(\xi,y)=\int_\RN (\eta_\epsilon)_{x_i}(x-\bar x)v(\bar x,\xi,y)$. The infinite differentiability is obtained iterating this procedure.
		
		\textbf{\underline{Step 4}: }Let us see that $v_\epsilon\to v^a$ in $L^2(\RN; H^1_ {\left<\xi\right>}(Y))$. 
		First, using Cauchy-Schwartz inequality and Fubini's theorem,
		\begin{equation}
			\label{eqn:densityeq2}
			\begin{aligned}
				& \norm{v_\epsilon(x)-v^a(x)}^2_{L^2(Y)} = 
				\int_{Y}\left(\int_{\RN}\eta_\epsilon(x-\bar x)(v^a(\bar x, y, \xi)-v^a(x, y, \xi))d\bar x\right)^2 d\xi dy
				\\
				\leq & \int_Y \left(\int_{\RN}\eta(x-\bar x)d\bar x\right)^2\left(\int_{\RN}\eta_\epsilon(x-\bar x)(v^a(\bar x, y, \xi)-v^a(x, y, \xi))^2d \bar x\right)d\xi dy
				\\
				\leq &  \int_{\RN}\eta_\epsilon(x-\bar x)\int_Y(v^a(\bar x, y, \xi)-v^a(x, y, \xi))^2 d\xi dy d\bar x \leq \int_{\RN}\eta_\epsilon(x-\bar x) \norm{v^a(\bar x)-v^a(x)}_{L^2(Y)}
			\end{aligned}
		\end{equation}
		Suppose $v^a\in C(\RN;H^1_ {\left<\xi\right>}(Y))$. Then, given $\delta>0$, we can find $\epsilon>0$ small enough such that, for every $\abs{x-\tilde{x}}\leq \epsilon$, $\norm{v^a(x)-v^a(\tilde{x})}_{H^1_ {\left<\xi\right>}(\RN)}\leq \delta$. Therefore, making a change of variables $z=\frac{x-\bar x}{\epsilon}$ and using that $\eta$ is supported in $B_\RN=B_{\RN}(0,1)$,
		\begin{equation}
			\label{eqn:densityeq3}
			\int_{\RN}\eta_\epsilon(x-\bar x) \norm{v^a(\bar x)-v^a(x)}_{L^2(Y)} = \int_{B_\RN}\eta(z) \norm{v^a(x-\epsilon z)-v^a(x)}_{L^2(Y)}dz \leq \delta.
		\end{equation}
		
		Hence, from \eqref{eqn:densityeq2} and \eqref{eqn:densityeq3}, we obtain that $v^a_\epsilon$ converges to $v^a$ in $C(\RN;L^2(Y))$ when $\epsilon\to 0$. Using an analogous argument for the $y$-derivatives and $\xi$-derivatives one obtains the convergence in $C(\RN;H^1_ {\left<\xi\right>}(Y))$. In particular, as both functions are compactly supported, $v^a_\epsilon\to v^a$  in $L^2(\RN;H^1_ {\left<\xi\right>}(Y)))$. 
		
		Now, let us treat the case $v^a\notin C(\RN;H^1_ {\left<\xi\right>}(Y))$. By Proposition \ref{prop:approxc0} (see below), we can find an approximation $\tilde{v}^a\in C_c(\RN;H^1_ {\left<\xi\right>}(Y))$ such that $\norm{\tilde{v}^a-v^a}_{L^2(\RN;H^1(Y))}\leq \delta$. 
		
		Then, denoting $\tilde{v}^a_\epsilon$ the convolution of $\eta_\epsilon$ with $\tilde{v}^a$, we have $\norm{\tilde{v}^a_\epsilon- \tilde{v}^a}_{L^2(\RN;H^1_ {\left<\xi\right>}(Y))}\to 0$ when $\epsilon\to 0$. In addition, by the linearity of convolution $v_\epsilon-\tilde{v}^a_\epsilon=(v-\tilde{v^a})_{\epsilon}$, where $(v-\tilde{v^a})_{\epsilon}$ denotes the convolution of $v-\tilde{v^a}$ with $\eta_\epsilon$. Then, applying \eqref{eqn:densityeq6} to this function we have 
		\begin{equation}
			\norm{v_\epsilon-\tilde{v}^a_\epsilon}_{L^2 (\RN; H^1_{\braket{\xi}}(Y_e))}=\norm{(v-\tilde{v}^a)_\epsilon}_{L^2 (\RN; H^1_{\braket{\xi}}(Y_e))}\leq \norm{v-\tilde{v}^a}_{L^2 (\RN; H^1_{\braket{\xi}}(Y_e))}\leq \delta
		\end{equation}
		Then,
		\begin{equation}
			\begin{aligned}
				\norm{v_\epsilon- v^a}_{L^2(\RN;H^1_ {\left<\xi\right>}(Y))} & \leq \norm{v_\epsilon- \tilde{v}^a_\epsilon}_{L^2(\RN;H^1_ {\left<\xi\right>}(Y))}+\norm{\tilde{v}^a_\epsilon- \tilde{v}^a}_{L^2(\RN;H^1_ {\left<\xi\right>}(Y))}+\norm{\tilde{v}^a- v^a}_{L^2(\RN;H^1_ {\left<\xi\right>}(Y))} \\
				& \leq \norm{\tilde{v}^a_\epsilon- \tilde{v}^a}_{L^2(\RN;H^1_ {\left<\xi\right>}(Y))} +2\delta \to 2\delta
			\end{aligned}
		\end{equation}
		when $\epsilon\to 0$. As $\delta>0$ was arbitrary, we have $\norm{v^a_\epsilon- v^a}_{L^2(\RN;H^1_ {\left<\xi\right>}(Y))}\to 0$ when $\epsilon\to 0$.
		
		\textbf{\underline{Step 5}: }Let us prove that $v^b_\epsilon\to v^b$ in $H^1(\RN\times(-R_0,0))$. As $v^b_\epsilon\in C^\infty_c(\RN;H^1(Y_e))$, to interpret this convergence, we need to use Lemma \ref{lemma:interpret} which guarantees that there exists $\hat{v}_\epsilon^b\in H^1(\RN\times(-R_0, 0))$ such that
		\begin{equation}
			v_\epsilon(x,\xi, y)= \hat{v}_\epsilon^b(x,y) \qquad x\in \RN, \ y\in (-R_0,0), \ \xi\in \omega.
		\end{equation}
		However, from definition \eqref{eqn:densityeq4}, we immediately see that
		\begin{equation}
			\hat{v}_\epsilon^b(x,y)=\int_{\RN}\eta_\epsilon(x-\bar x)v^b(\bar x, y)d\bar x dy, \qquad x\in \RN, \ y\in (-R_0,0).
		\end{equation}
		Then, it is a standard result that, as $v^b\in H^1(\RN\times(-R_0,0))$, the convolution with a mollifier $\hat{v}_\epsilon^b$ converges to $v^b$ in $H^1(\RN\times(-R_0,0))$ (see for example \cite[Appendix C.4]{evans}).

		\textbf{\underline{Step 6}: } Finally, we treat the case $v^a\in L^2(\Omega',\theta;H^1_ {\left<\xi\right>}(Y))$. As $v^b\in H^1_ {\left<\xi\right>}(\Omega^b)$, its trace on $\Omega'\times\{0\}$, ${Tr}^b(v^b)$ belongs to $L^2(\Omega')$. Then, we split $(v^a,v^b)$ into two functions $(v^a,v^b)=(v_1^a,v_1^b)+(v_2^a,v_2^b)$. First, we define
		\begin{equation}
			\left\{
			\begin{aligned}
				& v_1^a(x,\xi,y)={Tr}^b(v^b)(x) \qquad && (x,{\xi},y)\in W \\
				& v_1^b(x,\xi,y)=v^b(x,y) \qquad && (x,{\xi},y)\in \Omega^b.
			\end{aligned}
			\right.
		\end{equation}
		so $(v_1^a,v_1^b)\in H(\theta)$, and
		\begin{equation}
			\left\{
			\begin{aligned}
				& v_2^a(x,\xi,y)=v^a(x,{\xi},y)-{Tr}^b(v^b)(x) \qquad && (x,{\xi},y)\in W \\
				& v_2^b(x,\xi,y)=0 \qquad && (x,y)\in \Omega^b.
			\end{aligned}
			\right.
		\end{equation}
		so $(v_2^a,v_2^b)\in H(\theta)$. In addition, $v_1^a$ belongs to $L^2(\Omega';H^1_ {\left<\xi\right>}(Y))$ (note there is no weight $\theta$), so, by the previous steps, we can approximate $(v_1^a,v_1^b)$ by a $C^\infty_c(\RN;H^1_ {\left<\xi\right>}(Y_e))$ function. Hence, we only need to approximate $(v_2^a,v_2^b)$. 
		
		To do this, we will use an argument similar to the one of the proof of Lemma \ref{lemma:l2dense}. We define $A_\delta \defeq \{x\in\Omega': \theta(x)\geq \delta\}$. Note that, for any $\delta>0$, $v\chi_{A_\delta}\in H(\theta)$. In addition, we know that $\cup_{\delta>0} A_\delta = \Omega'\setminus \Theta_0$ (recall that $\Theta_0=\{x\in \Omega': \theta(x)=0\}$), so, given $\epsilon>0$, we can choose $\delta>0$ small enough such that 
		\begin{equation}
			\norm{v_2^a(1-\chi_{A_\delta})}_{L^2(\Omega',\theta;H^1_ {\left<\xi\right>}(Y))}=\norm{v_2^a(1-\chi_{A_\delta})}_{L^2(\Omega',\theta;H^1_ {\left<\xi\right>}(Y))}\leq \epsilon.
		\end{equation}
		As $(v_2^a\chi_{A_\delta},0)$ belongs to $H(\theta)$ too, we only need to approximate $(v_2^a\chi_{A_\delta},0)$. But $v^a_2\chi_{A_\delta}$ belongs to $L^2(\Omega';H^1_ {\left<\xi\right>}(Y))$ because
		\begin{equation}
			\norm{v^a_2}_{L^2(\Omega';H^1_ {\left<\xi\right>}(Y))}\int_{A_\delta} \norm{v^a_2(x)}_{H^1_ {\left<\xi\right>}(Y)}\leq \frac{1}{\delta} \int_{A_\delta}\theta(x)\norm{v^a_2(x)}_{H^1_ {\left<\xi\right>}(Y)}=\frac{\norm{v^a_2}_{L^2(\Omega';H^1_ {\left<\xi\right>}(Y))}}{\delta}<\infty,
		\end{equation}
		so we can approximate $(v_2^a\chi_{A_\delta},0)$ by a $C^\infty_c(\RN;H^1_ {\left<\xi\right>}(Y_e))$ function by the previous steps and the proof is finished.
	\end{proof}

	Combining Theorem \ref{thm:density2} and Theorem \ref{thm:density}, we obtain the following corollary. In this case, we use the notation of Section \ref{sec:test}, as the result will be used with the appropriate test functions explained in that section.
	\begin{corollary}
		\label{cor:density}
		Let $(\varphi^a,\varphi^b)\in H(\theta)$ and $\delta>0$. We can find a finite number of functions $\{\phi_i\}_{i=1}^k\subset C^\infty_c(\RN)$ and $\{\psi_i\}_{i=1}^k\subset H^1_{\braket{\xi}}(Y_e)$ such that
		\begin{equation}
			\label{eqn:densitycoreq1}
			\normdos{\varphi^a-\sum_{i=1}^k \phi_i\cdot \psi^a_i}_{L^2(\Omega',\theta;H^1_{\braket{\xi}}(Y))}+\normdos{\varphi^b-\sum_{i=1}^k \phi_i\cdot \psi^b_i}_{H^1(\Omega^b)}\leq \delta
		\end{equation} 
		where $\psi^a_i\in H^1_{\braket{\xi}}(Y)$ and $\psi^b_i\in H^1((-R_0,0))$ follow the notation of \eqref{eqn:psia} and \eqref{eqn:psib} from Section~\ref{sec:test}. In other words, the set of finite linear combination of product of functions $\phi\psi$, where $\phi\in C^\infty_c(\RN)$ and $\psi \in H^1_{\braket{\xi}}(Y_e)$ is dense in $H(\theta)$.
	\end{corollary}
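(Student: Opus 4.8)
The plan is to deduce the corollary by composing the two preceding density results: Theorem~\ref{thm:density} approximates an arbitrary element of $H(\theta)$ by a smooth function in $C^\infty_c(\RN;H^1_{\braket{\xi}}(Y_e))$, and Theorem~\ref{thm:density2} approximates such a smooth function by a finite sum of separated products $\sum_i\phi_i\psi_i$; transitivity of density, via the triangle inequality, then yields the claim. The only thing to keep track of is that the relevant quantity is the \emph{sum} of the two norms $\norm{\cdot}_{L^2(\Omega',\theta;H^1_{\braket{\xi}}(Y))}$ and $\norm{\cdot}_{H^1(\Omega^b)}$, i.e. the natural norm on the ambient product Hilbert space containing $H(\theta)$.

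Concretely, given $(\varphi^a,\varphi^b)\in H(\theta)$ and $\delta>0$, I would first apply Theorem~\ref{thm:density} with $\epsilon=\delta/4$ to obtain $v\in C^\infty_c(\RN;H^1_{\braket{\xi}}(Y_e))$ with $\norm{\varphi^a-v}_{L^2(\Omega',\theta;H^1_{\braket{\xi}}(Y))}\leq\delta/4$ and $\norm{\varphi^b-v}_{H^1(\Omega^b)}\leq\delta/4$. Here $v$ is read simultaneously as an $L^2(\Omega',\theta;H^1_{\braket{\xi}}(Y))$ function (its restriction to $\RN\times Y$) and as an $H^1(\Omega^b)$ function (the $\hat v$ produced by Lemma~\ref{lemma:interpret}), exactly as in Remark~\ref{remark:interpret}. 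I would then apply Theorem~\ref{thm:density2} to this same $v$ with $\epsilon=\delta/2$, producing $\{\phi_i\}_{i=1}^k\subset C^\infty_c(\RN)$ and $\{\psi_i\}_{i=1}^k\subset H^1_{\braket{\xi}}(Y_e)$ with $\bnorm{v-\sum_{i=1}^k\phi_i\psi_i}_{L^2(\Omega',\theta;H^1_{\braket{\xi}}(Y))}+\bnorm{v-\sum_{i=1}^k\phi_i\psi_i}_{H^1(\Omega^b)}\leq\delta/2$.

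To finish, I would invoke the triangle inequality in each of the two norms separately and add the results, obtaining $\norm{\varphi^a-\sum_i\phi_i\psi_i^a}_{L^2(\Omega',\theta;H^1_{\braket{\xi}}(Y))}+\norm{\varphi^b-\sum_i\phi_i\psi_i^b}_{H^1(\Omega^b)}\leq(\delta/4+\delta/4)+\delta/2=\delta$, which is \eqref{eqn:densitycoreq1}, with $\psi_i^a$ and $\psi_i^b$ as in \eqref{eqn:psia} and \eqref{eqn:psib}. There is no substantive obstacle here; the result is pure transitivity of density. The single point deserving explicit mention — and the reason the two triangle inequalities are legitimate — is that both $v$ and the product combination $\sum_i\phi_i\psi_i$ belong to $C^\infty_c(\RN;H^1_{\braket{\xi}}(Y_e))$ and are interpreted in $H(\theta)$ through the \emph{same} identifications of Remark~\ref{remark:interpret}, which are linear (the map $\varphi\mapsto\hat\varphi$ of Lemma~\ref{lemma:interpret} being linear); hence the difference of interpretations coincides with the interpretation of the difference, and the norm estimates chain coherently.
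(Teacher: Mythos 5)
Your proposal is correct and is exactly the argument the paper intends: the paper introduces Corollary~\ref{cor:density} with the words ``Combining Theorem~\ref{thm:density2} and Theorem~\ref{thm:density}, we obtain the following corollary,'' i.e.\ precisely the two-step approximation plus triangle inequality that you carry out. Your explicit remark on the linearity of the identifications of Remark~\ref{remark:interpret} is a sound (if implicit in the paper) justification for chaining the estimates.
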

	
	We present the Proposition we needed to use in Theorem \ref{thm:density}
	\begin{prop}
		\label{prop:approxc0}
		Let ${\hil}$ be a separable Hilbert space and $f\in L^2(\RN; {\hil})$. Then, for every $\epsilon>0$, there exists $g\in C_c(\RN;{\hil})$ such that
		\begin{equation}
			\norm{f-g}_{L^2(\RN;{\hil})}\leq \epsilon.
		\end{equation}
	\end{prop}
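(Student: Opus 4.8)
The plan is to reduce the vector-valued statement to the classical scalar density of $C_c(\RN)$ in $L^2(\RN)$ by a three-stage approximation: first approximate $f$ in norm by a simple function, then treat the approximation term by term, and finally replace each scalar indicator by a continuous compactly supported function while keeping the vector coefficient fixed. This is the standard recipe for density in Bochner spaces, and separability of $\hil$ is exactly what is needed to make the first stage work.

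First I would establish that simple functions are dense in $L^2(\RN;\hil)$. Since $f$ is Bochner (square-)integrable and $\hil$ is separable, $f$ is strongly measurable, hence the a.e.\ limit of a sequence of simple functions $s_n$; moreover these can be chosen so that $\norm{s_n(x)}_{\hil}\leq 2\norm{f(x)}_{\hil}$ for a.e.\ $x$. Then $\norm{s_n(x)-f(x)}_{\hil}^2\to 0$ a.e.\ and is dominated by the integrable function $9\norm{f(x)}_{\hil}^2$, so the dominated convergence theorem gives $\norm{s_n-f}_{L^2(\RN;\hil)}\to 0$. Fixing $\epsilon>0$, I would pick a simple function $s=\sum_{i=1}^m \chi_{E_i} v_i$ with $\norm{f-s}_{L^2(\RN;\hil)}\leq \epsilon/2$; discarding the terms with $v_i=0$ and noting that $s\in L^2(\RN;\hil)$ forces each remaining $E_i$ to have finite Lebesgue measure.

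Next I would handle a single term $\chi_{E}v$ with $|E|<\infty$ and $v\in\hil$. By the classical density of $C_c(\RN)$ in $L^2(\RN)$ (which rests only on inner/outer regularity of Lebesgue measure), for any $\delta>0$ there is $\phi\in C_c(\RN)$ with $\norm{\chi_E-\phi}_{L^2(\RN)}\leq \delta$. Then $\phi\, v\in C_c(\RN;\hil)$ and
\begin{equation}
	\norm{\chi_E v-\phi v}_{L^2(\RN;\hil)}^2=\int_{\RN}\abs{\chi_E(x)-\phi(x)}^2\norm{v}_{\hil}^2\,dx=\norm{v}_{\hil}^2\,\norm{\chi_E-\phi}_{L^2(\RN)}^2\leq \norm{v}_{\hil}^2\,\delta^2.
\end{equation}
Applying this to each of the $m$ terms with $\delta$ chosen so small that $\norm{v_i}_{\hil}\,\delta\leq \epsilon/(2m)$ for all $i$, and setting $g\defeq\sum_{i=1}^m \phi_i v_i\in C_c(\RN;\hil)$, the triangle inequality gives $\norm{s-g}_{L^2(\RN;\hil)}\leq \epsilon/2$, whence $\norm{f-g}_{L^2(\RN;\hil)}\leq \epsilon$.

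The only genuinely delicate point is the first stage: one must justify that strong measurability plus square-integrability yields $L^2$-convergence of simple functions, and the clean way is the pointwise domination $\norm{s_n(x)}_{\hil}\leq 2\norm{f(x)}_{\hil}$ feeding into dominated convergence. Everything after that is the scalar $L^2$ theory applied to the scalar factor $\chi_{E_i}$, with the vector $v_i$ carried along inertly, so no further obstacle is expected.
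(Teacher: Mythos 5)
Your proposal is correct and follows essentially the same route as the paper: reduce to simple functions (dense in $L^2(\RN;\hil)$), then approximate each scalar indicator $\chi_{E_i}$ by a $C_c(\RN)$ function via classical scalar density, carrying the vector coefficient $v_i$ along. The only difference is that you prove the density of simple functions in detail (via the domination $\norm{s_n(x)}_{\hil}\leq 2\norm{f(x)}_{\hil}$ and dominated convergence), whereas the paper simply invokes it as part of the definition of Bochner integrability.
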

	\begin{proof}
		By definition of Bochner integrable function, we know that simple functions are dense in $L^2(\RN; {\hil})$. As simple functions are linear combinations of indicator functions, we only need to approximate such these functions. Now, take an indicator function $f=\chi_E x$ where $E\subset\RN$ bounded and $x\in {\hil}$. If $\norm{x}_{\hil}=0$ we are done. If not, we can find a continuous function $\tilde{g}\in C^\infty_c(\RN)$ such that $\norm{\chi_E-\tilde{f}}_{L^2(\RN)}\leq \epsilon/\norm{x}$. Define then $g=\tilde{g}x$. Then, $\norm{f-g}_{L^2(\RN;{\hil})}\leq \norm{x}_{\hil}\norm{\chi_E-\tilde{g}}_{L^2(\RN)}\leq\epsilon$.
	\end{proof}

	\section{Other auxiliary results}

	\subsection{$H^1(\Omega)$ and continuity}
	The following results concern when an $H^1$ function defined on an open set minus some hyperplanes can be extended to the entire open set.
	
	\begin{lemma}
		\label{lemma:h1cont2}
		Let $\Omega\subset \R^{N+1}$, $y_0\in \R$ and denote $\Omega^a=\{(x,y)\in \R^{N+1}: y>y_0\}$ and $\Omega^b=\{(x,y)\in \R^{N+1}: y<y_0\}$. Let $u:\Omega \to \R$ such that $u\in H^1(\Omega^a)$ and $u\in H^1(\Omega^b)$. Given any ball $B=B((x,y_0),r)\subset \Omega$ and letting $B^a=\{(x,y)\in B:y>y_0\}$ and $B^b=\{(x,y)\in B:y>y_0\}$, we can define $u^a\in H^1(B^a)$ and $u^b\in H^1(B^b)$. As $B^a$ and $B^b$ are bounded Lipschitz domains, they have a well-defined trace in $B^0=\{(x,y_0)\in B\}$ that we denote $tr(u^a)$ and $tr(u^b)$. Then, if for every $(x,0)\in Y$, there exists a ball $B=B((x,y_0),r)\subset \Omega$ such that
		\begin{equation}
			tr(u^a)\equiv tr(u^b) \qquad \text{on} \ B^0,
		\end{equation}
		then $u\in H^1(\Omega)$ and $\norm{u}_{H^1(\Omega)}=\norm{u}_{H^1(\Omega^a)}+\norm{u}_{H^1(\Omega^b)}$.
	\end{lemma}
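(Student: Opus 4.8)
The plan is to exhibit the piecewise gradient as the global weak gradient of $u$ and to verify that no distributional mass concentrates on the interface $\Omega^0=\{(x,y_0)\in\Omega\}$. First I would set $v\in(L^2(\Omega))^{N+1}$ to be $\nabla u^a$ on $\Omega^a$ and $\nabla u^b$ on $\Omega^b$; this is well defined a.e.\ because $\abs{\Omega^0}=0$, and $u\in L^2(\Omega)$ for the same reason. It then suffices to prove that $v$ is the weak gradient of $u$, i.e.\ that $\int_\Omega u\,\partial_{x_i}\varphi=-\int_\Omega v_i\varphi$ for every $\varphi\in C^\infty_c(\Omega)$ and every $i$ (writing $x_{N+1}=y$). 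Since weak differentiability is a local property, I would fix $\varphi$, cover its compact support by finitely many balls — each either contained in $\Omega^a$, contained in $\Omega^b$, or centred at an interface point $(x,y_0)$ and satisfying the trace hypothesis — and use a subordinate partition of unity to reduce the identity to the case $\varphi\in C^\infty_c(B)$ for a single such ball $B$.

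On a ball contained in $\Omega^a$ or $\Omega^b$ the identity is just the definition of the weak derivative there, so the only real computation is on a ball $B=B((x,y_0),r)\subset\Omega$ with $tr(u^a)\equiv tr(u^b)$ on $B^0$. Here I would split $\int_B u\,\partial_{x_i}\varphi=\int_{B^a}u^a\partial_{x_i}\varphi+\int_{B^b}u^b\partial_{x_i}\varphi$ and apply the Gauss--Green formula on each half-ball, which is a bounded Lipschitz domain, valid for the pairing of $u^a\in H^1(B^a)$ (resp.\ $u^b\in H^1(B^b)$) with $\varphi\in C^\infty(\adh{B^a})$. The spherical part of $\partial B^a$ contributes nothing because $\varphi$ is supported in $B$, leaving only the flat face $B^0$. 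For horizontal directions $i\le N$ the outward normals of $B^a$ and $B^b$ on $B^0$ are $\mp e_y$, so the normal component $n_{x_i}$ vanishes and the flux terms are absent; for the vertical direction $i=N+1$ the two faces carry opposite normals, giving
\begin{equation}
\int_B u\,\partial_y\varphi=-\int_B v_{N+1}\varphi-\int_{B^0} tr(u^a)\varphi+\int_{B^0} tr(u^b)\varphi.
\end{equation}

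By the trace hypothesis the last two integrals cancel, so $\int_B u\,\partial_y\varphi=-\int_B v_{N+1}\varphi$, and likewise (trivially) for horizontal derivatives. Summing the partition-of-unity pieces yields the global identity, whence $u\in H^1(\Omega)$ with $\nabla u=v$ a.e. The norm relation follows at once from additivity of the integral over the null set $\Omega^0$: since $\abs{\Omega^0}=0$, one has $\int_\Omega(\abs{u}^2+\abs{\nabla u}^2)=\int_{\Omega^a}(\abs{u^a}^2+\abs{\nabla u^a}^2)+\int_{\Omega^b}(\abs{u^b}^2+\abs{\nabla u^b}^2)$, i.e.\ $\norm{u}_{H^1(\Omega)}^2=\norm{u}_{H^1(\Omega^a)}^2+\norm{u}_{H^1(\Omega^b)}^2$ (the stated formula should be read with squared norms).

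I expect the only delicate point to be the interface-flux cancellation: one must invoke the trace theorem to make sense of $tr(u^a),tr(u^b)$ as $L^2(B^0)$ functions and justify the Gauss--Green identity for an $H^1$ function paired with a smooth test function on a Lipschitz domain — most cleanly by approximating $u^a$ in $H^1(B^a)$ by smooth functions, applying the classical divergence theorem, and passing to the limit using continuity of the trace operator. The localization via the finite cover and partition of unity, and the treatment of the horizontal directions, are routine once this single computation is in place.
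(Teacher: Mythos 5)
Your proposal is correct and follows essentially the same route as the paper's proof: localize (the paper invokes locality of weak differentiability directly, you make it explicit with a partition of unity), integrate by parts on the two Lipschitz half-balls, observe that the horizontal directions produce no flux and that the vertical interface terms cancel by the trace hypothesis, and conclude the norm identity from the fact that the interface is a null set. Your remark that the stated norm equality should be read with squared norms matches what the paper's own computation actually proves.
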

	\begin{proof}
		The weak differentiability is a local property, so we only have to prove the differentiability in a neighbourhood of every point of $\Omega$. Given $(x,y)\in \Omega$, if $x\in \Omega^a$, we can take a ball $B\subset \Omega^a$ such that $x\in B$. But then, as $u\in H^1(\Omega)$, we have $u\in H^1(\Omega^a)$. The case in which $x\in \Omega^b$ is analogous. Therefore, let us assume $(x,y)=(x,y_0)$ and take a ball $B=B((x,y_0),r)\subset \Omega$ such that $tr(u^a)\equiv tr(u^b)$ on $B^0$. Now, take $\varphi\in C^\infty_c(B)$. As $B^a$ and $B^b$ are bounded Lipschitz domains, we can use the integration by parts formula (see \cite[Theorem 4.6]{evansmeasure}). Then for $i=1,\dots, N$,
		\begin{equation}
			\int_B u\varphi_{x_i} = \int_{B^a} u\varphi_{x_i}+ \int_{B^b} u\varphi_{x_i}=\int_{B^a} u_{x_i}\varphi+ \int_{B^b} u_{x_i}\varphi = \int_B u_{x_i}\varphi.
		\end{equation}
		and for $i=N+1$
		\begin{equation}
			\int_B u\varphi_{x_N} = \int_{B^a} u\varphi_{x_N}+ \int_{B^b} u\varphi_{x_N}=\int_{B^a} u_{x_N}\varphi+\int_{B^0} u\varphi+ \int_{B^b} u_{x_N}\varphi-\int_{B^0} u\varphi = \int_B u_{x_N}\varphi.
		\end{equation}
		so $u$ is weakly differentiable. Finally, as $\Omega_0=\{(x,y)\in \Omega: y=y_0\}$ is a set of measure zero, we have
		\begin{equation*}
			\norm{u}^2_{H^1(\Omega)}=\int_\Omega\left(\abs{\nabla u}^2+ u^2\right)=\int_{\Omega^a}\left(\abs{\nabla u}^2+ u^2\right)+\int_{\Omega^b}\left(\abs{\nabla u}^2+ u^2\right)=\norm{u}^2_{H^1(\Omega^a)}+\norm{u}^2_{H^1(\Omega^b)}
		\end{equation*}
	\end{proof}
	An straightforward generalization is the following.
	\begin{corollary}
		\label{cor:h1cont2}
		Let $\Omega\subset \R^{N+1}$.  Given the real numbers $a_0<a_1< a_2<\ldots<a_M$, define $\Omega^i=\{(x,y)\in \Omega: a_{i-1}<y<a_i\}$. Assume $\Omega\subset \{(x,y)\in \R^{N+1}: a_0<y<a_M\}$ let $u:\Omega \to \R$ such that $u\in H^1(\Omega^i)$ for every $i=1,\ldots,M$. Then, if for every $(x,a_i)$ where $x\in \RN$ and $i=1,\ldots, M-1$, there exists a ball $B=B((x,a_i),r)\subset \Omega$ such that, using the notation of Lemma \ref{lemma:h1cont2},
		\begin{equation}
			\label{eqn:h1cont2eq1}
			tr(u^a)\equiv tr(u^b) \qquad \text{on} \ B^0,
		\end{equation}
		then $u\in H^1(\Omega)$ and $\norm{u}_{H^1(\Omega)}=\sum_{i=1}^M\norm{u}_{H^1(\Omega^i)}$.
	\end{corollary}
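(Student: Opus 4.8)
The plan is to prove the statement by a purely local argument that reduces everything to Lemma~\ref{lemma:h1cont2}, exploiting that weak differentiability is a local property. First I would note that, by the standing assumption $\Omega\subset\RN\times(a_0,a_M)$, the set $\Omega$ is the disjoint union of the open strips $\Omega^i$, $i=1,\dots,M$, together with the finitely many interior slices $\Omega\cap(\RN\times\{a_i\})$, $i=1,\dots,M-1$. Hence it suffices to produce, around every point of $\Omega$, a ball on which $u$ is of class $H^1$. For a point $(x_0,y_0)\in\Omega$ with $y_0\neq a_i$ for all $i$, the point lies in the open strip $\Omega^i$ for the unique $i$ with $a_{i-1}<y_0<a_i$, so a sufficiently small ball about $(x_0,y_0)$ is contained in $\Omega^i$, where $u\in H^1$ by hypothesis.

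The heart of the argument is the treatment of the interface points $(x_0,a_i)$ with $1\leq i\leq M-1$. There I would take the ball $B=B((x_0,a_i),r)\subset\Omega$ provided by the hypothesis and shrink $r$ if necessary so that $r<\min(a_i-a_{i-1},\,a_{i+1}-a_i)$. This forces $B$ to meet only the single hyperplane $\{y=a_i\}$, so its upper half $B^a=B\cap\{y>a_i\}$ lies in $\Omega^{i+1}$ and its lower half $B^b=B\cap\{y<a_i\}$ lies in $\Omega^i$; consequently $u\in H^1(B^a)$ and $u\in H^1(B^b)$. Shrinking $r$ does not spoil the interface hypothesis, since the traces on the smaller interface $B^0$ are just restrictions of those on the original one. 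I can then apply Lemma~\ref{lemma:h1cont2} with domain $B$ and $y_0=a_i$ (its trace-matching requirement holding on all of $B^0$, hence on every sub-ball), obtaining $u\in H^1(B)$.

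Combining the two cases, every point of $\Omega$ has a neighbourhood on which $u$ admits weak first derivatives in $L^2$. By uniqueness of weak derivatives these local derivatives coincide on overlaps and patch into global functions $\nabla u\in L^2(\Omega)$, the global square-integrability following from the finite decomposition; together with $u\in L^2(\Omega)$ this gives $u\in H^1(\Omega)$. Finally, since $\Omega\setminus\bigcup_{i=1}^M\Omega^i$ is a finite union of hyperplane slices and therefore Lebesgue-null, the Dirichlet and $L^2$ integrals over $\Omega$ split as a finite sum over the strips, yielding $\norm{u}_{H^1(\Omega)}^2=\sum_{i=1}^M\norm{u}_{H^1(\Omega^i)}^2$, exactly as in the proof of Lemma~\ref{lemma:h1cont2}.

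The only obstacle—mild here—is purely the bookkeeping at the gluing hyperplanes: one must choose the localizing ball small enough to isolate a single $a_i$, so that its two halves sit inside single strips $\Omega^i,\Omega^{i+1}$ and Lemma~\ref{lemma:h1cont2} applies verbatim. An equivalent route is induction on $M$, peeling off the top strip $\Omega^M$ across $\{y=a_{M-1}\}$ and gluing with Lemma~\ref{lemma:h1cont2}; the local argument above is preferable because it avoids re-verifying the hypotheses of the lemma for the accumulated lower block at each inductive step.
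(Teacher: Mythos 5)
Your proof is correct, but it follows a genuinely different route from the paper's. The paper's proof is a global iteration: it applies Lemma~\ref{lemma:h1cont2} with $\Omega^b=\Omega^1$, $\Omega^a=\Omega^2$, $y_0=a_1$ to get $u\in H^1(\{a_0<y<a_2\})$, then again with the accumulated block $\{a_0<y<a_2\}$ as the lower domain and $\Omega^3$ as the upper one, and so on, peeling off one strip at a time --- exactly the inductive alternative you mention at the end and set aside. Your argument instead localizes: away from the hyperplanes $u$ is $H^1$ by hypothesis, at each interface point you shrink the hypothesized ball so it meets only the single hyperplane $\{y=a_i\}$ (so its halves sit in $\Omega^i$ and $\Omega^{i+1}$), apply Lemma~\ref{lemma:h1cont2} on that ball, and then patch via locality of weak differentiability. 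The trade-off is as you describe it: the paper's iteration is two lines long but silently re-verifies the lemma's trace hypothesis for the accumulated lower block at every step (which is only immediate after localizing near the interface, precisely the reduction you make explicit), whereas your version needs the extra patching step --- the same ``weak differentiability is a local property'' principle already invoked inside the proof of Lemma~\ref{lemma:h1cont2} itself, so nothing new is being assumed. One small point in your favour: you state the norm identity in the correct squared form $\norm{u}_{H^1(\Omega)}^2=\sum_{i=1}^M\norm{u}_{H^1(\Omega^i)}^2$, which is what the additivity of the integrals over the strips actually gives (the unsquared identity as written in the corollary statement is a slip, consistent with the computation at the end of the proof of Lemma~\ref{lemma:h1cont2}).
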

	\begin{proof}
		By applying Lemma \ref{lemma:h1cont2} to $u$ with $\Omega^b=\Omega^1$, $\Omega^a=\Omega^2$ and $y_0=a_1$, we obtain $u\in H^1(\{(x,y)\in \Omega: a_0<y<a_2\})$. Then, by applying Lemma \ref{lemma:h1cont2} to $u$ with $\Omega^b=\{(x,y)\in \Omega: a_0<y<a_2\}$, $\Omega^a=\Omega^3$ and $y_0=a_2$, we obtain $u\in H^1(\{(x,y)\in \Omega: a_0<y<a_3\})$. Therefore, in a finite number of steps we obtain $u\in H^1(\{(x,y)\in \Omega: a_0<y<a_M\})=H^1(\Omega)$.
	\end{proof}
	\begin{remark}
		\label{remark:h1cont2}
		Note that, if $u:\Omega \to \R$ is continuous, equation \eqref{eqn:h1cont2eq1} is automatically satisfied.
	\end{remark}

	\small

\end{document}